\DeclareMathOperator{\sgn}{sgn}
\newtheorem{Theorem}{Theorem}
\numberwithin{Theorem}{section}
\newtheorem*{theorem*}{Theorem 1}
\newtheorem*{proposition*}{Proposition}
\newtheorem{proposition}[equation]{Proposition}
\newtheorem*{corollary*}{Corollary}
\theoremstyle{definition}
\newtheorem*{definition}{Definition}
\newtheorem*{comments*}{Comments}
\newtheorem{example}{Example}
\newtheorem*{example*}{Example}
\newtheorem{claim}{Claim}
\newtheorem{Remark}[equation]{Remark}
\newtheorem*{remarks*}{Remarks}
\numberwithin{equation}{section}
\gdef\myletter{}
\let\savetheequation\theequation
\def\theequation{\savetheequation\myletter}
\newcommand{\Ker}{{\rm Ker}}
\newcommand{\Q}{{\mathbb Q}}
\newcommand{\Z}{{\mathbb Z}}
\newcommand{\C}{{\mathbb C}}
\renewcommand{\P}{{\mathbb P}}
\newcommand{\R}{{\mathbb R}}
\newcommand{\dint}{\displaystyle\int}  
\def    \to     {{\longrightarrow}}
\begin{document}
\title[Hyperpolygon spaces and moduli spaces of PHBs]{Hyperpolygon spaces and moduli spaces of parabolic Higgs bundles}
\author{Leonor Godinho}
\address{Departamento de Matem\'{a}tica, Instituto Superior T\'{e}cnico, Av. Rovisco Pais, 1049-001 Lisboa, Portugal}
\email{lgodin@math.ist.utl.pt}
\author{Alessia Mandini}
\address{Centro de An\'{a}lise Matem\'{a}tica, Geometria e Sistemas Din\^{a}micos, Instituto Superior T\'{e}cnico, Av. Rovisco Pais, 1049-001 Lisboa, Portugal}
\email{amandini@math.ist.utl.pt}

\thanks{Both authors were partially supported  by Fundação para a Ciência e a Tecnologia (FCT/Portugal) through program POCI 2010/FEDER and project PTDC/MAT/098936/2008;  
the first author was partially supported by Fundação Calouste Gulbenkian; 
the second author was partially supported by FCT grant SFRH/BPD/44041/2008.}
\subjclass[2000]{Primary 53C26, 14D20, 14F25, 14H60; Secondary 16G20, 53D20}

\maketitle
\begin{abstract}
Given an $n$-tuple of positive real numbers $\alpha$ we consider the hyperpolygon space $X(\alpha)$, the hyperk\"{a}hler quotient analogue to the K\"ahler moduli space of polygons in $\R^3$. We prove the existence of an isomorphism between hyperpolygon spaces and moduli spaces of stable, rank-$2$, holomorphically trivial parabolic Higgs bundles over $\C \P^1$ with fixed determinant and trace-free Higgs field. This isomorphism allows us to prove that  hyperpolygon spaces  $X(\alpha)$ undergo an elementary transformation in the sense of Mukai as $\alpha$ crosses a wall in the space of its admissible values. We describe the changes in the core of   $X(\alpha)$ as a result of this transformation as well as the changes in the nilpotent cone of the corresponding moduli spaces of parabolic Higgs bundles. Moreover, we study the intersection rings of the core components of  $X(\alpha)$. In particular, we find generators of these rings, prove a recursion relation in $n$ for their intersection numbers and use it to obtain explicit formulas for the computation of these numbers. Using our isomorphism, we obtain similar formulas for each connected component of the nilpotent cone of the corresponding moduli spaces of parabolic Higgs bundles thus determining their intersection rings. As a final application of our isomorphism we describe the cohomology ring structure of these moduli spaces of parabolic Higgs bundles and of the  components of their nilpotent cone.
\end{abstract}

\section{Introduction}

In this work we study two families of manifolds: hyperpolygon spaces and  moduli spaces of stable, rank-$2$, holomorphically trivial parabolic Higgs bundles over $\C \P^1$ with fixed determinant and trace free Higgs field, proving the existence of an isomorphism between them. This relationship connecting two different fields of study allows us to benefit from techniques and ideas from each of these areas to obtain new results and insights. In particular, using the study of variation of moduli spaces of parabolic Higgs bundles over a curve, we describe the dependence of hyperpolygon spaces $X(\alpha)$ and their cores on the choice of the parameter $\alpha$. We study the chamber structure on the space of admissible values of $\alpha$ and show that, when a wall is crossed, the hyperpolygon space suffers an elementary transformation in the sense of Mukai. Working on the side of hyperpolygons, we take advantage of the geometric description of the core components of a hyperpolygon space to study their intersection rings. We find homology cycles dual to generators of these rings and prove a recursion relation that allows us to decrease the dimension of the spaces involved. Based on this relation we obtain explicit expressions for the computation of the intersection numbers of the core components of hyperpolygon spaces. Using our isomorphism we can obtain similar formulas for the nilpotent cone components of the moduli space of rank-$2$, holomorphically trivial parabolic Higgs bundles over $\C \P^1$ with fixed determinant and trace-free Higgs field. To better understand these results we begin with a brief overview of the two families of spaces involved.
   
Let $K$ be a compact Lie group acting on a symplectic manifold $(V, \omega)$ with a moment map $\mu : V \to \frak{k}^*$. Then, for an appropriate central value $\alpha$ of the moment map, one has a smooth symplectic quotient 
$$
M(\alpha) := \mu^{-1}(\alpha) / K.
$$
Suppose that the cotangent bundle $T^*V $ has a hyperk\"ahler structure and that the action of $K$ extends naturally to an action on $T^*V $ with a hyperk\"ahler moment map
$ \mu_{HK}: T^*V \to  \frak{k}^* \oplus (\frak k \otimes \C)^* $.
Then one defines the hyperk\"ahler quotient as
$$
X(\alpha, \beta) :=  \mu_{HK}^{-1} (\alpha, \beta) /K
$$
for appropriate values of $(\alpha, \beta)$.
When $V = S^2 \times \cdots \times S^2$ is a product of $n$ spheres and $K=SO(3)$, the space $X(\alpha, \beta)$ for generic $(\alpha, \beta)$ is a  smooth non-compact hyperk\"ahler quotient of a product of cotangent bundles $T^*S^2$ by $SO(3)$. 
When $\beta=0$, 
$$X(\alpha):= X(\alpha,0)$$ 
contains the  so-called polygon space $M(\alpha)$ of all configurations of closed piecewise linear paths in $\R^3$ with $n$ steps of lengths $\alpha_1, \ldots, \alpha_n$ modulo rotations and translations (a symplectic quotient of a product of $S^2$s by $SO(3)$). For this reason, $X(\alpha)$ is usually called a \emph{hyperpolygon space}.
This family of hyperk\"ahler quotients was first studied by Konno in \cite{konno} where he shows that these spaces, when smooth, are all diffeomorphic. 

It is known that a polygon space $M(\alpha)$ can be viewed as the moduli space of stable representations of a star-shaped quiver, as in Figure~\ref{quiver}.
\begin{figure}
\begin{center}
\includegraphics[width=3cm]{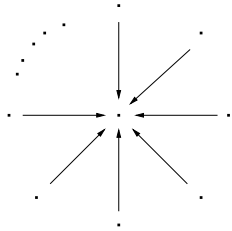}
\caption{Star-shaped quiver}
\label{quiver}
\end{center}
\end{figure}
More precisely, a star-shaped quiver $\mathcal{Q}$ with dimension vector 
$$v=(2,1, \ldots, 1)\in \R^{n+1}$$ 
is a directed graph with vertex set $J=\{ 0\} \cup \{ 1, \ldots,n\}$ and edge set $E = \big\{ (i,0) \mid i \in \{1, \ldots, n\} \big\}$. A representation of $\mathcal{Q}$, associated to a choice of finite dimensional vector spaces $V_i$, for  $i \in J$, such that $\dim V_i = v_i$,
is the space of homomorphisms from $V_i$ to $V_j$ for every pair of vertices $i$ and $j$ connected by an edge in $E$.
Therefore, the representation space of the star-shaped quiver $\mathcal{Q}$ described above is 
$$ E(\mathcal Q, V)= \bigoplus_{i =1}^n \textrm{Hom} (V_i, V_0) \cong \C^{2n}. $$ 
The group $ \prod GL(V_i) / GL(1)_{\Delta}$ acts in a Hamiltonian way on  $E(\mathcal Q, V)$ and the polygon space $M(\alpha)$ is obtained by symplectic reduction of $E(\mathcal Q, V)$ by this group, at the value $\alpha$. Similarly, one can obtain the hyperpolygon space $X(\alpha)$ as the hyperk\"ahler reduction of $T^* E(\mathcal Q, V)$ by the group $ \prod GL(V_i) / GL(1)_{\Delta}$ at $(\alpha,0)$.
Consequently, polygon and hyperpolygon spaces are examples of K\"ahler and hyperk\"ahler quiver  varieties in the sense of Nakajima \cite{n1,n2}. 

Any hyperk\"ahler quiver variety $X$ admits a natural $\C^*$-action and the \emph{core} $\mathfrak L$ of $X$ is defined as the set of points $x \in X$ for which the limit 
$$
\lim_{\lambda \to \infty} \lambda \cdot x
$$
exists. It clearly contains all the fixed-point set components and their flow-downs. Moreover, the core $\mathfrak L$ is a Lagrangian subvariety with respect to the holomorphic symplectic form and is a deformation retraction of $X$.
The circle $S^1 \subset \C^*$ acts on $X$ in  a Hamiltonian way with respect to the real symplectic form. This action has been studied by  Konno \cite{konno} for  hyperpolygon spaces. He shows that the fixed-point set of this action contains the polygon space $M(\alpha)$ (where the moment map attains its minimum) and that all the other components of $X(\alpha)^{S^1}$ are in bijection with the collection of index sets $S \subset \{ 1, \ldots, n\}$ of cardinality at least $2$ which satisfy 
\begin{equation}\label{eq:SHORT}
\sum_{i \in S} \alpha_{i} - \sum_{i \in S^c} \alpha_{i} < 0
\end{equation}
(see Theorem~\ref{thm:fixedpoints}).
Sets satisfying \eqref{eq:SHORT} are called \emph{short} sets following Walker \cite{W} and play a very important role in the study of polygon and hyperpolygon spaces.
The core of the hyperpolygon space $X(\alpha)$ is then 
$$ 
\mathfrak L_{\alpha} := M(\alpha) \cup \bigcup_{S \in \mathcal S'(\alpha)} U_S,
$$
where $U_S$ is the closure of the flow-down set of the fixed-point set component $X_S$ determined by the set $S$, and $\mathcal S'(\alpha)$ is the collection of short sets of cardinality at least $2$. Note that, even though the hyperpolygon spaces $X(\alpha)$ are all diffeomorphic for any generic choice of $\alpha$, they are not isomorphic as complex manifolds, nor as real symplectic manifolds nor as hyperk\"ahler manifolds. In particular, they are not $S^1$-equivariantly isomorphic and the dependence of $X(\alpha)$ and of its core $\mathfrak L_{\alpha} $  will be seen in Section~\ref{sec:wchyper}. The study of these changes is important since, for instance, the connected components of the core of a quiver variety  give a basis for the middle homology of the variety.

Let us now focus on the other family of spaces studied in this work.
Higgs bundles over a compact connected Riemann surface $\Sigma$ have been introduced by Hitchin \cite{H,H2} and are an important object of study in geometry with several relations with physics and representation theory. \emph{Parabolic Higgs bundles}, as first introduced by Simpson  \cite{S} (and hereafter referred to as simply PHBs), are
holomorphic vector bundles over $\Sigma$ endowed with a parabolic structure, that is, choices of weighted flags in the fibers over certain distinct marked points $x_1, \ldots, x_n $ in $\Sigma$, together with a Higgs field that respects the parabolic structure.

More precisely, if $D$ is the divisor $D= \sum_{i=1}^n x_i$  and $K_{\Sigma}$ is the canonical bundle over $\Sigma$, a parabolic Higgs bundle is a pair ${\bf E}:= (E, \Phi)$ where $E$ is a parabolic bundle over $\Sigma$ and 
$$\Phi : E \to E \otimes K_{\Sigma} (D)$$ 
(called the \emph{Higgs field}) is a strongly parabolic homomorphism. This means that $\Phi$ is a meromorphic endomorphism-valued one-form with simple poles along $D$ whose residues are nilpotent with respect to the flags.

As in the non-parabolic case, there exists a stability criterion (depending on the parabolic weights) that leads to the construction of moduli spaces of semistable parabolic Higgs bundles \cite{Y2}.
These spaces are smooth quasiprojective algebraic manifolds when the parabolic weights are chosen so that stability and semistability coincide. Such parabolic weights are called \emph{generic}. 

The original work of Hitchin in the non-parabolic setting extends to this context. In particular, the moduli space of parabolic Higgs bundles can be identified (as smooth manifolds) with the moduli space of solutions of the parabolic version of Hitchin's equations
$$
F(A)^{\bot} + [\Phi, \Phi^*] = 0 , \quad \overline{\delta}_A \Phi =0,
$$ 
where $A$ is a singular connection, unitary with respect to a singular hermitian metric on the bundle $E$ adapted to the parabolic structure (see \cite{K} for details).

The moduli spaces of parabolic Higgs bundles have a rich geometric structure. In particular, they contain the total space of the cotangent bundle of the moduli space of parabolic bundles whose holomorphic symplectic form can be extended to the entire moduli space.
Let $\mathcal{N}_{\beta,r,d}$ be the moduli space of rank-$r$,  degree-$d$ parabolic Higgs bundles that are stable for a choice of parabolic weights $\beta$, and let $\mathcal{N}^{0,\Lambda}_{\beta,r,d} \subset \mathcal{N}_{\beta,r,d}$ be the subspace of elements $(E, \Phi)$ that have fixed determinant and trace-free Higgs field.
Konno provides a gauge-theoretic interpretation of the moduli spaces $\mathcal{N}^{0,\Lambda}_{\beta,r,d} $ endowing them with a real symplectic form that, combined with the holomorphic one, gives a hyperk\"ahler structure on  $\mathcal{N}^{0,\Lambda}_{\beta,r,d}$, \cite{K}.

On the moduli space $\mathcal{N}_{\beta,r,d}$ there is a natural $\C^*$-action by scalar multiplication of the Higgs field. Restricting to $S^1 \subset \C^*$ one obtains a Hamiltonian circle action whose moment map $f: \mathcal{N}_{\beta,r,d} \to \R$ is a perfect Morse-Bott function on $\mathcal{N}_{\beta,r,d}$. Its downward Morse flow coincides with the so-called \emph{nilpotent cone} of $\mathcal{N}_{\beta,r,d}$ (see \cite{GGM} where  the work of Hausel \cite{Ha} is generalized to the parabolic case).

In this paper we show that hyperpolygon spaces are $S^1$-isomorphic to certain subspaces of $\mathcal{N}^{0,\Lambda}_{\beta,2,0}$ for $\Sigma=\C \P^1$ and for a generic choice of the parabolic weights $\beta_2(x_i) , \beta_1(x_i)$ with $x_i \in D$. 
Let $\alpha$ be the vector  
$$
\alpha:= \big(\beta_2(x_1) - \beta_1(x_1), \ldots, \beta_2(x_n) - \beta_1(x_n) \big) \in \R_+^n.
$$
Then the hyperpolygon space $X(\alpha)$ is $S^1$-isomorphic to the moduli space $\mathcal H(\beta) \subset \mathcal{N}^{0,\Lambda}_{\beta,2,0}$ of stable rank-$2$, holomorphically trivial PHBs over $\C\P^1$ with fixed determinant and trace free Higgs field. 
The isomorphism 
\begin{equation}\label{eq:1isom}
\mathcal I :X(\alpha) \to \mathcal H(\beta)
\end{equation}
constructed  in \eqref{eq:isom} restricts to an isomorphism between the polygon space $M (\alpha)$ and the moduli space of stable, rank-$2$, holomorphically trivial parabolic bundles over $\C\P^1$ with fixed determinant. (Viewing a polygon as a representation of a star-shaped quiver $\mathcal Q$ naturally yields a flag structure on $n$ fibers of a  rank-$2$, trivial bundle over $\C\P^1$.)
The fact that these two spaces are isomorphic has already been noted by Agnihotri and Woodward in \cite{AW} for small values of $\beta$. There, a different approach is taken to show that the symplectic quotient of a product of $SU(m)$-coadjoint orbits is isomorphic to the space of rank-$m$  parabolic degree-$0$  bundles over $\C\P^1$ for sufficiently small parabolic weights.

Generalizing the Morse-theoretic techniques introduced by Hitchin \cite{H} for the non-parabolic case, Boden and Yokogawa \cite{BY} and Garcia-Prada, Gothen and Muñoz \cite{GGM} use the restriction of the moment map $f$ to $\mathcal{N}^{0,\Lambda}_{\beta,r,d}$ to compute the Betti numbers in the rank-$2$ and rank-$3$ situation. These turn out to be independent of the parabolic weights. This fact is explained by Nakajima \cite{n3} who shows that the moduli spaces  $\mathcal{N}^{0,\Lambda}_{\beta,r,d}$ are actually diffeomorphic for any generic choice of the parabolic weights $\beta$.

The space $Q$ of admissible values of the parabolic weights $\beta$ contains a finite number of hyperplanes, called \emph{walls}, formed by non-generic values of $\beta$, which divide $Q$ into a finite number of chambers of generic values.
Thaddeus in \cite{T} shows that as $\beta$ crosses one of these walls the moduli space of parabolic Higgs bundles undergoes an elementary transformation in the sense of Mukai \cite{Mu} (see also \cite{Huy} for a detailed construction of these elementary transformations).

We adapt the work of Thaddeus to the moduli space $\mathcal H(\beta)$. In particular, we conclude that if $\mathcal{H}^\pm:=\mathcal H(\beta^\pm)$ are moduli spaces of PHBs for parabolic weights $\beta^+$  and $\beta^-$ on either side of a wall $W$, then $\mathcal{H}^+$ and $\mathcal{H}^-$ are related by a Mukai transformation where $\mathcal H^+$ and  $\mathcal H^-$ have a common blow-up. The locus in  $\mathcal{H}^-$ which is blown up is isomorphic to a complex projective space $\P U^-$ parameterizing all non-split extensions 
$$
0 \to {\bf L}^+ \to {\bf E} \to {\bf L} ^- \to 0
$$
of a trivial parabolic Higgs line bundle ${\bf L}^-$ that are $\beta^-$-stable but $\beta^+$-unstable.  
Using the isomorphism in \eqref{eq:1isom} 
we conclude that the corresponding hyperpolygon spaces $X^\pm := X(\alpha^{\pm})$  
are related by a Mukai transformation (see Theorem~\ref{thm:wch}). Moreover, the blown up locus  $\P U^-$ corresponds, via the isomorphism above, 
to a core component $U_S^-$ in $X^-$ for some short set $S \subset \{1, \ldots, n \}$ uniquely determined by the wall 
$W$. Taking advantage of the geometric description of the core components in $X(\alpha)$ we study
the changes in the other components $U_B^\pm$ of the cores $\mathfrak L_\pm$ when crossing a wall, which naturally 
depend on the intersections $U_B^- \cap U_S^- $ and $U_B^+ \cap U_{S^c}^- $ (see Section \ref{sec:wchyper}). Moreover, we recover the description of the birational map relating the polygon spaces $M(\alpha^\pm)$ given in \cite{m}.
These changes in the core translate, via our isomorphism, to changes in the nilpotent cone of $\mathcal{H}(\beta)$. In particular, one recovers the dependence on the parabolic weight $\beta$ of the moduli spaces of rank-$2$, degree-$0$ parabolic bundles over $\C \P^1$  studied in \cite{BH}. The study of the dependence of the whole nilpotent cone on the weights $\beta$ is new in the literature. 

Going back to the study of hyperpolygon spaces and their cores we consider $n$ circle bundles $\widetilde{V}_i$ over
$X(\alpha)$ and their first Chern classes $c_i := c_1(\widetilde{V}_i)$ as defined by Konno \cite{konno}.
These classes generate the cohomology ring of the hyperpolygon space $X(\alpha)$ (see \cite{konno, hp, hauselp}),
as well as  the cohomology of all the core components.
In particular, the restrictions $c_i \vert_{M(\alpha)}$ to the polygon space $M(\alpha)$ 
are the cohomology classes considered in \cite{AG} to determine the intersection ring of $M(\alpha)$. In this work
we give explicit formulas for the computation of the intersection numbers of the restrictions of the classes $c_i$ to the other 
core components. 

For that we first prove a recursion formula in $n$ which allows us to decrease the dimension of the spaces involved (see Theorem~\ref{thm:recursion}). Analog recursion formulas have already appeared for other moduli spaces in the work of Witten and Kontsevich (on moduli spaces of punctured curves) \cite{Ko, W1, W2}, of Weitsman (on moduli spaces of flat connections on $2$-manifolds of genus $g$ with $n$ marked points) \cite{Wi} and of Agapito and Godinho  (on moduli spaces of polygons in $\R^3$) \cite{AG}.
Based on our recursion relation we obtain explicit formulas for the intersection numbers of the core components $U_S$ (see Theorems~\ref{thm:1} and \ref{thm:2}).

Finally, the isomorphism $\mathcal{H}(\beta) \leftrightarrow X(\alpha)$ allows us to consider circle bundles over   $\mathcal{H}(\beta)$ (the pullbacks of those constructed over $X(\alpha)$) and their Chern classes. We can then obtain explicit formulas for the intersection numbers of the restrictions of these Chern classes to the different components of the nilpotent cone of $\mathcal{H}(\beta)$, which allow us to determine their intersection rings.

For completion, we use the isomorphism $\mathcal{I}$ together with the work of Harada-Proudfoot \cite{hp} and Hausel-Proudfoot \cite{hauselp} for hyperpolygon spaces to present the cohomology rings of $\mathcal{H}(\beta)$ and of its nilpotent cone components (see Theorems~\ref{thm:H} \and \ref{thm:US}). 

Here is an outline of the contents of the paper. In Section~\ref{sec:prel}, we review the basic definitions and facts about hyperpolygon spaces and moduli spaces of PHBs. In Section~\ref{sec:isomorphism}, we prove the existence of an isomorphism between hyperpolygon spaces and moduli spaces $\mathcal{H}(\beta)$ of stable rank-$2$, holomorphically trivial PHBs over $\C \P^1$ with fixed determinant and trace-free Higgs field, which is $S^1$-equivariant with respect to naturally defined circle actions on these two spaces. In Section~\ref{wall crossing}, we adapt Thaddeus' work \cite{T} on the variation of moduli spaces of PHBs to $\mathcal{H}(\beta)$ and, in Section~\ref{sec:wchyper}, we prove, via our isomorphism, that the corresponding hyperpolygon spaces $X(\alpha)$ undergo a Mukai transformation when the parameter $\alpha$ crosses a wall in the space of its admissible values. Moreover, in this section, we describe the changes suffered by the different core components as a result of this transformation. These changes easily translate to changes in the different components of the nilpotent cone of $\mathcal{H}(\beta)$. In Section~\ref{sec:intnum}, we construct circle bundles over $X(\alpha)$ and study the intersection numbers of their restrictions to each core component, giving examples of applications. In Section~\ref{sec:intersecPHB}, we see that the formulas obtained for the core components of $X(\alpha)$ also apply to the nilpotent core components of the corresponding moduli space of PHBs $\mathcal{H}(\beta)$, thus determining their intersection ring. Finally, for completion, we give presentations of the cohomology rings of $\mathcal{H}(\beta)$ and of each of its nilpotent cone components.

{\bf Acknowledgments:} The authors are grateful to Tamás Hausel for bringing PHBs to their attention and suggesting a possible connection with hyperpolygon spaces. Moreover, they would like to thank Ignasi Mundet Riera for useful discussions on the explicit construction  of the isomorphism ~\eqref{eq:1isom}. Finally, they would like to thank Gustavo Granja for helpful explanations regarding vector bundle extensions and Jean-Claude Hausmann  and Luca Migliorini for many useful conversations.  
\section{Preliminaries}\label{sec:prel}

\subsection{Polygons and Hyperpolygons}
\label{sec:pol}
Hyperpolygon spaces have been introduced by Konno \cite{konno} from a symplectic point of view, 
as the hyperk\"ahler quotient analogue of polygon spaces, and from an algebro-geometric 
point of view, as GIT quotients.
 
Hyperpolygon and polygon spaces are respectively the hyperk\"ahler and K\"ahler 
quiver varieties associated to 
star-shaped quivers $ \mathcal Q$ (Figure \ref{quiver}), that is,
those with vertex set $I \cup \{0\},$ for $I:= \{1 , \ldots, n \},$
and edge set $E = \{ (i,0) \mid i \in I\}$. 

Consider the representation of a star-shaped quiver  $\mathcal Q$ obtained by taking vector spaces $V_i = \C $ for $i \in I ,$ and 
$V_0= \C^2$. Then one gets the hyperk\"ahler quiver variety associated with 
$ \mathcal Q$ by performing hyperk\"ahler reduction on the cotangent bundle 
of the representation space $$ E(\mathcal Q, V)= \bigoplus_{i \in I} \textrm{Hom} (V_i, V_0) \cong \C^{2n} $$ 
with respect to the action of the group 
$ U(2) \times U(1)^n$  by conjugation.
Since the diagonal circle in $ U(2) \times U(1)^n$ acts trivially on the cotangent bundle of $E(\mathcal Q, V)$, 
one can consider the action of the quotient group
$$K := \Big( U(2) \times U(1)^n\Big)/U(1)= \Big(SU(2) \times U(1)^n\Big)/ \Z_2,$$
where $\Z_2$ acts by multiplication of each factor by $-1.$
As $T^* \C^2 \cong (\C^{2})^* \times \C^2$ can be identified with 
the space of quaternions, the cotangent bundle $T^*E(\mathcal Q, V) \cong T^* \C^{2n}$ has 
a natural hyperk\"ahler structure (see for example \cite{konno, hitchin}).  
Moreover, the hyperk\"ahler quotient of $T^* \C^{2n}$ 
by $K $ can be explicitly described as follows. Let
 $(p,q)$ be coordinates on $T^* \C^{2n}$, where $p=(p_1, \ldots, p_n)$ 
is the $n$-tuple of row vectors  $p_i =(a_i, b_i) \in \C^2$ and 
$q= (q_1, \ldots, q_n)$ is the $n$-tuple of column vectors 
$q_i =\Big( \begin{array}{c}
c_i \\ d_i 
\end{array} \Big) \in \C^2.$  
The action of $K$ on $ T^*\C^{2n}$ is given by 
$$ (p,q) \cdot  [A; e_1, \ldots, e_n]= 
\Big( (e_1^{-1}p_1 A, \ldots, e_n^{-1}p_n A), ( A^{-1} q_1 e_1, \ldots, A^{-1} q_n e_n ) \Big).$$
This action is hyperhamiltonian with hyperk\"ahler moment map \cite{konno}
$$ \mu_{HK}:= \mu_{\R} \oplus \mu_{\C} : T^* C^{2n} \rightarrow 
\big(\mathfrak{su}(2)^* \oplus (\R^n)^*\big) \oplus \big(\mathfrak{sl}(2, \C)^* \oplus (\C^n)^*\big),$$ 
where the real moment map $\mu_{\R}$ is given by 
\begin{equation} \label{real} 
\mu_{\R} (p,q) = \frac{ \bold{i}}{2} \sum_{i=1}^n (q_i q_i^* -p_i^* p_i )_0 
\oplus \Big(\frac{1}{2} (|q_1|^2 -|p_i|^2), \ldots, 
\frac{1}{2} (|q_n|^2 -|p_n|^2) \Big)
\end{equation} 
for $\bold{i}:= \sqrt{-1},$ and the complex moment map $\mu_{\C}$ is given by
\begin{equation} \label{complex}
 \mu_{\C} (p,q) = -  \sum_{i=1}^n (q_i p_i)_0 \oplus ( \bold{i} \, p_1 q_1, 
\ldots,\bold{i} \, p_n q_n).
\end{equation}
The hyperpolygon space $X(\alpha)$ is then defined to be  the hyperk\"ahler quotient
$$ X(\alpha) = T^*\C^{2n} / \!\! / \!\!/ \!\!/_{\alpha} K := 
\Big( \mu_{\R}^{-1} (0, \alpha) \cap \mu_{\C}^{-1} (0,0) \Big) / K$$
for $\alpha=(\alpha_1, \ldots, \alpha_n ) \in \R^n_+$.

\begin{Remark} An element $(p,q) \in T^*\C^{2n}$ is in  $\mu_{\C}^{-1} (0,0)$ if and only if 
$$
p_i q_i=0 \quad \text{and} \quad \sum_{i=1}^n (q_i p_i)_0=0,
$$
i.e. if and only if
\begin{equation}\label{complex1}
 a_i c_i + b_i d_i = 0
\end{equation}
and 
\begin{equation}\label{complex2}
\sum_{i=1}^n a_i c_i - b_i d_i =0, \quad
\sum_{i=1}^n a_i d_i =0, \quad
\sum_{i=1}^n b_i c_i =0.
\end{equation}

Similarly, $(p,q)$ is in $\mu_{\R}^{-1} (0, \alpha) $ if and only if 
$$
\frac{1}{2} \big(|q_i|^2 -|p_i|^2\big) = \alpha_i	\quad \text{and} \quad \sum_{i=1}^n \big(q_i q_i^* -p_i^* p_i \big)_0 =0,
$$
i.e. if and only if
\begin{equation}\label{real1}
|c_i|^2 +|d_i|^2 - |a_i|^2 - |b_i|^2 = 2 \alpha_i
\end{equation}
and 
\begin{equation}\label{real2}
\sum_{i=1}^n |c_i|^2 - |a_i|^2 + |b_i|^2 - |d_i|^2 =0, \quad
\sum_{i=1}^n a_i \bar{b_i} - \bar{c_i} d_i =0.
\end{equation}

\end{Remark}

An element $\alpha=(\alpha_1, \ldots, \alpha_n ) \in \R^n_+$ is said to be 
\emph{generic} if and only if 
\begin{equation}
\label{eq:epsilon}
\varepsilon_S(\alpha) := \sum_{i \in S} \alpha_{i} - \sum_{i \in S^c} \alpha_{i} \neq 0
\end{equation}
for every index set $S \subset \{1, \ldots, n\}$. For a generic $\alpha$, the hyperpolygon space $X(\alpha)$ is a 
non-empty smooth manifold of complex dimension $2(n-3)$. 

On the other hand, one defines polygon spaces $M(\alpha)$ using  the quiver $\mathcal Q$ of Figure \ref{quiver}
and the collection of vector spaces $V_i= \C$ and $V_0= \C^2$ by performing symplectic reduction on $E(\mathcal Q, V)=\C^{2n}$ by the action of $K$.
More precisely, one considers the Hamiltonian action of $K$ on $\C^{2n}$  given by 
$$q \cdot  [A; e_1, \ldots, e_n]= (A^{-1} q_1 e_1, \ldots, A^{-1} q_n e_n ),$$
with moment map
\begin{equation} \label{eq:mommap}
 \begin{array}{rcl}
\mu: \C^{2n} & \rightarrow & \mathfrak{su}(2)^* \oplus \big( \mathfrak{u}(1)^n \big)^*\\ 
q & \mapsto & \displaystyle\sum_{i=1}^n (q_i q_i^*)_0 
\oplus \Big(\frac{1}{2} |q_1|^2, \ldots, \frac{1}{2} |q_n|^2 \Big).
\end{array}
\end{equation}   
Then for $\alpha\in \R^n_{+}$, 
$$M(\alpha) := \C^{2n}/\!\!/_{(0, \alpha)} K= \mu^{-1}(\alpha)/K. $$
Note that $M(\alpha)$ lies inside the hyperpolygon space $X(\alpha)$ as the locus of points $[p, q]$ with $p=0$.

Performing reduction in stages one obtains the polygonal 
description of $M(\alpha).$ In fact, the symplectic reduction of $\C^{2n}$ 
by $U(1)^n$ (or, more precisely, by the maximal subtorus 
$T^n:= (Id \oplus U(1)^n)/ \Z_2$ in $K$) at the $\alpha$-level set is the 
product  of $n$ spheres of radii $\alpha_1, \ldots, \alpha_n$ and  
the residual action of $K/T^n \cong SO(3)$ on this product is just the  standard
action by rotation with moment map
\begin{displaymath}
\begin{array}{rcl}
\mu_{SO(3)}: \displaystyle\prod_{i=1}^n S^2_{\alpha_i} & \rightarrow & \R^3\\ 
(v_1, \ldots, v_n) & \mapsto & v_1 + \cdots +v_n.\\
\end{array}
\end{displaymath}
Performing the second step of reduction one gets 
$$M(\alpha) = \prod S^2_{\alpha_i}/ \!\! /_{\!0} SO(3)= \mu_{SO(3)}^{-1} (0) / SO(3).$$
The level set $ \mu_{SO(3)}^{-1} (0)$ is then the set of all closed polygons in $\R^3$ with $n$ 
edges $v_1, \ldots, v_n$ of lengths $\alpha_1, \ldots, \alpha_n$ respectively and the quotient
$M(\alpha)$ is the moduli space of all such polygons modulo rigid motions in $\R^3$. 
Note that this space is empty if $\alpha_i > \sum_{j \neq i} \alpha_j$ 
for some $i \in \{1, \ldots n \}$ since, in this case, the closing condition $\sum_{i=1}^n v_i=0$
is not verified for any $v \in \prod S^2_{\alpha_i}.$ 

If $\alpha$ is generic the polygon space  $M(\alpha)$  is a smooth manifold of complex dimension $n-3$ (when not empty).
Here generic has a geometric interpretation. It means  that no element  in $M(\alpha)$ is represented by a polygon contained in a line. In fact, if such a polygon existed,  
the $SO(3)$-action would not be free 
since the stabilizer of this polygon would be the circle  of rotations around the corresponding line. 
The quotient $M(\alpha)$ would then have a singularity.

Reduction in stages can also be performed in the opposite order.
The quotient $C^{2n} / \! \! /_{0} SU(2)$ is then identified with the Grassmannian $Gr(2,n)$ 
of 2-planes in $\C^{n}$, (see \cite{hk} for details).
The remaining $U(1)^n$-action has moment map 
\begin{equation} \label{eq:Grass}
\begin{array}{rcl}
 \mu_{\small{ U(1)^n}} : Gr(2,n)& \longrightarrow & \R^n\\
q &\mapsto&  \frac{1}{2}\Big(|q_1|^2, \ldots, |q_n|^2 \Big)
\end{array} 
\end{equation}
and the polygon space $M(\alpha)$ is the symplectic quotient $Gr(2,n) / \! \! /_{\alpha} U(1)^n$.

Hyperpolygon spaces can be described from an
algebro-geometric point of view as GIT quotients. For that we need the stability criterion developed by 
Nakajima \cite{n1,n2} for quiver varieties and adapted by Konno \cite{konno} to hyperpolygon spaces. 

Let $\alpha$ be generic. A set $S \subset \{1, \ldots, n\}$ is called \emph{short} if
 \begin{equation}\label{eq:short}
 \varepsilon_S(\alpha) <0
 \end{equation}
and \emph{long} otherwise.
Given  $(p,q)\in T^* \C^{2n}$ and  a set $S \subset \{1, \ldots, n\}$, we say that $S$  is \emph{straight} at $(p,q)$ 
if $q_i$ is proportional to $q_j$ for all $i, j \in S.$

\begin{Theorem}\cite{konno} 
 \label{alpha-stability}
Let $\alpha \in \R^n_{+}$ be generic. A  point $(p,q) \in T^* \C^{2n}$ is \emph{$\alpha$-stable} 
if and only if the following two conditions hold:
\begin{itemize}
\item[(i)] $q_i \neq 0$ for all $i$,  and
\item[(ii)] if $S\subset \{1, \ldots, n \}$ is straight at $(p,q)$ and $p_j =0$ for all $j \in S^c$, then $S$ is short.
\end{itemize}
\end{Theorem}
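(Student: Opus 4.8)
Here $X(\alpha)$ carries its algebro-geometric incarnation as a GIT quotient, and the plan is to run the Hilbert--Mumford numerical criterion. Recall that $X(\alpha)$ is biholomorphic to the GIT quotient of the affine variety $\mu_{\C}^{-1}(0,0)\subset T^*\C^{2n}$ by the complexified group $G:=\big(GL(2,\C)\times(\C^*)^n\big)/\C^*$, linearized by the character $\chi_\alpha$ determined by $\alpha$ (for $\alpha$ rational; the general case following from the Kempf--Ness theorem and the symplectic description), which up to a positive power is $[A;e_1,\ldots,e_n]\mapsto(\det A)^{-\sum_j\alpha_j}\prod_i e_i^{2\alpha_i}$; accordingly, $\alpha$-stability of a point of $\mu_{\C}^{-1}(0,0)$ means $\chi_\alpha$-stability in this sense (so throughout we assume $(p,q)$ satisfies the complex moment-map equations \eqref{complex1}--\eqref{complex2}). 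With the sign conventions for which $\alpha$-stability corresponds to $\chi_\alpha$-stability, Hilbert--Mumford says $(p,q)$ is $\alpha$-stable if and only if $\langle\chi_\alpha,\lambda\rangle>0$ for every nontrivial one-parameter subgroup $\lambda$ of $G$ for which $\lim_{t\to 0}\lambda(t)\cdot(p,q)$ exists. Up to conjugation such a 1-PS is a splitting $\C^2=W_+\oplus W_-$ into weight lines with weights $a\ge b$, together with integers $c_1,\ldots,c_n$ for the $(\C^*)^n$-factor, all taken modulo the diagonal cocharacter; normalizing $b=0$ (so $a\ge 0$) one computes $\langle\chi_\alpha,\lambda\rangle=\sum_i(2c_i-a)\,\alpha_i$.

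For necessity, suppose (i) fails, say $q_{i_0}=0$. The 1-PS with $a=0$ and $c_i=-\delta_{i\,i_0}$ then has a limit (the would-be blow-up of $q_{i_0}$ is harmless as $q_{i_0}=0$, while $p_{i_0}\mapsto t\,p_{i_0}\to 0$), it is nontrivial, and $\langle\chi_\alpha,\lambda\rangle=-2\alpha_{i_0}<0$; hence $(p,q)$ is not $\alpha$-stable. Suppose instead (ii) fails: some $S$ is straight at $(p,q)$, $p_j=0$ for all $j\in S^c$, but $\varepsilon_S(\alpha)\ge 0$. Let $W_-$ be the line containing all $q_i$ with $i\in S$ (using $q_i\ne 0$, which we may assume), extend to $\C^2=W_+\oplus W_-$, and take $a=1$, $b=0$, $c_i=0$ for $i\in S$, $c_i=1$ for $i\in S^c$; using $p_iq_i=0$ one checks the limit exists, and $\langle\chi_\alpha,\lambda\rangle=\sum_{j\in S^c}\alpha_j-\sum_{i\in S}\alpha_i=-\varepsilon_S(\alpha)\le 0$, so again $(p,q)$ is not $\alpha$-stable (and strictly $<0$ when $\alpha$ is generic). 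Thus (i) and (ii) are necessary.

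For sufficiency, assume (i) and (ii) and let $\lambda=(a,\,b{=}0;\,c_1,\ldots,c_n)$, $a\ge 0$, be nontrivial with $\lim_{t\to 0}\lambda(t)(p,q)$ existing; I must show $\langle\chi_\alpha,\lambda\rangle>0$. Existence of the limit forces, for each $i$: $c_i\ge 0$ always (as $q_i\ne 0$), $c_i\ge a$ whenever $q_i\notin W_-$, and — reading the $p_i$-factor together with $p_iq_i=0$ — that $p_i=0$ whenever $q_i\notin W_-$ and $a>0$. If $a=0$ then all $c_i\ge 0$ with some $c_i>0$, so $\langle\chi_\alpha,\lambda\rangle=2\sum_i c_i\alpha_i>0$. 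If $a>0$, put $S:=\{i:q_i\in W_-\}$; then $S$ is straight at $(p,q)$ and $p_j=0$ for all $j\in S^c$, so by (ii) $S$ is short, i.e. $\varepsilon_S(\alpha)<0$. Since $2c_i-a\ge a$ for $i\notin S$ (because $c_i\ge a$) and $2c_i-a\ge -a$ for $i\in S$ (because $c_i\ge 0$), we get $\langle\chi_\alpha,\lambda\rangle=\sum_i(2c_i-a)\alpha_i\ge a\big(\sum_{j\in S^c}\alpha_j-\sum_{i\in S}\alpha_i\big)=-a\,\varepsilon_S(\alpha)>0$. Hence no 1-PS destabilizes $(p,q)$, and $(p,q)$ is $\alpha$-stable.

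The main obstacle is the sufficiency direction, specifically the bookkeeping for one-parameter subgroups with nontrivial $GL(2,\C)$-component: one has to use the complex moment-map equation $p_iq_i=0$ to see that the $p_i$ are forced to vanish exactly on the complement of the straight set $S=\{i:q_i\in W_-\}$ naturally attached to $\lambda$, so that hypothesis (ii) becomes applicable. This, together with genericity of $\alpha$ (which upgrades $\varepsilon_S(\alpha)\le 0$ to $\varepsilon_S(\alpha)<0$, equivalently semistability to stability), is exactly what closes the estimate. Everything else — the explicit destabilizers in the necessity direction and the case $a=0$ — is routine once the GIT picture and the formula $\langle\chi_\alpha,\lambda\rangle=\sum_i(2c_i-a)\alpha_i$ are in place.
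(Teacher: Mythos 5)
The paper itself offers no proof of this statement---it is quoted from Konno \cite{konno}---so there is no internal argument to compare against; judged on its own terms, your Hilbert--Mumford/King proof is correct in outline and is essentially the standard way such stability criteria for quiver varieties are established, so it is close in spirit to the source. The weight bookkeeping for a $1$-PS normalized to $(a\ge b=0;\,c_1,\dots,c_n)$ is right; the decisive step---using the complex moment map equation $p_iq_i=0$ to kill $p_i$ whenever $q_i\notin W_-$ (the borderline case $c_i=a$ is exactly where this is needed), so that hypothesis (ii) applies to $S=\{i:q_i\in W_-\}$---is correctly identified and correctly executed; the $a=0$ and $a>0$ estimates close, and the explicit destabilizers for the necessity direction are valid. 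Two points should be argued rather than asserted. First, the identification of ``$\alpha$-stable'' with $\chi_\alpha$-GIT-stability, including the sign of the character, is not a convention one is free to choose: it comes from the Kempf--Ness correspondence with the level $(0,\alpha)$ of $\mu_{\R}$ (cf.\ Proposition~\ref{git}), and a one-line sign check (e.g.\ a single $\C^*$-factor acting on $(p_i,q_i)$ with moment map $\tfrac12(|q_i|^2-|p_i|^2)$ at positive level) should be included, since the entire statement flips if the sign is wrong. Second, in King's criterion ``nontrivial'' must mean ``not in the subgroup acting trivially,'' and the passage from rational to arbitrary generic real $\alpha$ (local constancy of the stability condition on chambers, or the symplectic description) deserves an explicit sentence. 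These are minor completions; the argument itself is sound.
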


\begin{Remark}\label{maximalstraight}
Note that it is enough to verify  (ii) for all maximal straight sets, that is for those that are not contained in any other straight set at $(p,q)$.
\end{Remark}
Let us denote by $\mu_{\C}^{-1} (0)^{\alpha -st}$ the set of points in $\mu_{\C}^{-1} (0)$ that are 
$\alpha$-stable and let $K^{\C}:= (SL(2,\C) \times (\C^*)^n) / \Z_2$ be the complexification of $K$.
\begin{proposition}\label{git}\cite{konno}
Let $\alpha \in \R^n_{+}$ be generic. Then
$$\mu_{HK}^{-1} \big( (0, \alpha),(0,0) \big) \subset \mu_{\C}^{-1} (0)^{\alpha -st}$$
and there exists a natural bijection 
$$ \iota: \mu_{HK}^{-1} \big( (0, \alpha),(0,0) \big)/K \to  \mu_{\C}^{-1} (0)^{\alpha -st}/ K^{\C}.$$
\end{proposition}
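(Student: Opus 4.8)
This is a Kempf--Ness type statement, so the plan splits into two parts: proving the set-theoretic inclusion, and then upgrading it to a bijection on quotients. I would begin by recording the structural facts that make the right-hand side meaningful: $K^\C$ acts on $T^*\C^{2n}$ extending the $K$-action; $\mu_\C$ is the holomorphic moment map, so $\mu_\C^{-1}(0)$ is a $K^\C$-invariant (affine, possibly singular) subvariety; and $\alpha$-stability in the sense of Theorem~\ref{alpha-stability} cuts out a $K^\C$-invariant open subset on which the $K^\C$-action has finite stabilizers, so $\mu_\C^{-1}(0)^{\alpha-st}$ lies in the smooth locus of $\mu_\C^{-1}(0)$ and $\mu_\C^{-1}(0)^{\alpha-st}/K^\C$ is a geometric quotient. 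The map $\iota$ will then be the one induced by the inclusion $\mu_\R^{-1}(0,\alpha)\cap\mu_\C^{-1}(0)\hookrightarrow\mu_\C^{-1}(0)^{\alpha-st}$ followed by the $K^\C$-quotient; it is automatically $K$-invariant, hence descends to $\mu_{HK}^{-1}((0,\alpha),(0,0))/K$ --- provided the inclusion really does land in the stable locus, which is the first thing to check.

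For the inclusion, I would take $(p,q)\in\mu_{HK}^{-1}((0,\alpha),(0,0))$ and verify conditions (i) and (ii) of Theorem~\ref{alpha-stability} directly from the moment-map equations. Condition (i): if $q_i=0$, then \eqref{real1} reads $-|a_i|^2-|b_i|^2=2\alpha_i$, impossible as $\alpha_i>0$. For (ii): assume $S$ is straight at $(p,q)$ and $p_j=0$ for all $j\in S^c$; by (i) every $q_i$ with $i\in S$ is a nonzero multiple of a fixed unit vector $u\in\C^2$, and \eqref{complex1} then gives $p_iu=0$ for $i\in S$. Vanishing of the trace-free part of the real moment map means $\sum_i(q_iq_i^*-p_i^*p_i)=c\,\mathrm{Id}$ for some $c\in\R$; evaluating the corresponding Hermitian form on $u$ and on a unit vector $w\perp u$, and substituting $|q_i|^2=2\alpha_i+|p_iw|^2$ for $i\in S$ and $|q_i|^2=2\alpha_i$ for $i\in S^c$ (both from \eqref{real1}, using $p_iu=0$ resp.\ $p_i=0$), one gets two expressions for $c$ whose difference gives
\[
  \varepsilon_S(\alpha)\;=\;\sum_{i\in S}\alpha_i-\sum_{i\in S^c}\alpha_i\;=\;-\sum_{i\in S}|p_iw|^2-\sum_{i\in S^c}|q_i^*u|^2\;\le\;0 ,
\]
and genericity of $\alpha$ makes this strict, so $S$ is short. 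This establishes $\mu_{HK}^{-1}((0,\alpha),(0,0))\subset\mu_\C^{-1}(0)^{\alpha-st}$ and the well-definedness of $\iota$.

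To prove that $\iota$ is a bijection it is enough to show that every $K^\C$-orbit contained in $\mu_\C^{-1}(0)^{\alpha-st}$ meets $\mu_\R^{-1}(0,\alpha)$ in exactly one $K$-orbit. This is the Kempf--Ness theorem for the K\"ahler quotient of the affine variety $\mu_\C^{-1}(0)$ by the reductive group $K^\C$, with moment map the restriction of $\mu_\R$: for $x$ in the stable locus, the Kempf--Ness function $\Psi_x\colon K^\C/K\to\R$ whose gradient along the orbit is $\mu_\R-(0,\alpha)$ is convex along the geodesics of the symmetric space $K^\C/K$, so its critical set is connected, consists of minima, and equals $\{gK : \mu_\R(g\cdot x)=(0,\alpha)\}$; once $\Psi_x$ is known to be proper it attains a minimum (giving surjectivity) and, the convexity being strict transverse to $K$-orbits, the minimizer is unique up to $K$ (giving injectivity). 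Properness of $\Psi_x$ is precisely what the combinatorial $\alpha$-stability of $x$ buys: by the Hilbert--Mumford argument it reduces to showing that for every nontrivial one-parameter subgroup of $K^\C$ the relevant weight at $x$ is strictly positive, and the limit configurations arising from one-parameter subgroups of the $(\C^*)^n$-factor are exactly the ``$p_j=0$ off $S$ with $S$ straight'' configurations of condition (ii), while those arising from the $SL(2,\C)$-factor correspond to splitting $\C^2$ along a line; so the Hilbert--Mumford inequalities are literally conditions (i) and (ii). For irrational $\alpha$ one either runs the argument with the real moment map in the analytic category, or uses density of rational weights together with openness of stability; alternatively the whole statement can be quoted as the star-shaped-quiver special case of Nakajima's construction of quiver varieties \cite{n1,n2}.

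The hard part is this last step: matching the combinatorial stability of Theorem~\ref{alpha-stability} with the numerical Hilbert--Mumford criterion for the explicit linear $K^\C$-action on $\mu_\C^{-1}(0)\subset T^*\C^{2n}$ --- that is, identifying the destabilizing one-parameter subgroups with straight sets $S$ having $p_j=0$ on $S^c$ --- and then feeding that into the convexity/properness of the Kempf--Ness function. The inclusion and the formal structure of $\iota$ are comparatively routine, and the remaining subtleties (non-smoothness of $\mu_\C^{-1}(0)$ away from the stable locus, a possible $\Z_2$ in stabilizers, irrational $\alpha$) are minor once the stability dictionary is in place.
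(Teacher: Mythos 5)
The paper does not actually prove this proposition: it is quoted verbatim from Konno, so the only meaningful comparison is with the argument in the cited literature, and your proposal reconstructs essentially that argument (Kempf--Ness/GIT for the doubled star-shaped quiver, as in Konno and Nakajima). Your inclusion step is correct and complete: condition (i) follows from \eqref{real1} exactly as you say, and your evaluation of the trace-free real moment map equation on the unit vector $u$ spanning the $q_i$, $i\in S$, and on $w\perp u$, combined with $p_iu=0$ from \eqref{complex1} and $p_j=0$ for $j\in S^c$, does yield $\varepsilon_S(\alpha)=-\sum_{i\in S}|p_iw|^2-\sum_{i\in S^c}|q_i^*u|^2\le 0$, with genericity forcing strictness; this is precisely the computation one finds in the sources. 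For the bijection you correctly reduce to the statement that every $\alpha$-stable $K^{\C}$-orbit in $\mu_{\C}^{-1}(0)$ meets $\mu_{\R}^{-1}(0,\alpha)$ in exactly one $K$-orbit and invoke convexity and properness of the Kempf--Ness functional; this is the standard route, but be aware that the step you flag as ``the hard part'' is genuinely the entire content of the cited result: the dictionary between Theorem~\ref{alpha-stability} and the Hilbert--Mumford/King criterion is not a matter of treating the $(\C^*)^n$- and $SL(2,\C)$-factors separately, since the destabilizing one-parameter subgroups mix a choice of line in $\C^2$ with a choice of index set, and the properness estimate for the shifted (possibly irrational) character must be run in the analytic category. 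As written, that half of your argument is an outline that either needs this matching carried out in detail or, as you yourself note, can be discharged by citing Nakajima's quiver-variety construction --- which is exactly what the paper does by attributing the proposition to Konno.
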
 
It follows that
$$X(\alpha) = \mu_{\C}^{-1} (0)^{\alpha -st} / K^{\C}. $$
As in  \cite{hp} we denote the elements in $\mu_{\C}^{-1} (0)^{\alpha -st}/ K^{\C}$ by 
$[p,q]_{\alpha-\text{st}}$, and by $[p,q]_{\R}$ the elements in $\mu_{HK}^{-1} \big( (0, \alpha),(0,0) \big)/K$ when we need to make explicit use of one of the two constructions. In all other cases, we will simply write $[p,q]$ for a hyperpolygon in $X(\alpha)$. 

\subsubsection{The Core}\label{core} 
Let us assume throughout this section that $\alpha$ is generic. The core of a hyperpolygon space $X(\alpha)$
has been studied in detail in \cite{konno,hp}, and here we give a brief overview of the results 
therein that will be relevant to our study. 

Consider the $S^1$-action on $X(\alpha)$ defined by 
\begin{equation}\label{action}
\lambda \cdot  [p,q] = [\lambda \, p, q].
\end{equation} 
This action is Hamiltonian with respect to 
symplectic structure $\omega_{\R}$  and the associated moment map $\phi:X(\alpha) \to \R$,  given by
\begin{equation}\label{eq:phi}
\phi([p,q]_{\R}) = \frac{1}{2}\sum_{i=1}^n |p_i|^2,
\end{equation}
is a Morse--Bott function. Following Konno\cite{konno} consider $\mathcal S(\alpha)$,
the collection of short sets for $\alpha$, and its subset 
$$ \mathcal S' (\alpha):= \big\{ S \subset \{1, \ldots,n \} \mid S \text{ is } \alpha\text{-short}, |S| \geq 2 \big\}. $$
Then,
\begin{Theorem}\cite{konno}\label{thm:fixedpoints}
The fixed point set for the $S^1$-action \eqref{action} is
$$ X(\alpha)^{S^1}= M(\alpha) \cup \bigcup_{S \in \mathcal S'(\alpha)} X_S$$
where,  for each element of $\mathcal S'(\alpha)$, 
$$ X_S := \big\{[p,q] \in X(\alpha) \mid S \text{ and } S^c\, \text{are straight},\,  p_j=0   \text{ for all } j \in S^c \big\}.$$ 
Moreover,  $X_S$ is diffeomorphic to $\C\P^{|S|-2}$ and has index $2(n-1-|S|)$. 
\end{Theorem}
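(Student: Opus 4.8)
The plan is to work in the GIT model $X(\alpha)=\mu_\C^{-1}(0)^{\alpha\text{-st}}/K^\C$ of Proposition~\ref{git}, in which the circle action \eqref{action} reads $\lambda\cdot[p,q]=[\lambda p,q]$ (this is well defined because $\mu_\C$ is linear in $p$, and by Theorem~\ref{alpha-stability} stability depends only on $q$ and on which $p_j$ vanish, both unaffected by rescaling $p$). Since $\alpha$ is generic, $K^\C$ acts freely on $\mu_\C^{-1}(0)^{\alpha\text{-st}}$, so $[p,q]$ is $S^1$-fixed iff there is a (necessarily unique, hence homomorphic) map $\lambda\mapsto g(\lambda)=[A(\lambda);e_1(\lambda),\dots,e_n(\lambda)]\in K^\C$ with $(\lambda p,q)\cdot g(\lambda)=(p,q)$, that is $A(\lambda)q_i=e_i(\lambda)q_i$ and $\lambda\,p_iA(\lambda)=e_i(\lambda)p_i$ for all $i$. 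If $p=0$ then $[p,q]\in M(\alpha)$, which is manifestly fixed and is the minimum of $\phi$ (by \eqref{eq:phi}, $\phi\ge 0$ with equality exactly on $M(\alpha)$). If $p\neq0$, conjugate $A(\lambda)$ to diagonal form; it cannot be projectively trivial, for otherwise the $p$-equations give $p=0$. Every $q_i$ is nonzero by stability, hence an eigenvector of $A(\lambda)$, so it is supported on one coordinate axis. For each $i$ with $p_i\neq0$, relation \eqref{complex1} ($p_iq_i=0$) forces $p_i$ onto the axis complementary to $q_i$, and matching the two resulting expressions for $e_i(\lambda)$ pins $A(\lambda)$ down (up to the $\Z_2\subset K$ it is $\operatorname{diag}(\lambda^{1/2},\lambda^{-1/2})$) and puts all those $q_i$ on a common axis. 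Setting $S:=\{i:\ q_i\ \text{lies on that axis}\}$ we obtain $p_j=0$ for $j\notin S$ and $S,S^c$ both straight at $(p,q)$, hence $[p,q]\in X_S$; conversely the same $g(\lambda)$ trivialises the action over all of each $X_S$. One also checks $M(\alpha)$ and the $X_S$ are pairwise disjoint, so $X(\alpha)^{S^1}=M(\alpha)\cup\bigcup_S X_S$ with these as the connected components.

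Next I would determine which $S$ occur and the structure of $X_S$. Since $S$ is straight with $p_j=0$ on $S^c$, part~(ii) of Theorem~\ref{alpha-stability} (tested on maximal straight sets by Remark~\ref{maximalstraight}) forces $S$ to be short. In the normal form above, the row vector $p_i$ and column vector $q_i$ are supported on complementary coordinates for $i\in S$, while $q_j$ is supported on the coordinate not used by the $q_i$ ($i\in S$) and $p_j=0$, for $j\in S^c$; in the coordinates introduced in Section~\ref{sec:pol} this says $a_i=d_i=0$ for $i\in S$ and $a_j=b_j=c_j=0$ for $j\in S^c$, so the three relations \eqref{complex2} collapse to the single equation $\sum_{i\in S}b_ic_i=0$. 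As $p\neq0$ there is some $b_{i_0}\neq0$ with $i_0\in S$, and $c_{i_0}\neq0$ by stability, so this equation forces $|S|\ge2$; hence $S\in\mathcal S'(\alpha)$. To identify $X_S$ I would gauge-fix: the residual gauge group preserving the normal form is $\{\operatorname{diag}(t,t^{-1})\}\times(\C^*)^n/\Z_2$, whose $n$ torus factors normalise all $c_i$ ($i\in S$) and all $d_j$ ($j\in S^c$) to $1$ — here $S^c\neq\emptyset$ because $S$ is short — turning the relation into $\sum_{i\in S}b_i=0$; the leftover gauge is one $\C^*$ rescaling all the $b_i$ simultaneously, and $b\equiv0$ is excluded by stability (it would make the long set $S^c$ destabilising). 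Thus $X_S$ is the projectivisation of the hyperplane $\{\sum_{i\in S}b_i=0\}\cong\C^{|S|-1}$, i.e. $\C\P^{|S|-2}$.

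For the index I would linearise $\tilde\lambda\colon(p',q')\mapsto(\lambda p',q')\cdot g(\lambda)$, which fixes $(p,q)$ and induces the circle action, and read off the weights of the resulting $\C^*$-representation on $T_{[p,q]}X(\alpha)\cong\ker(d\mu_\C)_{(p,q)}\big/\operatorname{im}\!\big(\mathfrak{k}^\C\to T_{(p,q)}T^*\C^{2n}\big)$; here $d\mu_\C$ is surjective at $(p,q)$ because the stabiliser is trivial, so the ambient representation splits off the target of $\mu_\C$ equivariantly. A direct computation gives: on $T_{(p,q)}T^*\C^{2n}$ the four directions $\partial_{a_i},\partial_{b_i},\partial_{c_i},\partial_{d_i}$ at each $i\in S$ carry weights $(1,0,0,1)$ and those at each $j\in S^c$ carry weights $(2,1,-1,0)$; the target of $\mu_\C$ carries weights $0,1,2$ on the $\mathfrak{sl}(2,\C)^*$ factor and $1$ on each of the $n$ scalar factors; and $\mathfrak{k}^\C$ carries weights $-1,0,1$ on $\mathfrak{sl}(2,\C)$ and $0$ on $\C^n$. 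Subtracting, $T_{[p,q]}X(\alpha)$ has weight $0$ with multiplicity $|S|-2$ (this is $TX_S$, matching the previous paragraph), weight $1$ with multiplicity $|S|-2$, weight $2$ with multiplicity $n-|S|-1$, and weight $-1$ with multiplicity $n-|S|-1$. Since $-1$ is the only negative weight, the negative normal bundle of $X_S$ has complex rank $n-|S|-1$, hence the index of $X_S$ equals $2(n-1-|S|)$.

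I expect the weight bookkeeping in the last step to be the main obstacle: one must be scrupulous about the $\Z_2$ in $K$ — equivalently about the half-integer one-parameter subgroup $A(\lambda)=\operatorname{diag}(\lambda^{1/2},\lambda^{-1/2})$ and the matching $e_i(\lambda)=\lambda^{\pm1/2}$ — since a sign error there flips weights and changes the index, and one must justify the surjectivity of $d\mu_\C$ at stable points so that the equivariant splitting of the ambient representation is legitimate. The remaining steps are routine manipulations with \eqref{complex1}, \eqref{complex2} and the stability criterion of Theorem~\ref{alpha-stability}.
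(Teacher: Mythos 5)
Your proposal is correct. Note first that the paper offers no proof of this statement at all: Theorem~\ref{thm:fixedpoints} is quoted from Konno \cite{konno}, so there is no in-paper argument to compare against; what you have written is an independent reconstruction, and it follows essentially the route of Konno's original analysis (GIT model of Proposition~\ref{git}, a one-parameter subgroup $g(\lambda)$ stabilising a stable orbit, then equivariant weight bookkeeping). I checked the key computations: the eigenvalue matching does force the half-integral cocharacter $A(\lambda)=\operatorname{diag}(\lambda^{1/2},\lambda^{-1/2})$, $e_i(\lambda)=\lambda^{\pm 1/2}$ (legitimate only because of the $\Z_2$ in $K$, as you note); the residual constraint from \eqref{complex2} is indeed $\sum_{i\in S}b_ic_i=0$, which with $p\neq 0$, $q_i\neq 0$ gives $|S|\geq 2$, and your gauge-fixing exhibits $X_S$ as $\P\{b\in\C^{|S|}:\sum b_i=0\}\cong\C\P^{|S|-2}$; and your ambient weights $(1,0,0,1)$ on $(a_i,b_i,c_i,d_i)$ for $i\in S$, $(2,1,-1,0)$ for $j\in S^c$, minus weights $\{0,1,2\}\cup\{1,\dots,1\}$ on the target of $\mu_\C$ and $\{-1,0,1\}\cup\{0,\dots,0\}$ on $\fk^\C$, give multiplicities $n-|S|-1,\ |S|-2,\ |S|-2,\ n-|S|-1$ for weights $-1,0,1,2$, summing to $2(n-3)$ and yielding index $2(n-1-|S|)$, as required. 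The two points you flag as needing care (surjectivity of $d\mu_\C$ from triviality of stabilisers at stable points, and the sign/$\Z_2$ conventions) are exactly the right ones, and your treatment of them is sound; the only items left as "one also checks" (pairwise disjointness of $M(\alpha)$ and the $X_S$, and that points of $X_S$ necessarily have $p\neq 0$ and two genuinely distinct eigen-axes) follow quickly from Theorem~\ref{alpha-stability} since $S$ and $S^c$ cannot both be short, so they are harmless to leave as routine.
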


For  $S \in \mathcal S'(\alpha)$ let $U_S$ be the closure of
$$ \big\{ [p,q] \in X(\alpha) \mid \lim_{\lambda \rightarrow \infty}[\lambda\, p,q ] \in X_S \big\} .$$ 
Then the \emph{core} $\mathfrak L_{\alpha}$ of $X(\alpha)$ is defined as 
$$\mathfrak L_{\alpha} := M(\alpha) \cup \bigcup_{S \in \mathcal S'(\alpha)} U_S$$
and is a deformation retraction of $X(\alpha)$. In fact $U_S$ is the closure of the flow-down set for the critical component $X_S$ and the polygon space (when non-empty) is the minimal set of $\phi$.
The core components $U_S$ are 
smooth compact submanifolds of complex dimension $n-3$, and can equivalently be described as
\begin{equation}
\label{eq:us}
 U_S= \{ [p,q] \mid S \text{ is straight and } p_j=0 \, \text{ for all } j \in S^c\}
\end{equation}
(see \cite{hp} for details). Moreover, they can be
nicely described  as moduli spaces of  pairs of  
polygons in $\R^3$ (see \cite{hp}). For that,
given a short set $S$ in $\mathcal S'(\alpha),$ and a point $[p,q]_{\R} \in U_S,$ define a $(n+1)$-tuple of vectors in $\R^3$,  $( u_i, v_j, w)$, $i \in S$, $j \in S^c$ as
$$u_i= q_i p_i + p_i^*q_i^*, \quad \forall i \in S$$
$$v_j= (q_j q_j^*)_0, \quad \quad \forall j \in S^c$$
$$w= \sum_{i \in S} (q_i q_i^*)_0 -(p_i^* p_i)_0,$$
where we make the usual identification 
${\bf i} \cdot \mathfrak{su}(2) \cong \mathfrak{su}(2)^* \cong \R^3$. These $n+1$ vectors  define
two polygons: one in $\R^3$ with edges $w$ and $ v_j$, with $ j \in S^c$,  and
one  lying in the orthogonal plane to $w$ with edges $u_i$ for  $i\in S$.
Note that $\|v_j\| = \alpha_j$ and that
$$\sum_{i \in S} \alpha_i \leq \| w\| \leq \sum_{j \in S^c } \alpha_j,$$
where the variations in $\|w\|$ are determined by the lengths of the vectors $u_i$.
The lower bound $\| w \|= \sum_{i \in S} \alpha_i $ is reached when $u_i=0$ for all $i$, meaning that the 
planar polygon collapses to a point and one obtains a polygon in $\R^3$ of edges $w$ and  $\{ v_j \mid j \in S^c \}.$
In this case, the point $[p,q]_{\R}$ defining this polygon is in the intersection $U_S \cap M(\alpha).$
When the upper bound $\| w  \| =\sum_{j \in S^c } \alpha_j$ is reached, the spatial polygon is forced to be 
in a line and the planar polygon has maximal perimeter. 
\begin{Theorem}\label{hp}\cite{hp}
For any $S\in \mathcal S'(\alpha) $ the associated core component $U_S$ is homeomorphic to the moduli space $\mathcal{Z}$ of $n+1$ of vectors 
$$\{ u_i, v_j, w \in \R^3 \mid  i \in S, j\in S^c \}$$ 
taken up to rotation, satisfying the conditions: 
\begin{align*}
1)& \quad  w + \displaystyle\sum_{j \in S^c} v_j=0;\\ 
2)&\quad   \displaystyle\sum_{i \in S} u_i =0;\\
3)& \quad u_i \cdot w =0  \text{ for all } i \in S;\\
4)&\quad  \|v_j \| = \alpha_j  \text{ for all } j \in S^c;\\
5)& \quad \| w \| = \displaystyle\sum_{i \in S} \sqrt{\alpha_i^2 + \|u_i \|^2}.
\end{align*}
\end{Theorem}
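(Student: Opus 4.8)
The plan is to exhibit an explicit homeomorphism $\Psi\colon U_S\to\mathcal Z$ given by $[p,q]_{\R}\mapsto(u_i,v_j,w)$ with $u_i,v_j,w$ as in the statement, and then to build a continuous inverse by reconstructing a representative $(p,q)$ out of the vector data. First I would check that $\Psi$ is well defined. Using the identification ${\bf i}\cdot\mathfrak{su}(2)\cong\mathfrak{su}(2)^*\cong\R^3$, the quantities $(q_jq_j^*)_0$ and $\sum_{i\in S}\big((q_iq_i^*)_0-(p_i^*p_i)_0\big)$ are manifestly Hermitian and trace-free; for $u_i$ one uses \eqref{complex1}, which says $p_iq_i=0$, so $q_ip_i$ is nilpotent (hence trace-free) and $u_i=q_ip_i+(q_ip_i)^*$ lies in $\R^3$. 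Next, all three expressions are invariant under the residual torus $T^n\subset K$ (the phases $e_i$ cancel in $q_iq_i^*$, $p_i^*p_i$ and $q_ip_i$), while the remaining $K/T^n\cong SO(3)$ acts by simultaneous rotation of $(u_i,v_j,w)$; thus $\Psi$ descends to a map $U_S\to\mathcal Z$. Then I would verify conditions 1)--5): condition 1) is the $\mathfrak{su}(2)$-part of $\mu_{\R}(p,q)=0$ together with $p_j=0$ for $j\in S^c$ (from \eqref{eq:us}); condition 2) is $\sum_i(q_ip_i)_0=0$, i.e. the $\mathfrak{sl}(2,\C)$-part of $\mu_{\C}(p,q)=0$, again using $p_j=0$ on $S^c$; condition 4) is $\|v_j\|=\tfrac12|q_j|^2=\alpha_j$ by \eqref{real1}. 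For 3) and 5) one uses that $S$ is straight: since $p_iq_i=0$, the line $\C p_i^*$ is orthogonal to $\C q_i$, so in the sphere picture $(q_iq_i^*)_0$ and $(p_i^*p_i)_0$ are antipodal; hence $w$ points along the common direction of the $q_i$ with $i\in S$, each $u_i$ is orthogonal to that direction (giving 3)), and $\|(q_iq_i^*)_0-(p_i^*p_i)_0\|=\tfrac12(|q_i|^2+|p_i|^2)$, which a short computation rewrites as $\sqrt{\alpha_i^2+\|u_i\|^2}$ using $\alpha_i=\tfrac12(|q_i|^2-|p_i|^2)$ and $\|u_i\|=|q_i|\,|p_i|$; summing over $i\in S$ (all terms collinear) gives 5).

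For the inverse, given $(u_i,v_j,w)\in\mathcal Z$ I pick a representative. Since $\|w\|\ge\sum_{i\in S}\alpha_i>0$, the vector $w$ is nonzero and determines a line $\ell\subset\C^2$; I set $\C q_i:=\ell$ for $i\in S$ and recover $|q_i|^2=\alpha_i+\sqrt{\alpha_i^2+\|u_i\|^2}$ and $|p_i|^2=-\alpha_i+\sqrt{\alpha_i^2+\|u_i\|^2}$, with $p_i^*\in\ell^\perp$; the angular position of $p_i$ relative to $q_i$ is prescribed by $u_i$ (a vector in the plane orthogonal to $w$), and the overall phase of each pair $(q_i,p_i)$ is exactly the leftover $U(1)$ in $T^n$. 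For $j\in S^c$, $v_j$ with $\|v_j\|=\alpha_j$ fixes the line $\C q_j$ and $|q_j|^2=2\alpha_j$ up to phase, and one puts $p_j:=0$. I then check that the resulting $(p,q)$ satisfies \eqref{complex1}, \eqref{complex2}, \eqref{real1}, \eqref{real2} — this is where conditions 1)--5) get used in reverse — and is $\alpha$-stable by Theorem~\ref{alpha-stability}: $q_i\ne0$ for all $i$ because $|q_i|^2>0$, while $S$ is straight by construction and short by hypothesis, so the straight-set condition (ii) holds on every maximal straight set. The two maps are mutually inverse and continuous; finally $U_S$ is compact (Theorem~\ref{thm:fixedpoints} and the surrounding discussion) and $\mathcal Z$ is Hausdorff, so $\Psi$ is a homeomorphism.

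The main obstacle is the reconstruction step: showing that the recipe for the inverse is consistent — that the relative phase of $(q_i,p_i)$ is genuinely pinned down by $u_i$ once $\ell$ is fixed, that no further constraints are needed for $(p,q)$ to land in $\mu_{\R}^{-1}(0,\alpha)\cap\mu_{\C}^{-1}(0,0)$, and that the $\alpha$-stability conditions of Theorem~\ref{alpha-stability} come for free — together with carefully matching the residual $SO(3)\times U(1)^n$ gauge freedom on the hyperpolygon side with the ``up to rotation'' quotient defining $\mathcal Z$. The verification of condition 3) also hinges on the standard but not entirely trivial fact that orthogonal complex lines correspond to antipodal points of $S^2$ under $\C\P^1\cong S^2$.
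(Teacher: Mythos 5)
The paper itself contains no proof of this theorem: it is quoted from \cite{hp}, and the only thing the text records is the assignment $u_i=q_ip_i+p_i^*q_i^*$, $v_j=(q_jq_j^*)_0$, $w=\sum_{i\in S}(q_iq_i^*)_0-(p_i^*p_i)_0$, which is exactly the map you take as $\Psi$. So your route is the intended one, and your verification of the forward direction is correct: $T^n$-invariance and $SO(3)$-equivariance, conditions 1), 2), 4) from the moment map equations together with $p_j=0$ on $S^c$ (see \eqref{eq:us}), condition 3) from $p_iq_i=0$ (orthogonality of $\C p_i^*$ and $\C q_i$, hence antipodality on $S^2$), and condition 5) from $\tfrac12\bigl(|q_i|^2+|p_i|^2\bigr)=\sqrt{\alpha_i^2+\|u_i\|^2}$ with all terms collinear.

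The genuine gap is the $\alpha$-stability check for the reconstructed $(p,q)$. Condition (ii) of Theorem~\ref{alpha-stability} must be verified for \emph{every} maximal straight set $T$ with $p_j=0$ for all $j\in T^c$, and these are in general larger than, or different from, $S$: if some $v_j$ with $j\in S^c$ is a positive multiple of $w$, then $\C q_j=\ell$ and the relevant maximal straight set is $S\cup A$ with $A=\{j\in S^c\mid v_j\in\R_{>0}\,w\}$; and if all $u_i=0$ (so $p\equiv 0$ and the point lies in $M_S(\alpha)$), condition (ii) applies to any set $C\subset S^c$ of mutually parallel $v_j$'s. Shortness of these sets is not a hypothesis and does not follow from shortness of $S$; it must be extracted from the data: condition 1) gives $\|w\|+\sum_{j\in A}\alpha_j\le\sum_{j\in S^c\setminus A}\alpha_j$ (and, in the degenerate case, $\sum_{j\in C}\alpha_j\le\|w\|+\sum_{j\in S^c\setminus C}\alpha_j$), condition 5) gives $\|w\|\ge\sum_{i\in S}\alpha_i$ with equality exactly when all $u_i=0$, and combining these yields $\varepsilon_{S\cup A}(\alpha)\le 0$, respectively $\varepsilon_C(\alpha)\le 0$, with equality excluded only because $\alpha$ is generic. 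Your sentence ``$S$ is straight by construction and short by hypothesis, so the straight-set condition (ii) holds on every maximal straight set'' is therefore not a valid inference as written, and you yourself leave the point flagged but unresolved in your closing paragraph. This is the one step that needs an actual argument; the rest of the reconstruction (pinning the relative phase of $(q_i,p_i)$ by $u_i$, which is the product $c_ib_i$ in a basis adapted to $\ell$, matching the $K$-gauge with the rotation quotient defining $\mathcal Z$, and the antipodality fact) works as you describe.
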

\begin{figure}[htbp]
\begin{center}
\psfrag{w}{$w$}
\psfrag{v1}{$v_{j_1}$}
\psfrag{v2}{$v_{j_2}$}
\psfrag{vSc}{$v_{j_{\lvert S^c \rvert}}$}
\psfrag{u1}{$u_{i_1}$}
\psfrag{uS}{$u_{i_{\lvert S \rvert}}$}
\includegraphics[width=8cm]{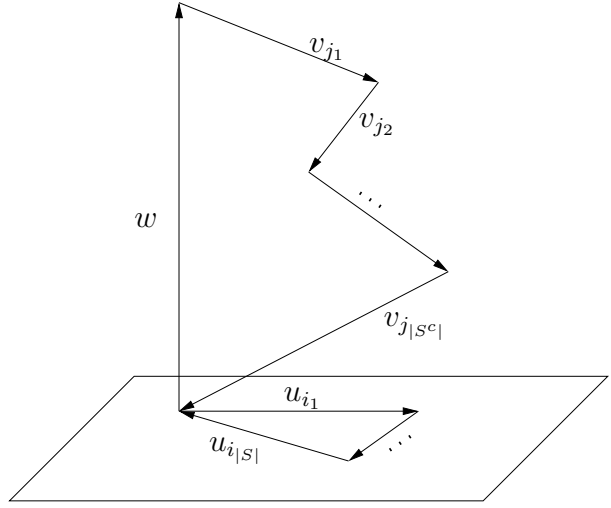}
\caption{A hyperpolygon in the core component $U_S$ described as a pair of
a spacial polygon and a planar one (where $S=\{i_1, \ldots, i_{\lvert S \rvert}\}$ and $S^c=\{j_1, \ldots, j_{\lvert S^c \rvert}\}$).}
\label{hyperpolygon}
\end{center}
\end{figure}

If the polygon space $M(\alpha)$ is non empty, then all the core components $U_S$ intersect $M(\alpha)$. More precisely, for any 
$S \in \mathcal S'(\alpha),$ 
$$ U_S \cap M(\alpha)\cong  M_S (\alpha),$$
where
\begin{equation}
\label{eq:ms}
M_S (\alpha):= \Big\{ v \in \prod_{i=1}^n S^2_{\alpha_i} \mid \sum_{i=1}^n v_i =0, v_i \text{ proportional to } v_j \, \forall i,j \in S  \Big\}/ SO(3).
\end{equation}
This intersection is a $(|S^c|-2)$-dimensional submanifold of $M(\alpha)$ that can be identified with the moduli space 
of polygons in $\R^3$ with $|S^c|+1$ edges of lengths $\sum_{i \in S} \alpha_i$ and $ \alpha_j$, for $j \in S^c$. 

The intersection of any other two core components $U_S$ and $U_T$, with $S, T \in \mathcal S'(\alpha),$ 
depends upon the intersection of the short sets $S$ and $T$.
\begin{itemize}
\item If $S \cap T = \emptyset $ then $U_S \cap U_T = M_S(\alpha) \cap M_T(\alpha).$ (Note that this intersection might be empty.)
\item If $S \cap T \neq \emptyset $ and $ S \cup T $ is long, then $U_S \cap U_T = \emptyset$.
\item If $S \cap T \neq \emptyset $ and $ S \cup T $ is short, then
$$ U_S \cap U_T = \big\{[p,q] \mid S \cup T \text{ straight }, p_j=0 \text{ for all } j \in (S \cap T)^c \big\} \subseteq U_{S \cup T}.$$
\end{itemize}
Finally, if $S \subset T $, the critical submanifold $X_T$ intersects $U_S,$ and $U_S \cap X_T \cong \C\P^{|S|-2}$ (cf. \cite{hp}). 
In particular,
\begin{proposition}\label{Smaximal}
If $S \in \mathcal S'(\alpha)$ is maximal with respect to inclusion then
$$U_S \cong \C\P^{n-3}. $$
\end{proposition}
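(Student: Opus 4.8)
\emph{Proof proposal.} The plan is to make the description \eqref{eq:us} of $U_S$ completely explicit inside the GIT model $X(\alpha)=\mu_{\C}^{-1}(0)^{\alpha\text{-st}}/K^{\C}$, and then to recognize it as a linear $\C^*$-quotient of an affine space. I would first solve the complex moment map equations along $U_S$. If $[p,q]\in U_S$ then $q_i\neq0$ for all $i$ by $\alpha$-stability, and since $S$ is straight I may write $q_i=z_ie$ for $i\in S$, with $e\in\C^2\setminus\{0\}$ and $z_i\in\C^*$. The equation $p_iq_i=0$ then forces $p_ie=0$, so $p_i=w_ie^*$, where $e^*$ spans the (one-dimensional) space of row vectors annihilating $e$; since $e^*e=0$, the matrix $ee^*$ is already trace-free, so the last equation $\sum_i(q_ip_i)_0=0$ reduces to $\sum_{i\in S}z_iw_i=0$. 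For $j\in S^c$ one has $p_j=0$, while $q_j\in\C^2\setminus\{0\}$ is unconstrained.

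Next I would gauge-fix. Using $SL(2,\C)$ I normalize the line $\C e$ to $\C\binom{1}{0}$; the residual stabilizer is a Borel subgroup $B\cong\C^*\ltimes\C$, and the torus $(\C^*)^n$ still acts freely. Using the torus factors indexed by $i\in S$ I set $z_i=1$, after which the torus element $a\in B$ acts on the $w$-coordinates by $w_i\mapsto a^{-2}w_i$; using the torus factors indexed by $j\in S^c$ I reduce each $q_j$ to a point $[q_j]\in\C\P^1$. Writing $x_j$ for the affine coordinate of $[q_j]$, the residual group $B$ acts on the $w$-variables only through $a$ and on the $x$-variables by a single affine transformation (the linear part being $a^{-2}$, the unipotent part acting by simultaneous translation). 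A dimension count then gives the expected complex dimension $n-3$ for the quotient.

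The hypothesis that $S$ is \emph{maximal} short enters through the stability analysis, which I expect to be the delicate point. First, no $\alpha$-stable configuration can have $x_j=\infty$ for some $j$: this would make $S\cup\{j\}$ straight at $(p,q)$ with $p_k=0$ for all $k\notin S\cup\{j\}$, so Theorem~\ref{alpha-stability} would force $S\cup\{j\}$ to be short, contradicting maximality. Hence all $x_j$ are finite, and then a straight set at such a point is contained either in $S$ or in $S^c$ (a vector proportional to $\binom{1}{0}$ is never proportional to one of the form $\binom{x_j}{1}$). Every $S'\subseteq S$ is automatically short, since $\varepsilon_{S'}(\alpha)=\varepsilon_S(\alpha)-2\sum_{i\in S\setminus S'}\alpha_i<0$, and such sets never obstruct stability; within $S^c$ the straight sets that matter are $T_c:=\{j\in S^c:x_j=c\}$, and the condition in Theorem~\ref{alpha-stability} can fail only when $w=0$ (so that $p$ vanishes off $S^c$) and some $T_c$ is long. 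Using the identity $\varepsilon_B(\alpha)=-\varepsilon_S(\alpha)-2\sum_{j\in S^c\setminus B}\alpha_j$ for $B\subseteq S^c$, together with the fact that maximality of $S$ is equivalent to $\alpha_i>-\tfrac12\varepsilon_S(\alpha)>0$ for every $i\in S^c$, I would conclude that every proper subset of $S^c$ is short, while $S^c$ itself is long. Therefore a configuration fails to be $\alpha$-stable precisely when $w=0$ and all the $x_j$ coincide.

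Finally, quotienting by the translations in $B$ — which act freely on the $x$-variables — identifies the $x$-data with a point of $\{y\in\C^{|S^c|}:\sum_j y_j=0\}$, on which the remaining $\C^*$ acts by scalars, just as on $\{w\in\C^{|S|}:\sum_i w_i=0\}$; the unstable locus found above is exactly the origin. Hence
$$
U_S\;\cong\;\bigl(\C^{\,(|S|-1)+(|S^c|-1)}\setminus\{0\}\bigr)\big/\C^*\;=\;\C\P^{\,|S|+|S^c|-3}\;=\;\C\P^{\,n-3},
$$
and since $U_S$ is already known to be a smooth compact complex submanifold of complex dimension $n-3$, this set-theoretic identification is an isomorphism of complex manifolds. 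I expect the main obstacle to be the stability bookkeeping in the third paragraph: one must enumerate all straight sets that occur at points of $U_S$ and translate the maximality of $S$ correctly into the short/long dichotomy for subsets of $S^c$.
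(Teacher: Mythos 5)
Your argument is correct, but it takes a genuinely different route from the paper. The paper's proof is symplectic: after disposing of the case $\lvert S\rvert=n-1$ (where $M_S(\alpha)=\varnothing$, so $U_S=X_S\cong\C\P^{n-3}$), it restricts the Hamiltonian circle action \eqref{action} to $U_S$, uses maximality of $S$ to see that $\phi\vert_{U_S}$ has only the two critical values $0$ and $\phi(X_S)$ with the latter non-degenerate, and invokes Delzant's two-critical-value theorem (Theorem~\ref{delzant}); this is short and, as a bonus, identifies the reduced symplectic form on $U_S$ as a multiple of the Fubini--Study form. You instead work directly in the GIT model: solving $\mu_\C=0$ on the locus \eqref{eq:us}, gauge-fixing by the Borel and the torus, and translating Theorem~\ref{alpha-stability} via the identities $\varepsilon_{S\cup\{j\}}=\varepsilon_S+2\alpha_j$ and $\varepsilon_B=-\varepsilon_S-2\sum_{j\in S^c\setminus B}\alpha_j$, so that maximality says exactly that $S\cup\{j\}$ is long (killing $x_j=\infty$) and that $S^c$ is the only long subset of $S^c$ (making the unstable locus the single point $w=0$, all $x_j$ equal). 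Your bookkeeping here is right, including the weight $a^{-2}$ on the $w$'s and the affine action on the $x$'s, and the case $\lvert S^c\rvert=1$ is subsumed uniformly. What your approach buys is an explicit linear model: homogeneous coordinates $\bigl((w_i)_{i\in S},(y_j)_{j\in S^c}\bigr)$ with $\sum w_i=\sum y_j=0$ on $U_S$, which makes the $\C^*$-weights and the intersection with $M(\alpha)$ and with the $W_j$'s visible; what it costs is the stability enumeration and a final word of care that the descended bijection $\C\P^{n-3}\to U_S$ is holomorphic (it is, being induced by the holomorphic, $B$-invariant map from the slice to $X(\alpha)$, and a holomorphic bijection between compact complex manifolds of equal dimension is a biholomorphism), which you state a bit loosely but which is not a gap.
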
 
This was conjectured in \cite{hp}, and is  a simple consequence of  the following result of Delzant. 
\begin{Theorem}\label{delzant}\cite{del}
Let $(M, \omega)$ be a compact symplectic $2n$-dimensional manifold equipped with  a Hamiltonian
$S^1$-action with moment map $\phi$. If $\phi$ has only two critical values, one of which is
non-degenerate, then $M$ is isomorphic to $(\C\P^n, \lambda \omega_{FS}),$  where $\lambda \omega_{FS}$ is
some multiple of the Fubini--Study symplectic form.
\end{Theorem}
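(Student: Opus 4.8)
The plan is to run equivariant Morse theory for $\phi$, pin down the cohomology of $M$ and of the non-exceptional critical set, and then use the local normal form together with Duistermaat--Heckman to recover the symplectic type. Throughout I may assume the action is effective (otherwise quotient $S^1$ by its finite kernel). Replacing $\phi$ by $-\phi$ if necessary (equivalently, precomposing with $\lambda \mapsto \lambda^{-1}$), I assume the non-degenerate critical value is the maximum, attained at a single fixed point $p$ at which $\Hess\,\phi$ is non-degenerate; since $\phi$ is a moment map this forces the coindex of $p$ to be $0$ and its index to be $2n$. The other critical value is the minimum, whose critical set $M_{\min}$ is, by Atiyah's connectivity theorem, connected. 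As is standard for Hamiltonian circle actions, $\phi$ is a Morse--Bott function whose critical set is $\Fix(S^1)$ and whose indices and coindices are all even; consequently $\phi$ is a perfect Morse--Bott function over $\Z$ and
$$P_t(M) = P_t(M_{\min}) + t^{2n},$$
the second summand being the contribution of the index-$2n$ point $p$.

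Next I would determine $M_{\min}$ cohomologically. Write $Q(t) = P_t(M_{\min})$ and $2d = \dim_{\R} M_{\min}$. Poincar\'e duality on $M$ and on $M_{\min}$ turns the displayed identity into $Q(t)\bigl(1 - t^{2(n-d)}\bigr) = 1 - t^{2n}$, so $Q(t) = 1 + t^{2(n-d)} + t^{4(n-d)} + \cdots + t^{2d}$ and in particular $r := n-d$ divides $n$. Here the symplectic hypothesis enters decisively: if $d > 0$ then $M_{\min}$ is a positive-dimensional symplectic manifold, so $[\omega|_{M_{\min}}] \neq 0$ forces $b_2(M_{\min}) \geq 1$, whence $r = 1$; and $d = 0$ is impossible for $n \geq 2$, since then $M$ itself would have $b_2 = 0$, contradicting $[\omega] \neq 0$ (the case $n = 1$ being the trivial $M = S^2 = \C\P^1$). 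Thus $M_{\min}$ is a connected codimension-$2$ symplectic submanifold with $P_t(M_{\min}) = 1 + t^2 + \cdots + t^{2n-2}$ and $P_t(M) = 1 + t^2 + \cdots + t^{2n}$: $M$ has the integral cohomology of $\C\P^n$.

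I would then extract the local symplectic data. By the equivariant Darboux (local normal form) theorem there is an $S^1$-invariant Darboux chart around $p$ in which the action is linear with weights $-m_1, \dots, -m_n$ ($m_i \in \Z_{>0}$) and $\phi = c_{\max} - \tfrac12\sum_i m_i|z_i|^2$; likewise the normal bundle of $M_{\min}$ is an $S^1$-equivariant complex line bundle on which $S^1$ acts with a single positive weight $a$. The key point is that the action is free on every regular level set. Indeed, a point just above the minimum has stabilizer $\Z/a$, and this subgroup acts trivially on the whole normal disc bundle of $M_{\min}$; were $a>1$ it would fix an open set, hence all of the connected $M$, contradicting effectiveness, so $a=1$ and the action is free on $\phi^{-1}(c_{\min}+\varepsilon)$. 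Stabilizers are constant along the (equivariant) gradient flow, so freeness holds on every level $\phi^{-1}(t)$, $t\in(c_{\min},c_{\max})$; applied just below the maximum, freeness of the linear $S^1$-action on the unit sphere forces every $m_i = 1$. Hence near $p$ the action is the standard scalar rotation on $\C^n$, and each reduced space $M_t=\phi^{-1}(t)/S^1$ for $t$ near $c_{\max}$ is genuinely $\bigl(\C\P^{n-1}, s\,\omega_{FS}\bigr)$.

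Finally I would reassemble $M$ from its reduced spaces. Since $(c_{\min},c_{\max})$ contains no critical value, the $M_t$ are mutually symplectomorphic up to the linear Duistermaat--Heckman variation of the cohomology class, the variation being controlled by the (positive, Hopf) Euler class of the principal bundle $\phi^{-1}(t)\to M_t$; letting $t\to c_{\min}$ identifies $M_{\min}$ with the limiting reduced space, which is therefore $\bigl(\C\P^{n-1}, \lambda\,\omega_{FS}\bigr)$ for a suitable $\lambda$. Reconstructing $M$ over $[c_{\min},c_{\max}]$ from this family (a disc bundle over $M_{\min}\cong\C\P^{n-1}$ capped off at the free linear model around $p$) recovers the standard rotational $S^1$-space $\bigl(\C\P^n,\lambda\,\omega_{FS}\bigr)$, and an equivariant Moser argument promotes the resulting diffeomorphism to a symplectomorphism. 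I expect this last reconstruction step to be the main obstacle: passing from \emph{$M$ has the cohomology of $\C\P^n$ and a free circle action with two critical values} to an actual symplectomorphism with $\bigl(\C\P^n,\lambda\,\omega_{FS}\bigr)$ requires carefully matching the Duistermaat--Heckman and connection data of the two spaces and invoking equivariant Moser stability, whereas the topological steps above are comparatively routine.
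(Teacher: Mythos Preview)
The paper does not give its own proof of this theorem: it is quoted verbatim from Delzant's paper \cite{del} and then immediately applied as a black box in the proof of Proposition~\ref{Smaximal}. So there is no ``paper's proof'' to compare against; any comparison would have to be with Delzant's original argument, which the present paper does not reproduce.

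That said, your outline is a coherent and essentially standard route to the result. The Poincar\'e-duality computation pinning down $P_t(M_{\min})$ and forcing $\operatorname{codim} M_{\min}=2$ is correct, as is the effectiveness argument showing all isotropy weights equal $1$ (so the action is free on every regular level and the reduced spaces are genuine $\C\P^{n-1}$'s). You have also correctly identified the one place where real work remains: the reconstruction step. Knowing that every reduced space is $(\C\P^{n-1},s\,\omega_{FS})$ and that the principal $S^1$-bundle over it is the Hopf bundle does pin down the equivariant diffeomorphism type of $M$ as $\C\P^n$ with the standard rotation, but upgrading this to an honest symplectomorphism requires more than a one-line Moser invocation---you need to match the equivariant cohomology class of $\omega$ with that of $\lambda\,\omega_{FS}$ (here $H^2(\C\P^n)\cong\R$ and equality of moment-map images does the job) and then run an $S^1$-equivariant Moser isotopy. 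This is routine but should be spelled out if you are writing a self-contained proof rather than citing Delzant.
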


\begin{proof}(Proposition~\ref{Smaximal})
Since $S$ is maximal with respect to inclusion, the core component $U_S$ is just the closure of the flow down set of $X_S \cong \C\P^{|S|-2}$. 

If $|S|=n-1$ then, assuming without loss of generality that  $S=\{1,\ldots,n-1\}$, we have  
$$\sum_S \alpha_i < \alpha_{n} $$
($S$ is short), meaning that the polygon space $M_S(\alpha)$ is empty. Therefore $U_S= X_S \cong \C \P^{n-3}$.

If $|S| < n-1$ then $X_S$ has index 
$2(n-1-|S|)$ and $\phi(X_S)$ is a 
non-degenerate critical value of the restriction of $\phi $ to $U_S$. 
The only other critical value of $\phi$ on $U_S$ is its minimum value $\phi(M(\alpha))=0.$
We can then apply Theorem \ref{delzant} to $U_S$ equipped with the restriction
of the $S^1$-action on $X(\alpha)$ to conclude the proof.   
\end{proof}

\begin{example}\label{ex:h4}
When $n=4$ there are four critical components of the moment map $\phi$ for any generic choice of $\alpha$. In fact, since either $S$ or $S^c$ is short, there are always exactly three short sets ($S_1$, $S_2$ and $S_3$) of cardinality $2$ in $\mathcal{S}^\prime(\alpha)$. Moreover, the polygon space $M(\alpha)$ is empty if and only if there is a short set $S_0$ of cardinality $3$ in  $\mathcal{S}^\prime(\alpha)$. Note that in this case there is exactly one such set in  $\mathcal{S}^\prime(\alpha)$. The critical components $X_{S_i}$, $i=1,2,3$, are isolated points of index $2$, while $X_{S_0}$ and $M(\alpha)$, when nonempty, are diffeomorphic to $\C \P^1$ and have index $0$. The core components $U_{S_i}$, for $i=1,2,3$, are three copies of $\C\P^1$ intersecting the minimal component in three distinct points. Consequently, the core $\mathfrak{L}_\alpha$ is a union of $4$ spheres arranged in a $D_4$ configuration  \cite{Fri} as in Figure~\ref{fig:core4}.
\begin{figure}[htbp]
\psfrag{X1}{$X_{S_1}$}
\psfrag{X2}{$X_{S_2}$}
\psfrag{X3}{$X_{S_3}$}
\psfrag{M}{$X_{S_0}$ or $M(\alpha)$}
\includegraphics[scale=.4]{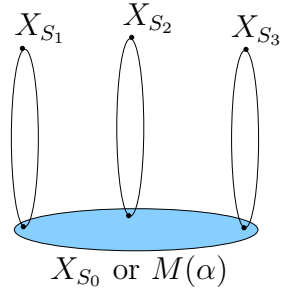}
\caption{Core of $X(\alpha)$ when $n=4$: four spheres arranged in a $D_4$
configuration.}
\label{fig:core4}
\end{figure}
\end{example}

\subsubsection{Walls}\label{Walls}
We now set some notation and basic definitions relative to the 
wall-crossing analysis that will be carried out in Section \ref{wall crossing}. 
Moreover,  we summarize the wall-crossing behavior for polygon spaces which is described in detail in \cite{m}.

Let $\Gamma \subset \R^n_+$ be the set of generic values of  $\alpha$. 
If $\alpha \notin \Gamma$ then there exists an index set $S \subset \{1, \ldots,n\}$ for which
$\varepsilon_S (\alpha)= 0$. Hence $\Gamma$ is the complement of the union of finitely many  walls
$$W_S:=\{ \alpha\in \R^n_+ \mid \varepsilon_S (\alpha)= 0\}$$
with $S \subset \{1,\ldots,n\}$. The set $S$ will be called the \emph{discrete data} of $W_S$.

Note that an index set $S$ and its complement $S^c$ define the same wall. Moreover, a wall $W_S$ separates two adjacent connected components of $\Gamma$, called \emph{chambers}, say $\Delta^+$ and $\Delta^-$, such that 
$\varepsilon_S(\alpha^+)>0$ for every $\alpha^+ \in \Delta^+$ and $\varepsilon_S(\alpha^-)<0$ for every
$\alpha^- \in \Delta^-$. Consequently, $S$ is maximal short (with respect to inclusion) for values of $\alpha^-$ in $\Delta^-$ and long for those in $\Delta^+$.

The collection of short sets $\mathcal S(\alpha)$ completely determines the 
chamber of $\alpha$ and, since only one of $S$ and $S^c$ is short, there is a $1$-$1$ correspondence between the elements of
$\mathcal S(\alpha)$ and the walls in $\R_+^n.$

\begin{Remark}
The image
$$ \Xi :=  \mu_{U(1)^n} (Gr(2,n))= \Big\{ (\alpha_1, \ldots, \alpha_n) \in \R^n_+ \mid 0 \leq \alpha_i \leq \frac{1}{2}  \text{ and } \sum_{i=1}^n \alpha_i =1\Big\}$$
of the moment map defined in \eqref{eq:Grass} is formed by values of $\alpha$ for which  $M(\alpha)$ is nonempty.
Since  $M(\alpha)$ is diffeomorphic to $M(\lambda \alpha)$ for every $\lambda\in \R_+$, one can easily see that $M(\alpha)\neq \varnothing$ if and only if $\alpha$  is in the cone $C_{\Xi}$ over $\Xi$. 
The walls $W_S$ with $|S|=1$ or $|S|=n-1$ form the boundary of $C_{\Xi}$ and so are called \emph{vanishing walls}. (When $\alpha$ crosses one of these walls the whole space $M(\alpha)$ vanishes.) The chambers in $\R_+^n \setminus C_{\Xi}$ are called \emph{null chambers} and each of these  is separated from $C_{\Xi}$ by a unique vanishing wall. 
\end{Remark}

By the Duistermaat--Heckman Theorem, $M(\alpha^+)$ and $M(\alpha^-)$ 
are diffeomorphic for $\alpha^+$ and $\alpha^-$ in the same chamber but the diffeotype of $M(\alpha^\pm)$ changes if $\alpha^+$ and $\alpha^-$ are in different chambers. In particular, if $\alpha^+$ and $\alpha^-$ lie in opposite sides of a single wall $W_S$, then $M(\alpha^+)$ and $M(\alpha^-)$ are related  by a blow up followed by a blow down. This is a classical result for reduced spaces
(see, for example \cite{gs,bp}) and has been worked out in detail for the case of polygon spaces in \cite{m}, where the submanifolds involved in the birational transformation are characterized in terms of lower dimensional polygon spaces. More precisely, these submanifolds are the intersections 
$$M_S(\alpha^+)= U_S \cap M(\alpha^+)\quad \text{and} \quad M_S(\alpha^-)= U_S \cap M(\alpha^-)$$
defined in \eqref{eq:ms}.
\begin{Theorem}\label{wall-crossing}\cite{m} If $\Delta^+$ and $\Delta^-$ are two chambers lying in opposite sides of a wall $W_S$ and $S$ is short for $\alpha^-\in \Delta^-$ and long for $\alpha^+ \in \Delta^+$, then $M(\alpha^+)$ is obtained from $M(\alpha^-)$ by
 a blow up along $M_S(\alpha^-)\cong \C\P^{|S^c| - 2}$ followed by 
a blow down of the projectivized normal bundle of $M_{S^c} (\alpha^+) \cong \C\P^{|S| - 2}.$
\end{Theorem}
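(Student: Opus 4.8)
The plan is to use the realisation $M(\alpha)=\mu_{SO(3)}^{-1}(0)/SO(3)$ inside $\prod_{i=1}^{n}S^{2}_{\alpha_{i}}$ and to track how this reduced space changes as $\alpha$ moves across the single wall $W_{S}$. Since $\Gamma$ is the complement of a locally finite union of walls, I may pick $\alpha^{\pm}\in\Delta^{\pm}$ near a common interior point $\alpha_{0}$ of $W_{S}$ so that $W_{S}$ is the only wall met along the segment $[\alpha^{-},\alpha^{+}]$. Then $\alpha_{0}$ is \emph{almost} generic: the only configurations in $\mu_{SO(3)}^{-1}(0)$ at which $SO(3)$ does not act freely are the degenerate polygons lying on a line, with the $S$-edges pointing in one direction and the $S^{c}$-edges in the opposite one (their closing condition forces $\varepsilon_{S}(\alpha_{0})=0$). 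These form a single $SO(3)$-orbit with stabiliser the circle $SO(2)$ of rotations about the line, hence a single (cone) singular point $p_{0}$ of the otherwise smooth space $M(\alpha_{0})$. The theorem thus reduces to comparing the two resolutions of $M(\alpha_{0})$ near $p_{0}$ obtained by perturbing the reduction value to $\varepsilon_{S}(\alpha^{\pm})$.

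The first step is a local normal form for the family $\alpha\mapsto M(\alpha)$ near $p_{0}$. By the Marle--Guillemin--Sternberg normal form at the degenerate orbit, a neighbourhood of that orbit in $\prod S^{2}_{\alpha_{i}}$, with its residual $SO(2)$-action, is modelled on a linear Hamiltonian $SO(2)$-space; a direct computation of weights shows that the circle of rotations about the line acts on the tangent space $\bigoplus_{i=1}^{n}T_{v_{i}}S^{2}_{\alpha_{i}}\cong\C^{n}$ with weight $+1$ on the $i$-th factor for $i\in S$ and $-1$ for $i\in S^{c}$ (for one orientation of the line), and that the symplectic slice obtained after imposing the linearised closing condition and carrying out the $SO(3)$-reduction is the representation $\C^{|S|-1}\oplus\C^{|S^{c}|-1}$ with the circle acting by weight $+1$ on the first summand and $-1$ on the second. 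Hence, near $p_{0}$, the family $M(\alpha)$ is isomorphic to the family of symplectic reductions of $\C^{|S|-1}\oplus\C^{|S^{c}|-1}$ by that circle, the reduction parameter being a multiple of $\varepsilon_{S}(\alpha)$. This is the standard model of a symplectic flip: at the value corresponding to $\alpha^{-}$ the reduction fibres over a $\C\P^{|S^{c}|-2}$ with $(|S|-1)$-dimensional fibre, at the value corresponding to $\alpha^{+}$ it fibres over a $\C\P^{|S|-2}$ with $(|S^{c}|-1)$-dimensional fibre, and at $\varepsilon_{S}=0$ it is the affine cone over the Segre embedding of $\C\P^{|S|-2}\times\C\P^{|S^{c}|-2}$, whose vertex is $p_{0}$.

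Granting this model, the wall-crossing behaviour of symplectic quotients across a single wall (Guillemin--Sternberg \cite{gs}; see also \cite{bp}) applies: passing from $\varepsilon_{S}(\alpha^{-})<0$ to $\varepsilon_{S}(\alpha^{+})>0$ one blows up the locus $Z^{-}\subset M(\alpha^{-})$ contracted to $p_{0}$ and then blows the exceptional divisor down the other way onto the contracted locus $Z^{+}\subset M(\alpha^{+})$; because all the circle weights are $\pm1$, this is an ordinary blow-up/blow-down, not a weighted one. It remains to identify $Z^{\mp}$. In polygon terms a configuration in $M(\alpha^{-})$ limits to $p_{0}$ precisely by collapsing its $S$-edges onto a line, which is possible iff those edges are already mutually parallel; so $Z^{-}$ is exactly the locus $M_{S}(\alpha^{-})$ of \eqref{eq:ms}, which is the moduli space of polygons in $\R^{3}$ with the $|S^{c}|+1$ edges $\sum_{i\in S}\alpha^{-}_{i}$ and $\alpha^{-}_{j}$ $(j\in S^{c})$. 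Since $S$ is maximal short for $\alpha^{-}$, the collapsed edge $\sum_{i\in S}\alpha^{-}_{i}$ is maximal short in this smaller polygon space, so Proposition~\ref{Smaximal} (equivalently Theorem~\ref{delzant}) gives $M_{S}(\alpha^{-})\cong\C\P^{|S^{c}|-2}$. Symmetrically $Z^{+}=M_{S^{c}}(\alpha^{+})\cong\C\P^{|S|-2}$. Finally the ranks match, $\mathrm{rk}\,N_{M_{S}(\alpha^{-})/M(\alpha^{-})}=(n-3)-(|S^{c}|-2)=|S|-1$ and $\mathrm{rk}\,N_{M_{S^{c}}(\alpha^{+})/M(\alpha^{+})}=|S^{c}|-1$, so the common exceptional divisor is simultaneously the projectivised normal bundle of $M_{S}(\alpha^{-})$ in $M(\alpha^{-})$ and that of $M_{S^{c}}(\alpha^{+})$ in $M(\alpha^{+})$, which is the assertion (and recovers the argument of \cite{m}).

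The step I expect to be the main obstacle is the weight computation in the local normal form: one must check carefully that the residual circle acts with weights exactly $\pm1$ and keep correct track of the linearised closing constraint and the $SO(3)$-quotient, so that the model is the clean weight-$(\pm1)$ flip with the stated fibre ranks; once that is in place everything else is a routine consequence of Duistermaat--Heckman theory. A secondary point requiring care is the identification $M_{S}(\alpha^{-})\cong\C\P^{|S^{c}|-2}$, which hinges on translating ``$S$ is maximal short for $\alpha^{-}$'' into ``the collapsed edge is maximal short in the $(|S^{c}|+1)$-gon space'' before invoking Proposition~\ref{Smaximal}. One could instead bypass the local computation altogether by working with $M(\alpha)=Gr(2,n)/\!\!/_{\alpha}U(1)^{n}$ and applying torus Duistermaat--Heckman wall-crossing: $W_{S}$ is crossed by the moment image of the diagonal sub-circle in the $S$-coordinates, whose fixed-point set in $Gr(2,n)$ has the relevant component $\C\P^{|S|-1}\times\C\P^{|S^{c}|-1}$, and reducing that component at the wall value by the residual torus again yields $M_{S}(\alpha^{-})$ and $M_{S^{c}}(\alpha^{+})$.
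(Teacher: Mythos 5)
This is correct and is essentially the argument the paper relies on: Theorem~\ref{wall-crossing} is quoted from \cite{m}, and the proof there is exactly this Guillemin--Sternberg/Brion--Procesi wall-crossing for the $SO(3)$-reduction at the lineal configuration on the wall, with the flip loci identified with the lower-dimensional polygon spaces $M_S(\alpha^-)$ and $M_{S^c}(\alpha^+)$ (your weight-$(\pm 1)$ slice computation agrees with the known quadratic-cone local model of polygon spaces at degenerate polygons, so the flip is indeed an ordinary blow-up/blow-down). One small slip: Proposition~\ref{Smaximal} concerns hyperpolygon core components $U_S$, not polygon spaces, so for $M_S(\alpha^{-})\cong \C\P^{|S^c|-2}$ you should instead invoke the classical fact that a polygon space in which a single edge index is maximal short (equivalently, whose chamber abuts the corresponding vanishing wall) is a complex projective space --- a standard statement which can, for instance, be obtained from the very same wall-crossing argument applied at that vanishing wall, starting from the empty chamber.
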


The situation for hyperpolygon spaces  is quite different. The diffeotype of $X(\alpha)$ does not depend on the 
value $\big( (\alpha,0)(0,0) \big)$ of the hyperk\"ahler moment map as long as $\alpha$ is generic (see  \cite{konno}). Nevertheless, if $\alpha^+$ and $\alpha^-$ are in different chambers of $\Gamma$ 
the hyperk\"ahler structures on $X(\alpha^\pm)$ are 
not the same. Moreover, if we equip these spaces with the $S^1$-action defined in  \eqref{action} we see that $X(\alpha^+)$ and $X(\alpha^-)$ are not isomorphic as Hamiltonian $S^1$-spaces since their cores 
 $\mathfrak L_{\alpha^\pm}$ are different. The transformations suffered by $X(\alpha^\pm)$ and its core will be studied in Section~\ref{sec:wchyper}.

Another difference in the behavior of hyperpolygon spaces is that, even though $M(\alpha)=\varnothing$ for every value 
of $\alpha$ in a null chamber, the corresponding 
hyperpolygon space $X(\alpha)$ is always non empty as we can see in Example~\ref{esempio1}.

\begin{example}\label{esempio1} Let $\alpha=(10,1,1,2,3)$ be in the null chamber of $\Gamma$ determined by the vanishing wall $W_{\{1\}}$. 
The polygon space $M(\alpha)$ is empty since $\alpha_1> \sum_{i=2}^5 \alpha_i$. However,
the hyperpolygon space $X(\alpha)\neq \varnothing$. For example, taking the short set $S= \{ 4,5 \}$,  we see that the core component 
$U_{\{ 4,5 \} } \subset X(\alpha) $ is non empty. Indeed, it can be identified with the moduli space of pairs of polygons as depicted in Figure \ref{hyperpolygon}  (cf. Theorem \ref{hp}).
\begin{figure}[htbp]
\begin{center}
\psfrag{2}{\small{$2$}}
\psfrag{3}{\small{$3$}}
\psfrag{10}{\small{$10$}}
\psfrag{1}{\small{$1$}}
\psfrag{u}{\small{$u_4=-u_5$}}
\psfrag{M}{$k$}
\includegraphics[width=2cm]{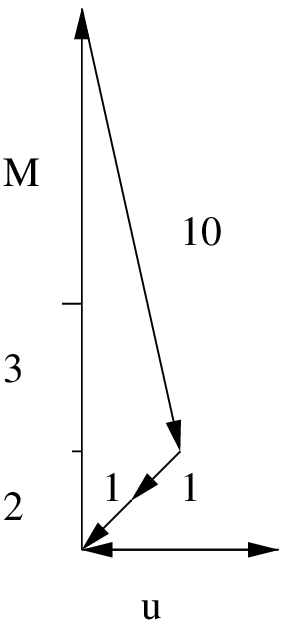}
\caption{A hyperpolygon in the core component $U_{\{ 4,5 \} }$ for $\alpha=(10,1,1,2,3)$.}
\label{hyperpolygon}
\end{center}
\end{figure}
The spatial polygon has edges $w, v_1, v_2, v_3$ respectively of lengths $5+k, 10, 1, 1$ with $k \in [3, 7]$. (For 
$k > 7 $ or $k<3$ the polygon would not close.)  The planar polygon lies on a line and has edges $u_4, u_5$ with $u_4=- u_5$ satisfying 
\begin{equation} \label{ex1}
5+k = \sqrt{4+\|u_4\|^2} + \sqrt{9+\|u_4\|^2}.
\end{equation}
 Any choice of $\|u_4\|$ satisfying \eqref{ex1} for some $k \in [3,7]$ determines a 
family of hyperpolygons in $U_{\{4,5\}}$ that is isomorphic to the polygon space $M(\|w\|, 10, 1, 1).$ For example, 
choosing $\|u_4\|=4,$ we get that $U_{\{4,5\}}$ contains the non-empty polygon space $M(5+2\sqrt{5}, 10, 1,1).$
\end{example}

\subsection{Moduli spaces of parabolic Higgs bundles}
Let $\Sigma$ be a connected smooth projective algebraic curve of genus $g$ with $n$ distinguished marked points $x_1, \ldots, x_n$ and let $D$ be the divisor $x_1+ \cdots + x_n$. A \emph{parabolic structure} on a holomorphic bundle $E \to \Sigma$ consists of weighted flags
\begin{align*}
E_x & = E_{x,1} \supset \cdots \supset E_{x,s_x} \supset 0, \\
0 & \leq \beta_1(x) < \cdots < \beta_{s_x} (x)< 1
\end{align*}
over each point $x\in D$. Given two parabolic bundles $E,F$ over $\Sigma$ with parabolic structures at $x_1, \ldots, x_n$ and weights $\beta_i^E(x)$ and $\beta_j^F(x)$ respectively, a holomorphic map $\phi:E \to F$  is called \emph{parabolic} if  $\phi(E_{x,i}) \subset F_{x, j+1}$ whenever $\beta_i^E(x) > \beta_j^F(x)$ and   \emph{strongly parabolic} if $\phi(E_{x,i}) \subset F_{x, j+1}$  whenever $\beta_i^E(x) \geq \beta_j^F(x)$. 

Let $Par Hom(E, F)$ and $S Par Hom(E, F)$ be the subsheaves of $Hom(E,F)$ formed by the parabolic and strongly parabolic morphisms between $E$ and $F$, respectively. In particular, $Par End(E):= Par Hom (E,E)$ and $S Par End(E):= S Par Hom (E,E)$.

Considering  $m_i(x):= \dim E_{x,i} - \dim E_{x, i+1}$, the \emph{multiplicity} of the weight $\beta_i(x)$, one defines the \emph{parabolic degree} $\text{pdeg}(E)$ and \emph{parabolic slope} $\mu (E)$ of a parabolic bundle $E$ as 
$$
\text{pdeg} (E)= \deg (E) +  \sum_{x \in D} \sum_{i=1}^{s_x} m_i(x) \beta_i(x),
$$
and
$$
\mu (E)  = \frac{ \text{pdeg} (E)}{\text{rank}(E)}.
$$
A subbundle $F$ of a parabolic bundle $E$ can be given a parabolic structure by intersecting the flags with the fibers $F_{x}$, and discarding any subspace $E_{x,j} \cap F_{x}$ which coincides with $E_{x,j+1} \cap F_{x}$. The weights are assigned accordingly. Similarly, the quotient $E/F$ can be given a parabolic structure by projecting the flags to $E_{x}/F_{x}$. The weights of $E/F$ are precisely those discarded for $F$.

A parabolic bundle $E$ is said to be \emph{semistable} if $\mu(F) \leq \mu(E)$ for all proper parabolic subbundles $F$ of $E$ and \emph{stable} if the inequality is always strict. 
\begin{example}
\label{ex:1}
We will now consider a very simple example which we will need later. Let $E$ be a rank-two parabolic bundle over $\Sigma$ with parabolic structure
\begin{align*}
\mathbb{C}^2 & = E_{x,1} \supset E_{x,2} = \mathbb{C} \supset 0, \\
0 & \leq \beta_1(x) < \beta_2(x) < 1
\end{align*}
over each point $x \in D$. Then
$$
\text{pdeg}(E)=\deg(E) + \sum_{x \in D}^n \left( \beta_1(x) + \beta_2(x)\right).
$$
If $L$ is a parabolic line subbundle of $E$, its parabolic structure is  given by the trivial flag over each point of $D$
$$
\mathbb{C} = L_{x,1} \supset 0,
$$
with weights 
$$
\beta^L(x) =  \left\{ \begin{array}{l} \beta_1(x), \quad \text{if} \quad L_x \cap E_{x,2} = \{ 0 \},  \\ \\ \beta_2(x),  \quad \text{if} \quad L_x \cap E_{x,2} = \mathbb{C}. \end{array} \right.
$$
Then, assuming $D=\{x_1, \ldots, x_n\}$,
$$
\text{pdeg}(L)=\deg(L) + \sum_{i\in S_L} \beta_2(x_i) + \sum_{i\in S_L^c} \beta_1(x_i),
$$
where $S_L:=\{ i \in \{1, \ldots, n\} \mid\,  \beta^L(x_i)=\beta_2(x_i)\}$.
(Note that the quotient bundle $E/L$ is also a parabolic line bundle over $\Sigma$ with parabolic structure given by the trivial flag over each point of $D$ weighted by the weights of $E$ not used in $L$.)

Hence, the parabolic bundle $E$ is stable if and only if its parabolic line subbundles $L$ satisfy
\begin{equation}
\label{eq:stable}
\deg E - 2\deg(L) > \sum_{i \in S_L} \big(\beta_2(x_i) -  \beta_1(x_i)\big) - \sum_{i \in S_L^c} \big(\beta_2(x_i) -  \beta_1(x_i)\big).
\end{equation}
\end{example}

Let $K_\Sigma$ denote the canonical bundle over $\Sigma$ (i.e. the bundle of holomorphic $1$-forms in $\Sigma$), let $\mathcal{O}_\Sigma(D)$ be the line bundle over $\Sigma$ associated to the divisor $D$ and give $E\otimes K_\Sigma(D):= E \otimes K \otimes \mathcal{O}_\Sigma(D)$ the obvious parabolic structure. A \emph{parabolic Higgs bundle} or PHB is a pair ${\bf E} := (E, \Phi)$, where $E$ is a parabolic bundle and 
$$\Phi \in H^0(\Sigma, S Par End(E) \otimes K_\Sigma(D))$$ 
is called an \emph{Higgs field} on $E$. Note that $\Phi$ is a meromorphic, endomorphism-valued one-form with simple poles along $D$, whose residue at $x$ is nilpotent with respect to the flag, i.e. 
$$
(\text{Res}_x \Phi) (E_{x,i}) \subset E_{x,i+1} ,
$$
for all $i=1, \ldots, s_x$ and $x\in D$. The definitions of stability and semistability are extended to Higgs bundles as expected. A PHB ${\bf E}= (E,\Phi)$ is \emph{stable} if
$\mu(F) <  \mu(E)$ for all proper parabolic subbundles $F\subset E$ which are preserved by $\Phi$ and similarly for \emph{semistability}, where the strict inequality is substituted by the weak inequality.

The usual properties of stable bundles also apply to stable parabolic Higgs bundles. For instance, if ${\bf E}$ and ${\bf F}$ are two stable PHBs  then there are no parabolic maps between them unless they are isomorphic  \cite{K}  (in which case they must have the same parabolic slope) and the only parabolic endomorphisms of a stable parabolic Higgs bundle are the scalar multiples of the identity.

We will say that a vector $\beta$ of  weights $\beta_i(x_j)$ is \emph{generic} when every semistable parabolic Higgs bundle is stable (i.e. if there are no properly semistable Higgs bundles).  Fixing a generic $\beta$ and the topological invariants  $r=\text{rank} (E)$ and $d=\deg(E)$, the moduli space $\mathcal{N}_{\beta,r,d}$ of $\beta$-stable,  rank-$r$,  degree-$d$ parabolic Higgs bundles  was constructed by Yokogawa in \cite{Y} using GIT. In particular, he shows that this space is a smooth irreducible complex variety of dimension
$$
\dim \mathcal{N}_{\beta,r,d}= 2 (g-1)r^2 + 2 + \sum_{i=1}^n \left(r^2 - \sum_{j=1}^{s_{x_i}} m_j(x_i)^2 \right),
$$
containing the cotangent bundle of the moduli space of stable parabolic bundles. For that, he worked out a deformation theory for PHBs as described next (see also \cite{GGM} for details). 

\subsubsection{Deformation Theory}

Given PHBs ${\bf E}=(E,\Phi)$ and ${\bf F}=(F, \Psi)$ one defines a complex of sheaves
\begin{align*}
C^{\bullet} ({\bf E}, {\bf F}): Par Hom(E,F) & \to S Par Hom(E,F) \otimes K_\Sigma(D) \\
 f & \mapsto (f \otimes 1)\Phi - \Psi f,
\end{align*} 
and write $C^\bullet({\bf E}):=C^\bullet({\bf E},{\bf E})$. Then the following proposition holds (see for instance \cite{T} for a detailed proof).
\begin{proposition}
\label{prop:GP}
\begin{enumerate}
\item The space of infinitesimal deformations of  a PHB ${\bf E}$ is isomorphic to the first hypercohomology group of the complex
$C^{\bullet} ({\bf E})$. Consequently the tangent space to $\mathcal{N}_{\beta,r,d}$ at a point ${\bf E}$ is isomorphic to $\mathbb{H}^1(C^\bullet(E))$.
\item The space of homomorphisms between PHBs ${\bf E}$ and ${\bf F}$ is isomorphic to the hypercohomology group $\mathbb{H}^0(C^\bullet ({\bf E}, {\bf F}))$.
\item The space of extensions $0 \to {\bf E} \to {\bf F} \to {\bf G} \to 0$ of PHBs ${\bf E}$ and  ${\bf G}$ is isomorphic to the hypercohomology group $\mathbb{H}^1(C^\bullet({\bf G}, {\bf E}))$.
\item There is a long exact sequence 
\begin{align*}
0 & \to \mathbb{H}^0(C^\bullet({\bf E}, {\bf F})) \to H^0(Par Hom (E,F)) \to H^0(S Par Hom(E,F)\otimes K_\Sigma(D)) \to \\
  & \to \mathbb{H}^1(C^\bullet({\bf E}, {\bf F})) \to H^1(Par Hom (E,F)) \to H^1(S Par Hom(E,F)\otimes K_\Sigma(D)) \to \\
  & \to \mathbb{H}^2(C^\bullet({\bf E}, {\bf F})) \to 0.
\end{align*}
\end{enumerate}
\end{proposition}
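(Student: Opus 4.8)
The plan is to establish all four items as consequences of standard hypercohomology machinery applied to the two-term complex $C^\bullet({\bf E},{\bf F})$, which lives in degrees $0$ and $1$. The central observation is that $C^\bullet({\bf E},{\bf F})$ is precisely the deformation complex governing the relevant moduli problem, so items (1)--(3) are really three instances of the general principle that $\mathbb{H}^1$ of a deformation complex classifies first-order deformations/extensions and $\mathbb{H}^0$ classifies infinitesimal automorphisms/homomorphisms. Item (4) is then purely formal: it is the hypercohomology long exact sequence of the complex, using that the complex has only two nonzero terms so the $\mathbb{H}^i$ vanish for $i\neq 0,1,2$.

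First I would treat item (2). A hypercohomology class in $\mathbb{H}^0(C^\bullet({\bf E},{\bf F}))$ is represented by a global section $f\in H^0(Par Hom(E,F))$ in the kernel of the differential $f\mapsto (f\otimes 1)\Phi-\Psi f$; but that kernel condition says exactly that $f$ intertwines the Higgs fields, i.e. $f$ is a morphism of parabolic Higgs bundles. Since the complex is concentrated in degrees $0,1$, there are no higher Čech corrections, so $\mathbb{H}^0(C^\bullet({\bf E},{\bf F}))=\operatorname{Hom}({\bf E},{\bf F})$ directly. Next, for items (1) and (3): a first-order deformation of ${\bf E}$ is encoded by a Čech $1$-cocycle $(\{g_{\alpha\beta}\},\{\psi_\alpha\})$ with $g_{\alpha\beta}\in Par Hom(E,E)$ deforming the transition functions and $\psi_\alpha\in S Par Hom(E,E)\otimes K_\Sigma(D)$ deforming the Higgs field, subject to the compatibility that the deformed $\Phi$ remains strongly parabolic and commutes with the deformed bundle structure to first order; tracking these conditions gives exactly the cocycle condition for $\mathbb{H}^1(C^\bullet({\bf E}))$, and coboundaries correspond to changes of local trivialization, i.e. isomorphic deformations. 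Therefore $T_{\bf E}\mathcal{N}_{\beta,r,d}\cong\mathbb{H}^1(C^\bullet({\bf E}))$. For item (3), an extension $0\to{\bf E}\to{\bf F}\to{\bf G}\to 0$ is built by gluing ${\bf E}$ and ${\bf G}$ along a class; the bundle extension data lies in $H^1(Par Hom(G,E))$ and the Higgs field on ${\bf F}$ is determined up to an element of $H^0(S Par Hom(G,E)\otimes K_\Sigma(D))$, and the condition that $\Phi_{\bf F}$ be a well-defined strongly parabolic Higgs field respecting the sub/quotient structure assembles these into a single class in $\mathbb{H}^1(C^\bullet({\bf G},{\bf E}))$; the Baer sum of extensions matches the group structure on hypercohomology.

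Finally, item (4) is the hypercohomology spectral sequence, or equivalently the long exact sequence associated to viewing $C^\bullet({\bf E},{\bf F})$ as a complex of sheaves: writing $A=Par Hom(E,F)$ in degree $0$ and $B=S Par Hom(E,F)\otimes K_\Sigma(D)$ in degree $1$, the short exact sequence of complexes $0\to B[-1]\to C^\bullet\to A\to 0$ yields the long exact sequence
\begin{align*}
0 &\to \mathbb{H}^0(C^\bullet) \to H^0(A) \to H^0(B) \to \\
  &\to \mathbb{H}^1(C^\bullet) \to H^1(A) \to H^1(B) \to \\
  &\to \mathbb{H}^2(C^\bullet) \to 0,
\end{align*}
where the sequence terminates because $\Sigma$ is a curve (so $H^2(A)=H^2(B)=0$) and the complex has no terms in degree $\geq 2$ (so $\mathbb{H}^i(C^\bullet)=0$ for $i\geq 3$). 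The connecting maps $H^i(A)\to H^i(B)$ are induced by the differential $f\mapsto(f\otimes 1)\Phi-\Psi f$ of the complex.

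The main obstacle is item (1): carefully checking that the strongly-parabolic and pole-order constraints on the deformed Higgs field translate exactly into the hypercohomology cocycle condition — in other words, that no spurious constraints or extra deformation directions appear at the marked points. This is where the precise definitions of $Par Hom$ and $S Par Hom$ do the work, and it is the step that genuinely uses the parabolic structure rather than being formal nonsense about two-term complexes. Since a detailed proof of this proposition is available in the literature (see \cite{T}, \cite{GGM}), I would state the result and indicate these references rather than reproduce the verification in full.
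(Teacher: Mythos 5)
Your proposal is correct and, in substance, matches the paper's treatment: the paper does not prove Proposition~\ref{prop:GP} but simply records it with a pointer to the literature (``see for instance \cite{T} for a detailed proof''), which is exactly where you end up after your sketch. The sketch itself is sound — $\mathbb{H}^0$ as Higgs-intertwining parabolic maps, $\mathbb{H}^1$ via \v{C}ech cocycles for deformations and extensions, and the long exact sequence from $0 \to B[-1] \to C^\bullet \to A \to 0$ terminating because $H^2$ vanishes on a curve — so deferring the parabolic bookkeeping at the marked points to \cite{T} and \cite{GGM} is entirely consistent with what the authors do.
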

Moreover, we have the following duality result whose proof can be found in \cite{GGM}.
\begin{proposition}
\label{prop:Serre}
If ${\bf E}$ and ${\bf F}$ are PHBs then there exists a natural isomorphism
$$
\mathbb{H}^i(C^\bullet({\bf E}, {\bf F})) \cong \mathbb{H}^{2-i}(C^\bullet ({\bf F}, {\bf E}))^*.
$$   
In particular for any stable PHB ${\bf E}$ there is a natural isomorphism $T_{\bf E} \mathcal{N}_{\beta,r,d} \cong T_{\bf E}^* \mathcal{N}_{\beta,r,d}$.  
\end{proposition}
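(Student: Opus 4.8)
\emph{Plan of proof.} The plan is to deduce the statement from Serre duality on $\Sigma$ combined with the parabolic duality between the sheaves $Par Hom$ and $S Par Hom$. Recall that $C^\bullet({\bf E},{\bf F})$ is the two--term complex $Par Hom(E,F)\to S Par Hom(E,F)\otimes K_\Sigma(D)$ placed in degrees $0$ and $1$, and that the four sheaves occurring here are locally free, being elementary modifications of $Hom(E,F)$ along $D$. The key input is the pair of natural isomorphisms of $\mathcal{O}_\Sigma$--modules
$$\big(Par Hom(F,E)\big)^\vee\cong S Par Hom(E,F)\otimes\mathcal{O}_\Sigma(D),\qquad \big(S Par Hom(F,E)\big)^\vee\cong Par Hom(E,F)\otimes\mathcal{O}_\Sigma(D),$$
which come from the fact that ``compose and take trace'' defines perfect pairings
$$Par Hom(E,F)\otimes S Par Hom(F,E)\longrightarrow \mathcal{O}_\Sigma(-D),\qquad S Par Hom(E,F)\otimes Par Hom(F,E)\longrightarrow \mathcal{O}_\Sigma(-D).$$
Here the trace lands in $\mathcal{O}_\Sigma(-D)$ because the composite of a parabolic and a strongly parabolic homomorphism is strongly parabolic, hence nilpotent, and therefore traceless, on each fibre over a point of $D$. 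I would establish these isomorphisms by a local comparison of the weighted flags of $E$ and $F$ at the marked points, or simply invoke Yokogawa's parabolic Serre duality.

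First I would compute the object $R\mathcal{H}om_{\mathcal{O}_\Sigma}\!\big(C^\bullet({\bf F},{\bf E}),K_\Sigma\big)$. Since $C^\bullet({\bf F},{\bf E})$ is a bounded complex of locally free sheaves, this derived functor is the naive $\mathcal{H}om$--complex; using the two displayed isomorphisms, its term in degree $-1$ becomes $Par Hom(E,F)$ and its term in degree $0$ becomes $S Par Hom(E,F)\otimes K_\Sigma(D)$, so as a graded object it agrees with $C^\bullet({\bf E},{\bf F})[1]$. Next I would verify that the transpose of the differential $h\mapsto (h\otimes 1)\Psi-\Phi h$ of $C^\bullet({\bf F},{\bf E})$, rewritten via the trace pairings, is equal up to sign to the differential $f\mapsto (f\otimes 1)\Phi-\Psi f$ of $C^\bullet({\bf E},{\bf F})$; this is a short computation using cyclicity of the trace. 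Hence there is a natural isomorphism $R\mathcal{H}om\big(C^\bullet({\bf F},{\bf E}),K_\Sigma\big)\cong C^\bullet({\bf E},{\bf F})[1]$ in $D^b_{\mathrm{coh}}(\Sigma)$.

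Finally I would apply Serre duality for hypercohomology on the curve $\Sigma$: for any bounded complex $G^\bullet$ of coherent sheaves there is a natural isomorphism $\mathbb{H}^j(\Sigma,G^\bullet)^*\cong\mathbb{H}^{1-j}\big(\Sigma,R\mathcal{H}om(G^\bullet,K_\Sigma)\big)$. Taking $G^\bullet=C^\bullet({\bf F},{\bf E})$, setting $j=2-i$, and substituting the identification of the previous paragraph, one obtains
$$\mathbb{H}^{2-i}\big(C^\bullet({\bf F},{\bf E})\big)^*\cong\mathbb{H}^{i-1}\big(C^\bullet({\bf E},{\bf F})[1]\big)\cong\mathbb{H}^{i}\big(C^\bullet({\bf E},{\bf F})\big),$$
which is the asserted isomorphism; its naturality is inherited from that of the trace pairings and of Serre duality. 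The final clause then follows by taking ${\bf F}={\bf E}$ and $i=1$ and using Proposition~\ref{prop:GP}(1), which identifies $T_{\bf E}\mathcal{N}_{\beta,r,d}$ with $\mathbb{H}^1(C^\bullet({\bf E}))$; stability of ${\bf E}$ ensures smoothness of $\mathcal{N}_{\beta,r,d}$ at ${\bf E}$, so this identification is valid.

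I expect the only real work to be in the second paragraph: the perfectness of the parabolic trace pairings at the marked points — this is the genuinely parabolic ingredient, where one must track precisely how the weighted flags of $E$ and $F$ interlock — together with the sign bookkeeping needed to match the two Higgs--field differentials. Everything else in the argument is formal homological algebra.
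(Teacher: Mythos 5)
Your argument is correct: the perfect trace pairings $Par Hom(E,F)\otimes S Par Hom(F,E)\to\mathcal{O}_\Sigma(-D)$ (perfect because on a curve these subsheaves of $Hom$ are locally free and the flags interlock in complementary fashion), the identification of the dual complex with $C^\bullet({\bf E},{\bf F})[1]\otimes K_\Sigma^{-1}$ up to an irrelevant sign, and Grothendieck--Serre duality for hypercohomology assemble exactly as you describe. The paper itself gives no proof but cites \cite{GGM}, and your route is essentially the same as the one there (and in Yokogawa and Boden--Yokogawa), so there is nothing further to add.
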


\subsubsection{Fixed determinant}\label{sec:fixed determinant}

If ${\bf E} \in \mathcal{N}_{\beta,r,d}$ and $E$ is the underlying parabolic bundle, its determinant $\Lambda^rE$ is a parabolic line bundle of degree
$$
\widetilde{d}= d + \sum_{i = 1}^{n} \left[\sum_j m_j(x_i) \beta_j(x_i)\right]
$$
and weight $\sum_j m_j(x) \beta_j(x) - \left[\sum_j m_j(x) \beta_j(x)\right]$, at any $x\in D$, where the square brackets denote the integer part. For fixed weights the moduli space of rank-$1$ parabolic Higgs bundles of degree $\widetilde{d}$ is naturally identified with the total space of the cotangent bundle to the Jacobian of degree-$\widetilde{d}$ line bundles on $\Sigma$.  Hence one has the map
\begin{align}\label{eq:mapdet}
\det: \mathcal{N}_{\beta,r,d} & \to T^* Jac^{\tilde{d}}(\Sigma), \\ \nonumber
(E,\Phi) & \mapsto (\Lambda^r E, \text{Tr}\,  \Phi).
\end{align}
Fixing $\Lambda$, a  line bundle of degree $\widetilde{d}$,  Konno \cite{K} defines the moduli space $\mathcal{N}^{0,\Lambda}_{\beta,r,d}$ of stable parabolic Higgs bundles with fixed determinant $\Lambda$ and trace-free Higgs field as the fibre of the map \eqref{eq:mapdet} over $(\Lambda, 0)$ i.e.
$$
\mathcal{N}^{0,\Lambda}_{\beta,r,d}:= \text{det}^{-1} (\Lambda, 0).
$$
In particular, he shows  that, for any $\Lambda$ and generic $\beta$, this space is a smooth, hyperk\"{a}hler manifold of complex dimension
$$
\dim \mathcal{N}^{0,\Lambda}_{\beta,r,d} = 2(g-1) (r^2-1)  + \sum_{i=1}^n \left(r^2 - \sum_{j=1}^{s_{x_i}} m_j(x_i)^2 \right).
$$ 
The deformation theory of ${\bf E}=(E, \Phi)$ in $\mathcal{N}^{0,\Lambda}_{\beta,r,d}$ is determined by the complex
\begin{align*}
C^\bullet_0({\bf E}): Par End_0(E) & \to S Par End_0(E) \otimes K_\Sigma(D) \\
f & \mapsto (f\otimes 1)\Phi - \Phi f,
\end{align*}
where the subscript $0$ indicates trace $0$.

We will now give a brief description of $\mathcal{N}^{0,\Lambda}_{\beta,r,d}$ following \cite{K} and \cite{GGM}.  Given a PHB ${\bf E}$ of rank $r$ with underlying topological bundle $E$, one says that a  local frame  $\{e_1, \ldots, e_r\}$ around $x$ \emph{preserves the flag} at $x$ if $E_{x,i}$ is spanned by the vectors $\{e_{M_i+1}(x), \ldots, e_r(x)\}$, where $M_i=\sum_{k\leq i} m_k$.  Then one fixes a hermitian metric $h$ on $E$ which is smooth in $\Sigma\setminus D$ and whose behavior around the points in $D$ is as follows: if $z$ is a centered local coordinate around $x$ (i.e. such that $z(x)=0$), then one requires $h$ to have the form
\begin{equation}
\label{eq:metric}
h=\left(\begin{array}{ccc} \vert z\vert^{2 \lambda_1} & & 0 \\ & \ddots&  \\ 0 & & \vert z\vert^{2 \lambda_r}\end{array} \right)
\end{equation}
with respect to some local frame around $x$ which preserves the flag at $x$. Let us denote by $\mathcal{J}$ the affine space of holomorphic structures on $E$ and by $\mathcal{A}$ the space of associated $h$-unitary connections. Note that the unitary connection $A$ associated to some element $\overline{\delta}_A$ of $\mathcal{J}$ via the hermitian metric $h$ is singular at the punctures. Indeed, writing $z= \rho e^{\bold{i} \theta}$ and considering the local frame $\{e_i\}$ used in \eqref{eq:metric}, the connection $A$ has the form
 \begin{equation}
\label{eq:connection}
d_A = d + {\bf i} \left(\begin{array}{ccc}  \lambda_1 & & 0 \\ & \ddots&  \\ 0 & &  \lambda_r \end{array} \right) d\theta + A^\prime
\end{equation} 
with respect to the local frame $\{ e_i/\vert z \vert^{\lambda_i} \}$, where $A^\prime$ is regular. 
The space of trace-free Higgs fields on a parabolic bundle $E$ is
$$
{\bf \Omega}  := \Omega^{1,0} \big(S Par End_0(E) \otimes K_\Sigma(D)\big).
$$
Let $\mathcal{G}_\mathbb{C}$ denote the group of complex parabolic gauge transformations (i.e. the group of smooth determinant-$1$ bundle automorphisms of $E$ which preserve the flag structure) and let $\mathcal{G}$ denote the subgroup of $h$-unitary parabolic gauge transformations. Using  the weighted Sobolev norms defined by Biquard \cite{Bi}  on the above spaces (see \cite{Bi} and \cite{K} for details) let us denote by $\mathcal{J}^p$, ${\bf \Omega}^p$, $\mathcal{G}^p$ and $\mathcal{G}_\mathbb{C}^p$ the corresponding Sobolev completions. Following Konno we consider the space
$$
\mathcal{H} := \big\{ (\overline{\delta}_A, \Phi) \in \mathcal{J} \times {\bf \Omega}\mid\, \overline{\delta}_A \Phi =0 \big\}
$$
and the corresponding subspace $\mathcal{H}^p$ of $\mathcal{J}^p \times {\bf \Omega}^p$. 
The gauge group $\mathcal{G}_\mathbb{C}$ acts on $\mathcal{H} $ by conjugation, i.e. on the residues $N_i := Res_{x_i}  \Phi$ the $\mathcal{G}_\mathbb{C}$-action is $g^{-1} N_i g$ for any $g \in \mathcal{G}_\mathbb{C}$ (cf.  \cite{K}).
Let $F(A)^0$ be the trace-free part of the curvature of the $h$-unitary connection  corresponding to $\overline{\delta}_A$. Then we consider the moduli space $\mathcal{E}^0$ defined as the subspace of $\mathcal{H}^p$ satisfying \emph{Hitchin's equation} 
$$
\mathcal{E}^0:= \big\{(\overline{\delta}_A , \Phi) \in \mathcal{H}^p\mid\, F(A)^0 + [\Phi, \Phi^*]=0\big\}/\mathcal{G}^p.
$$
Taking the usual definition of semi-stability on $\mathcal{H}$, Konno shows in \cite{K} that, for some $p>1$,
\begin{equation}
\label{eq:quotient}
\mathcal{N}^{0,\Lambda}_{\beta,r,d} := \mathcal{H}_{ss}/ \mathcal{G}_{\mathbb{C}} \cong \mathcal{E}^0
\end{equation}
and this second quotient endows $\mathcal{N}^{0,\Lambda}_{\beta,r,d}$ with a hyperk\"{a}hler structure.

There is a natural circle action on the moduli space $\mathcal{N}^{0,\Lambda}_{\beta,r,d}$ given by
\begin{equation}
\label{eq:action}
e^{\bold{i} \theta}  \cdot (E, \Phi)= ( E, e^{\bold{i} \theta}  \Phi)
\end{equation}
which is respected by the identification in \eqref{eq:quotient}. This action is Hamiltonian with respect to the symplectic structure of $\mathcal{N}^{0,\Lambda}_{\beta,r,d}$ compatible with the complex structure induced by the complex structure
$$
I(\overline{\delta}_A, \Phi)=({\bf i} \overline{\delta}_A,{\bf i} \Phi)
$$
on $\mathcal{H}^p$ (see \cite{BY} for details). The corresponding moment map is
$$
[(A, \Phi)] \mapsto -\frac{1}{2} \vert \vert \Phi \vert \vert^2 = - \bold{i} \int_\Sigma \text{Tr} (\Phi \Phi^*).
$$
Let us consider the positive function 
\begin{equation}
\label{eq:f}
f:=\frac{1}{2} \vert \vert \Phi \vert \vert^2.
\end{equation}
Boden and Yokogawa in \cite{BY} show that this map is proper. By a general result of Frankel \cite{F} which states that a proper moment map of a circle action on a K\"{a}hler manifold is a perfect Morse-Bott function, we conclude that $f$ is Morse-Bott.  Its critical set (which corresponds to the fixed point set of the circle action) was studied by Simpson in \cite{S} who shows the following result.
\begin{proposition}\textrm{(Simpson)} \label{prop:Simpson} The equivalence class of a stable PHB ${\bf E}= (E,\Phi)$ is fixed by the $S^1$-action \eqref{eq:action} if and only if $E$ has a direct sum decomposition
$$
E=E_0 \oplus \cdots \oplus E_m
$$
as parabolic bundles, such that $\Phi$ is strongly parabolic and of degree one with respect to this decomposition, i.e., 
$$
\Phi_{\vert_{E_l}} \in H^0(S Par Hom (E_l, E_{l+1}) \otimes K_\Sigma(D)).
$$
Moreover, stability implies that $\Phi_{\vert_{E_l}}\neq 0$ for $l=0, \ldots, m-1$, and ${\bf E}= (\bigoplus_l E_l, \Phi)$ is stable as a parabolic Higgs bundle if and only if the stability condition is satisfied for all  proper parabolic subbundles which respect the decomposition $E=\bigoplus_l E_l$ and are preserved by $\Phi$.
\end{proposition}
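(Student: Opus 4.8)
The plan is to prove the two implications separately, disposing of the easy ``if'' direction first and then extracting the grading from the $S^1$-symmetry in the ``only if'' direction. For ``if'', suppose $E=E_0\oplus\cdots\oplus E_m$ as parabolic bundles with $\Phi$ sending $E_l$ into $SParHom(E_l,E_{l+1})\otimes K_\Sigma(D)$. I would let $g_t\in\mathrm{Aut}(E)$, for $t\in\mathbb C^*$, act by $t^l\cdot\mathrm{Id}$ on $E_l$; since the decomposition is one of parabolic bundles, each flag $E_{x,i}$ is graded by it, so $g_t$ is a parabolic automorphism, and rescaling $g_t$ by a scalar makes $\det g_t=1$ without affecting conjugation. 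A block computation shows that $(g_t\otimes1)\circ\Phi\circ g_t^{-1}$ rescales the component $E_l\to E_{l+1}\otimes K_\Sigma(D)$ by $t^{l+1}/t^{l}=t$, so $g_t\cdot(E,\Phi)=(E,t\Phi)$; since $\mathrm{Tr}(t\Phi)=t\,\mathrm{Tr}\,\Phi=0$ and the determinant is unchanged, restricting to $t=e^{\mathbf i\theta}$ shows $[e^{\mathbf i\theta}\cdot\mathbf E]=[\mathbf E]$ in $\mathcal N^{0,\Lambda}_{\beta,r,d}$.

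For ``only if'', assume $\mathbf E$ stable and fixed by the action. For each $\theta$ there is a parabolic isomorphism $g_\theta\colon\mathbf E\to e^{\mathbf i\theta}\cdot\mathbf E$, i.e. $(g_\theta\otimes1)\circ\Phi=e^{\mathbf i\theta}\Phi\circ g_\theta$. Since the only parabolic endomorphisms of the stable PHB $\mathbf E$ are scalars (as recalled above), $g_\theta$ is unique up to a factor in $\mathbb C^*$, so $g_{\theta_1+\theta_2}$ and $g_{\theta_1}g_{\theta_2}$ differ by a scalar; I would then invoke the standard argument that, after rescaling the $g_\theta$ and if necessary passing to a finite cover of $S^1$, one obtains a genuine one-parameter group that complexifies to an algebraic $\mathbb C^*$-action $t\mapsto g_t$ on $E$ with integral weights and $(g_t\otimes1)\circ\Phi\circ g_t^{-1}=t\Phi$. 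Write $E=\bigoplus_\lambda E_\lambda$ for the weight decomposition. Each $g_t$ preserves the flags, so $E_{x,i}=\bigoplus_\lambda(E_{x,i}\cap E_\lambda)$, whence the $E_\lambda$ are parabolic subbundles and $E=\bigoplus_\lambda E_\lambda$ as parabolic bundles; comparing weights in $(g_t\otimes1)\circ\Phi\circ g_t^{-1}=t\Phi$ forces the $(E_\lambda\to E_\mu\otimes K_\Sigma(D))$-component of $\Phi$ to vanish unless $\mu=\lambda+1$. Since a stable PHB has no nontrivial decomposition as a direct sum of PHBs (a summand would have parabolic slope $\geq\mu(\mathbf E)$, violating strict stability), the nonzero $E_\lambda$ occur for consecutive integers, which I relabel $E_0,\dots,E_m$; then $\Phi|_{E_l}\in H^0(SParHom(E_l,E_{l+1})\otimes K_\Sigma(D))$, strong parabolicity being inherited from that of $\Phi$.

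For the ``moreover'' part: if $\Phi|_{E_l}=0$ for some $l<m$, then $\bigoplus_{j\leq l}E_j$ and $\bigoplus_{j>l}E_j$ are both $\Phi$-invariant parabolic subbundles and $\mathbf E$ splits, contradicting stability, so $\Phi|_{E_l}\neq0$ for $l=0,\dots,m-1$; and given any $\Phi$-invariant parabolic $F\subset E$, the subbundles $g_tF$ are again $\Phi$-invariant parabolic of the same rank and parabolic degree, while $\lim_{t\rightarrow0}g_tF$ exists in the relevant Quot scheme, respects the decomposition, and has parabolic degree $\geq\mathrm{pdeg}(F)$ by semicontinuity, so a destabilizing subbundle produces a destabilizing one respecting the decomposition. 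The hard part will be the ``only if'' direction, namely upgrading the pointwise family $\{g_\theta\}$ to an honest algebraic $\mathbb C^*$-action on $E$ that is compatible with the parabolic flags; the gap-freeness of the set of weights and the Quot-scheme limit in the ``moreover'' part are the remaining points requiring care.
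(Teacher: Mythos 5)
First, a point of reference: the paper does not prove this proposition at all -- it is quoted from Simpson \cite{S} -- so your proposal can only be measured against the classical argument, not an in-paper one. Against that benchmark your outline is essentially sound: the ``if'' direction via the gauge transformation acting by $t^l$ on $E_l$ is correct (the determinant-$1$ normalization is harmless since scalars act trivially by conjugation), the gap-freeness of the weights via indecomposability of a stable PHB is right, so is the argument that $\Phi\vert_{E_l}\neq 0$ for $l<m$, and the reduction of stability to decomposition-respecting subbundles can indeed be done by your degeneration (or, equivalently and more cheaply, by the purely algebraic device of filtering a $\Phi$-invariant $F$ by $F\cap\bigoplus_{\mu\geq\lambda}E_\mu$ and passing to the associated graded, noting that incidence with the flag subspaces, hence the parabolic weight contribution, can only increase under this operation).

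Where your route differs from Simpson's, it makes the problem harder than necessary: the step you flag as the hard part -- promoting the pointwise family $\{g_\theta\}$ to an honest algebraic $\mathbb{C}^*$-action compatible with the flags -- can be bypassed entirely. It suffices to fix a single $t=e^{\mathbf{i}\theta}$ which is not a root of unity and one parabolic isomorphism $g$ with $(g\otimes 1)\circ\Phi\circ g^{-1}=t\,\Phi$ (it exists by the fixed-point hypothesis and is essentially unique by simplicity, cf.\ Proposition~\ref{prop:GP}). The coefficients of the characteristic polynomial of $g$ are holomorphic functions on the compact curve, hence constant, so $E$ decomposes into the generalized eigenbundles of $g$; these are parabolic subbundles because $g$ preserves the flags, and the intertwining relation sends the $\lambda$-eigenbundle into the $t\lambda$-eigenbundle tensored with $K_\Sigma(D)$. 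Finiteness of the spectrum together with your indecomposability argument then yields the consecutive chain $E_0,\dots,E_m$ with $\Phi$ of degree one, with no need for a one-parameter group, a cover of $S^1$, or any continuity/algebraicity discussion. So: correct in outline, with the acknowledged gaps closable, but the single-automorphism eigenbundle trick is the standard (and shorter) path through the ``only if'' direction.
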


\begin{Remark}
\label{rmk:1}
Note that if $m=0$, then $E=E_0$ and $\Phi=0$ and one obtains the fixed points $(E,0)$, where $E$ is a stable parabolic bundle. Hence the moduli space  $\mathcal{M}_{\beta,r,d}^{0, \Lambda}$ of $\beta$-stable rank-$r$ parabolic bundles of fixed degree and determinant is a component of the fixed-point set.
\end{Remark}

The Morse index of a critical point of $f$, which equals the dimension of the negative weight space of the circle action on the tangent space at the fixed point (see \cite{F}), was computed by Garc\'{i}a-Prada, Gothen and Mu\~{n}oz:

\begin{proposition} \cite{GGM}
\label{prop:morseindex}
Let the PHB ${\bf E}=(\oplus_{l=0}^m E_l , \Phi)$ represent a critical point of $f$. Then the Morse index of $f$ at ${\bf E}$ is given by
\begin{align*}
\lambda_{{\bf E}} & = 2r^2(g-1) + \sum_{i=1}^n \Bigl(r^2 - \sum_{j=1}^{s_{x_i}} m_j(x_i)^2\Bigr) \\ & + 2\sum_{l=0}^m \Bigl((1-g-n) \mathrm{rank}(E_l)^2   + \sum_{i=1}^n \dim P_{x_i}(E_l,E_l)\Bigr) \\ & + 2 \sum_{l=0}^{m-1}\Bigl((1-g)\mathrm{rank}(E_l)\mathrm{rank}(E_{l+1})  - \mathrm{rank}(E_l) \deg (E_{l+1}) \Bigr. \\ &  \Bigl.\hspace{2cm}  + \mathrm{rank}(E_{l+1}) \deg (E_{l})   - \sum_{i=1}^n \dim N_{x_i}(E_l,E_{l+1})\Bigr), 
\end{align*}
where, given two parabolic bundles $F$ and $G$, $P_x(F,G)$ denotes the subspace of $Hom(F_x,G_x)$ formed by parabolic maps, and $N_x(F,G)$ denotes the subspace of strongly parabolic maps. 
\end{proposition}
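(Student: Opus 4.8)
The plan is to read off $\lambda_{\bf E}$ from the $S^1$-representation on $T_{\bf E}\mathcal{N}^{0,\Lambda}_{\beta,r,d}$, using the deformation complex, the symmetry coming from Serre duality, and parabolic Riemann--Roch. By the fixed-determinant version of Proposition~\ref{prop:GP} (the deformation theory of $\mathcal{N}^{0,\Lambda}_{\beta,r,d}$ recalled above), $T_{\bf E}\mathcal{N}^{0,\Lambda}_{\beta,r,d}\cong\mathbb{H}^1(C^\bullet_0({\bf E}))$, and this identification is $S^1$-equivariant. By Proposition~\ref{prop:Simpson}, since ${\bf E}=(\bigoplus_{l=0}^{m}E_l,\Phi)$ is a critical point the underlying parabolic bundle splits and $\Phi$ is strongly parabolic of degree one for the splitting; consequently $C^\bullet_0({\bf E})$ acquires a weight grading in which the summand $Par Hom(E_l,E_j)$ of $Par End_0(E)$ sits in weight $l-j$, the summand $S Par Hom(E_l,E_j)\otimes K_\Sigma(D)$ of $S Par End_0(E)\otimes K_\Sigma(D)$ sits in weight $1+l-j$ — the shift by $1$ recording that $\Phi$ has weight $1$ under $t\cdot(E,\Phi)=(E,t\Phi)$ — and $\mathrm{ad}\,\Phi$ is weight-preserving. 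Thus $\mathbb{H}^1(C^\bullet_0({\bf E}))=\bigoplus_w\mathbb{H}^1(C^\bullet_{0,w})$ as $S^1$-modules, and, since the Morse index equals the real dimension of the negative-weight part (\cite{F}), $\lambda_{\bf E}=\sum_{w<0}\dim_{\mathbb{R}}\mathbb{H}^1(C^\bullet_{0,w})$.

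The key reduction is that these weights are symmetric about $\tfrac12$. The Serre-duality pairing on $\mathbb{H}^1(C^\bullet_0({\bf E}))$ from Proposition~\ref{prop:Serre} is induced by the holomorphic symplectic form of $\mathcal{N}^{0,\Lambda}_{\beta,r,d}$, which has weight $1$; hence it identifies $\mathbb{H}^1(C^\bullet_{0,w})$ with $\mathbb{H}^1(C^\bullet_{0,1-w})^*$, so $\dim\mathbb{H}^1(C^\bullet_{0,w})=\dim\mathbb{H}^1(C^\bullet_{0,1-w})$ for every $w$. Writing $d_w=\dim_{\mathbb{R}}\mathbb{H}^1(C^\bullet_{0,w})$, this symmetry together with $\sum_w d_w=\dim_{\mathbb{R}}\mathcal{N}^{0,\Lambda}_{\beta,r,d}$ forces $\sum_{w<0}d_w=\tfrac12\dim_{\mathbb{R}}\mathcal{N}^{0,\Lambda}_{\beta,r,d}-d_0$, that is
$$
\lambda_{\bf E}=\dim_{\mathbb{C}}\mathcal{N}^{0,\Lambda}_{\beta,r,d}-\dim_{\mathbb{R}}\mathbb{H}^1(C^\bullet_{0,0})=\dim_{\mathbb{C}}\mathcal{N}^{0,\Lambda}_{\beta,r,d}+2\chi(C^\bullet_{0,0}),
$$
the last step using that $\mathbb{H}^0(C^\bullet_0({\bf E}))=0$ (a trace-free parabolic endomorphism of the stable PHB ${\bf E}$ commuting with $\Phi$ is a scalar, hence $0$) and $\mathbb{H}^2(C^\bullet_0({\bf E}))=0$ (Proposition~\ref{prop:Serre}), vanishings which pass to the weight-zero summand, so $\dim_{\mathbb{C}}\mathbb{H}^1(C^\bullet_{0,0})=-\chi(C^\bullet_{0,0})$.

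It remains to compute $\chi(C^\bullet_{0,0})$. From the weight grading the weight-zero subcomplex is
$$
C^\bullet_{0,0}:\ \Bigl(\textstyle\bigoplus_{l=0}^{m}Par End(E_l)\Bigr)_{0}\ \xrightarrow{\ \mathrm{ad}\,\Phi\ }\ \textstyle\bigoplus_{l=0}^{m-1}S Par Hom(E_l,E_{l+1})\otimes K_\Sigma(D),
$$
the subscript $0$ denoting the trace-free part, so $\chi(C^\bullet_{0,0})=\sum_{l=0}^{m}\chi(Par End(E_l))-(1-g)-\sum_{l=0}^{m-1}\chi\bigl(S Par Hom(E_l,E_{l+1})\otimes K_\Sigma(D)\bigr)$, the $-(1-g)=-\chi(\mathcal{O}_\Sigma)$ accounting for the removed trace line. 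Parabolic Riemann--Roch then gives each term: since $Par End(E_l)\subset End(E_l)$ is cut out at $x_i$ by the parabolic condition, $\chi(Par End(E_l))=(1-g)\mathrm{rk}(E_l)^2-\sum_{i=1}^n\bigl(\mathrm{rk}(E_l)^2-\dim P_{x_i}(E_l,E_l)\bigr)$, which is the term $(1-g-n)\mathrm{rk}(E_l)^2+\sum_i\dim P_{x_i}(E_l,E_l)$ in the statement; and, using Serre duality $-\chi\bigl(S Par Hom(E_l,E_{l+1})\otimes K_\Sigma(D)\bigr)=\chi(Par Hom(E_{l+1},E_l))$ together with $\dim P_{x_i}(E_{l+1},E_l)=\mathrm{rk}(E_l)\mathrm{rk}(E_{l+1})-\dim N_{x_i}(E_l,E_{l+1})$, Riemann--Roch for $Par Hom(E_{l+1},E_l)$ gives exactly $(1-g)\mathrm{rk}(E_l)\mathrm{rk}(E_{l+1})-\mathrm{rk}(E_l)\deg(E_{l+1})+\mathrm{rk}(E_{l+1})\deg(E_l)-\sum_i\dim N_{x_i}(E_l,E_{l+1})$. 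Feeding these into $\lambda_{\bf E}=\dim_{\mathbb{C}}\mathcal{N}^{0,\Lambda}_{\beta,r,d}+2\chi(C^\bullet_{0,0})$ and using the dimension formula $\dim_{\mathbb{C}}\mathcal{N}^{0,\Lambda}_{\beta,r,d}=2(g-1)(r^2-1)+\sum_{i=1}^n\bigl(r^2-\sum_j m_j(x_i)^2\bigr)$, the constant $2(g-1)$ combines with $\dim_{\mathbb{C}}\mathcal{N}^{0,\Lambda}_{\beta,r,d}$ into the first line $2r^2(g-1)+\sum_i(r^2-\sum_j m_j(x_i)^2)$, and the rest becomes $2\sum_{l=0}^m B_l+2\sum_{l=0}^{m-1}C_l$ with $B_l,C_l$ the two displayed blocks, which is the claimed formula.

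The main obstacle is bookkeeping rather than conceptual: setting up the weight grading on $C^\bullet_0({\bf E})$ with the correct $\Phi$-induced shift so that the weight-zero subcomplex — hence the critical submanifold's tangent space — is exactly the one above, and then carrying the parabolic corrections $\dim P_{x_i}(\cdot,\cdot)$ and $\dim N_{x_i}(\cdot,\cdot)$ faithfully through the parabolic Riemann--Roch and Serre-duality computations. One must also check carefully that the weight-symmetry is genuinely about $\tfrac12$ and not about $0$, since it is precisely this that removes a spurious factor of $\tfrac12$ and yields the integer Morse index displayed (and makes the formula vanish at the minimum $m=0$, as it must).
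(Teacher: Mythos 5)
The paper does not actually prove Proposition~\ref{prop:morseindex}; it is quoted from \cite{GGM}, so there is no internal proof to compare against, and your proposal supplies a genuine argument. It is essentially correct, and it follows the standard strategy (decompose $T_{\bf E}\mathcal{N}^{0,\Lambda}_{\beta,r,d}\cong\mathbb{H}^1(C^\bullet_0({\bf E}))$ into weight subcomplexes at the fixed point and count the negative-weight part, as the paper's preceding remark on \cite{F} allows), but with a shortcut that differs from the route one would take following \cite{GGM}: instead of computing $\dim\mathbb{H}^1$ of each nonzero-weight subcomplex by parabolic Riemann--Roch (using the vanishing of $\mathbb{H}^0$ and $\mathbb{H}^2$ there) and summing over the negative weights, you exploit the weight-$1$ equivariance of the holomorphic symplectic form, i.e.\ the termwise Serre duality that identifies the weight-$w$ subcomplex with the dual of the weight-$(1-w)$ one, to reduce everything to $\chi(C^\bullet_{0,0})$. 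Your bookkeeping for that Euler characteristic is right: $\deg End(E_l)=0$, the corrections $\sum_i\bigl(\mathrm{rk}(E_l)^2-\dim P_{x_i}(E_l,E_l)\bigr)$, the duality $-\chi\bigl(SParHom(E_l,E_{l+1})\otimes K_\Sigma(D)\bigr)=\chi\bigl(ParHom(E_{l+1},E_l)\bigr)$, the fiberwise relation $\dim P_{x}(E_{l+1},E_l)+\dim N_{x}(E_l,E_{l+1})=\mathrm{rk}(E_l)\mathrm{rk}(E_{l+1})$, and the removed trace line $\chi(\mathcal{O}_\Sigma)=1-g$ all combine, together with the dimension formula for $\mathcal{N}^{0,\Lambda}_{\beta,r,d}$, into exactly the displayed expression; I checked it also reproduces Proposition~\ref{prop:index} and vanishes at the minimum $m=0$.

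The one step you assert rather than derive is the weight grading itself. To make it airtight you should exhibit the gauge transformations $g_\theta$ realizing the fixed point (acting on $E_l$ by $e^{-\mathbf{i}l\theta}$, say, so that $g_\theta^{-1}\Phi g_\theta=e^{\mathbf{i}\theta}\Phi$), compute the induced weights on the $Hom(E_l,E_j)$-blocks of $C^\bullet_0({\bf E})$, and reconcile this with the sign convention under which the index of $f=\tfrac12\Vert\Phi\Vert^2$ (whose moment map is $-f$) is the dimension of the negative-weight space. This matters quantitatively: with the opposite pairing of labels, your symmetry argument would yield $\lambda_{\bf E}\equiv\dim_\mathbb{C}\mathcal{N}^{0,\Lambda}_{\beta,r,d}$ at every critical point, which fails already at the minimum. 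You flag this and use the $m=0$ case to fix the side, which does pin the convention down, but in a complete write-up it should be an explicit computation rather than a consistency check.
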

\subsubsection{The rank-two situation}\label{sec:rk2}

Let us now restrict ourselves to the {\bf rank two} situation. Most of what is presented in this section is essentially contained in \cite{BY} but we will  give an exposition adapted to our purposes. 

If ${\bf E}=(E,\Phi)$ is a fixed point of the circle action defined in \eqref{eq:action} then we have two possible cases:
\begin{enumerate}
\item $E$ is a stable rank-$2$ parabolic  bundle and $\Phi=0$ (see Remark~\ref{rmk:1});
\item $E= E_0 \oplus E_1$ where $E_0$ and $E_1$ are parabolic line bundles and $\Phi$ induces a strongly parabolic map 
$$\Phi_0 := \Phi_{\vert_{E_0}}: E_0 \to E_1 \otimes K_\Sigma(D).$$ 
\end{enumerate}
In the first case, the corresponding critical submanifold can be identified with the moduli space  $\mathcal{M}_{\beta,2,d}^{0, \Lambda}$ of ordinary rank-$2$ parabolic bundles of fixed degree and determinant and it is the only critical component where the Morse-Bott function $f$ takes its minimum value $f=0$.

The fixed points in the second situation occur when $e^{{\bf i} \theta}\cdot (\overline{\delta}_A,\Phi)$ is gauge equivalent to $(\overline{\delta}_A, \Phi)$. In particular, this implies that there exists a $1$-parameter family  $g_\theta\in \mathcal{G}^p$ such that $g_\theta^{-1} \Phi g_\theta = e^{{\bf i}\theta} \Phi$ which  is diagonal with respect to the decomposition $E=E_0\oplus E_1$ (in fact the splitting of the holomorphic parabolic bundle $E$ is determined by the eigenvalues of $g_\theta$). Hence $\Phi$ is either strictly upper or lower triangular, meaning that one of $E_0$ or $E_1$ is $\Phi$-invariant. Since we also have that $\Phi_0 := \Phi_{\vert_{E_0}}$ is a map from $E_0$ to $E_1 \otimes K_\Sigma(D)$, we conclude that
$$
\Phi= \left(\begin{array}{cc} 0 & 0 \\ \phi & 0\end{array} \right),
$$ 
with $0\neq \phi \in S Par Hom(E_0,E_1 \otimes K_\Sigma(D))$. Then $E_1$ is preserved by $\Phi$ which, by $\beta$-stability of ${\bf E}$, implies  that $\mu (E_1) < \mu(E)$. By Example~\ref{ex:1} this is equivalent to requiring
\begin{equation}
\label{eq:stability}
\deg E - 2 \deg E_1 > \sum_{i \in S_{E_1}} \big(\beta_2(x_i) - \beta_1(x_i)\big) - \sum_{i \in S_{E_1}^c} \big(\beta_2(x_i) - \beta_1(x_i)\big),
\end{equation}
where $0\leq \beta_1(x_i) < \beta_2 (x_i)<1$ are  the parabolic weights of $E$ at $x_i\in D$ and 
$$
S_{E_1}=\big\{i \in \{1, \ldots, n\}\mid\, \beta^{E_1}(x_i)=\beta_2(x_i)\big\}
$$ 
with $0\leq \beta^{E_1}(x_1) <1$ the weight of $E_1$ at $x_i$. 

On the other hand, the existence of a strongly parabolic map
$$0\neq \Phi_0 := \Phi_{\vert_{E_0}}: E_0 \to E_1 \otimes K_\Sigma(D)$$ 
implies that
$$
H^0 \Big(S Par Hom \big(E_0,E_1 \otimes K_\Sigma(D)\big)\Big) \neq 0. 
$$
Moreover,
$$
S Par Hom\big(E_0, E_1 \otimes K(D)\big) \cong Hom \left(E_0, E_1 \otimes K\left(D \setminus \cup_{i \in S_{E_1^c}} \{x_i\} \right) \right),
$$
since, denoting the parabolic weights of $E_0$ and $E_1$ at $x_i$ respectively by $\beta^{E_0}(x_i)$ and  $\beta^{E_1}(x_i)$, we have  
\begin{align*}
S_{E_0} = S_{E_1}^c & = \big\{ i \in \{1, \ldots, n\} \mid \, \beta^{E_0}(x_i) = \beta_2(x_i)\big\}  \\ & = \big\{ i \in \{1, \ldots, n\} \mid \, \beta^{E_0}(x_i) > \beta^{E_1}(x_i)\big\}.
\end{align*}
Hence, a necessary condition for $(E_0 \oplus E_1, \Phi)$ to be a critical point is that
\begin{align*}
0   \leq \deg Hom &  \Big( E_0, E_1 \otimes K_\Sigma\big( D \setminus \cup_{i \in S_{E_1}^c} \{x_i\} \big) \Big) \\ & = \deg \Big( E_0^* \otimes  E_1 \otimes K_\Sigma\big(D \setminus \cup_{i \in S_{E_1}^c} \{x_i\} \big) \Big) 
\\ &  =\deg \Big( E_0^* \otimes E_1 \otimes  K_\Sigma \otimes \mathcal{O}_\Sigma\big(D \setminus \cup_{i \in S_{E_1}^c} \{x_i\} \big) \Big)  \\ & =  \deg( E_1) -\deg( E_0) + 2(g-1) + \big\lvert  D \setminus \cup_{i \in S_{E_1}^c} \{x_i\} \big\rvert \\ & =  \deg( E_1) -\deg( E_0) + 2(g-1) + n - \vert  S_{E_1}^c \vert \\ & = \deg( E) - 2\deg( E_0) + 2(g-1) + \vert S_{E_1} \vert,
\end{align*}
where we used the fact that $\deg K_\Sigma = 2(g-1)$ and that, for any divisor $\widetilde{D}=\sum_{x \in \Sigma} n_x\, x$, we have  
$$\deg \mathcal{O}_\Sigma(\widetilde{D})= \deg(\widetilde{D})= \sum_{x \in \Sigma}n_x.$$
Using \eqref{eq:stability} we conclude that if $(E_0 \oplus E_1, \Phi)$ is a critical point then
$$
\varepsilon_{S_{E_1}}(\beta_2-\beta_1) + d < 2d_0 \leq d + 2(g-1) +  \vert S_{E_1} \vert,
$$
where $d_0=\deg E_0$, $d=\deg E$, $\beta_2 -\beta_1$ is the vector 
$$
\big(\beta_2(x_1)-\beta_1(x_i), \ldots, \beta_2(x_n)-\beta_1(x_n)\big)
$$ 
and $\varepsilon_{S_{E_1}}(\beta_2-\beta_1)$ is the sum defined in \eqref{eq:epsilon}.

Given $S\subset \{1,\ldots, n\}$ and $d_0\in \mathbb{Z}$,  let $\mathcal{M}_{(d_0,S)}$ be the critical submanifold  formed by parabolic Higgs bundles ${\bf E}=(E_0 \oplus E_1, \Phi)\in \mathcal{N}^{0,\Lambda}_{\beta,2,d}$, where $E_0$ is a parabolic line bundle of topological degree $d_0$ and parabolic weights $\beta^{E_0}$ satisfying  
$S_{E_0}=S^c$ (i.e. $\beta^{E_0}(x_i)=\beta_2(x_i)$ if and only if $i \in S^c$).  Then
\begin{proposition} 
\label{prop:cond}
Given $S\subset \{1,\ldots, n\}$ and $d_0\in \mathbb{Z}$,  the critical submanifold $\mathcal{M}_{(d_0,S)}\subset \mathcal{N}^{0,\Lambda}_{\beta,2,d}$ is nonempty if and only if
\begin{equation}
\label{eq:cond}
\varepsilon_S(\beta_2-\beta_1) + d < 2 d_0 \leq d + 2(g-1) + \vert S \vert.
\end{equation}

Moreover, denoting by $\widetilde{S}^m \Sigma$ the $2^{2g}$ cover of the symmetric product $S^m \Sigma$ under the map $x \mapsto 2x$ on $\text{Jac}(\Sigma)$, the map
\begin{align} \label{eq:map2}
\mathcal{M}_{(d_0,S)} & \to \widetilde{S}^m \Sigma, \\ \nonumber
(E_0 \oplus E_1, \Phi) & \mapsto (E_0, \text{div}\, \Phi_0)
\end{align}
is an isomorphism for 
$$
m= d - 2d_0 + 2(g-1) + \vert S\vert,
$$
where $\text{div} \, \Phi_0$ (the zero set of $\Phi_0:=\Phi_{\vert_{E_0}}$) is a non-negative divisor of degree $m$.
\end{proposition}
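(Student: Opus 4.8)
The plan is to reduce everything to the description of the non-minimal critical points obtained in the analysis preceding the statement, and then to a count of square roots of line bundles on $\Sigma$. Recall that a point of $\mathcal{M}_{(d_0,S)}$ is the isomorphism class of a $\beta$-stable PHB ${\bf E}=(E_0\oplus E_1,\Phi)$ with $\Phi$ strictly lower-triangular (hence automatically trace-free), where $E_0$ is a parabolic line bundle whose underlying bundle has degree $d_0$ and whose discrete weight data is $S_{E_0}=S^c$, where $E_1=\Lambda\otimes E_0^{-1}$ by the fixed-determinant condition (so the underlying bundle of $E_1$ has degree $d-d_0$ and $S_{E_1}=S$), and where $0\neq\Phi_0:=\Phi_{\vert_{E_0}}\in H^0\big(\Sigma, S Par Hom(E_0,E_1\otimes K_\Sigma(D))\big)$. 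Using the isomorphism $S Par Hom(E_0,E_1\otimes K_\Sigma(D))\cong E_0^{-1}\otimes E_1\otimes K_\Sigma\otimes\mathcal{O}_\Sigma(\sum_{i\in S}x_i)$ established above, $\Phi_0$ is in fact a nonzero section of a genuine line bundle $L_{E_0}:=E_0^{-2}\otimes\Lambda\otimes K_\Sigma(\sum_{i\in S}x_i)$ of degree $m=d-2d_0+2(g-1)+\vert S\vert$, so that $\text{div}\,\Phi_0$ is a well-defined effective divisor of degree $m$.

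Granting this, the necessity of \eqref{eq:cond} is already contained in the discussion above: $\beta$-stability of ${\bf E}$ forces $\mu(E_1)<\mu(E)$, which by Example~\ref{ex:1} is exactly the left inequality $2d_0>d+\varepsilon_S(\beta_2-\beta_1)$, while the existence of a nonzero section of $L_{E_0}$ forces $m=\deg L_{E_0}\geq 0$, which is the right inequality. For sufficiency I would argue conversely: assuming \eqref{eq:cond}, since $m\geq 0$ choose any effective divisor $E$ of degree $m$; the line bundle $\Lambda\otimes K_\Sigma(\sum_{i\in S}x_i)\otimes\mathcal{O}_\Sigma(-E)$ has even underlying degree $2d_0$, hence admits a square root $E_0$ because the squaring morphism $\text{Jac}^{d_0}(\Sigma)\to\text{Jac}^{2d_0}(\Sigma)$ is surjective. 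Give $E_0$ the unique weights with $S_{E_0}=S^c$, set $E_1=\Lambda\otimes E_0^{-1}$ with the complementary weights, let $\Phi_0$ be the section of $L_{E_0}\cong\mathcal{O}_\Sigma(E)$ with divisor $E$, and let $\Phi$ be the corresponding strictly lower-triangular Higgs field. Then ${\bf E}=(E_0\oplus E_1,\Phi)$ has the form of Proposition~\ref{prop:Simpson}, so it is a critical point; its only proper parabolic subbundle that respects the decomposition and is preserved by $\Phi$ is $E_1$, so ${\bf E}$ is stable if and only if $\mu(E_1)<\mu(E)$, i.e. if and only if $2d_0>d+\varepsilon_S(\beta_2-\beta_1)$, which holds by hypothesis. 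Hence ${\bf E}\in\mathcal{M}_{(d_0,S)}$ and this set is nonempty.

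For the second assertion I would produce an explicit inverse to $(E_0\oplus E_1,\Phi)\mapsto(E_0,\text{div}\,\Phi_0)$. A point of $\widetilde{S}^m \Sigma$ is exactly a pair $(E_0,E)$ consisting of an effective divisor $E$ of degree $m$ together with a degree-$d_0$ line bundle $E_0$ satisfying $E_0^{\otimes 2}\cong\Lambda\otimes K_\Sigma(\sum_{i\in S}x_i)\otimes\mathcal{O}_\Sigma(-E)$ (the $2^{2g}$ choices of such a square root being the sheets of the cover $\widetilde{S}^m \Sigma\to S^m\Sigma$, the map $E_0\mapsto E_0^{\otimes 2}$ realizing the $x\mapsto 2x$ of the statement). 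To such a pair I would associate the stable PHB built as in the previous paragraph from $E_0$ (with weights $S_{E_0}=S^c$), $E_1=\Lambda\otimes E_0^{-1}$, and the section of $L_{E_0}\cong\mathcal{O}_\Sigma(E)$ with divisor $E$. One then checks that these two maps are mutually inverse and that both are algebraic (they are cut out by the usual universal constructions), so the first is an isomorphism of varieties.

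The main obstacle is the bookkeeping in this last step. One must verify that the decomposition $E=E_0\oplus E_1$ is canonically attached to a critical point -- it is the weight-space decomposition of the $S^1$-action provided by Proposition~\ref{prop:Simpson}, so any isomorphism of PHBs in $\mathcal{M}_{(d_0,S)}$ respects it -- and that two sections of the line bundle $L_{E_0}$ with the same divisor differ only by a nonzero constant, which is realized by the diagonal gauge transformation $\mathrm{diag}(g_0,g_0^{-1})$ acting on $\Phi_0$ by multiplication by $g_0^2$. Together these show that $(E_0,\text{div}\,\Phi_0)$ determines ${\bf E}$ up to isomorphism and, conversely, that distinct square roots $E_0$ give genuinely non-isomorphic parabolic Higgs bundles, so that the $\text{Jac}[2]$-ambiguity in the choice of square root is precisely the $2^{2g}$ sheets of $\widetilde{S}^m \Sigma$ and is neither collapsed nor enlarged by Higgs-bundle isomorphisms. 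Everything else is formal given the critical-point analysis that precedes the statement.
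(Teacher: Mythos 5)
Your proposal is correct and follows essentially the same route as the paper's proof: necessity is read off from the stability and degree discussion preceding the statement, sufficiency is obtained by building $(E_0\oplus E_1,\Phi)$ from an effective divisor of degree $m$ together with a choice of square root line bundle (the $2^{2g}$ two-torsion ambiguity), and bijectivity of \eqref{eq:map2} comes from the canonical splitting plus the fact that rescaling $\Phi_0$ is realized by a determinant-one gauge transformation. Your additional remarks (stability checked only on $E_1$ via Proposition~\ref{prop:Simpson}, canonicity of the decomposition under PHB isomorphisms) merely make explicit points the paper leaves implicit, not a different argument.
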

\begin{proof}
The discussion preceding this statement shows that \eqref{eq:cond} is necessary for $\mathcal{M}_{(d_0,S)}$ to be nonempty.

Suppose now that a pair $(d_0, S)$ satisfies \eqref{eq:cond}. Given an effective divisor $D_m\in S^m \Sigma$ with $m= d - 2d_0 + 2(g-1) + \vert S\vert$ one gets a line bundle $\mathcal{O}_\Sigma(D_m)$ with a nonzero section $\Phi_0$ determined up to multiplication by a nonzero scalar, as well as the bundle 
$$
U := K_\Sigma \otimes \mathcal{O}_\Sigma(\cup_{i \in S} \,\{x_i\}) \otimes \mathcal{O}_\Sigma(-D_m)
$$
of degree $2d_0 - d$. Then,  one can choose a line bundle $L_0 \in \text{Jac}^{d_0}(\Sigma)$, such that
\begin{equation}
\label{eq:lzero}
L_0^{\otimes 2} = \Lambda \otimes U
\end{equation}
and  equip it with the parabolic structure given by the trivial flag over each point $x_i \in D$ and the  weight assignment
$$
\beta^{L_0}(x_i)= \left\{ \begin{array}{l} \beta_1(x_i), \,\, \text{if} \,\, i\in S \\ \\ \beta_2(x_i), \,\, \text{if} \,\, i\in \{1, \ldots, n\} \setminus S. \end{array} \right. 
$$
In addition one considers the bundle 
$$
L_1 : = L_0 \otimes U^*
$$
equipped with the complementary parabolic structure. Defining $\Phi$ to have component $\Phi\vert_{L_0}=\Phi_0$ one  obtains a PHB ${\bf E}=(L_0 \oplus L_1, \Phi)$ which clearly has the desired invariants $(d_0, S)$, has the required determinant (since $\Lambda^2 (L_0 \oplus L_1) = L_0 \otimes L_1=L_0^{\otimes 2} \otimes U^* = \Lambda$) and is stable if \eqref{eq:cond} is satisfied. Hence  \eqref{eq:cond} is a sufficient condition for $\mathcal{M}_{(d_0,S)}$ to be nonempty. Note that there exist $2^{2g}$ possible choices of $L_0$ satisfying \eqref{eq:lzero} (since the $2$-torsion points in the Jacobian form a group 
$$
\Gamma_2=\{ L\mid\, L^{\otimes 2} =\mathcal{O} \}
$$
isomorphic to $\mathbb{Z}^{2g}$), and that each choice gives a stable PHB. Hence, the map \eqref{eq:map2} is surjective. 

To see that it is injective we note that by taking non-zero scalar multiples of the Higgs field $\Phi_0 \in H^0(L_0^* \otimes L_1 \otimes K(\cup_{i \in S} \, \{x_i\}) )$ (in order to obtain the same divisor $\textit{div}\, \Phi$) one obtains two isomorphic PHBs since $(E, \Phi)$ is gauge equivalent to $(E, \lambda \Phi)$ for $\lambda \neq 0$.
\end{proof}
To compute  the Morse index at the points in $\mathcal{M}_{(d_0, S)}$ we use Proposition~\ref{prop:morseindex} to obtain the following proposition.
\begin{proposition}\label{prop:index}
The index of the critical submanifold $\mathcal{M}_{(d_0, S)}$ is
$$
\lambda_{(d_0, S)}= 2(g-1+n)+ 4d_0 - 2 d -2 \vert S\vert.
$$
\end{proposition}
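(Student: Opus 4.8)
The plan is to specialize the general Morse index formula of Proposition~\ref{prop:morseindex} to the present situation, in which $r=2$, $m=1$, the summands $E_0,E_1$ have rank $1$, and the flag of $E$ at every marked point $x_i$ is the full flag $\mathbb{C}^2\supset\mathbb{C}\supset 0$. All that is then required is to evaluate the handful of local combinatorial invariants that enter the formula.

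First I would record the purely numerical contributions. Since $s_{x_i}=2$ and $m_1(x_i)=m_2(x_i)=1$ we have $\sum_j m_j(x_i)^2=2$, so the first two terms of the formula give $2\cdot 4\,(g-1)+\sum_{i=1}^n(4-2)=8(g-1)+2n$. Because $E_0$ and $E_1$ are parabolic line bundles carrying the trivial one-step flag, every endomorphism of $E_l$ is parabolic at each $x_i$, hence $\dim P_{x_i}(E_l,E_l)=1$; the term $2\sum_{l=0}^{1}\bigl((1-g-n)+n\bigr)$ therefore equals $4(1-g)$.

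Next I would compute the last sum, which for $m=1$ collapses to the single index $l=0$, namely $2\bigl((1-g)-\deg E_1+\deg E_0-\sum_{i=1}^n\dim N_{x_i}(E_0,E_1)\bigr)$. Here $\deg E_0=d_0$, and since $\deg E_0+\deg E_1=\deg E=d$ one gets $-\deg E_1+\deg E_0=2d_0-d$. The only point that needs genuine attention is the count of strongly parabolic homomorphisms $E_0\to E_1$ at $x_i$. By definition of $\mathcal{M}_{(d_0,S)}$ we have $S_{E_0}=S^c$, so for $i\in S$ the weights are $\beta^{E_0}(x_i)=\beta_1(x_i)<\beta_2(x_i)=\beta^{E_1}(x_i)$, the strong parabolicity constraint is vacuous, and $\dim N_{x_i}(E_0,E_1)=1$; for $i\in S^c$ the weights satisfy $\beta^{E_0}(x_i)=\beta_2(x_i)>\beta_1(x_i)=\beta^{E_1}(x_i)$, strong parabolicity forces the value at $x_i$ to vanish, and $\dim N_{x_i}(E_0,E_1)=0$. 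Hence $\sum_{i=1}^n\dim N_{x_i}(E_0,E_1)=|S|$ and the last sum equals $2\bigl((1-g)+2d_0-d-|S|\bigr)$.

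Finally I would add the three pieces, $8(g-1)+2n+4(1-g)+2\bigl((1-g)+2d_0-d-|S|\bigr)$, and collect terms: the coefficients of $g$ sum to $2$, the constant terms to $-2$, which yields $2(g-1+n)+4d_0-2d-2|S|$, as asserted. I do not expect a real obstacle — once Proposition~\ref{prop:morseindex} is available the argument is essentially bookkeeping — but the step most likely to trip one up, and the one I would justify with care, is the evaluation of $\dim N_{x_i}(E_0,E_1)$: this is precisely where the discrete datum $S$ enters the index, and conflating the strongly parabolic condition with the ordinary parabolic one would change the count, and hence the final formula.
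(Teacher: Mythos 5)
Your proposal is correct and follows the same route as the paper: both specialize Proposition~\ref{prop:morseindex} to $r=2$, $m=1$, compute $\dim P_{x_i}(E_l,E_l)=1$ and $\dim N_{x_i}(E_0,E_1)=1$ for $i\in S$, $0$ for $i\in S^c$ (using $S_{E_0}=S^c$), and sum. Your explicit bookkeeping and the emphasis on the strongly parabolic versus parabolic distinction match the paper's argument exactly.
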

\begin{proof}Noting that all the multiplicities are equal to $1$ and that $s_x=2$ for every point in $D$, the proof follows from Proposition~\ref{prop:morseindex} after we compute the dimensions of the spaces $P_x(E_l,E_l)$, $l=0,1$, and $N_x(E_0,E_1)$ for every point $x\in D$. The space $P_x(E_l,E_l)$ is formed by the parabolic endomorphisms of $(E_l)_x$ and so, in this case,
$$
\dim P_x(E_l,E_l)= \dim End((E_l)_x) = 1.
$$ 
The space $N_{x_i}(E_0,E_1)$ is the space of strongly parabolic maps from $(E_0)_{x_i}$ to $(E_1)_{x_i}$ and so
$$
N_{x_i}(E_0,E_1) = \left\{ \begin{array}{l} 0, \,\, \text{if}\,\, \beta^{E_0}(x_i) > \beta^{E_1}(x_i) \\ \\ Hom((E_0)_{x_i},(E_1)_{x_i}), \,\, \text{otherwise}. \end{array} \right.
$$ 
Hence,
$$
\dim N_{x_i}(E_0,E_1) = \left\{ \begin{array}{l} 0, \,\, \text{if}\,\, i \in \{1, \ldots, n\} \setminus S \\ \\ 1, \,\, \text{if}\,\, i \in S. \end{array} \right.
$$
\end{proof}
With this we have the following proposition.
\begin{proposition}\label{prop:vanishing}
\begin{enumerate}
\item If $g\geq 1$ then $\lambda_{(d_0,S)}>0$ for all $(d_0, S)$ satisfying \eqref{eq:cond}. 
\item If $g=0$ and $n\geq 3$ then there is at most one pair $(d_0,S)$ satisfying \eqref{eq:cond} with $\lambda_{(d_0,S)}=0$. Moreover, this pair exists if and only if $\mathcal{M}_{\beta,2,d}^{0,\Lambda}=\varnothing$ and, in this case, $\mathcal{M}_{(d_0,S)}=\mathbb{C} \P^{n-3}$.
\end{enumerate}
\end{proposition}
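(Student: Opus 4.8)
The plan is to derive everything from the explicit index formula $\lambda_{(d_0,S)} = 2(g-1+n) + 4d_0 - 2d - 2|S|$ of Proposition~\ref{prop:index}, together with the inequalities~\eqref{eq:cond} and one Morse-theoretic input. The first move is a uniform lower bound for $\lambda_{(d_0,S)}$: since $0 \le \beta_1(x_i) < \beta_2(x_i) < 1$ for every $i$, each gap $\alpha_i := \beta_2(x_i) - \beta_1(x_i)$ lies in the open interval $(0,1)$, so
$$
\varepsilon_S(\beta_2 - \beta_1) = \sum_{i\in S}\alpha_i - \sum_{i\in S^c}\alpha_i > -|S^c| = |S| - n .
$$
Combining this with the left inequality $2 d_0 > \varepsilon_S(\beta_2-\beta_1) + d$ of~\eqref{eq:cond} and the integrality of $2d_0$ and of $d + |S| - n$ forces $2 d_0 \ge d + |S| - n + 1$; substituting into the index formula yields $\lambda_{(d_0,S)} \ge 2g$. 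Part~(1) is then immediate, since $g \ge 1$ makes this bound strictly positive.

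For part~(2) we have $g = 0$, so the bound becomes $\lambda_{(d_0,S)} \ge 0$, with equality exactly when $2 d_0 = d + |S| - n + 1$. In particular, in the equality case $d_0$ is determined by $S$ and $d + |S| - n$ must be odd; moreover, one checks that for $n \ge 3$ the right-hand inequality of~\eqref{eq:cond} is then automatic (it reduces to $1 - n \le -2$). To see that at most one pair can have $\lambda = 0$, suppose $S \ne T$ both do. From $2 d_0 = d + |S| - n + 1$ together with the left inequality of~\eqref{eq:cond} one extracts $\varepsilon_S(\beta_2 - \beta_1) < 1 - |S^c|$, equivalently $\sum_{i\in S}\alpha_i + \sum_{i\in S^c}(1-\alpha_i) < 1$, and similarly for $T$. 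Adding these two inequalities and grouping the indices into the four cells cut out by $S$ and $T$, every contribution is positive and each index of the symmetric difference $S \triangle T$ contributes exactly $1$, so $|S \triangle T| < 2$. On the other hand, $2 d_0 = d + |S| - n + 1$ and $2 d_0' = d + |T| - n + 1$ force $|S| \equiv |T| \pmod 2$, hence $|S \triangle T|$ is even, hence zero, contradicting $S \ne T$.

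For the equivalence between the existence of such a pair and the emptiness of $\mathcal{M}^{0,\Lambda}_{\beta,2,d}$, I would use that the function $f$ of~\eqref{eq:f} is a perfect Morse--Bott function on the nonempty connected manifold $\mathcal{N}^{0,\Lambda}_{\beta,2,d}$ (see~\cite{BY}), so that exactly one of its nonempty critical submanifolds has index $0$. By Proposition~\ref{prop:Simpson} and Remark~\ref{rmk:1}, the critical submanifolds are $\mathcal{M}^{0,\Lambda}_{\beta,2,d}$, on which $f \equiv 0$ attains its minimum, and the submanifolds $\mathcal{M}_{(d_0,S)}$, on which $f > 0$. Hence, if $\mathcal{M}^{0,\Lambda}_{\beta,2,d} \ne \varnothing$ it is the unique index-$0$ component and no $\mathcal{M}_{(d_0,S)}$ has index $0$; while if $\mathcal{M}^{0,\Lambda}_{\beta,2,d} = \varnothing$, then, since $f$ is proper and bounded below, its minimum is attained on an index-$0$ critical submanifold, which must be some $\mathcal{M}_{(d_0,S)}$, producing the desired pair. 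Finally, when $\lambda_{(d_0,S)} = 0$ we have $2 d_0 = d + |S| - n + 1$, so the integer $m$ of Proposition~\ref{prop:cond} equals $m = d - 2 d_0 + 2(g-1) + |S| = n - 3$; since $g = 0$ makes the $2^{2g}$-cover trivial and $S^{n-3}\C\P^1 \cong \C\P^{n-3}$, that proposition identifies $\mathcal{M}_{(d_0,S)} \cong \C\P^{n-3}$.

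The step I expect to be the real obstacle is the equivalence ``a $\lambda = 0$ pair exists $\iff \mathcal{M}^{0,\Lambda}_{\beta,2,d} = \varnothing$'', since it rests on perfectness of the Morse--Bott function and on the nonemptiness and connectedness of $\mathcal{N}^{0,\Lambda}_{\beta,2,d}$, facts imported from the literature. A self-contained alternative would analyze directly, via the splitting $\mathcal{O}(a)\oplus\mathcal{O}(b)$ on $\C\P^1$ and the stability criterion~\eqref{eq:stable}, precisely when no stable rank-$2$ parabolic bundle with determinant $\Lambda$ exists, and match that combinatorially with the existence of an $S$ with $\varepsilon_S(\beta_2-\beta_1) < 1 - |S^c|$ and $d + |S| - n$ odd; this is feasible but considerably more involved, so I would prefer the Morse-theoretic argument.
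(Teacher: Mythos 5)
Your part (1), the uniqueness statement in part (2), and the identification $\mathcal{M}_{(d_0,S)}\cong\C\P^{n-3}$ are correct, and the inequalities you use ($\varepsilon_S(\beta_2-\beta_1)>\vert S\vert-n$, integrality, the parity argument on $\vert S\vert$ and $\vert T\vert$, the ``sum of the two inequalities is $<2$ but the symmetric difference contributes at least its cardinality'' step) are exactly the paper's, merely repackaged as the clean lower bound $\lambda_{(d_0,S)}\geq 2g$. The genuine divergence, and the genuine gap, is in the equivalence ``a $\lambda=0$ pair exists $\iff \mathcal{M}^{0,\Lambda}_{\beta,2,d}=\varnothing$''. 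Your forward direction is fine once one grants that $f$ is a perfect Morse--Bott function and that $\mathcal{N}^{0,\Lambda}_{\beta,2,d}$ is connected: the degree-zero term of the Morse--Bott identity then forces a unique index-zero critical component, so a nonempty $\mathcal{M}^{0,\Lambda}_{\beta,2,d}$ excludes any index-zero $\mathcal{M}_{(d_0,S)}$. But your reverse direction assumes that $\mathcal{N}^{0,\Lambda}_{\beta,2,d}\neq\varnothing$ precisely in the regime $\mathcal{M}^{0,\Lambda}_{\beta,2,d}=\varnothing$, i.e.\ that a stable parabolic Higgs bundle exists even when no stable parabolic bundle does. That is not a harmless background fact: it is essentially the nontrivial existence statement at stake, it is not established anywhere in the paper, and the natural citations (the Betti-number computations of \cite{BY}, or weight-independence of the topology) are themselves obtained from this same Morse function or from wall-crossing, so invoking them here is uncomfortably close to circular.

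The paper avoids this entirely by arguing in the opposite logical order: following Boden--Hu \cite{BH}, if $\mathcal{M}^{0,\Lambda}_{\beta,2,d}=\varnothing$ then $\beta$ lies in a null chamber bounded by a unique vanishing wall $H_{(d_0,S)}$, and a Riemann--Roch/Euler-characteristic computation (using $Par\,Ext^1_{\beta^\prime}(L,F)=0$) shows that vanishing walls are exactly those with $2d_0-d-1+n-\vert S\vert=0$; since on the empty side one also has $2d_0-d>\varepsilon_S(\alpha)$, the pair $(d_0,S)$ satisfies \eqref{eq:cond}, so by Proposition~\ref{prop:cond} the index-zero critical component $\mathcal{M}_{(d_0,S)}$ is nonempty. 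In other words, the wall-crossing argument \emph{constructs} the index-zero component, and nonemptiness of $\mathcal{N}^{0,\Lambda}_{\beta,2,d}$ comes out as a consequence rather than going in as a hypothesis. To close your argument you must either supply the nonemptiness independently (a direct construction of a stable parabolic Higgs bundle for every generic weight with $g=0$, $n\geq 3$, or your suggested case analysis of $\mathcal{O}(a)\oplus\mathcal{O}(b)$ via \eqref{eq:stable}, carried out in full), or replace this step by the Boden--Hu vanishing-wall analysis; as written, the implication ``$\mathcal{M}^{0,\Lambda}_{\beta,2,d}=\varnothing\Rightarrow$ the pair exists'' is unproved.
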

\begin{proof}
If $\lambda_{(d_0,S)}=0$ then $2d_0 = 1-g-n+d+\vert S\vert$. Since, from \eqref{eq:cond}, we have $2d_0> \varepsilon_S(\alpha) + d$, with $\alpha=\beta_2-\beta_1$, we conclude that
$\varepsilon_S(\alpha) < 1-g-n+\vert S\vert$. Moreover, since by definition
$$
\varepsilon_S(\alpha)= \sum_{i \in S} \alpha_i - \sum_{i \in S^c} \alpha_i
$$ 
and $0 < \alpha_i < 1$, we have $\varepsilon_S(\alpha) >-\vert S^c \vert = \vert S \vert - n$ and so
$$
\vert S \vert - n < \varepsilon_S (\alpha) < 1-g-n+\vert S\vert,
$$ 
implying that $0 < 1-g$ and thus $g=0$. 

Let us assume now that $g=0$. Then \eqref{eq:cond}  and $\lambda_{(d_0,s)}=0$ imply that 
$$\vert S \vert - n < \varepsilon_S(\alpha) < 1+ \lvert S \rvert -n,$$ 
and so 
$$
0 < \sum_{i \in S} \alpha_i - \sum_{i \in S^c} \alpha_i + \vert S^c \vert < 1, 
$$
which is equivalent to
\begin{equation}
\label{eq:scomp}
0  <  \sum_{i \in S} \alpha_i + \sum_{i \in S^c} (1- \alpha_i) < 1,
\end{equation}
with the advantage that  now all the summands in \eqref{eq:scomp} are positive. If $\lambda_{(d_0^\prime, S^\prime)} = 0$ for some other $(d_0^\prime, S^\prime) \neq (d_0, S)$ then
$$
2(d_0^\prime - d_0) = \vert S^\prime \vert - \vert S \vert
$$
and so $\vert S^\prime \vert - \vert S \vert$ is even. This implies that there exist at least two indices  in $S\cup S^\prime$ that are not in $S^\prime \cap S$ and so  
$$\big\lvert (S\cup S^\prime) \cap (S\cap S^\prime)^c\big\rvert = \big\lvert (S^\prime \cup S)\cap \big((S^\prime)^c \cup S^c\big)  \big\rvert \geq 2.$$ 
Hence, since  both $S$ and $S^\prime$ satisfy \eqref{eq:scomp} we have that
$$
2 < \sum_{i \in S^\prime \cup S} \alpha_i + \sum_{i \in (S^\prime)^c \cup S^c} (1- \alpha_i) < 2. 
$$ 
which is impossible. Hence there is at most one pair $(d_0,S)$ satisfying \eqref{eq:cond} with $\lambda_{(d_0,S)}=0$.

Still assuming $g=0$, one has from Proposition~\ref{prop:cond} that
$$
\mathcal{M}_{(d_0,S)} \cong S^m \mathbb{C} \P^1 \cong \mathbb{C} \P^m 
$$
with $m= d-2d_0 - 2 + \vert S \vert$. In particular, if $\lambda_{(d_0,S)} =0$, we have that $m=n-3$ and so $\mathcal{M}_{(d_0,S)} \cong\mathbb{C} \P^{n-3}$.

To show that such a pair exists if and only if $\mathcal{M}_{\beta,2,d}^{0,\Lambda}=\varnothing$ we first define for any $(d_0,S)$  the hyperplane 
$$
H_{(d_0, S)}=\{(\beta_1, \beta_2)\in Q \mid\, \varepsilon_S(\alpha) + d = 2d_0\},
$$
where $Q:=\{ (\beta_1, \beta_2) \in \mathbb{R}^{2n} \mid \, 0 <\beta_{1,i} < \beta_{2,i} < 1,\, i=1, \ldots,n \}$ is the so-called \emph{weight space}. Boden and Hu show in \cite{BH} that, if  $\beta$ and $\beta^\prime$ are weights in adjacent connected components of $Q \setminus \cup_{(d_0,S)} H_{(d_0,S)}$,  (usually called \emph{chambers})  then the corresponding moduli spaces are related by a special birational transformation which is similar to a flip in Mori theory which will be studied in detail in Section~\ref{wall crossing}. Moreover, when $g=0$, there exist \emph{null chambers} formed by weights $\beta\in Q$ for which $\mathcal{M}_{\beta,2,d}^{0,\Lambda}=\varnothing$. Let $\beta$ and $\beta^\prime$ be weights on either side of a (unique) hyperplane separating a null chamber from the rest (called a \emph{vanishing wall}), and let $\delta$ be a weight on this hyperplane. Then, assuming $\mathcal{M}_{\beta^\prime,2,d}^{0,\Lambda}=\varnothing$, Boden and Hu show that there exists a canonical projective map
$$
\phi: \mathcal{M}_{\beta,2,d}^{0,\Lambda} \to \mathcal{M}_{\delta,2,d}^{0,\Lambda} 
$$  
which is a fibration with fiber $\mathbb{C} \P^a$, where $a=\dim \mathcal{M}_{\beta,2,d}^{0,\Lambda} - \dim \mathcal{M}_{\delta,2,d}^{0,\Lambda}=n-3$. Moreover, $\mathcal{M}_{\delta,2,d}^{0,\Lambda}$ consists of classes of strictly semistable bundles $E=L\oplus F$ for parabolic line bundles $L$ and $F$ with $S_F=S$ and $\deg(L)=d_0$.  Assuming, without loss of generality, that $\varepsilon_{S_F}(\widetilde{\beta}) > \varepsilon_{S_F}(\widetilde{\delta}) > \varepsilon_{S_F}(\widetilde{\beta}^\prime)$, the fact that $\mathcal{M}_{\beta^\prime,r,d}^{0,\Lambda}=\varnothing$ implies that there are no nontrivial extensions of $L$ by $F$, when regarded with weight $\beta^\prime$, i.e. $Par Ext^1_{\beta^\prime}(L,F)=0$ (cf. \cite{BY} for details). Then, the short exact sequence of sheaves
$$
0 \to Par Hom (L,F) \to Hom (L,F) \to Hom(L_D, F_D) /P_D(L,F) \to 0,
$$
(where, denoting by $P_x(L,F)$ the subspace of $Hom(L_x,F_x)$ consisting of parabolic maps, we write $P_D(L,F)=\oplus_{x \in D} P_x(L,F)$), gives us
\begin{align}
\label{seq:par}
\nonumber \chi\big( Par Hom (L,F)\big)& =\chi \big(Hom(L,F)\big) - \chi\big(Hom(L_D, F_D) /P_D(L,F)\big) \\ & = \chi \big(Hom(L,F)\big)+ \sum_{i=1}^n (\dim P_{x_i} -1).
\end{align}
Moreover, since $H^0\big(Par Hom_{\beta^\prime} (L,F)\big)=0$, 
\begin{align*}
0& = \dim Par Ext^1_{\beta^\prime}(L,F) = \dim H^1\big(Par Hom_{\beta^\prime} (L,F)\big) \\ & = - \chi\big(Par Hom_{\beta^\prime} (L,F)\big) = - \chi \big(Hom(L,F)\big) - \sum_{i=1}^n (\dim P_{x_i} -1) \\ & = - \chi (L^* \otimes F) + \vert S_L \vert 
 = 2d_0 - d - 1 + n - \vert S \vert,
\end{align*}
where we used the Riemann-Roch theorem and the fact that $S_L=S_F^c=S^c$. 
Hence, every vanishing wall is given by $H_{(d_0,S)}$ with $2d_0 - d - 1 + n - \vert S\vert = 0$. Conversely, if $d+1 -n + \vert S\vert$ is even and $d_0=( d + 1-n + \vert S\vert)/2$, then $H_{(d_0,S)}$ is a vanishing wall. We conclude that if $\beta^\prime$ is in a null chamber separated from the rest by a (unique) hyperplane $H_{(d_0,S)}$ then $2d_0 - d > \varepsilon_S(\alpha^\prime)$ with $\alpha^\prime=\beta^\prime_2 -\beta^\prime_1$, as usual, and $2d_0 - d - 1 + n - \vert S\vert = 0$ and so, when $n\geq 3$, $(d_0,S)$ originates a critical component with index $0$ (since this pair satisfies \eqref{eq:cond}).
\end{proof}
\begin{example}\label{ex2}
Let us now consider the case where  $g=0$ (i.e. $\Sigma=\mathbb{C} \P^1$) and  $\deg(E)=0$, and make the additional restriction of only considering rank-$2$ PHBs which are trivial as holomorphic vector bundles. 
Let $\mathcal H (\beta) \subset \mathcal N^{0,\Lambda}_{\beta,2,0}$ be the moduli space of such PHBs. The $S^1$-action on $ \mathcal N^{0,\Lambda}_{\beta,2,0}$ defined in \eqref{eq:action} restricts to an $S^1$-action 
on  $\mathcal H (\beta)$ with moment map the restriction to $\mathcal H (\beta)$ of the moment map $f$ defined in \eqref{eq:f}.
For a generic weight vector $\beta$ (with $0<\beta_1(x_j)<\beta_2(x_j)<1$ at the parabolic points $x_j \in D=\{x_1, \ldots, x_n\}$), the critical components  of $f=\frac{1}{2} \vert \vert \Phi \vert \vert^2$ where $f$ is nonzero are those $\mathcal{M}_{(0,S)} \subset \mathcal H (\beta)$ for which
$$
\varepsilon_S(\beta_2 -\beta_1) < 0 \leq \vert S \vert - 2.
$$
Indeed, by Proposition~\ref{prop:Simpson}, an element of $\mathcal{M}_{(0,S)}$ decomposes as $E=E_0\oplus E_1$, and so $d_0 = \deg(E_0)=0$. 

Hence, there is a one-to-one correspondence between the components $\mathcal{M}_{(0,S)}$ and the sets $S\subset \{1, \ldots, n\}$ with $\vert S \vert \geq 2$ which are short for $\alpha\in \mathbb{R}_+^n$, with $\alpha_i:=\beta_2(x_i) -\beta_1(x_i)$ (see \eqref{eq:short} for the definition of a short set). 

The Morse indices of the critical submanifolds $\mathcal{M}_{(0,S)}$ are 
$$
\lambda_{(0,S)}= 2(n-1-\vert S \vert).
$$
If one of these has index zero then the corresponding short set $S$ has cardinality $\vert S \vert =n-1$. As we will see later, the space $\mathcal{M}_{\beta,2,0}^{0,\Lambda}$ of ordinary rank-$2$ parabolic bundles of degree zero and fixed determinant can be identified with the set of spatial polygons in  $\mathbb{R}^3$ with $n$ edges of prescribed lengths equal to $\alpha_i$. Then, the existence of a short set with  cardinality $n-1$ implies that these polygons do not close and so $\mathcal{M}_{\beta,2,0}^{0,\Lambda} =\varnothing$ (thus verifying Proposition~\ref{prop:vanishing}).

To end this example we explore in detail the implications of the genericity condition on the weight vector $\beta$. Let $E$ be any rank-$2$ semistable parabolic bundle over $\mathbb{C} \P^1$ which is trivial as a holomorphic vector bundle. By Grothendieck's Theorem the underlying holomorphic bundle is isomorphic to the sum
$$
\mathcal{O}_{\mathbb{C} \P^1}(0) \oplus \mathcal{O}_{\mathbb{C} \P^1}(0).
$$
Hence, given an arbitrary $i\in\{1, \ldots,n\}$ there is a uniquely determined parabolic degree-$0$ line subbundle $L$ of $E$ with fiber over $x_i$ equal to $L_{x_i}=E_{x_i,2}$ (the underlying line bundle is just $\mathbb{C}\P^1 \times E_{x_i,2}$). Any other parabolic line subbundle $\widetilde{L}$ of $E$ admits a nontrivial parabolic map to $E/L$ . In particular its degree is also zero (cf. Lemma 2.4 in \cite{B}). We conclude that any parabolic line subbundle of $E$ must have degree zero and hence it is trivial as a holomorphic line bundle.

Knowing this, any rank-$2$ holomorphically trivial PHB which is semistable but not stable with respect to the weights $\beta$ must have an invariant line subbundle ${\bf L}$ satisfying 
\begin{equation}
\label{eq:gen1}
0 = \sum_{i \in S_L} \big(\beta_2(x_i)-\beta_1(x_i)\big) - \sum_{i \in S_L^c}  \big(\beta_2(x_i)-\beta_1(x_i)\big)
\end{equation}
(just use \eqref{eq:stable} with both $\deg(E)=\deg(L)=0$). For any $S\subset \{1, \ldots, n \}$ one can construct a parabolic line bundle which is trivial as a holomorphic line bundle and has parabolic weights 
$$
\beta^L (x_i) = \left\{ \begin{array}{l} \beta_2(x_i), \,\text{if}\,\, i \in S \\ \\ \beta_1(x_i),\, \text{if} \,\, i \notin S. \end{array} \right.
$$
Hence one may write $L= \mathbb{C} \P^1 \times \mathbb{C}$ and see it as a line subbundle ${\bf L}$ of the PHB
$$
{\bf E}=\left(E:= \mathbb{C} \P^1 \times \mathbb{C}^2, (\beta_j(x_i))_{x_i \in D}, \Phi=0\right)
$$  
with the flag structure defined by
\begin{align*}
\mathbb{C}^2 & = E_{x_i,1} \supset  E_{x_i,2}= \mathbb{C}  \supset 0, \\
0 & \leq \beta_1(x_i) <  \beta_2(x_i) < 1,
\end{align*}
where the class $[E_{x_i,2}] \in \mathbb{C} \P^1$  is the same for all $i \in S$ and satisfies 
$$
[E_{x_i,2}] = [L_{x_i}],\,\,\text{for} \,\, i \in S.
$$
(Note that $\mathbb{C} \P^1$ is the projective space of the fiber of $E$.)
Then ${\bf E}$ and ${\bf L}$ satisfy \eqref{eq:gen1} if and only if
$$
\sum_{i \in S}  \big(\beta_2(x_i)-\beta_1(x_i)\big) - \sum_{i \in S^c}  \big(\beta_2(x_i)-\beta_1(x_i)\big)= 0.
$$
We conclude that  a  weight vector $\beta$ is generic if and only if
$$
\varepsilon_S(\alpha):= \sum_{i \in S} \alpha_i - \sum_{i \in S^c} \alpha_i \neq 0
$$
for every $S \subset \{1, \ldots, n\}$, where $\alpha:= \beta_2-\beta_1$. Note that  this condition is  the same as the one used for  polygon and hyperpolygon spaces in Section~\ref{sec:pol}.
\end{example}

\begin{example}\label{ex:n4}
Let us consider the moduli space $\mathcal{N}_{\beta,2,0}^{0, \Lambda}$ of PHBs over $\C\P^1$ with $n=4$ parabolic points. By Proposition~\ref{prop:cond}, given $S\subset\{1,2,3,4\}$ and $d_0 \in \Z$, the critical submanifold $\mathcal{M}_{(d_0,S)}$ is nonempty if and only if 
$$
\varepsilon_S(\alpha) < 2d_0 \leq \lvert S \rvert -2,
$$
with $\alpha=\beta_2-\beta_1$. If $d_0=0$ then one obtains all short sets of cardinality at least $2$. One can easily check that the only other possible value is $d_0=1$, in which case one obtains $S=\{1,2,3,4\}$. 
There are then four index-$2$ critical points: $\mathcal{M}_{(1,\{1,2,3,4\})}$ and  $\mathcal{M}_{(0,S_i)}$, $i=1,2,3$, corresponding to the three possible short sets $S_i$ of cardinality $2$ (cf. Proposition~\ref{prop:index}). Moreover, from Proposition~\ref{prop:vanishing} we know that $\mathcal{M}_{\beta,2,0}^{0,\Lambda}$ is empty if and only if there is a short set $S_0$ of cardinality $3$, in which case  the critical component  $\mathcal{M}_{(0,S_0)}\cong \C\P^1$ has index-$0$. When nonempty $\mathcal{M}_{\beta,2,0}^{0,\Lambda}\cong \C\P^1$ is the critical component of index-$0$. 

Note that if we restrict the circle action to the moduli space $\mathcal{H}(\beta)$ as in Example~\ref{ex2}  we are left with the three index-$2$ critical points $\mathcal{M}_{(0,S_i)}$, $i=1,2,3$, corresponding to the three possible short sets $S_i$ of cardinality $2$, together with the minimal sphere (either $\mathcal{M}_{\beta,2,0}^{0,\Lambda}$ or $\mathcal{M}_{(0,S_0)}$ with $S_0$ the short set of cardinality $3$).
\end{example}

\section{Trivial rank-$2$ parabolic Higgs bundles over $\C\P^1$ versus hyperpolygons}
\label{sec:isomorphism}

In this section we give an explicit isomorphism between hyperpolygons spaces and  moduli spaces of parabolic Higgs bundles. 

Given a divisor $D=\{x_1, \ldots, x_n\}$  in $\C\P^1$, let $\mathcal H (\beta)$ be the subspace of $\mathcal N^{0,\Lambda}_{\beta,2,0}$ formed by 
rank-$2$ $\beta$-stable PHBs $E$ over $\C\P^1$ that are topologically trivial, (see Example \ref{ex2}) with generic parabolic weights $\beta_2(x_i), \beta_1(x_i)$.
The fact that the parabolic weights are generic implies that the vector $\alpha:= \beta_2 - \beta_1 \in \R_+^n$ is also generic  (see \eqref{eq:epsilon}), 
and hence we can consider the hyperpolygon space $X(\alpha)$.
Then we have the following result.

\begin{Theorem} \label{isomorphism}
The hyperpolygon space $X(\alpha)$ and the moduli space $\mathcal H(\beta)$ of PHBs  are isomorphic. 
\end{Theorem}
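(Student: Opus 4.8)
The plan is to write down the isomorphism $\mathcal I$ of \eqref{eq:1isom} explicitly and then to produce an explicit inverse. To a hyperpolygon $[p,q]\in X(\alpha)=\mu_{\C}^{-1}(0)^{\alpha\text{-}st}/K^{\C}$ I associate $\mathbf E=\mathcal I([p,q]):=(E,\Phi)$, where $E=\C\P^1\times\C^2$ is the holomorphically trivial rank-$2$ bundle, the flag at $x_i$ is
$$
\C^2=E_{x_i,1}\supset E_{x_i,2}:=\C\,q_i\supset 0
$$
(a genuine flag because condition (i) of Theorem~\ref{alpha-stability} forces $q_i\neq 0$) carrying the given weights $\beta_1(x_i)<\beta_2(x_i)$, and, after fixing an affine coordinate $z$ on $\C\P^1$ with $\infty\notin D$,
$$
\Phi:=A(z)\,dz,\qquad A(z):=\sum_{i=1}^{n}\frac{q_i p_i}{z-x_i}.
$$
The first task is to check that $\mathbf E\in\mathcal H(\beta)$. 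The equation $p_iq_i=0$ of \eqref{complex1} makes each residue $q_ip_i$ a trace-free nilpotent endomorphism with image $\C q_i=E_{x_i,2}$ and kernel $\C q_i$, so $\Phi$ is trace-free, $\Lambda^2E$ is trivial, and $\text{Res}_{x_i}\Phi=q_ip_i$ is strongly parabolic; and since $p_iq_i=0$, the relations \eqref{complex2} are equivalent to $\sum_i q_ip_i=0$, which is exactly the vanishing of the $z^{-1}$-coefficient of $A(z)$, i.e. the condition for $\Phi$ to extend holomorphically over $z=\infty$. Thus $\Phi\in H^0\big(\C\P^1, S Par End_0(E)\otimes K_{\C\P^1}(D)\big)$. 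Finally $\mathcal I$ is constant on $K^{\C}$-orbits: the torus $(\C^{*})^{n}$ fixes each line $\C q_i$ and each product $q_ip_i$, while $A\in SL(2,\C)$ sends $(\C q_i,\,q_ip_i)$ to $(A^{-1}\C q_i,\,A^{-1}(q_ip_i)A)$, which is the effect on the whole datum of the constant bundle automorphism $v\mapsto A^{-1}v$ of $E$; the $\Z_2$ acts trivially.

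To build the inverse $\mathcal J\colon\mathcal H(\beta)\to X(\alpha)$, fix $\mathbf E=(E,\Phi)\in\mathcal H(\beta)$. A holomorphic trivialization $E\cong\C\P^1\times\C^2$ (the set of such trivializations being a $GL(2,\C)$-torsor) yields a nonzero $q_i$ spanning $E_{x_i,2}$ and, because $\text{Res}_{x_i}\Phi$ is strongly parabolic and nilpotent with image in $\C q_i$, a unique row vector $p_i$ with $\text{Res}_{x_i}\Phi=q_ip_i$; strong parabolicity gives $p_iq_i=0$, and since $\text{Tr}\,\Phi=0$ and $\Phi$ is regular at $\infty$ one gets $\sum_i q_ip_i=0$, so $(p,q)\in\mu_{\C}^{-1}(0)$. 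Altering the trivialization by $A\in GL(2,\C)$ and rescaling the $q_i$'s moves $(p,q)$ precisely by the $K^{\C}$-action, so $\mathcal J$ is well defined, and by construction $\mathcal I$ and $\mathcal J$ are mutually inverse once the $(\C^{*})^{n}$- and scalar-ambiguities are quotiented out. Carrying this out over a universal family also makes $\mathcal I$ and $\mathcal J$ morphisms.

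The heart of the proof, and the step I expect to be the main obstacle, is the equivalence of the two stability conditions:
$$
[p,q]\ \alpha\text{-stable}\quad\Longleftrightarrow\quad \mathbf E=\mathcal I([p,q])\ \beta\text{-stable}.
$$
Condition (i) of Theorem~\ref{alpha-stability} is automatic, so the point is to match $\Phi$-invariant parabolic line subbundles $L\subset E$ with straight sets. If $L$ is $\Phi$-invariant then at each $x_i$ with $\text{Res}_{x_i}\Phi\neq 0$ one must have $L_{x_i}=\C q_i$ (otherwise $\text{Res}_{x_i}\Phi(L_{x_i})\subseteq \C q_i\cap L_{x_i}=0$ forces $L_{x_i}\subseteq\ker(q_ip_i)=\C q_i$), and $L_z$ is an eigenline of $A(z)$ for every $z$; a degree computation using the moment-map equation $\sum_i q_ip_i=0$ and the genericity of $\alpha$ then rules out $\Phi$-invariant line subbundles of negative degree as potential destabilizers, so it suffices to consider the constant (degree-zero) line subbundles $L=\C v\subset E$. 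Such an $L$ is $\Phi$-invariant precisely when $p_j=0$ for every $j$ outside the straight set $S:=\{i : q_i\parallel v\}$, and, taking $v$ along a maximal straight set (Remark~\ref{maximalstraight}, which also gives $S_{\C v}=S$), Example~\ref{ex:1} applied with $\deg E=\deg L=0$ identifies the failure of stability $\mu(L)\ge\mu(E)$ with $\varepsilon_S(\alpha)\ge 0$, i.e. with $S$ being long. Therefore $\beta$-stability of $\mathbf E$ amounts to requiring every maximal straight set $S$ with $p_j=0$ for all $j\in S^{c}$ to be short, which by Theorem~\ref{alpha-stability} and Remark~\ref{maximalstraight} is exactly the $\alpha$-stability of $[p,q]$.

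Finally, the $S^{1}$-equivariance is immediate: $\lambda\cdot[p,q]=[\lambda p,q]$ multiplies each $q_ip_i$, hence $A(z)$ and $\Phi$, by $\lambda$, which is the action \eqref{eq:action} on $\mathcal H(\beta)$. It remains to upgrade the bijection to an isomorphism of complex algebraic manifolds; for this one may either observe that $\mathcal I$ and $\mathcal J$ are morphisms that are mutually inverse, or note that $X(\alpha)$ and $\mathcal H(\beta)$ are smooth (hence normal) of the same dimension $2(n-3)$ and that $\mathcal I$ is a bijective morphism, so that $\mathcal I$ is an isomorphism by Zariski's Main Theorem.
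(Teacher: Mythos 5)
Most of your construction runs parallel to the paper's: your explicit partial-fractions formula $\Phi=\sum_i\frac{q_ip_i}{z-x_i}\,dz$ is a concrete substitute for the paper's appeal to existence/uniqueness of a meromorphic one-form with prescribed residues on $\C\P^1$, and your treatment of well-definedness on $K^{\C}$-orbits, of the inverse map via $\mathrm{Res}_{x_i}\Phi=q_ip_i$, and of $S^1$-equivariance matches the paper's proof. The problem is the step you yourself single out as the heart of the argument. In the direction ``$[p,q]$ $\alpha$-stable $\Rightarrow$ $\mathbf E$ $\beta$-stable'' you must bound \emph{every} $\Phi$-invariant parabolic line subbundle, and you dispose of the non-constant ones with the single sentence that ``a degree computation using the moment-map equation $\sum_i q_ip_i=0$ and the genericity of $\alpha$ rules out $\Phi$-invariant line subbundles of negative degree as potential destabilizers.'' No such computation is given, and the two ingredients you invoke cannot suffice. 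Indeed, on the stratum $p=0$ (the polygon space inside $X(\alpha)$) the moment-map equation is vacuous, $\Phi=0$, and \emph{every} line subbundle of $\C\P^1\times\C^2$ is $\Phi$-invariant, including saturated subbundles $L\cong\mathcal O(-k)$, $k\geq 1$, given by $z\mapsto\langle(f(z),g(z))^t\rangle$; for such an $L$ the incidence set $S_L=\{i: L_{x_i}=\C q_i\}$ need not be straight, so the $\alpha$-stability hypothesis (shortness of straight sets with $p_j=0$ outside) says nothing about it, while the stability inequality you need is $\varepsilon_{S_L}(\alpha)<2k$. For example, if several of the flag lines satisfy $[q_i]=[x_i:1]$, the degree $-1$ subbundle $z\mapsto\langle(z,1)^t\rangle$ meets all of those flags, and whether $\varepsilon_{S_L}(\alpha)<2$ holds depends on the actual magnitudes of the $\alpha_i$, not on genericity. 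So this reduction is a genuine missing argument, not a routine degree count.

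The paper closes exactly this hole by a different route: it argues (in its Example on trivial rank-$2$ parabolic bundles over $\C\P^1$, resting on the existence of a nontrivial parabolic map from any line subbundle to the quotient $E/L$ by a constant subbundle and the degree estimate of Bauer's Lemma 2.4) that the only parabolic (Higgs) line subbundles one must test are the holomorphically trivial, i.e.\ constant, ones, and only then runs the comparison with straight sets that you carry out correctly in the degree-zero case. To repair your proof you must either reproduce or cite such a statement controlling the parabolic degree of non-constant line subbundles of the trivial bundle (equivalently, bound $\varepsilon_{S_L}(\alpha)$ against $-2\deg L$ for subbundles whose fibers sweep through many flags), rather than attributing the reduction to the moment-map equation and genericity of $\alpha$; as it stands, the converse direction of your stability equivalence, and hence the well-definedness of $\mathcal I$, is not established.
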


\begin{proof}
Consider the map 
\begin{equation}
\label{eq:isom}
\begin{array}{rl}
\mathcal{I}:X(\alpha) \rightarrow & \mathcal H(\beta)\\ \\
{[p,q]}_{\alpha \textnormal{-st}} \mapsto & [E_{(p,q)}, {\Phi}_{(p,q)} ] =: \mathbf E_{(p,q)} \\
\end{array}
\end{equation}
where $E_{(p,q)}$ is the trivial vector bundle $\C\P^1\times \C^2$ with the parabolic structure consisting of weighted flags 
\begin{align*}\label{flag}
 \C^2 & \supset \langle q_i  \rangle \supset 0\\ 
0 \leq \beta_{1}(x_i) & < \beta_{2}(x_i) < 1
\end{align*}
over each $x_i \in D$, and where $\Phi_{[p,q]} \in H^0\big(S Par End(E_{(p,q)}) \otimes K_{\C\P^1}(D)\big)$ is the Higgs field uniquely determined by setting the residues at the parabolic points $x_i$ equal to
\begin{equation}
\label{eq:res}
\text{Res}_{x_i} \Phi := (q_i p_i)_0.
\end{equation}

We first show that the map $\mathcal I$ is well-defined, that is, the Higgs field $\Phi_{(p,q)}$ is uniquely defined,  the PHB $\mathbf E_{(p,q)}$ is stable, and the map $\mathcal I$ is independent of the choice of representative in ${[p,q]}_{\alpha \textnormal{-st}}$.

$\bullet$ Given a prescribed set of residues adding up to zero, Theorem II.5.3 in \cite{Rsurfaces} allows one  to construct a meromorphic 1-form (since  $\C\P^1$ is compact). This defines $\Phi$ up to addition of a holomorphic 1-form. However, by Hodge theory, the space of holomorphic 1-forms on a Riemann surface of genus $g$ has dimension $g$ (see Proposition III.2.7 in \cite{Rsurfaces}), and so on $\C\P^1$ a collection of residues adding up to zero uniquely determines a meromorphic 1-form.
Since  $(p,q) \in \mu_{\C}^{-1} (0)^{\alpha \textnormal{-st}}$, the set of residues  \eqref{eq:res}
adds up to $0$ by the complex moment map condition \eqref{complex} and so it uniquely determines the Higgs field  $\Phi_{(p,q)} \in H^0\big(S Par End(\C\P^1 \times \C^2) \otimes K_{\C\P^1}(D)\big).$

$\bullet$ Recall that the PHB $\mathbf E_{(p,q)} $ is stable if $\mu (L) < \mu (E_{(p,q)}) $ for all proper parabolic subbundles $L$ that are preserved by $ \Phi_{(p,q)} .$
Note that, since the bundle $E_{(p,q)}$ is topologically trivial, any parabolic Higgs subbundle ${\bf L}$ of ${\bf E}_{(p,q)}$ is also trivial (see Example \ref{ex2}) and its parabolic structure at each point $x_i \in D$ consists of the fiber $L_{x_i}$ with weight 
$$\beta^{L}(x_i) = \left\{ \begin{array}{ll} \beta_2 (x_i),&  \text{if} \, L_{x_i} =\langle q_i \rangle \\ \\ \beta_1(x_i), & \text{otherwise}. \end{array}\right.
$$ 
Consider the index set $S_L := \{ i \in \{1, \ldots, n \} \mid L_{x_i} = \textnormal{Im } q_i\}$ associated to any such subbundle. Since $L$ is topologically trivial, then $S_L$ is clearly straight. Let us assume without loss of generality that  the fiber of $L$ at each point of $\C\P^1$ is the space generated by $(1,0)^t$. Then, writing $q_i = (c_i,d_i)^t$, one has $d_i=0$ for $i\in S_L$ and $d_i \neq 0 $ for $i \in S_L^c.$
Since $\Phi_{(p,q)} $ preserves ${\bf L}$, then, writing $p_i=(a_i,b_i)$ the residues $(q_ip_i)_0$ satisfy 
\begin{displaymath}
(q_ip_i)_0 \left(\begin{array}{c}
1\\ \\
0
\end{array}\right) =
\left(\begin{array}{cc}
\frac{1}{2}(a_i c_i -b_i d_i) & b_i c_i\\ \\
a_i d_i & - \frac{1}{2}(a_i c_i -b_i d_i)\\
\end{array}\right)
\left(\begin{array}{c}
1\\ \\
0
\end{array}\right)
= 
\left(\begin{array}{c}
\lambda_i \\ \\
0
\end{array}\right)
\end{displaymath}
for some $\lambda_i \in \C.$
This implies that $a_id_i=0$ for every $i$ and so $a_i=0$ for every $i \in S_L^c$. 
Then, using the moment map condition  \eqref{complex1}, one has $b_i=0$  and thus $p_i=0$ for $i \in S_L^c$. 

Consequently, by the $\alpha$-stability of $(p,q)$ (see Theorem \ref{alpha-stability}) the index set $S_L$ is short. This, by \eqref{eq:stable} with $deg L = deg E = 0,$ is equivalent to $\mu(L) <\mu (E), $ and the stability of ${\bf E}_{(p,q)}$ follows.

$\bullet$ To see that $\mathcal I$ is independent of the choice of a representative in ${[p,q]}_{\alpha \textnormal{-st}}$ let $(\tilde p , \tilde q )$ be an element in the $K^\C$-orbit of $(p,q)$ and consider $[E_{(\tilde p, \tilde q)}, \Phi_{(\tilde p, \tilde q)} ]$ as before. The Higgs field $\Phi_{(\tilde p, \tilde q)}$ is defined by the residues
\begin{align*}
\text{Res}_{x_i}  \Phi_{(\tilde p_i, \tilde q_i)} & := (\tilde{q}_i \tilde{p}_i)_0 = \big(B q_i z_i^{-1} z_i p_i B^{-1}\big)_0 =
B (q_i p_i)_0 B^{-1} \\ & = B \, \text{Res}_{x_i} \Phi_{(p,q)} B^{-1}
\end{align*}
for some $B \in SL(2, \C)$ and $z_i \in \C^*$.
Similarly, the flags in $E_{(\tilde p, \tilde q)}$ are determined by $\tilde q_i = B q_i z_i^{-1}$. 
Note that $q_i z_i^{-1}$ is just another generator of $\langle q_i \rangle$, and $B$ acts on the whole bundle leaving the flag structure unchanged. Since the weights are obviously the same, we can conclude that $[E_{(p,q)}, \Phi_{(p,q)}] = [E_{(\tilde p, \tilde q)}, \Phi_{(\tilde p, \tilde q)} ].$
This completes the proof that the map $\mathcal{I}$ is well-defined.

Let us consider the map
$\mathcal{F}: \mathcal{H}(\beta) \to X(\alpha)$ defined by 
\begin{equation}\label{eq:mathcalf}
{\mathcal F} ([E, \Phi])= [p,q]_{\alpha \text{-st}}
\end{equation}
where $(p,q)$ is determined as follows. For every parabolic point $x_i \in D, $ let $q_i = (c_i, d_i)^t$ be a generator of the flag 
$E_{x_i,2}$ and, considering the residue of the Higgs field $\Phi$ at the parabolic point $x_i$ 
\begin{equation*}
 N_i:= \text{Res}_{x_i} \Phi = 
\left( \begin{array}{cc}
r_{11}^i & r_{12}^i\\ \\
r_{21}^i & r_{22}^i
\end{array} \right),
\end{equation*}
let  $p_i $ be 
\begin{equation}
\label{eq:pi}
 p_i = (a_i, b_i) := \left\{ \begin{array}{ll}
 \big( \frac{r_{21}^i}{d_i}, \frac{r_{12}^i}{c_i} \big), & \textnormal{if } c_i, d_i \neq 0; \\
  & \\
 \big( \frac{r_{21}^i}{d_i}, 0 \big), & \textnormal{if } c_i=0, d_i \neq 0; \\
& \\
 \big(0,\frac{r_{12}^i}{c_i} \big), & \textnormal{if } c_i \neq 0,  d_i = 0. \\
\end{array} \right.
\end{equation}
(Note that the case $c_i=d_i=0$ never occurs since the flags are complete.) To see that $\mathcal{F}$ is well-defined one needs 
to check that $(p,q)$, defined as above, is in $\mu_{\C}(p,q)=0$,  it is $\alpha$-stable and also that the value of $\mathcal{F}$ does not 
depend on the choice of generators of the flags $E_{x_i,2}$ nor on the choice of representative of the class $[E,\Phi]$.

$\bullet$ Since $N_i$ is by assumption trace-free, one gets $r_{22}^i= - r_{11}^i$.
Moreover, since $N_i$ preserves the flag, one has that $c_i=0$ implies $r_{12}^i=0$ and that $d_i=0$ implies $r_{21}^i=0$. Hence, in all cases one has $r_{12}^i=b_ic_i$ and $r_{21}^i=a_id_i$ and then
\begin{equation}
\label{eq:flag}
\left( \begin{array}{cc}
r_{11}^i & b_i c_i\\ \\
a_i d_i & - r_{11}^i\\
\end{array} \right)
\left( \begin{array}{c}
c_i\\ \\
d_i\\
\end{array} \right)= \left( \begin{array}{c}
\lambda c_i\\ \\
\lambda d_i
\end{array} \right)
\end{equation}
for some $\lambda \in \C.$  On the other hand,  since the residue of the Higgs field  is nilpotent, one has $\det N_i=0$ and so 
\begin{equation}\label{eq:r11 squared}
(r_{11}^i)^2 = - r_{12}^i r_{21}^i = - a_i b_i c_i d_i.
\end{equation}
Using \eqref{eq:flag} and \eqref{eq:r11 squared} one gets that
\begin{equation}\label{eq:r11}
r_{11}^i = \frac{a_i c_i - b_i d_i}{2}
\end{equation}
and so  the residue can be rewritten as
\begin{equation}\label{eq:Ni}
N_i = \left(\begin{array}{cc}
\frac{1}{2}(a_i c_i -b_i d_i) & b_i c_i\\ \\
a_i d_i & - \frac{1}{2}(a_i c_i -b_i d_i)\\
\end{array}\right).
\end{equation}
This, together with the fact that the sum of the residues $N_i$ is  $0$, implies condition \eqref{complex2}.
Moreover, \eqref{eq:r11 squared} and \eqref{eq:r11} give us
$$ \frac{(a_i c_i - b_i d_i)^2}{4}  = - a_i b_i c_i d_i,$$
which implies 
$$a_i c_i + b_i d_i =0.$$
Hence, the nilpotency of the residue $N_i$ implies  the complex moment map condition \eqref{complex1}.
This proves that $(p,q) \in \mu_{\C}^{-1}(0).$ 

$\bullet$ To show that $(p,q)$ is $\alpha$-stable, we need to check that conditions $(i)$ and $(ii)$ of Theorem~\ref{alpha-stability}
are verified. The first one ($q_i \neq 0$ for all $i$), is trivially verified since the flags are complete by assumption. To show  the second condition, let $S \subset \{ 1, \ldots, n\}$ be a maximal straight set such that $p_i=0$ for all $i \in S^c.$
As in Example \ref{ex2} one can construct a line subbundle $L_S$ of the trivial bundle $\C \P^1\times \C^2$ which is trivial as an holomorphic line bundle, with fiber the complex line generated by the $q_i$ for  $i \in S$. We then give $L_S$ a parabolic structure at the parabolic points $x_1, \ldots, x_n$ by assigning the parabolic weights 
$$
\beta^{L_S} (x_i) = \left\{ \begin{array}{l} \beta_2(x_i), \,\text{if}\,\, i \in S \\ \\ \beta_1(x_i),\, \text{if} \,\, i \notin S. \end{array} \right.
$$
By construction $L_S$ is a parabolic subbundle of ${\bf E}$. Moreover, it is also trivially preserved by the Higgs field $\Phi$ since, by the moment map condition \eqref{complex1}, one has $$ N_i q_i =0, \quad \forall \, i=1, \ldots, n.$$
Therefore, by stability of ${\bf E} $, one gets that $L_S$ satisfies $\mu(L_S) < \mu (E)$, which implies that $S$ is short since both bundles have degree zero.  By Remark \ref{maximalstraight}, this is equivalent to condition $(ii)$.

$\bullet$ To show that the value of $\mathcal{F}$ is independent of the choice of generator $q_i$ of the flag $E_{x_i,2}$, let $q_i, \tilde{q}_i$ be two different generators of  $E_{x_i,2}$. Then $\tilde{q}_i=\lambda_i q_i$ for some $\lambda_i \in \C^*$ and so \eqref{eq:pi}  clearly implies that $\tilde{p}_i=\lambda_i^{-1} p_i$ and then $[p,q]_{\alpha-st}=[(\tilde{p},\tilde{q})]_{\alpha-st}$.

$\bullet$ To show that $\mathcal J$ does not depend on the choice of  representative of  the class of $ [E , \Phi]$ one considers another PHB $\widetilde{\mathbf E}= (\widetilde E, \widetilde{ \Phi})$ in $[E , \Phi]$. Let $(\tilde p, \tilde q)$ be coordinates  determined from $\widetilde{\mathbf E}$ by the recipe above and denote by $\widetilde N_i$ the residues of the Higgs field $\widetilde{\Phi}$. Then there exists  $g\in SL(2,\C)$ such that
$\widetilde{E}_{x_i, 2} = g E_{x_i, 2}$
and so one can take
$ \tilde{q}_i= g\, q_i $, where $q_i$ is a generator of $E_{x_i,2}$. Moreover, since the Higgs field $\widetilde{\Phi}$ is obtained from $\Phi$ by conjugation with $g$, the residues  $\widetilde{N}_i $ of $\widetilde{\Phi}$ satisfy
$$ \widetilde{N}_i  = g\, N_i \, g^{-1} \quad \forall i=1, \ldots, n.$$ 
Since $\tilde{p}_i $ is determined by the equation $( \tilde{q}_i \tilde{p}_i)_0 = \widetilde{N}_i,$
one can easily see that 
$$ \tilde{p}_i= p_i g^{-1}$$
and so $(\tilde p , \tilde q)$ is in the $K^{\C}$-orbit of $(p,q)$.

Finally, from what was shown above it is clear that
$$ \mathcal{F}  = \mathcal I^{-1}.$$

\end{proof}
This isomorphism allows us to identify $X(\alpha)$ and $\mathcal{H}(\beta)$ as $S^1$-spaces.

\begin{proposition}
The isomorphism $\mathcal I$ is $S^1$-equivariant with respect to the $S^1$-actions on $X(\alpha)$ and on $\mathcal H (\beta)$  
defined in \eqref{action} and in \eqref{eq:action} respectively.
\end{proposition}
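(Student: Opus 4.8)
The plan is to unwind both circle actions on a representative $(p,q)\in\mu_{\C}^{-1}(0)^{\alpha\text{-st}}$ and check that the construction in \eqref{eq:isom} intertwines them, identifying $\lambda\in S^1\subset\C^*$ with $e^{\mathbf i\theta}$. First I would fix $\lambda\in S^1$ and a hyperpolygon $[p,q]\in X(\alpha)$, and compute $\mathcal I(\lambda\cdot[p,q])=\mathcal I([\lambda p,q])=[E_{(\lambda p,q)},\Phi_{(\lambda p,q)}]$. The parabolic bundle $E_{(\lambda p,q)}$ depends only on $q$ (through the flags $\langle q_i\rangle$ and the fixed weights $\beta_1(x_i)<\beta_2(x_i)$), so $E_{(\lambda p,q)}=E_{(p,q)}$ as parabolic bundles. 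Hence the only thing to track is the Higgs field.

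Next I would use that, by \eqref{eq:res}, the residues of $\Phi_{(\lambda p,q)}$ at the parabolic points are
$$
\text{Res}_{x_i}\Phi_{(\lambda p,q)}=(q_i(\lambda p_i))_0=\lambda\,(q_i p_i)_0=\lambda\,\text{Res}_{x_i}\Phi_{(p,q)}.
$$
The key point — and the only point that needs a word of justification — is that the assignment sending a collection of trace-free nilpotent residues summing to zero to the unique meromorphic endomorphism-valued $1$-form on $\C\P^1$ with those residues is $\C$-linear. This is immediate from its characterization in the proof of Theorem~\ref{isomorphism}: since $H^0(\C\P^1,K_{\C\P^1})=0$, such a $1$-form is \emph{unique}, and uniqueness forces the linear $1$-form built from $\lambda\cdot\{\text{Res}_{x_i}\Phi_{(p,q)}\}$ to coincide with $\lambda\,\Phi_{(p,q)}$ (which has the right residues and the right pole structure). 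Therefore $\Phi_{(\lambda p,q)}=\lambda\,\Phi_{(p,q)}$.

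Combining these two observations gives
$$
\mathcal I(\lambda\cdot[p,q])=[E_{(p,q)},\lambda\,\Phi_{(p,q)}]=\lambda\cdot[E_{(p,q)},\Phi_{(p,q)}]=\lambda\cdot\mathcal I([p,q]),
$$
where the middle equality is exactly the definition of the $S^1$-action \eqref{eq:action} on $\mathcal H(\beta)$. I would also remark that this computation is manifestly independent of the chosen representative $(p,q)$, since both sides are already known to be well defined (the map $\mathcal I$ having been shown well defined in Theorem~\ref{isomorphism}, and the $S^1$-actions being defined on the quotients). There is no real obstacle here; the only substantive ingredient is the linearity of ``residues $\mapsto$ Higgs field'' on $\C\P^1$, which is a direct consequence of the uniqueness already established, so the proof is essentially a one-line verification once the actions are written out.
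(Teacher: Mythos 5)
Your proposal is correct and follows essentially the same route as the paper's proof: the parabolic bundle is unchanged since it depends only on $q$, the residues scale by the circle parameter, and the uniqueness of the meromorphic $1$-form on $\C\P^1$ with prescribed residues forces $\Phi_{(\lambda p,q)}=\lambda\,\Phi_{(p,q)}$, giving equivariance. Your extra remark spelling out why uniqueness yields linearity of the residues-to-Higgs-field assignment is a slightly more explicit justification of the step the paper states directly, but it is the same argument.
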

\begin{proof}
The bundles $e^{{\bf i} \theta} \cdot \mathcal I ([p,q])$ and $\mathcal I (e^{{\bf i} \theta} \cdot [p,q])$ are both topologically trivial and have the same parabolic structure. Moreover, the Higgs field $\Phi_{(e^{{\bf i} \theta}p,q)}$ on $\mathcal I (e^{{\bf i} \theta} \cdot [p,q])$ is uniquely 
determined by the residues 
$$
Res_{x_i} \Phi_{(e^{{\bf i} \theta}p,q)} = (e^{{\bf i} \theta} q_i p_i)_0 = e^{{\bf i} \theta} Res_{x_i} \Phi_{(p,q)}
$$
and hence 
$$
\Phi_{(e^{{\bf i} \theta}p,q)} = e^{{\bf i} \theta} \Phi_{(p,q)}.
$$
Therefore, as PHBs, 
$$
e^{{\bf i} \theta} \cdot \mathcal I ([p,q])= \mathcal I (e^{{\bf i} \theta} \cdot [p,q])
$$
and the isomorphism $\mathcal I $ is $S^1$-equivariant.
\end{proof}
Since the isomorphism $\mathcal I: X(\alpha) \to  \mathcal H (\beta)$ is $S^1$-equivariant it maps the critical components of the moment map $\phi$ on $X(\alpha)$  to the critical components of the moment map $f$ on $\mathcal H (\beta)$ as well as the corresponding flow downs. This flow down is the restriction to $\mathcal H (\beta)$ of the  \emph{nilpotent cone}  of $\mathcal{N}_{\beta,2,0}^{0,\Lambda}$, following \cite{n1}-Section 5 and \cite{GGM}-Section 3.5. 

In particular,  the moduli space of polygons $M(\alpha)$ is mapped to the moduli space $\mathcal{M}^{0,\Lambda}_{\beta,2,0}$ of rank-$2$, holomorphically trivial, fixed determinant parabolic bundles over $\C \P^1$. The fact that these two spaces are isomorphic has already been noted in \cite{AW} for small values of $\beta$. 

Moreover, the  critical components $X_S$ in $X(\alpha)$ are mapped to the critical components $\mathcal{M}_{(0,S)}$ in $\mathcal H (\beta)$ and each connected component of the core $U_S$ is isomorphic through $\mathcal I $ to the component $\mathcal{U}_{(0,S)}:=\mathcal{I}(U_S)$ of the nilpotent cone defined as the closure inside $\mathcal{H}(\beta)$ of the set
\begin{equation}\label{eq:nilcone}
\left\{ [E, \Phi] \in  \mathcal H (\beta) \mid \lim_{t \to \infty}    [E, t\cdot \Phi] \in  \mathcal M_{(0,S)} \right\}.
\end{equation}
The nilpotent cone $\mathcal L_{\beta}$ of $ \mathcal H (\beta)$ is then
\begin{displaymath}
\mathcal L_{\beta} := \mathcal M^0_{\beta,2,0} \cup  \bigcup_{S \in \mathcal S'(\alpha)} \mathcal U_{(0,S)},
\end{displaymath}
and so $\mathcal L_{\beta} = \mathcal I (\mathfrak L_{\alpha})$.

\begin{example}
Consider the case of $4$ parabolic points as in Example~\ref{ex:n4}. The closure of the flow down of the index-$2$ critical points $\mathcal{M}_{(1,\{1,2,3,4\})}$ and  $\mathcal{M}_{(0,S_i)}$, $i=1,2,3$ is a union of four spheres intersecting the minimal component at four distinct points. Consequently, the nilpotent cone of $\mathcal{N}_{\beta,2,0}^{0,\Lambda}$ is a union of five spheres arranged in a $\widetilde{D}_4$ configuration \cite{Fri} as in Figure~\ref{fig:cone4}. Restricting this  nilpotent cone to $\mathcal{H}(\beta)$ we loose the critical point $\mathcal{M}_{(1,\{1,2,3,4\})}$ and the corresponding flow down. Hence, the nilpotent cone of   $\mathcal{H}(\beta)$ is a union of  four spheres arranged in a $D_4$ configuration just like the core of the associated hyperpolygon space $X(\alpha)$ (cf. Example~\ref{ex:h4}).
\begin{figure}[htbp]
\psfrag{1}{\footnotesize{$\mathcal{M}_{\big(1,\{1,2,3,4\}\big)}$}}
\psfrag{2}{\footnotesize{$\mathcal{M}_{(0,S_1)}$}}
\psfrag{3}{\footnotesize{$\mathcal{M}_{(0,S_2)}$}}
\psfrag{4}{\footnotesize{$\mathcal{M}_{(0,S_3)}$}}
\psfrag{M}{\footnotesize{$\mathcal{M}_{\beta,2,0}^{0,\Lambda}$ or $\mathcal{M}_{(0,S_0)}$
with $\lvert S_0 \rvert =3$}}
\includegraphics[scale=.5]{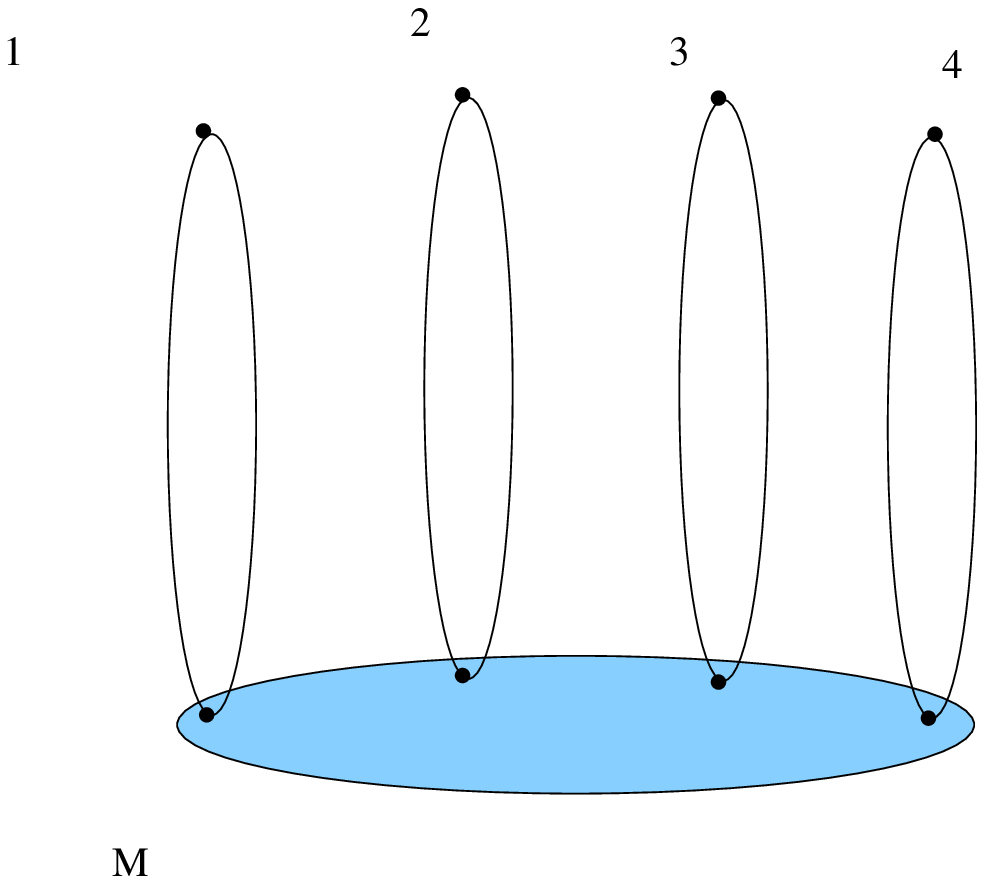}
\caption{Nilpotent cone of $\mathcal{N}_{\beta,2,0}^{0,\Lambda}$ when $n=4$: union of five spheres arranged in a $\widetilde{D}_4$
configuration.}
\label{fig:cone4}
\end{figure}

\end{example}
\section{Wall crossing}\label{wall crossing}

The variation of moduli of PHBs has been studied in detail by Thaddeus in  \cite{T}. 
The construction in this Section is an adaptation of his work to the moduli space $\mathcal{H}(\beta)$ of rank-$2$, topologically trivial PHBs over $\C \P^1$ with  fixed determinant and trace-free Higgs field considered in the previous section.
As we have seen in Example~\ref{ex2}, rank-$2$ PHBs over $\C \P^1$ which are trivial as holomorphic bundles are semistable but not stable with respect to the parabolic weights $\beta_{1}(x_i), \beta_{2}(x_i)$ if and only if 
$$ \varepsilon_S (\alpha)=0$$
for some set $S \subset \{ 1, \ldots, n \}$, with $\alpha=\beta_2 - \beta_1$. 
Hence, any such PHB must have an invariant line subbundle ${\bf L}$ which is trivial as a holomorphic line bundle and satisfies  \begin{equation}\label{eq:1}
 0= \sum_{i \in S_L} \alpha_i - \sum_{i \in S_L^c} \alpha_i 
\end{equation}
for $S_L = \{i \in \{1, \ldots, n \} \mid  \beta^L(x_i)= \beta_2 (x_i) \}$, where $\beta^L (x_i)$ is the parabolic weight of $L$ at $x_i$.
We will call the set $S_L$ the \emph{discrete data} associated to a line subbundle ${\bf L}$ of a strictly semistable PHB satisfying equation \eqref{eq:1}.

Let $Q$ be the weight space of all possible values of $(\beta_1 (x_j), \beta_2(x_j))$. It can be seen as the product 
$$ Q = \mathcal S_2^n\subset (\R_+)^{2n}$$
of $n$ open simplices of dimension $2$ determined by 
$$ 0 \leq \beta_1 (x_j ) < \beta_2 (x_j) <1.$$

If the discrete data of a line subbundle is fixed, then \eqref{eq:1} requires that the point $\beta\in Q$ belongs to the intersection of an affine hyperplane with $Q$. We will call such an intersection a \emph{wall}. There is therefore a finite number of walls. Note that a set $S\subset \{1, \ldots, n\}$ and its complement give rise to the same wall and that on the complement of these walls the stability condition is equivalent to semistability.  A connected component  of this complement will be called a \emph{chamber}. In this section we study how the moduli spaces $\mathcal H(\beta)$ change when a wall is crossed.

Let us then choose a point in $Q$ lying on only one wall $W$. A small neighborhood of this point intersects exactly two chambers, say $\Delta^+$ and  $\Delta^-$ and a 
PHB is $\Delta^+$-stable (respectively  $\Delta^-$-stable) if it is stable with respect to the weights $\beta \in \Delta^+$ (respectively $\Delta^-$). 
If a PHB $\bf E$ is $\Delta^-$-stable but $\Delta^+$-unstable then it has a PH line subbundle $\bf L$ (called a \emph{destabilizing subbundle}) for which the stabilizing condition holds in  $\Delta^-$ but fails in $\Delta^+$.

Let $\mathcal H^+$ and $\mathcal H^-$ respectively denote the moduli space of $\Delta^+$ and $\Delta^-$-stable rank-$2$, fixed-determinant PHBs which are trivial as holomorphic bundles. Choosing the wall $W$ is equivalent to choosing a set $S \subset \{1, \ldots, n\}$ for which \eqref{eq:1} holds whenever $\beta \in W$. The only ambiguity is the possibility of exchanging $S$ with $S^c$. Interchanging these sets if necessary one can assume without loss of generality that $\varepsilon_S (\alpha) >0$ whenever $\beta \in \Delta^+$ with $\alpha=\beta_2-\beta_1$. The following propositions then hold.
\begin{proposition}
 If $\bf E$ is $\Delta^-$-stable but $\Delta^+$-unstable then any destabilizing subbundle has discrete data $S$.
\end{proposition}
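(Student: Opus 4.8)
The plan is to read off, from the very definition of a destabilizing subbundle, a pair of sign conditions on the linear form $\varepsilon_B$ attached to the discrete data $B$ of such a subbundle, and then to use the hypothesis that the chosen base point lies on a single wall to pin down $B$; a final coefficient comparison then excludes $B=S^c$.

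First I would fix a destabilizing subbundle $\mathbf L\subset\mathbf E$, give it the parabolic structure induced from $\mathbf E$ as in Example~\ref{ex:1}, and set $B:=S_L=\{i:\beta^L(x_i)=\beta_2(x_i)\}$, its discrete data. Since $\mathbf E$ is trivial as a holomorphic bundle, every parabolic line subbundle of $E$ is holomorphically trivial (see Example~\ref{ex2}), so $\deg L=\deg E=0$. Feeding $\deg E=\deg L=0$ into the stability inequality \eqref{eq:stable} of Example~\ref{ex:1}, the condition $\mu_\beta(L)<\mu_\beta(E)$ becomes exactly $\varepsilon_B(\beta_2-\beta_1)<0$, i.e. ``$B$ is short at $\beta$''. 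Now ``$\mathbf L$ is destabilizing'' means precisely that the stability condition for $\mathbf L$ holds for weights in $\Delta^-$ and fails for weights in $\Delta^+$; translated, $\varepsilon_B(\alpha^-)<0$ for $\beta^-\in\Delta^-$ and $\varepsilon_B(\alpha^+)\geq0$ for $\beta^+\in\Delta^+$, where $\alpha^\pm=\beta^\pm_2-\beta^\pm_1$. Because $\Delta^+$ is a chamber it is disjoint from every wall, in particular from $W_B=\{\beta\in Q:\varepsilon_B(\beta_2-\beta_1)=0\}$, so in fact $\varepsilon_B(\alpha^+)>0$.

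Next I would show $W_B=W$. The functional $\varepsilon_B$ is a nonzero linear form on $\R^{2n}$, so $\{\varepsilon_B=0\}$ is a genuine hyperplane. Take the neighborhood $N$ of the chosen point of $W$ small enough to meet, among walls, only $W$, and to meet exactly the two chambers $\Delta^+$ and $\Delta^-$; then $N\setminus W$ is the disjoint union of $N\cap\Delta^+$ and $N\cap\Delta^-$, on which $\varepsilon_B$ is respectively $>0$ and $<0$ (each chamber is connected and avoids $W_B$, so $\varepsilon_B$ has constant sign there). Hence $\varepsilon_B$ vanishes nowhere on $N\setminus W$, so $\{\varepsilon_B=0\}\cap N\subseteq W$. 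On the other hand, a path inside $N$ from a point of $\Delta^-$ to a point of $\Delta^+$ meets $\{\varepsilon_B=0\}$ by the intermediate value theorem, so $\{\varepsilon_B=0\}\cap N$ is nonempty; being the intersection of a hyperplane with the open set $N$, it is a $(2n-1)$-dimensional subset of the hyperplane $W$. Two hyperplanes sharing a $(2n-1)$-dimensional subset coincide, so $\{\varepsilon_B=0\}=\{\varepsilon_S=0\}$ and therefore $\varepsilon_B=\pm\varepsilon_S$ as linear forms.

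Finally, since the coefficient of $\beta_2(x_i)$ in $\varepsilon_T$ equals $+1$ when $i\in T$ and $-1$ otherwise, $\varepsilon_B=\varepsilon_S$ forces $B=S$ while $\varepsilon_B=-\varepsilon_S$ forces $B=S^c$. But $S$ was chosen so that $\varepsilon_S(\alpha^+)>0$ for $\beta^+\in\Delta^+$, hence $B=S^c$ would give $\varepsilon_B(\alpha^+)=-\varepsilon_S(\alpha^+)<0$, contradicting what was established above; therefore $B=S$. I expect the only genuinely non-formal step to be the middle one — arguing that a sign change of $\varepsilon_B$ across the \emph{single} wall $W$ forces $W_B$ to be exactly $W$ — while the translation via \eqref{eq:stable} and the final coefficient comparison are routine bookkeeping.
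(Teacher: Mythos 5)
Your proof is correct and follows essentially the same route as the paper: the paper's argument is precisely that a destabilizing subbundle stops destabilizing as $\beta$ crosses from $\Delta^+$ to $\Delta^-$, so $\varepsilon_{S_L}(\alpha)$ changes from positive to negative across the single wall $W$, forcing $S_L=S$. You have simply made explicit the steps the paper leaves implicit (the translation via \eqref{eq:stable} with $\deg L=\deg E=0$, the hyperplane-coincidence argument identifying $W_{S_L}$ with $W$, and the sign convention ruling out $S^c$), all of which are sound.
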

\begin{proof}
As the weight $\beta$ crosses from $\Delta^+$ to $\Delta^-$ any destabilizing subbundle ${\bf L}^+$ of $\bf E$ stops destabilizing. Hence the corresponding values of $\varepsilon_{S_L}(\alpha)$ change from positive to negative. This implies that ${\bf L}^+$ has discrete data $S_L=S$.
\end{proof}
\begin{proposition}
\label{prop:unique}
 If $\bf E$ is $\Delta^-$-stable but $\Delta^+$-unstable then the destabilizing subbundle ${\bf L}^+$ is unique. 
\end{proposition}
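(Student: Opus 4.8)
The plan is to argue by contradiction, supposing that a $\Delta^-$-stable but $\Delta^+$-unstable PHB $\mathbf E$ admits two distinct destabilizing subbundles ${\bf L}^+$ and ${\bf L}'$, and deriving a contradiction with $\Delta^-$-stability. By the previous proposition both ${\bf L}^+$ and ${\bf L}'$ carry the discrete data $S$, so in particular both are trivial as holomorphic line bundles, both are preserved by the Higgs field $\Phi$, and at each marked point $x_i$ the parabolic weight is $\beta_2(x_i)$ for $i\in S$ and $\beta_1(x_i)$ for $i\in S^c$. The key point to exploit is that on $\C\P^1$ the underlying bundle $E$ is holomorphically trivial $\mathcal O\oplus\mathcal O$, so two distinct holomorphic line subbundles ${\bf L}^+\neq{\bf L}'$ must have fibers ${\bf L}^+_x\neq{\bf L}'_x$ for \emph{every} $x\in\C\P^1$ (two distinct sub-line-bundles of the trivial bundle meet fiberwise only where they coincide, which for honest subbundles is either nowhere or everywhere); in particular ${\bf L}^+_{x_i}\neq{\bf L}'_{x_i}$ at each parabolic point.

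From there I would consider the inclusion ${\bf L}^+\hookrightarrow\mathbf E$ composed with the projection $\mathbf E\to\mathbf E/{\bf L}'$. Both are parabolic Higgs bundle maps, so the composite $\psi:{\bf L}^+\to\mathbf E/{\bf L}'$ is a parabolic map of Higgs line bundles; since ${\bf L}^+_{x_i}\neq {\bf L}'_{x_i}$ for all $i$, $\psi$ is nonzero, and because the underlying holomorphic bundles are both trivial of degree $0$, $\psi$ is actually an isomorphism of the underlying holomorphic line bundles. One then has to check that $\psi$ respects the parabolic and Higgs structures well enough to conclude that $\mathbf E$ splits as ${\bf L}^+\oplus{\bf L}'$ as a parabolic Higgs bundle (with ${\bf L}'\cong\mathbf E/{\bf L}^+$). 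The point is that the parabolic weights of ${\bf L}^+$ and of ${\bf L}'\cong\mathbf E/{\bf L}^+$ are complementary at each $x_i$ (the weights not used by ${\bf L}^+$ are exactly those of the quotient), and a nonzero parabolic map between line bundles whose weights differ forces a weight comparison — here the splitting is produced because $\psi$ is an isomorphism of holomorphic bundles and $\mathbf E$ is an extension of ${\bf L}'$ by ${\bf L}^+$ that $\psi$ shows to be split. Once $\mathbf E\cong{\bf L}^+\oplus{\bf L}'$, compute parabolic slopes: the discrete data being $S$ gives $\varepsilon_S(\alpha^-)<0$ for $\beta\in\Delta^-$, which means $\mu({\bf L}^+)<\mu(\mathbf E)$ but $\mu({\bf L}')>\mu(\mathbf E)$ (the two slopes straddle $\mu(\mathbf E)$ symmetrically since the degrees are equal), so ${\bf L}'$ is a $\Delta^-$-destabilizing subbundle — contradicting $\Delta^-$-stability of $\mathbf E$.

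The main obstacle, I expect, is the step that turns "two distinct destabilizing subbundles" into "the extension splits." One needs to be careful that the parabolic and Higgs structures are genuinely compatible: the isomorphism $\psi:{\bf L}^+\to\mathbf E/{\bf L}'$ of holomorphic bundles must be shown to be a \emph{parabolic} isomorphism (same weights at each $x_i$) and to intertwine the induced Higgs fields, so that its graph — or rather the resulting splitting of $0\to{\bf L}'\to\mathbf E\to{\bf L}'^\vee\otimes(\det)\to 0$ — is a splitting in the category of parabolic Higgs bundles. Here the equality $S_{{\bf L}^+}=S_{{\bf L}'}=S$ is exactly what makes the weights of ${\bf L}^+$ and of the quotient $\mathbf E/{\bf L}'$ match up, and the fact that both line subbundles are $\Phi$-invariant makes $\psi$ automatically Higgs-compatible. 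A cleaner alternative, which I would try first, is purely numerical: directly compare $\mu({\bf L}^+)$ and $\mu({\bf L}')$ using that they both have discrete data $S$ and hence the \emph{same} parabolic degree, and observe that the subbundle ${\bf L}^+\cap{\bf L}'$ (as a saturated subsheaf, which is $0$ since the fibers are generically distinct) together with ${\bf L}^+ + {\bf L}' = E$ forces, via additivity of parabolic degree on the exact sequence $0\to{\bf L}^+\cap{\bf L}'\to{\bf L}^+\oplus{\bf L}'\to{\bf L}^++{\bf L}'\to 0$, the relation $\operatorname{pdeg}({\bf L}^+)+\operatorname{pdeg}({\bf L}')=\operatorname{pdeg}(E)+\operatorname{pdeg}(0)=\operatorname{pdeg}(E)$, i.e. $\operatorname{pdeg}({\bf L}^+)=\frac12\operatorname{pdeg}(E)=\mu(E)$ — contradicting that ${\bf L}^+$ is a destabilizing subbundle in $\Delta^+$ (where $\mu({\bf L}^+)>\mu(E)$, strictly). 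This second route avoids the extension-splitting bookkeeping entirely and is the one I would write up, falling back on the splitting argument only if the sheaf-theoretic parabolic-degree additivity needs more justification than expected.
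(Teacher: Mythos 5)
Your overall strategy---using that a holomorphically trivial rank-$2$ bundle over $\C\P^1$ has only ``constant'' degree-zero line subbundles, so two distinct destabilizers would have distinct fibers at \emph{every} point---is sound, but both of your proposed ways of finishing rest on a step that fails. In the numerical route you say you would actually write up, the identity $\text{pdeg}({\bf L}^+)+\text{pdeg}({\bf L}')=\text{pdeg}(E)$ is not a valid instance of additivity: parabolic degree is additive only when the parabolic structures on the terms of the sequence are compatible, and the induced structures on ${\bf L}^+\oplus{\bf L}'$ do not match that of $E$. Concretely, since by the previous proposition both destabilizers have discrete data $S$, one computes $\text{pdeg}({\bf L}^+)+\text{pdeg}({\bf L}')-\text{pdeg}(E)=\varepsilon_S(\alpha)$, which is nonzero off the wall, so the equality you invoke is false precisely where you need it. What is true is only the inequality $\text{pdeg}({\bf L}^+)+\text{pdeg}({\bf L}')\le\text{pdeg}(E)$, because at each $x_i$ at most one of two everywhere-distinct lines can coincide with the flag line $E_{x_i,2}$, so the two induced weights sum to at most $\beta_1(x_i)+\beta_2(x_i)$; with that inequality your contradiction does close (each destabilizer has parabolic slope strictly bigger than $\mu(E)$ in $\Delta^+$). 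Even more directly: since both have discrete data $S\neq\varnothing$, at any $x_i$ with $i\in S$ both fibers must equal $E_{x_i,2}$, flatly contradicting your own fiberwise-distinctness observation---no degree computation is needed at all.

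Your fallback (splitting) route has a parallel error: the quotient ${\bf E}/{\bf L}'$ carries the weights \emph{discarded} by ${\bf L}'$, hence discrete data $S^c$, so its weights are complementary to, not equal to, those of ${\bf L}^+$, and $\psi$ cannot be a parabolic isomorphism; in fact the parabolic condition forces $\psi$ to vanish at every $x_i$ with $i\in S$, while $\psi$ is a nonzero constant map of trivial line bundles. That tension is exactly how the paper argues: it composes the second destabilizer into ${\bf L}^-={\bf E}/{\bf L}^+$, obtaining a nonzero element of $\mathbb H^0\big(C^\bullet({\bf F},{\bf L}^-)\big)$, which is impossible because these are stable line PHBs with different parabolic degrees ($\text{pdeg}\,{\bf F}>\text{pdeg}\,{\bf L}^-$ on $\Delta^+$). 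Likewise your claim that $\mu({\bf L}^+)$ and $\mu({\bf L}')$ ``straddle'' $\mu({\bf E})$ is wrong: both destabilizers have the same discrete data $S$ and hence equal parabolic degree, so a splitting ${\bf L}^+\oplus{\bf L}'$ could never reproduce the parabolic structure of ${\bf E}$ in the first place.
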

\begin{proof}
Let ${\bf L}^-$ be the quotient of $\bf E$ by a destabilizing subbundle ${\bf L}^+$ (topologically trivial as well). If $\bf F$ is another $\Delta^+$-destabilizing trivial line subbundle, then it must have discrete data $S$. There is then a non-trivial homomorphism $\bf F \to {\bf L}^-$ of PHBs and hence a nontrivial element of $\mathbb H^0(C^\bullet (\bf F, \bf L^-))$ (both $\bf F$ and ${\bf L}^-$ are trivially $\Delta^+$ and $\Delta^-$-stable). By Proposition~\ref{prop:GP}, this is impossible since the two PHBs are not isomorphic. Indeed,
$$
\textnormal{pdeg} \, {\bf F} = \sum_{i \in S} \beta_2(x_i) + \sum_{i \in S^c} \beta_1(x_i), 
$$
while
$$
\textnormal{pdeg} \, {\bf L}^-  = \sum_{i \in S} \beta_1(x_i) + \sum_{i \in S^c} \beta_2(x_i),
$$
and so 
\begin{align*}
\beta \in \Delta^+ & \Leftrightarrow \varepsilon_{S}(\alpha) >0 
 \Leftrightarrow \sum_{i \in S} (\beta_2(x_i) -  \beta_1(x_i)) >\sum_{i \in S^c} (\beta_2(x_i) -  \beta_1(x_i)) \\ &  \Leftrightarrow \text{pdeg} \, \bf F > \text{pdeg} \, {\bf L}^- .
\end{align*}
\end{proof}
\begin{proposition}
Let ${\bf L}^+$ and ${\bf L}^-$ be two line PHBs which are trivial as holomorphic line bundles with discrete data $S$ and $S^c$. Then any extension of ${\bf L}^-$  by ${\bf L}^+$ is $\Delta^+$-unstable and it is $\Delta^-$-stable if and only if it is not split.
\end{proposition}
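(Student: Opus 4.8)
The plan is to reduce the assertion to parabolic–slope computations for the two obvious sub‑line‑bundles ${\bf L}^{+}$ and ${\bf L}^{-}$, and then to control an arbitrary would‑be destabilizing subbundle by the rigidity of morphisms between stable parabolic Higgs bundles.

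First I would record the relevant slopes. Since $\mathrm{Ext}^{1}_{\mathcal O}(\mathcal O,\mathcal O)=0$ on $\C\P^{1}$, the underlying bundle $E$ is holomorphically trivial, so $\deg E=0$ and the weights of ${\bf E}$ at each $x_{i}$ are $\{\beta_{1}(x_{i}),\beta_{2}(x_{i})\}$; using that ${\bf L}^{+}$ has discrete data $S$ and ${\bf L}^{-}$ has discrete data $S^{c}$, a direct computation gives
$$\mu({\bf L}^{+})-\mu({\bf E})=\tfrac12\,\varepsilon_{S}(\alpha),\qquad \mu({\bf L}^{-})-\mu({\bf E})=\tfrac12\,\varepsilon_{S^{c}}(\alpha)=-\tfrac12\,\varepsilon_{S}(\alpha).$$
In the extension $0\to{\bf L}^{+}\to{\bf E}\xrightarrow{\pi}{\bf L}^{-}\to0$ the subbundle ${\bf L}^{+}$ is $\Phi$-invariant, and $\varepsilon_{S}(\alpha)>0$ on $\Delta^{+}$, so the first equality shows that every such ${\bf E}$ is $\Delta^{+}$-unstable. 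The second equality settles one direction of the equivalence: if the extension splits, ${\bf E}={\bf L}^{+}\oplus{\bf L}^{-}$, then ${\bf L}^{-}\hookrightarrow{\bf E}$ is $\Phi$-invariant with $\mu({\bf L}^{-})>\mu({\bf E})$ on $\Delta^{-}$, so ${\bf E}$ is $\Delta^{-}$-unstable.

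For the converse I would argue by contradiction: suppose the extension is non‑split but ${\bf E}$ is $\Delta^{-}$-unstable, and choose a $\Phi$-invariant parabolic line subbundle ${\bf M}\subset{\bf E}$ with $\mu({\bf M})\ge\mu({\bf E})$ for $\beta^{-}\in\Delta^{-}$. By Example~\ref{ex2}, $M$ has degree $0$ and is holomorphically trivial. If $M=L^{+}$ as subsheaves of $E$, then ${\bf M}={\bf L}^{+}$ with its induced parabolic structure, and the first displayed formula gives $\mu({\bf M})=\mu({\bf E})+\tfrac12\varepsilon_{S}(\alpha^{-})<\mu({\bf E})$, a contradiction; hence $M\neq L^{+}$. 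Then, since $M\ne L^{+}$ and both are rank‑one subbundles, $M\cap L^{+}=0$, so the composite $\phi\colon{\bf M}\hookrightarrow{\bf E}\xrightarrow{\pi}{\bf L}^{-}$ is a nonzero morphism of parabolic Higgs bundles which is injective as a map of sheaves; since $\deg M=\deg L^{-}=0$ on $\C\P^{1}$ it is in fact an isomorphism of the underlying holomorphic line bundles.

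To finish, I would invoke the rigidity of stable parabolic Higgs bundles recalled in Section~\ref{sec:prel}: ${\bf M}$ and ${\bf L}^{-}$ are line bundles, hence automatically stable, so the existence of the nonzero parabolic morphism $\phi$ forces ${\bf M}\cong{\bf L}^{-}$ as parabolic Higgs bundles, and then $\phi$ — a nonzero endomorphism of a stable object up to this isomorphism — is itself an isomorphism. Consequently ${\bf M}\hookrightarrow{\bf E}$ composed with $\phi^{-1}$ is a section ${\bf L}^{-}\to{\bf E}$ of $\pi$, so the extension splits, contradicting the hypothesis; hence ${\bf E}$ is $\Delta^{-}$-stable. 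The main obstacle is precisely this last step — upgrading ``$\phi$ is a holomorphic isomorphism that is parabolic'' to ``$\phi$ is a parabolic (Higgs) isomorphism'': a priori the weights of ${\bf M}$ could be strictly smaller than those of ${\bf L}^{-}$ at some marked points, and it is the no‑nonzero‑maps‑between‑non‑isomorphic‑stable‑PHBs statement that rules this out, while the bookkeeping of the parabolic structures induced on subbundles and quotients must be carried out carefully throughout.
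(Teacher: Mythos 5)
Your slope computations and the two easy implications (every such extension is $\Delta^+$-unstable because ${\bf L}^+$ is $\Phi$-invariant with $\mu({\bf L}^+)-\mu({\bf E})=\tfrac12\varepsilon_S(\alpha)>0$ on $\Delta^+$; split $\Rightarrow$ $\Delta^-$-unstable via ${\bf L}^-$) are correct and agree with the paper. The genuine gap is at the step you flag yourself. From the non-vanishing of $\phi\colon {\bf M}\to{\bf L}^-$ you only get $S_M\subseteq S^c$ (for $i\in S$ the quotient map kills $E_{x_i,2}=L^+_{x_i}$, so $M_{x_i}\neq E_{x_i,2}$), and rigidity of stable PHBs cannot upgrade this to $S_M=S^c$: the version of rigidity that is actually true says that a nonzero parabolic (Higgs) map between stable objects forces $\mu(\mathrm{source})\leq\mu(\mathrm{target})$, and is an isomorphism only when the slopes agree. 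If $S_M\subsetneq S^c$ then $\mathrm{pdeg}\,{\bf M}<\mathrm{pdeg}\,{\bf L}^-$, and nonzero parabolic Higgs maps ${\bf M}\to{\bf L}^-$ between these two non-isomorphic stable line PHBs do exist -- a nowhere-vanishing map of trivial line bundles is automatically parabolic whenever the source weight is the smaller one at each marked point -- so the ``no maps between non-isomorphic stable PHBs'' phrase, read literally, fails exactly in the regime where you invoke it. Rigidity alone therefore does not rule out the scenario you identified as the main obstacle.

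What closes the gap is the wall-adjacency of $\Delta^\pm$, which the paper uses to pin down the discrete data of the destabilizing subbundle \emph{before} appealing to rigidity. Since $\beta^\pm$ lie in the two chambers meeting a point $\delta$ that lies only on the wall $\varepsilon_S=0$, for any $T\subsetneq S^c$ one has $\varepsilon_T=\varepsilon_{S^c}-2\sum_{i\in S^c\setminus T}\alpha_i$, which is strictly negative at $\delta$ (where $\varepsilon_{S^c}=0$) and hence, by constancy of sign, on all of $\Delta^-$; so a $\Phi$-invariant line subbundle ${\bf M}$ with $S_M\subsetneq S^c$ has $\mu({\bf M})<\mu({\bf E})$ at $\beta^-$ and cannot be destabilizing. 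Hence $S_M=S^c$, so ${\bf M}$ and ${\bf L}^-$ have the same weights and the same parabolic degree, and only now does rigidity (in the equal-slope case, via $\mathbb H^0\big(C^\bullet({\bf M},{\bf L}^-)\big)\neq 0$ and the scalar-endomorphism property) make $\phi$ an isomorphism and split the sequence. This is precisely the paper's route: it first argues that the $\Delta^-$-destabilizing subbundle has discrete data $S^c$ and only then concludes the splitting; with that one insertion your argument becomes complete.
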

\begin{proof}
The bundle ${\bf L}^+$ would be the destabilizing subbundle of such an extension $\bf E$ so this extension would be $\Delta^+$-unstable. Moreover, if $\bf E$ splits as ${\bf L}^+ \oplus {\bf L}^-$ then ${\bf L}^-$ is the $\Delta^-$-destabilizing subbundle of $\bf E$ which would then be $\Delta^-$-unstable.

Conversely, if the extension $\bf E$ is $\Delta^-$-unstable, the $\Delta^-$-destabilizing bundle $\bf F$ must not be $\Delta^+$-destabilizing and so it has discrete data $S^c$. The composition map 
$$
\bf F \hookrightarrow \bf E \to {\bf L}^-
$$
must then be a nontrivial homomorphism of PHBs since $\bf F$ and ${\bf L}^-$ have the same incidences with the flags ($\bf F$ and ${\bf L}^-$ both have discrete data $S^c$). Hence there is an element of $\mathbb H^0(C^\bullet (\bf F, {\bf L}^-))$ which, by Proposition~\ref{prop:GP}, must be an isomorphism and so $\bf E$ splits.
\end{proof}
The above three propositions then give the following result.
\begin{Theorem}
 If $\bf E$ is $\Delta^-$-stable but $\Delta^+$-unstable then it can be expressed uniquely as a nonsplit extension of PHBs
$$
0 \to {\bf L}^+ \to {\bf E} \to {\bf L}^- \to 0,
$$
where ${\bf L}^\pm$ are parabolic Higgs line bundles with discrete data $S$ and $S^c$. 
Conversely, any such extension is $\Delta^-$-stable but $\Delta^+$-unstable. 
\end{Theorem}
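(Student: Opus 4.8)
The plan is to bootstrap the statement directly from the three preceding propositions, adding only some bookkeeping about holomorphic triviality and about the parabolic weights of quotient bundles. For the forward implication, suppose ${\bf E}$ is $\Delta^-$-stable but $\Delta^+$-unstable. By definition of $\Delta^+$-instability there is a $\Phi$-invariant parabolic Higgs line subbundle ${\bf L}^+\subset{\bf E}$ which destabilizes ${\bf E}$ for weights in $\Delta^+$. Since the underlying bundle $E$ is holomorphically trivial of rank $2$ and degree $0$, Example~\ref{ex2} (using Lemma 2.4 in \cite{B}) shows that ${\bf L}^+$ and the quotient ${\bf L}^-:={\bf E}/{\bf L}^+$ are holomorphically trivial line bundles, and ${\bf L}^-$ inherits a parabolic Higgs structure from ${\bf E}$ because ${\bf L}^+$ is $\Phi$-invariant. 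The first of the three preceding propositions gives that ${\bf L}^+$ has discrete data $S$; since the parabolic weight of the quotient at $x_i$ is precisely the one of ${\bf E}$ not carried by ${\bf L}^+$, we get $\beta^{{\bf L}^-}(x_i)=\beta_2(x_i)$ exactly when $i\notin S$, i.e. ${\bf L}^-$ has discrete data $S^c$. Proposition~\ref{prop:unique} gives that ${\bf L}^+$, and hence the whole exact sequence, is unique. Finally the extension is non-split: if it split, ${\bf L}^-$ would be a $\Phi$-invariant line subbundle of ${\bf E}$ with discrete data $S^c$, and since $\varepsilon_S(\alpha)<0$ for $\beta\in\Delta^-$ we would have $\varepsilon_{S^c}(\alpha)>0$ there, so ${\bf L}^-$ would destabilize ${\bf E}$ in $\Delta^-$, contradicting $\Delta^-$-stability.

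For the converse, let $0\to{\bf L}^+\to{\bf E}\to{\bf L}^-\to 0$ be a non-split extension of parabolic Higgs bundles in which ${\bf L}^\pm$ are holomorphically trivial parabolic Higgs line bundles with discrete data $S$ and $S^c$ respectively. As an extension of underlying holomorphic bundles this reads $0\to\mathcal{O}\to E\to\mathcal{O}\to 0$ over $\C\P^1$, and since $H^1(\C\P^1,\mathcal{O})=0$ it splits holomorphically; hence $E\cong\mathcal{O}\oplus\mathcal{O}$ is holomorphically trivial of rank $2$ and degree $0$, so ${\bf E}$ represents a point of $\mathcal{H}^+$ (resp. $\mathcal{H}^-$) once the relevant stability is established. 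The third of the three preceding propositions then applies verbatim and yields that ${\bf E}$ is $\Delta^+$-unstable and that it is $\Delta^-$-stable if and only if the extension is non-split, which by hypothesis it is. This establishes both halves of the theorem.

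The argument is thus essentially formal once the three propositions are in hand; the only non-formal inputs, both already available, are the triviality of sub- and quotient-bundles of a holomorphically trivial rank-$2$, degree-$0$ bundle on $\C\P^1$ (Example~\ref{ex2}) and the vanishing $H^1(\C\P^1,\mathcal{O})=0$. The point that requires the most care is the discrete-data bookkeeping: one must track how the complementary assignment of parabolic weights to a subbundle and to its quotient interacts with the sign conventions $\varepsilon_S(\alpha)>0$ on $\Delta^+$ and $\varepsilon_S(\alpha)<0$ on $\Delta^-$, and check that $\Phi$-invariance of ${\bf L}^+$ genuinely equips ${\bf L}^-$ with a parabolic Higgs structure so that the sequence is a sequence of PHBs rather than merely of bundles. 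I do not anticipate a real obstacle beyond this; the substance of the proof lives entirely in the three preceding propositions, and the theorem is their synthesis.
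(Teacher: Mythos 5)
Your proposal is correct and follows exactly the paper's route: the paper derives this theorem directly as a synthesis of the three preceding propositions, and your argument is that synthesis, with the extra (correct and harmless) bookkeeping about holomorphic triviality of sub/quotient bundles, the complementary weights of the quotient, and the vanishing $H^1(\C\P^1,\mathcal{O})=0$ made explicit.
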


We will use this Theorem to see that $\mathcal H^+$ and $\mathcal H^-$ have a common blow up with the same exceptional divisor. The loci in $\mathcal H^\pm$ which are blown up (flip loci) are isomorphic to projective bundles $\P U^\pm \cong \C\P^{n-3}$ over a product $\mathcal N^+ \times \mathcal N^-$ (a $0$-dimensional manifold) of moduli spaces of parabolic Higgs line bundles which are trivial as holomorphic line bundles. Moreover, as we will see, the bundles $U^+$ and $U^-$ are dual to each other and so $\P U^+ $ and $\P U^-$ are projective bundles of the same rank over the same basis.

Let then $\mathcal N^+$ and $\mathcal N^-$ be the moduli spaces of parabolic line Higgs bundles over $\C\P^1$ which are trivial as holomorphic line bundles and have discrete data $S$ and $S^c$ respectively. By \cite{BY} the dimension of these spaces is
$$
\dim\mathcal{N}^- = \dim \mathcal{N}^+ = 2(g-1)(r^2-1) + (r^2 -r)= 0.
$$
Moreover, $\mathcal{N}^+$ and $\mathcal{N}^-$ are composed of just one point as any two parabolic line Higgs bundles which are trivial as holomorphic line bundles and have discrete data $S$ (or $S^c$) are isomorphic (there is always a parabolic map between them). Hence the product $\mathcal{N}^+\times \mathcal{N}^-=\{\text{pt}\}$.

Define ${\bf L}^\pm$ to be the element in $\mathcal N^\pm$. 
Considering  the complex $C^\bullet({\bf L}^-, {\bf L}^+)$ and taking the hypercohomology 
$$
\mathbb{H}^*\big(C^\bullet({\bf L}^-, {\bf L}^+)\big)
$$
one defines
$$
U^-:= \mathbb H^1 \big(C^\bullet({\bf L}^-,{\bf L}^+)\big)=(R^1)_*\big(C^\bullet({\bf L}^-, {\bf L}^+)\big)
$$
and then, from the long exact sequence presented in Proposition~\ref{prop:GP}, one obtains
\begin{align}\label{seq:exact}
0 & \to \mathbb{H}^0\big(C^\bullet({\bf L}^-, {\bf L}^+)\big) \to H^0\big(Par Hom ( L^-, L^+)\big) \to\\ 
\nonumber
&  \to  H^0\big(S Par Hom ( L^-, L^+) \otimes K_{\C \P^1}(D)\big) \to    U^- \to H^1\big(Par Hom (L^-, L^+)\big) \to \\ 
\nonumber
& \to H^1\big(S Par Hom (L^-, L^+) \otimes K_{\C \P^1}(D)\big) \to \mathbb{H}^2\big(C^\bullet({\bf L}^-, {\bf L}^+)\big)  \to 0.
\end{align}
Analogously, one can consider the complex $C^\bullet({\bf L}^+, {\bf L}^-)$ and define 
$$
U^+:= \mathbb H^1\big(C^\bullet({\bf L}^+, {\bf L}^-)\big)=(R^1)_*\big(C^\bullet({\bf L}^+, {\bf L}^-)\big)
$$
and obtain a similar sequence.
By Proposition~\ref{prop:GP} and Serre duality for hypercohomology (cf. Proposition~\ref{prop:Serre}) $\mathbb{H}^0$ and $\mathbb{H}^2$ vanish and so $U^+$ and $U^-$ are locally free sheaves (hence vector bundles \cite{A}) dual to each other:
$$
U^-:= \mathbb H^1\big(C^\bullet({\bf L}^-, {\bf L}^+)\big) = \mathbb H^1\big(C^\bullet({\bf L}^+, {\bf L}^-)\big)^* = (U^+)^*. 
$$ 
As stated in Proposition~\ref{prop:GP}(3), $U^-$ parameterizes all extensions of the PHB in $\mathcal{N}^-$ by that in $\mathcal{N}^+$ and so, as usual, the projectivization $\P U^-$ parameterizes all {\bf nonsplit} extensions of the parabolic Higgs line bundle in $\mathcal{N}^-$ by that in $\mathcal{N}^+$ (see for instance \cite{L}). Following the exact sequence \eqref{seq:exact} one can see that the dimension of $U^-$ is given by
$$
\dim U^- = \chi\big(S Par Hom (L^-, L^+) \otimes K_{\C \P^1}(D)\big) - \chi\big(Par Hom (L^-, L^+)\big).
$$
Using \eqref{seq:par} one obtains
$$
\chi\big(Par Hom ( L^-, L^+)\big)  = \chi\big(Hom(L^-, L^+)\big) + \sum_{i=1}^n \big(\dim P_{x_i}(L^-,L^+) - 1\big),
$$
where $P_{x_i}(L^-,L^+)$ denotes  the subspace of $Hom(L^-_{x_i}, L^+_{x_i})$ formed by parabolic maps.
Then, since 
$$S_{L^-}=\{i\in \{1, \ldots, n\}\mid \beta^{L^-}(x_i)= \alpha_2(x_i)\} = S^c$$ 
and
$$
\text{dim}\,P_{x_i}(L^-,L^+)=\left\{ \begin{array}{ll} 1, & \text{if}\,\, i \in S_{L^-}^c \\ \\
0, & \text{otherwise}, \end{array} \right.   
$$
one gets
\begin{align*}
\chi\big( Par Hom (L^-, L^+)\big)= 1- \vert S^c \vert,
\end{align*}
where we used Riemann-Roch to compute   
\begin{align*}
\chi\big(Hom(L^-, L^+)\big) & = \chi\big(Hom(\mathcal O(0),\mathcal O(0))\big)= \chi\big(\C \P^1, \mathcal{O}(0)\big) \\ & = \text{rank} (\mathcal O(0))(1-g)=1.
\end{align*}
On the other hand, consider the short exact sequence
\begin{align*}
0 \to & S Par Hom (L^-, L^+)\otimes K_{\C \P^1}(D)  \to  Hom (L^-, L^+)\otimes K_{\C \P^1}(D) \to \\
&\to Hom (L^-_D, L^+_D)/N_D(L^-, L^+)\to 0,
\end{align*}
where $Hom (L^-_D, L^+_D)= \bigoplus_{x \in D} Hom(L^-_x, L^+_x)$ and, where, denoting by $N_x(L^-, L^+)$ the subspace of $Hom(L^-_x, L^+_x)$ formed by strictly parabolic maps,  $N_D(L^-, L^+)= \bigoplus_{x \in D} N_x(L^-,L^+)$. Then,  
\begin{eqnarray*}
\lefteqn{\chi\big(S Par Hom (L^-, L^+) \otimes K_{\C \P^1}(D)\big)   =} \\ & & \chi\big(Hom (L^-, L^+)\otimes K_{\C \P^1}(D)\big)+ \sum_{x \in D} (\dim N_x - 1)
\end{eqnarray*}
and so, since in this case $P_x(L^-,L^+)=N_{x}(L^-,L^+)$, one obtains
\begin{align*}
\chi\big(S Par Hom (L^-, L^+) \otimes K_{\C \P^1}(D)\big)= \chi\big(K_{\C \P^1}(D)\big) - \vert S^c \vert = n-1 - \vert S^c \vert.
\end{align*}
Here we used the fact that $Hom(L^-, L^+)=\mathcal{O}(0)$, and Riemann-Roch with $\deg(K_{\C \P^1})=-2$ and $\deg(\mathcal O(D))=n$. 
One concludes that
$$
\dim U^- = n-1 - \vert S^c \vert - (1- \vert S^c \vert)= n-2
$$
and so $U^- \cong \C^{n-2}$ and $\P U^- \cong \C \P^{n-3}$.

Every parabolic Higgs bundle given by an element in $ \P U^-$ is $\Delta^-$-stable and so, by the universal property of the moduli space $\mathcal H^-$, 
there exists a morphism
$$
\C \P^{n-3} \cong \P U^- \to \mathcal H^-
$$
whose image is precisely the locus of PHBs which become unstable when the wall is crossed.

Let $V^-$ be the cotangent bundle to $\P U^-$ and consider the corresponding map $\pi^- : \P V
^- \to \P U^-$. On the other hand, consider the Euler sequence of the cotangent bundle (see \cite{Huy}) 
\begin{equation}
\label{eq:Euler}
0 \longrightarrow V^- \stackrel{\pi^+}{\longrightarrow} (U^-)^* \otimes \mathcal
 O_{\P U^-}(-1) \longrightarrow \mathcal O_{\P U^-} \longrightarrow 0.
\end{equation}
More explicitly, using the fact that 
$$ (U^-)^* \otimes \mathcal O_{\P U^-}(-1) = (U^-)^* \times U^- = U^+ \times U^-, $$
one has
$$
\begin{array}{ccccccc}
0 \longrightarrow & T^* \C \P^{n-3} & \stackrel{\pi^+}{\longrightarrow} &  (U^-)^* \times U^- & \longrightarrow & \C \P^{n-3} \times \C & \longrightarrow 0 \\
 & ([\omega], \xi) & \longmapsto & (\omega, \xi) & \longmapsto & ([\omega], \xi(\omega)) & \\
\end{array}
$$
where $[\omega] \in \C \P^{n-3}$ and
$$\xi \in T^*_{[\omega]} \C \P^{n-3} = [\omega]^{\perp} = \{ \xi \in (U^-)^* \mid \xi(\omega) =0 \}.$$
Hence
\begin{align*}
\P V^- = \P (T^* \P U^-) & \subset \P\big((U^-)^* \otimes \mathcal O_{\P U^-}(-1)\big) = \P\big((U^-)^* \times U^-\big) \\
& =\P\big((U^-)^*\big) \times \P(U^-) = \P(U^+) \times \P(U^-),
\end{align*}
the fiber $V^-_{[\omega]}$ over a line $[\omega] \in \P U^-$ is naturally isomorphic to the space of linear functionals $\xi : U^- \to \C$ with $\xi (\omega) =0$, and there is an induced map $\pi^+: \P V^- \to \P U^+$. Moreover, one can identify $\P (T^*_{[\omega]} \P U^-)$ with $\P ( [\omega]^{\perp})$ in a canonical way and for
$[\xi] \in \P \big( [\omega]^{\perp}\big)$ one defines an element $\sigma_{\xi} \in Gr_{n-3} (\C^{n-2})$ with $ [\omega] \subset \sigma_{\xi}$ by 
$$
\sigma_{\xi} = \{ v\in \C^{n-2} \mid \xi(v) = 0 \}.
$$
Then $[\xi] \mapsto ([\omega], \sigma_{\xi})$ gives a diffeomorphism of $\P V^-$ onto the manifold of partial flags in $U^- = \C^{n-2}$ of type $(1, n-3)$ and $\pi^{\pm}: V^- \to \P U^{\pm}$ are the forgetful morphisms that discard one subspace.

As noted before $\P U^-$ parameterizes all nonsplit extensions of the bundle ${\bf L}^-$ in $\mathcal{N^-}$ by the bundle
${\bf L}^+$ in $\mathcal{N^+}$. Over $\P U^- \times \C \P^1$ there is a universal extension 
\begin{equation} 
\label{seq:universal}
0 \to \widetilde{{\bf L}}^+ \otimes \mathcal{O}_{\P U^-}(1) \to \widetilde{{\bf E}} \to  \widetilde{{\bf L}}^- \to 0,
\end{equation}
where, for $([\omega],x)\in \P U^-\times \C \P^1$,   
$$
\widetilde{{\bf L}}^+_{([\omega],x)}={\bf L}_x^+ \quad \text{and} \quad \widetilde{{\bf L}}^-_{([\omega],x)}={\bf L}_x^- 
$$
i.e., if we consider the projection $pr:\P U^-\times \C \P^1 \to \C \P^1$, we have 
$$
\widetilde{{\bf L}}^+ = pr^* {\bf L}^+ \quad \text{and} \quad \widetilde{{\bf L}}^- = pr^* {\bf L}^-. 
$$
Moreover, by the universal property, the extension $\widetilde{{\bf E}}$ restricted to $\{[\omega]\} \times \C\P^1$ is the extension ${\bf E}([\omega])$ of ${\bf L}^-$ by ${\bf L}^+$ determined by the element $[\omega]\in \P U^-$. Extensions like \eqref{seq:universal} are parameterized by
$$
\mathbb{H}^1\Big( \P U^-\times \C \P^1, C^\bullet\big(\widetilde{{\bf L}}^-, \widetilde{{\bf L}}^+ \otimes \mathcal{O}_{\P U^-}(1)\big)\Big)
$$
which by the Kunneth formula is isomorphic to
$$
\mathbb{H}^1\big( \C \P^1,  C^\bullet({\bf L}^-,{\bf L}^+)\big)\otimes \mathbb{H}^0\big( \P U^-,\mathcal{O}_{\P U^-}(1)\big) = U^- \otimes (U^-)^* \cong \text{End}(U^-) 
$$
and one can show that the identity element in $\text{End}(U^-)$ defines the universal extension described above.

Now consider the long exact sequence associated to
\begin{equation}
\label{seq:bprime}
0 \to C^\bullet\big(\widetilde{{\bf L}}^-, \widetilde{{\bf L}}^+ \otimes \mathcal{O}_{\P U^-}(1)\big) \to C^{\bullet \prime}_0(\widetilde{\bf E}) \to \left(C^\bullet(\widetilde{{\bf L}}^+) \oplus C^\bullet(\widetilde{{\bf L}}^-)  \right)_0 \to 0,
\end{equation}
where $C^{\bullet \prime}_0(\widetilde{{\bf E}})$ is the subcomplex of $ C^{\bullet}_0(\widetilde{\bf E})$ associated to the subsheaves $Par End^\prime_0 (\widetilde{E})$ and $SPar End^\prime_0 (\widetilde{E})$ of $Par End_0 (\widetilde{E})$ and $SPar End_0 (\widetilde{E})$ preserving ${\bf L}^+$, and $\left(C^\bullet(\widetilde{{\bf L}}^+) \oplus C^\bullet(\widetilde{{\bf L}}^-)  \right)_0$ is the complex formed by the direct sum of elements of 
$C^\bullet(\widetilde{{\bf L}}^+)$ and $C^\bullet(\widetilde{{\bf L}}^-)$ with symmetric trace. 
By Serre duality and Proposition~\ref{prop:GP} we know that
\begin{align*}
& \mathbb{H}^0\big(C_0^\bullet(\widetilde{{\bf L}}^-)\big) = \mathbb{H}^2\big(C_0^\bullet(\widetilde{{\bf L}}^-)\big) = 0, \\
& \mathbb{H}^0\big(C^\bullet(\widetilde{{\bf L}}^+)\big) = \mathbb{H}^2\big(C^\bullet(\widetilde{{\bf L}}^+)\big) = \C, \\
& \mathbb{H}^0\Big(C^\bullet\big(\widetilde{{\bf L}}^-, \widetilde{{\bf L}}^+ \otimes \mathcal{O}_{\P U^-}(1)\big)\Big) = 
\mathbb{H}^2\Big(C^\bullet\big(\widetilde{{\bf L}}^-, \widetilde{{\bf L}}^+ \otimes \mathcal{O}_{\P U^-}(1)\big)\Big) = 0.
\end{align*}
Moreover, again by Kunneth formula, 
\begin{eqnarray*}
\lefteqn{\text{dim} \, \mathbb{H}^1 \big(\P U^- \times \C\P^1, C^\bullet(\widetilde{{\bf L}}^+)\big) = } \\  & 
\text{dim} \, \Big(\mathbb{H}^1 \big(\C\P^1, C^\bullet({\bf L}^+)\big)\otimes  \mathbb{H}^0 \big(\P U^-,  \mathcal{O}_{\P U^-}(1)\big) \Big)= 0,
\end{eqnarray*}
since $\text{dim} \, \mathbb{H}^1 (\C\P^1, C^\bullet({\bf L}^+))=0$ is the dimension of the moduli space of line PHBs over $\C\P^1$ (cf. \cite{BY}).
Then the long exact sequence associated to
$$
0 \to C^\bullet_0({\bf L}^-) \to \left(C^\bullet(\widetilde{{\bf L}}^+) \oplus C^\bullet(\widetilde{{\bf L}}^-)  \right)_0 \to C^\bullet({\bf L}^+) \to 0
$$
gives 
\begin{align*}
& \mathbb{H}^0\Big(\big(C^\bullet(\widetilde{{\bf L}}^+) \oplus C^\bullet(\widetilde{{\bf L}}^-)  \big)_0 \Big) =\mathbb{H}^2\Big(\big(C^\bullet(\widetilde{{\bf L}}^+) \oplus C^\bullet(\widetilde{{\bf L}}^-)  \big)_0 \Big) = \C \\ & \mathbb{H}^1\Big(\big(C^\bullet(\widetilde{{\bf L}}^+) \oplus C^\bullet(\widetilde{{\bf L}}^-)  \big)_0 \Big)  = 0. 
\end{align*}
Moreover, $\mathbb{H}^0\big(C^{\bullet \prime}_0(\widetilde{{\bf E}}) \big)= 0$
since $\mathbb{H}^0\big(C^{\bullet}_0(\widetilde{{\bf E}}) \big)=0$ and $C^{\bullet \prime}_0(\widetilde{{\bf E}})$ is a subcomplex of $C^{\bullet}_0(\widetilde{{\bf E}})$.
Hence, the long exact sequence associated to \eqref{seq:bprime} gives 
\begin{equation}
\label{seq:prime}
0 \to \C \stackrel{a}{\to} \mathbb{H}^1\Big(C^\bullet\big(\widetilde{{\bf L}}^-, \widetilde{{\bf L}}^+ \otimes \mathcal{O}_{\P U^-}(1)\big)\Big) \stackrel{b}{\to} \mathbb{H}^1\big( C^{\bullet \prime}_0(\widetilde{{\bf E}})\big) \to 0
\end{equation}
and 
\begin{equation}
\label{seq:h2}
0 \to   \mathbb{H}^2\Big(C^\bullet\big(\widetilde{{\bf L}}^-, \widetilde{{\bf L}}^+ \otimes \mathcal{O}_{\P U^-}(1)\big)\Big) \to  \mathbb{H}^2 \big(C^{\bullet \prime}_0(\widetilde{{\bf E}})\big) \to \C \to 0. 
\end{equation}
The image of the map $a$ must be the line spanned by the extension class $\rho$ of $\widetilde{\bf E}$. This follows from exactness of \eqref{seq:prime} and, from the fact that $ \mathbb{H}^1\big( C^{\bullet \prime}_0(\widetilde{{\bf E}})\big)$ classifies infinitesimal deformations of extensions and the deformation of any extension along its extension class is isomorphic to the trivial one, thus implying $\Ker \, b = \langle \rho \rangle$.

On the other hand, since $\P U^-$ parameterizes a family of extensions of the PHB ${\bf L}^-\in \mathcal{N}^-$ by  ${\bf L}^+\in \mathcal{N}^+$, there is a natural map
$$
T_{[\omega]}\P U^- \to \mathbb{H}^1\Big(\C \P^1,C_0^{\bullet \prime}\big({\bf E}([\omega])\big)\Big), 
$$
where the bundle $\mathbb{E}([\omega])$ is the extension determined by $[\omega]$. Therefore, one has the following maps between exact sequences
\begin{equation}\label{seq:down}
\xymatrix{
0 \ar[r] \ar[d] & (V^-)^* \ar[r]^\simeq \ar[d]^m & T \P U^- \ar[r] \ar[d] &  0 \ar[d] \\  
0 \ar[r] & \mathbb{H}^1\Big(C^\bullet\big(\widetilde{{\bf L}}^-, \widetilde{{\bf L}}^+ \otimes \mathcal{O}_{\P U^-}(1)\big)\Big)/\langle \rho \rangle    \ar[r]^<(.2){\simeq}  & \mathbb{H}^1(C^{\bullet \prime}_0 (\widetilde{{\bf E}})) \ar[r] & 0
}
\end{equation}
and, since the map $m$ is an isomorphism, one has that
\begin{equation}
\label{eq:m}
T \P U^- \cong \mathbb{H}^1\big(C^{\bullet \prime}_0(\widetilde{{\bf E}})\big). 
\end{equation}

Let us now consider the long exact sequence associated to
$$
0 \to C_0^{\bullet \prime}(\widetilde{{\bf E}}) \to C_0^\bullet(\widetilde{{\bf E}}) \to C^\bullet\big( \widetilde{{\bf L}}^+ \otimes \mathcal{O}_{\P U^-}(1),\widetilde{{\bf L}}^-\big) \to 0
$$
which is
$$
0 \to T \P U^- \to T \mathcal{H}^- \to \mathbb{H}^1\Big(C^\bullet\big( \widetilde{{\bf L}}^+ \otimes \mathcal{O}_{\P U^-}(1),\widetilde{{\bf L}}^-\big)\Big) \to \C \to 0,
$$
where we used \eqref{eq:m}, \eqref{seq:h2} and the fact that $T \mathcal{H}^- \cong \mathbb{H}^1 \big(C^\bullet_0(\widetilde{{\bf E}})\big)$. One concludes that the map $\P U^- \to \mathcal{H}^-$ is an embedding (it is injective by Proposition~\ref{prop:unique}) and that the map
$$
T \mathcal{H}^-\cong \mathbb{H}^1\Big(C_0^\bullet\big(\widetilde{{\bf E}}\big)\Big) \to   \mathbb{H}^1\big(C^\bullet(\widetilde{{\bf L}}^+ \otimes \mathcal{O}_{\P U^-}(1),\widetilde{{\bf L}}^-)\big), 
$$ 
whose image is the normal bundle of $\P U^-$ inside   $\mathcal{H}^-$, has corank $1$. This map is Serre dual to the map
$$
\mathbb{H}^1\Big(C^\bullet\big(\widetilde{{\bf L}}^-, \widetilde{{\bf L}}^+ \otimes \mathcal{O}_{\P U^-}(1)\big)\Big) \to \mathbb{H}^1\big(C_0^\bullet(\widetilde{{\bf E}})\big)
$$
which maps a deformation of the extension class $\rho$ of $\widetilde{{\bf E}}$ to a deformation of the bundle itself. Since a deformation in the direction of $\rho$ itself is isomorphic to a trivial deformation, the kernel of this map is the line through $\rho$. We conclude then that the normal bundle of $\P U^-$ inside $\mathcal{H}^-$ is the annihilator of $\rho$ in $\mathbb{H}^1\Big(C^\bullet\big( \widetilde{{\bf L}}^+ \otimes \mathcal{O}_{\P U^-}(1),\widetilde{{\bf L}}^-\big)\Big)$ which by \eqref{seq:down} is $V^-$.    

Let $\widetilde{\mathcal{H}}^-$ be the blow up of $\mathcal{H}^-$ along the image of the embedding $\P U^- \to \mathcal{H}^-$ with exceptional divisor $\P V^-$. Moreover, since the roles of plus and minus in the above arguments are completely interchangeable one can consider the blow up  $\widetilde{\mathcal{H}}^+$ of  $\mathcal{H}^+$ along the image of the embedding $\P U^+ \to \mathcal{H}^+$ with exceptional divisor $\P V^+$. Then we have the following result.

\begin{proposition}
There is an isomorphism $\widetilde{\mathcal{H}}^- \leftrightarrow   \widetilde{\mathcal{H}}^+$ such that the following diagram commutes
$$
\xymatrix{
\mathcal{H}^-\setminus \P U^-  \ar[r] \ar@{<->}[d] & \widetilde{\mathcal{H}}^-  \ar@{<->}[d] \ar@{<-^{)}}[r] &  \ar@{<->}[d] \,\, \P V^-\\
\mathcal{H}^+\setminus \P U^+  \ar[r] &  \widetilde{\mathcal{H}}^+   \ar@{<-^{)}}[r]  &  \,\, \P V^+.
}
$$
\end{proposition}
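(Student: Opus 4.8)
The plan is to extend the tautological birational identification between $\mathcal{H}^-$ and $\mathcal{H}^+$ over the two flip loci and to show that it lifts to an honest isomorphism of the blow-ups, the commutativity of the square being forced by the construction. First I would record the identification away from the flip loci: by the Theorem above characterizing $\Delta^-$-stable, $\Delta^+$-unstable PHBs, together with its mirror statement obtained by interchanging $+$ and $-$, a PHB represents a point of $\mathcal{H}^-\setminus \P U^-$ precisely when it is stable for both chambers, hence precisely when it represents a point of $\mathcal{H}^+\setminus\P U^+$; this gives a canonical isomorphism $\mathcal{H}^-\setminus\P U^-\cong\mathcal{H}^+\setminus\P U^+$. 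Writing $b^\pm\colon\widetilde{\mathcal H}^\pm\to\mathcal H^\pm$ for the blow-down maps, and using that $b^\pm$ is an isomorphism over the complement of the centre, this isomorphism transports to a canonical isomorphism $\Psi_\circ\colon\widetilde{\mathcal H}^-\setminus\P V^-\to\widetilde{\mathcal H}^+\setminus\P V^+$ filling in the left-hand square. Everything then reduces to extending $\Psi_\circ$ across the exceptional divisors.

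Next I would pin down the exceptional divisors. We have shown $N_{\P U^-/\mathcal H^-}\cong V^-$, and since the whole discussion is symmetric in $+$ and $-$ we also get $N_{\P U^+/\mathcal H^+}\cong V^+$; thus $\P V^-$ and $\P V^+$ are the exceptional divisors of $\widetilde{\mathcal H}^-$ and $\widetilde{\mathcal H}^+$. The Euler sequence \eqref{eq:Euler} and the discussion following it identify $\P V^-$ with the manifold of partial flags of type $(1,n-3)$ in $U^-\cong\C^{n-2}$, with $\pi^-$ remembering the line and $\pi^+$ remembering the annihilator of the hyperplane; passing to annihilators and reversing the flag identifies this canonically with the manifold of type-$(1,n-3)$ flags in $U^+=(U^-)^*$, i.e. with $\P V^+$, and under this identification $\pi^-$ and $\pi^+$ are interchanged. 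Call the resulting isomorphism $\vartheta\colon\P V^-\to\P V^+$; it carries the projection $\pi^-\colon\P V^-\to\P U^-$ of the first exceptional divisor onto the projection $\pi^+\colon\P V^+\to\P U^+$ of the second — the two forgetful maps of one incidence variety — which is exactly the local geometry of a Mukai elementary transformation.

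To extend $\Psi_\circ$ I would use the elementary (Hecke) modification of families, as in \cite{T} and \cite{Huy}. Working with a universal PHB $\mathcal U^-$ on $\mathcal H^-\times\C\P^1$ (available for generic weights; otherwise one descends from a Quot-type parameter scheme), pull it back by $b^-\times\mathrm{id}$ to $\widetilde{\mathcal H}^-\times\C\P^1$; over the exceptional divisor $\P V^-\times\C\P^1$ its fibre at $([\omega],\sigma)$ is the $\Delta^-$-stable non-split extension ${\bf E}([\omega])$ of ${\bf L}^-$ by ${\bf L}^+$ from \eqref{seq:universal}, which contains the sub-PHB ${\bf L}^+$. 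Performing the elementary modification of the pulled-back family along $\P V^-$ with respect to ${\bf L}^+$ yields a new flat family $\widetilde{\mathcal U}$ that agrees with $\mathcal U^-$ off $\P V^-$ and whose fibre at $([\omega],\sigma)$ is the non-split extension $0\to{\bf L}^-\to{\bf E}^+\to{\bf L}^+\to 0$ classified by $\pi^+([\omega],\sigma)\in\P U^+$; by the mirror of the Theorem above this fibre is $\Delta^+$-stable, and off $\P V^-$ the fibres are $\Delta^+$-stable by construction, so $\widetilde{\mathcal U}$ is a flat family of $\Delta^+$-stable PHBs. It therefore defines a morphism $\widehat\Psi\colon\widetilde{\mathcal H}^-\to\mathcal H^+$ with $\widehat\Psi=b^+\circ\Psi_\circ$ off $\P V^-$ and $\widehat\Psi|_{\P V^-}=b^+\circ\vartheta$; since $\P V^-=\widehat\Psi^{-1}(\P U^+)$ is a divisor and $\widehat\Psi$ is an isomorphism onto $\mathcal H^+\setminus\P U^+$ over its complement, the universal property of the blow-up $b^+$ provides a unique lift $\Psi\colon\widetilde{\mathcal H}^-\to\widetilde{\mathcal H}^+$ which extends $\Psi_\circ$ and restricts to $\vartheta$ on the exceptional divisor.

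Running the identical construction with $+$ and $-$ swapped produces $\Psi'\colon\widetilde{\mathcal H}^+\to\widetilde{\mathcal H}^-$; since $\Psi'\circ\Psi$ and $\Psi\circ\Psi'$ restrict to the identity on the dense open complements $\widetilde{\mathcal H}^\mp\setminus\P V^\mp$, and the blow-ups are separated reduced varieties, both composites are the identity, so $\Psi$ is an isomorphism, and by its very construction it makes the stated square commute (the right-hand square being realized by $\vartheta$). The hard part will be the flip analysis in the third paragraph: checking that the Hecke modification along $\P V^-$ is a \emph{flat} family whose residues remain nilpotent with respect to the flags (so that it really is a family of PHBs) and that its fibres over $\P V^-$ are precisely the $\Delta^+$-stable extensions parameterized by $\pi^+$ rather than degenerate objects — this is the part genuinely adapted from Thaddeus, and it rests on the duality $U^+\cong(U^-)^*$ and on the descriptions \eqref{eq:Euler}, \eqref{seq:down} of the normal bundles.
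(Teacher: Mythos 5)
Your proposal is correct and follows essentially the same route as the paper: both pull the universal family back over the blow-up, perform Thaddeus' elementary modification along the exceptional divisor with respect to the sub-PHB $\widetilde{{\bf L}}^+$, and check that the resulting family of $\Delta^+$-stable PHBs induces a map $\widetilde{\mathcal{H}}^-\to\mathcal{H}^+$ which is an isomorphism off $\P V^-$ and restricts there to the forgetful morphism $\pi^+$ onto $\P U^+$. The only (inessential) difference is the final gluing step: you lift via the universal property of the blow-up $\widetilde{\mathcal{H}}^+\to\mathcal{H}^+$ and compose with the mirror construction, whereas the paper uses both modified maps together with the blow-down maps to embed $\widetilde{\mathcal{H}}^\pm$ into $\mathcal{H}^+\times\mathcal{H}^-$ and identifies their images as the closure of the common graph.
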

\begin{proof}
Let $\widetilde{{\bf E}}$ be the universal PHB over $\mathcal{H}^- \times \C \P^1$. By uniqueness of families of extensions, the restriction  $\widetilde{{\bf E}}\vert_{\P U^- \times \C\P^1}$ is isomorphic to the universal extension of $\widetilde{{\bf L}}^-$ by $\widetilde{{\bf L}}^+\otimes \mathcal{O}_{\P U^-}(1)$ tensored by the pull-back of a line bundle $F$ over $\P U^-$. Then the pull-back of $\widetilde{{\bf E}}$ to $\widetilde{\mathcal{H}}^- \times \C \P^1$ restricted to $\P V^-  \times \C \P^1$ has $\widetilde{{\bf L}}^+ \otimes F(1)$ as a sub PHB. Let $\widetilde{{\bf E}}^\prime$ be the elementary modification of the pull-back of $\widetilde{{\bf E}}$ to $\mathcal H^- \times \C\P^1$ along $\widetilde{{\bf L}}^+ \times F(1)$ as in Proposition 4.1 of \cite{T}. Then, for $x\notin \P V^-$,  $\widetilde{{\bf E}}^\prime_{\{ x\} \times \C \P^1} = \widetilde{{\bf E}}_{\{ x\} \times \C \P^1} $ while for $x \in \P V^-$,    $\widetilde{{\bf E}}^\prime_{\{ x\} \times \C \P^1}$ is an extension of $\widetilde{{\bf L}}^+$ by  $\widetilde{{\bf L}}^-$ with extension class $\rho_x \in \mathbb{H}^1(C^\bullet(\widetilde{{\bf L}}^+,\widetilde{{\bf L}}^-))$ obtained as the image of the normal space $\mathcal{N}_x(\P V^- / \widetilde{\mathcal{H}}^-)$ (see \cite{T} for details). Indeed, at every point $x\in \P V^-$ there are deformation maps 
$$
T_x  \widetilde{\mathcal{H}}^- \to \mathbb{H}^1\big(C^\bullet_0(\widetilde{{\bf E}})\big) \quad \text{and} \quad T_x \P V^- \to \mathbb{H}^1\big(C^{\bullet \prime}_0(\widetilde{{\bf E}})\big)
$$
and then the short exact sequence 
$$
0 \to C^{\bullet \prime}_0(\widetilde{{\bf E}})  \to C^\bullet_0(\widetilde{{\bf E}}) \to C^\bullet(\widetilde{{\bf L}}^+,\widetilde{{\bf L}}^-) \to 0
$$
determines a well-defined map from the ($1$-dimensional) normal space  $\mathcal{N}_x(\P V^- / \widetilde{\mathcal{H}}^-)$ to $\mathbb{H}^1\big(C^\bullet(\widetilde{{\bf L}}^+,\widetilde{{\bf L}}^-)\big)$, giving a class $\rho_x$ well-defined up to a scalar.

We then have the following commutative diagram for $x \in \P V^-$
$$
\xymatrix{
T_x \widetilde{\mathcal{H}}^- \ar[r] \ar[d] &  T_{\pi^-(x)}  \mathcal{H}^- \ar[r] \ar[d] & \,\,\,\mathbb{H}^1\Big(C^\bullet_0\big(\widetilde{{\bf E}}(x)\big)\Big) \ar[d] \\ 
\mathcal{N}_x(\P V^- / \widetilde{\mathcal{H}}^-) \ar[r] & V^-_{\pi^-(x)} \ar[r]^>(.7){\pi^+} & \,\,\, \mathbb{H}^1\big(C^\bullet(\widetilde{{\bf L}}_x^+,\widetilde{{\bf L}}_x^-)\big),
}
$$
where we used the fact that 
$$
\pi^- \big(\mathcal{N}_x(\P V^- / \widetilde{\mathcal{H}}^-)\big) = \mathcal{N}_{\pi^-(x)} \big(\P U^- / \mathcal{H}^- \big)=V^-_{\pi^-(x)},
$$
as well as Proposition~\ref{prop:GP} adapted to the traceless situation. This defines a map $\widetilde{\mathcal{H}}^- \stackrel{\varphi}{\to} \mathcal{H}^+$ which is an isomorphism away from the exceptional divisor $\P V^-$ and such that for $x \in \P V^-$ gives $\varphi(x)=\pi^+ (x)$, where $\pi^+$ is the forgetful morphism defined by the Euler sequence as in \eqref{eq:Euler}.

Interchanging plus and minus signs in the above argument one obtains maps
$$
\widetilde{\mathcal{H}}^- \to \mathcal{H}^+ \quad \text{and} \quad \widetilde{\mathcal{H}}^+ \to \mathcal{H}^-.
$$
Using these along with the blow-down maps $\widetilde{\mathcal{H}}^\pm \to \mathcal{H}^\pm$ one obtains injections of $\widetilde{\mathcal{H}}^+$ and $\widetilde{\mathcal{H}}^-$ into $\mathcal{H}^+ \times \mathcal{H}^-$. Clearly these maps are embeddings and their images are both equal to the closure of the image of $\widetilde{\mathcal{H}}^\pm \setminus \P V^\pm$ and the result follows. 
\end{proof}

\subsection{Wall-crossing for hyperpolygons}\label{sec:wchyper}
Now that we have studied the changes in $\mathcal{H}(\beta)$ as $\beta$ crosses a wall we will use the isomorphism constructed in Section~\ref{sec:isomorphism} to analyze the behavior of the corresponding spaces of hyperpolygons $X(\alpha)$ (with $\alpha=\beta_2-\beta_1$). First note that by rescaling if necessary one can assume that all hyperpolygon spaces considered in this section have weights $\alpha_i<1$.  

Let $W$ be a wall separating two adjacent chambers $\widetilde{\Delta}^-$ and $\widetilde{\Delta}^+$ of admissible values of $\alpha$ and let $S$ be an index set in $\{1, \ldots, n\}$ associated to $W$. Exchanging $S$ with $S^c$ if necessary one can assume that $S$ is short for every $\widetilde{\alpha}^-\in \widetilde{\Delta}^-$ and long for every  $\alpha^+\in \widetilde{\Delta}^+$. Then one sees that the corresponding spaces of PHBs suffer a Mukai transformation as described above for $\mathcal{H}(\beta^\pm)$. Note that the wall $W$ uniquely determines a wall in $Q$ (defined by the same equation $\varepsilon_S(\alpha)=0$) separating two chambers $\Delta^+, \Delta^- \subset Q$ of nongeneric parabolic weights. 

Let $X^\pm$ be  hyperpolygon spaces for values $\alpha^\pm \in \widetilde{\Delta}^\pm$. Then $X^+$ and $X^-$ suffer a Mukai transformation where $X^-$ is blown up along an embedded $\C \P^{n-3}$ and then blown down in the dual direction giving rise to a new embedded $\C \P^{n-3}$. Therefore, one sees (as observed by Konno in \cite{konno}) that $X^+$ and $X^-$ are diffeomorphic. Let us study this transformation in more detail. 

The embedded $\C \P^{n-3}$ that is blown-up in $X^-$ corresponds to $\P U^-$ in $\mathcal{H}^-$ by the isomorphism of Theorem~\ref{isomorphism}. In fact, $\P U^-$ is the space of PHBs in $\mathcal{H}^-$ that are not stable for $\beta^+ \in \Delta^+$. Hence, any PHB $\bf{E}$ in  $\P U^-$ has a destabilizing subbundle $\bf L$ which is topologically trivial and is such that 
$$
S=S_L=\{i \in \{1, \ldots, n\}\mid L_{x_i}=E_{x_i,2}\}
$$
is a maximal straight set. Moreover, as seen in the proof of Theorem~\ref{isomorphism} the fact that $\bf E$ is $\Delta^-$-stable implies that the corresponding hyperpolygon $\mathcal{F}({\bf E})=[p,q]_{\alpha^--\text{st}}$ in $X^-$ satisfies $p_i=0$ for every $i \in S^c$. By stability of hyperpolygons (cf. Theorem~\ref{alpha-stability}) one has that $S$ is $\widetilde{\Delta}^-$-short. Hence, the image of $\P U^-$ under the isomorphism $\mathcal{F}$ is the core component $U_S^- \cong \C \P^{n-3}$ (cf. Theorem~\ref{Smaximal}).

Similarly, one concludes that $\P U^+$ corresponds to $U_{S^c}^+\cong  \C \P^{n-3}$ in $X^+$ and so we have the following result.

\begin{Theorem}\label{thm:wch}
Let $X^+$ and $X^-$ be hyperpolygon spaces for $\alpha^+$ and $\alpha^-$ on either side of a wall $W$ of discrete data $S$. Then $X^-$ and $X^+$ are related by a Mukai transformation where $X^\pm$  have a common blow up obtained by blowing up $X^-$ along the core component $U_S^-$ and by blowing up $X^+$ along the core component $U_{S^c}^+$. The common exceptional divisor is a partial flag bundle $\P( T^* \C \P^{n-3})\cong \P( T^* U_S^-)\cong\P(U_{S^c}^+)$.  
\end{Theorem}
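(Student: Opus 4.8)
The plan is to derive the statement from the isomorphism $\mathcal{I}$ of Theorem~\ref{isomorphism} together with the elementary transformation relating $\mathcal{H}^+$ and $\mathcal{H}^-$ constructed in Section~\ref{wall crossing}: since $\mathcal{I}$ identifies $X(\alpha)$ with $\mathcal{H}(\beta)$ for $\alpha=\beta_2-\beta_1$, the wall-crossing for parabolic Higgs bundles directly yields the wall-crossing for hyperpolygons once the relevant loci are matched. So the first step is to reconcile the two wall structures: the wall $W$ with discrete data $S$ is cut out by $\varepsilon_S(\alpha)=0$, and since $\alpha=\beta_2-\beta_1$ the same linear equation $\varepsilon_S(\beta_2-\beta_1)=0$ cuts out a wall in the weight space $Q$ separating two chambers $\Delta^+,\Delta^-$. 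Choosing generic weights $\beta^\pm\in\Delta^\pm$ with $\alpha^\pm=\beta_2^\pm-\beta_1^\pm\in\widetilde{\Delta}^\pm$, and using the convention that $S$ is long for $\widetilde{\Delta}^+$ and maximal short for $\widetilde{\Delta}^-$, the chambers $\Delta^\pm$ are exactly those appearing in the construction of Section~\ref{wall crossing}. By Theorem~\ref{isomorphism} one then has isomorphisms $\mathcal{I}^\pm:X^\pm=X(\alpha^\pm)\to\mathcal{H}(\beta^\pm)=\mathcal{H}^\pm$, with inverses $\mathcal{F}^\pm$.

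The substantive step is to identify the flip loci $\mathbb{P}U^\pm\cong\mathbb{C}\mathbb{P}^{n-3}$ with core components, and this has essentially been done in the paragraphs preceding the statement; I would spell it out as follows. A point of $\mathbb{P}U^-$ is a PHB $\mathbf{E}\in\mathcal{H}^-$ which fails to be $\Delta^+$-stable, so by the Theorem immediately preceding the common-blow-up Proposition it is a nonsplit extension whose destabilizing subbundle $\mathbf{L}^+$ is a trivial parabolic Higgs line bundle of discrete data $S$. Feeding this into the $\Phi$-invariance and complex moment map computation carried out in the proof of Theorem~\ref{isomorphism} (with $L=L^+$), one gets that $L^+_{x_i}=\langle q_i\rangle$ for $i\in S$ — so $S$ is straight at $\mathcal{F}^-(\mathbf{E})=[p,q]$ — and that $p_j=0$ for all $j\in S^c$. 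By the description \eqref{eq:us} this means $\mathcal{F}^-(\mathbf{E})\in U_S^-$; conversely every point of $U_S^-$ gives, via $\mathcal{I}$, an $\alpha^-$-stable but $\Delta^+$-unstable PHB. Since $S$ is maximal short for $\alpha^-$, Proposition~\ref{Smaximal} gives $U_S^-\cong\mathbb{C}\mathbb{P}^{n-3}$, and an irreducibility and dimension count forces $\mathcal{F}^-(\mathbb{P}U^-)=U_S^-$. Running the argument with the roles of $+$ and $-$ interchanged (so $S$ becomes $S^c$, which is maximal short for $\alpha^+$) yields $\mathcal{F}^+(\mathbb{P}U^+)=U_{S^c}^+\cong\mathbb{C}\mathbb{P}^{n-3}$.

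With these identifications in hand I would transport the common-blow-up Proposition of Section~\ref{wall crossing} through $\mathcal{F}^\pm$: the blow-up of $\mathcal{H}^-$ along $\mathbb{P}U^-$ being isomorphic to the blow-up of $\mathcal{H}^+$ along $\mathbb{P}U^+$, it follows that the blow-up of $X^-$ along $U_S^-$ is isomorphic to the blow-up of $X^+$ along $U_{S^c}^+$ — the asserted Mukai transformation. For the exceptional divisor I would transport its description from Section~\ref{wall crossing}, where it was identified with $\mathbb{P}V^-=\mathbb{P}(T^*\mathbb{P}U^-)$, the partial flag bundle of type $(1,n-3)$ in $\mathbb{C}^{n-2}$, with forgetful projections $\pi^\pm$ onto $\mathbb{P}U^\pm$; under the identifications of the previous paragraph this becomes the partial flag bundle $\mathbb{P}(T^*\mathbb{C}\mathbb{P}^{n-3})\cong\mathbb{P}(T^*U_S^-)\cong\mathbb{P}(T^*U_{S^c}^+)$.

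I do not expect a genuine obstacle: the difficult deformation-theoretic work — producing the elementary transformation for $\mathcal{H}^\pm$ and checking that $\mathbb{P}U^\pm$ embed as flip loci with dual normal bundles — is already carried out in Section~\ref{wall crossing}. The points that need care are all bookkeeping: keeping the chamber conventions on the $\alpha$-side and on the weight-space side consistent, so that one is comparing the same transformation on both sides; invoking the $S^1$-equivariance of $\mathcal{I}$ so that core components of $X^\pm$ are carried to nilpotent-cone components of $\mathcal{H}^\pm$; and verifying that the core components $U_S^-$ and $U_{S^c}^+$ appearing here are precisely the ones attached to maximal short sets, so that Proposition~\ref{Smaximal} applies to identify them with $\mathbb{C}\mathbb{P}^{n-3}$.
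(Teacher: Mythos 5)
Your proposal is correct and follows essentially the same route as the paper: transport the common blow-up of $\mathcal{H}^\pm$ from Section~\ref{wall crossing} through the isomorphism of Theorem~\ref{isomorphism}, identifying $\P U^-$ with $U_S^-$ (via the destabilizing subbundle having discrete data $S$, straightness of $S$, and $p_j=0$ for $j\in S^c$, then Proposition~\ref{Smaximal}) and $\P U^+$ with $U_{S^c}^+$. The only cosmetic difference is that you close the identification $\mathcal{F}^-(\P U^-)=U_S^-$ by an explicit converse/dimension argument, which the paper leaves implicit.
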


Even though $X^+$ and $X^-$ are diffeomorphic they are not isomorphic as $S^1$-spaces, for the $S^1$-action in \eqref{action}, and the corresponding cores 
$$\frak{L}_{\alpha^\pm}= M(\alpha^\pm) \cup \bigcup_{B \in \mathcal{S}^\prime (\alpha^\pm)} U_B^\pm$$ do change under the Mukai transformation. 

All the fixed point set components $X_B^-$ with $B\in \mathcal{S}^\prime(\alpha^-)$ remain unchanged except for $X_S^-\simeq \C \P^{\vert S \vert -2}$ which is substituted by $X_{S^c}^+\cong \C \P^{\vert S^c \vert -2}$. 

The fixed point set component $M(\alpha^-)$ suffers a blow up along 
$$U_S^-\cap M(\alpha^-)=  M_S(\alpha^-)$$ 
followed by a blow down resulting in a new polygon space $M_{S^c}(\alpha^+)=U_{S^c}^+ \cap M(\alpha^+)$ embedded in $U_{S^c}^+$ (see Section~\ref{Walls}). 

The core components $U_B^-$ for which $B\cap S \neq \varnothing$ but $B \not\subset S$ are not affected by the Mukai transformation and remain unchanged as $U_B^+$. Indeed, since $S$ is a maximal $\Delta^-$-short set, $B\cup S$ is long and so $U_S^- \cap U_B^- = \varnothing$.

If $B\varsubsetneq S$ then
$$
U_B^- \cap U_S^- = \big\{[p,q]\in U_S^-\mid p_j=0 \, \text{for all}\, j \in S\setminus B \big\}  
$$  
and so $U_B^-$ suffers a blow up along $U_B^-\cap U_S^-$ followed by a blow down of the exceptional divisor 
$$V_B=\P\big(T^*(U_B^-\cap U_S^-)\big),$$ 
resulting in the core component $U_B^+$. Note that if one blows up $U_B^+$ along $U_B^+\cap U_{S^c}^+= M_B^+ \cap M_{S^c}^+$ (since  $B \cap S^c = \varnothing$), one obtains the exceptional divisor $V_B$ inside the common blow up of $U_B^-$ and $U_B^+$.

Finally, if $B \subset S^c$ then $U_B^-$ suffers a blow up along 
$$U_B^-\cap U_S^-= M_B^- \cap M_S^-$$ 
followed by a blow down of the exceptional divisor $V_B$ resulting in the core component $U_B^+$. Again, if one blows up $U_B^+$ along 
$$
U_B^+ \cap U_{S^c}^+ = \big\{[p,q]\in U_{S^c}^+\mid p_j=0 \, \text{for all}\, j \in S^c\setminus B \big\},  
$$ 
one obtains the  exceptional divisor $V_B$.    

\begin{example}
Let $n=5$ and consider $\alpha^-=(2,1,5,1,2)$ and $\alpha^+=(3,1,5,1,2)$ on either side of the wall $W_S$ with $S={\{1,2,5\}}$. The corresponding collections of short sets of cardinality greater or equal to $2$ are
\begin{align*}
\mathcal{S}^\prime(\alpha^-)=\Big\{ &  \{1,2\},\{1,4\},\{1,5\}, \{2,4\}, \{2,5\}, \{4,5\},\{1,2,4\}, {\bf \{1,2,5\}},  \\ &  \{1,4,5\},\{2,4,5\}\Big\}
\end{align*}
and 
\begin{align*}
\mathcal{S}^\prime(\alpha^+)=\Big\{ & \{1,2\},\{1,4\},\{1,5\}, \{2,4\}, \{2,5\}, {\bf \{3,4\}}, \{4,5\},\{1,2,4\},  \\ & \{1,4,5\},\{2,4,5\}\Big\}.
\end{align*}
Crossing the wall $W_S$ we see that the core component $U^-_{\{1,2,5\}}\cong \C \P^2$ disappears as a result of the Mukai transformation, being replaced by the new core component $U^+_{\{3,4\}}\cong \C \P^2$. The other core components affected are those relative to elements of $\mathcal{S}^\prime(\alpha^-)$ which are subsets of $S$ (i.e. $\{1,5\}$, $\{1,2\}$ and $\{2,5\}$). In Figures~\ref{fig:wallcrossing1} and ~\ref{fig:wallcrossing2} we represent these changes. There, the critical components are pictured by shaded ellipses or dots (when $0$-dimensional) while other ellipses represent copies of $\C \P^1$ flowing between two fixed points.
\begin{figure}
\includegraphics[width=7cm]{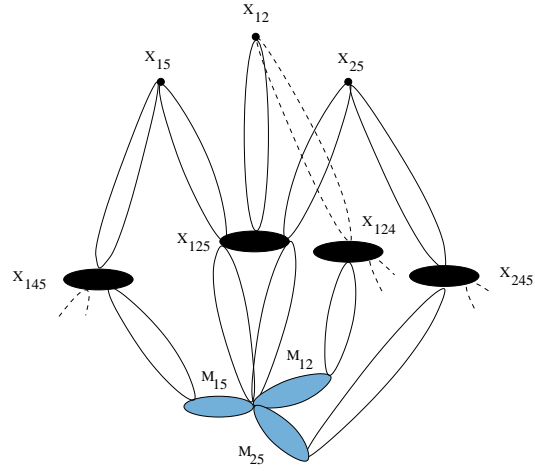}
\caption{Relevant part of the core of $X(\alpha^-)$ before crossing the wall $W_{\{1,2,5\}}$.}
\label{fig:wallcrossing1}
\end{figure}

\begin{figure}
\includegraphics[width=12cm]{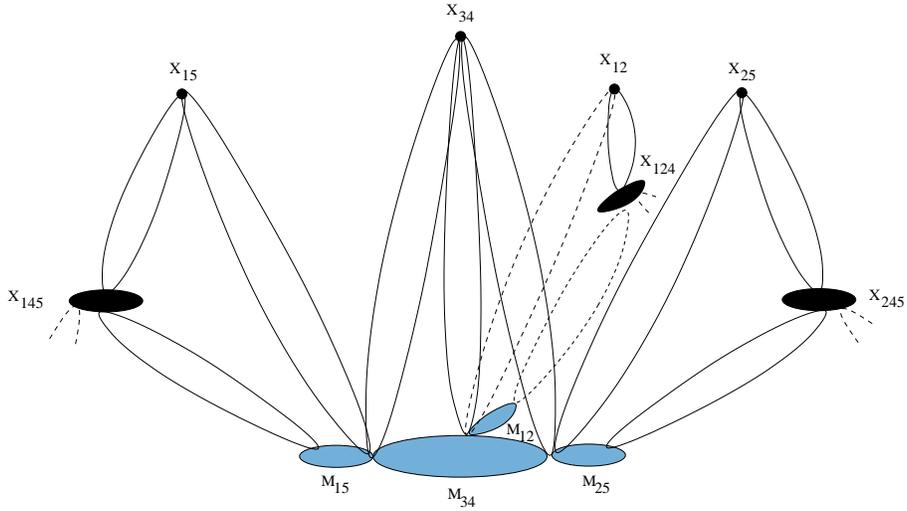}
\caption{Relevant part of the core of $X(\alpha^+)$ after crossing the wall $W_{\{1,2,5\}}$.}
\label{fig:wallcrossing2}
\end{figure}
\end{example}

\begin{Remark} By the above arguments it is clear that the submanifolds $\P U^-$ and $\P U^+$ of $\mathcal{H}^-$ and $\mathcal{H}^+$ involved in the Mukai flop are the nilpotent cone components $\mathcal U_{(0,S)} \subset \mathcal H^-$ and $\mathcal U_{(0,S^c)} \subset \mathcal H^+$, defined as the closure of the flow-down set \eqref{eq:nilcone}. Moreover, the changes in the different core components of $X^\pm$ as one crosses a wall translate to changes in the corresponding components of the nilpotent cone in $\mathcal{H}^\pm$. In particular the birational map between polygon spaces $M(\alpha^\pm)$ studied in \cite{m} and described in Section~\ref{Walls} translates to the birational map between $\mathcal{M}_{\beta^\pm,2,0}^{0,\Lambda}$ studied in \cite{BH} and described in Section~\ref{sec:rk2}.
\end{Remark}

\section{Intersection numbers for hyperpolygon spaces}\label{sec:intnum}
The intersection numbers of polygon spaces $M(\alpha)$ are computed in \cite{AG}. In this section we give explicit formulas for the computation of the intersection numbers of the remaining core components $U_S$ for $S\in \mathcal{S}^\prime(\alpha)$.

\subsection{Circle bundles}

As in \cite{konno} one constructs circle bundles over $X(\alpha)$ as follows. For each $1\leq i \leq n$ one can define the spaces
$$
\widetilde{Q}_i=\left\{ (p,q) \in \mu_\mathbb{R}^{-1}(0,\alpha) \cap \mu_\mathbb{C}^{-1}(0)\mid  (q_iq_i^*-p_i^*p_i)_0= \left(\begin{array}{cc} t & 0 \\ 0 & -t \end{array} \right), \, t>0\right\}. 
$$
Note that the vectors $(q_iq_i^*-p_i^*p_i)_0$ live in ${\bf i} \, \frak{su}(2) \cong \frak{su}(2)^* \cong \mathbb{R}^3$ and that, under this identification, $\widetilde{Q}_i$ is the set of points $(p,q)$ for which $(q_iq_i^*-p_i^*p_i)_0=(0,0,\alpha_i+\vert p_i\vert^2)$. 
One then considers the representation
$$
\rho_{SO(3)}:K \to SO(3) \simeq SO(\frak{su}(2))
$$
defined by
$$
\rho_{SO(3)}([A, e_1, \ldots, e_n])=\text{Ad}(A),
$$
where $\text{Ad}$ is the adjoint representation of $SU(2)$, and take the quotient 
$$
Q_i:=\widetilde{Q}_i/\ker \rho_{SO(3)}.
$$ 
Define an $S^1$-action on $Q_i$ by the following injective homomorphism of $S^1$ into $K$
\begin{equation}
\label{eq:actionqi}
\iota_{Q_i}(e^{\mathbf{i} t}) = \left[\left(\begin{array}{cc} e^{\mathbf{i} t} & 0 \\ 0 & e^{-\mathbf{i} t} \end{array}\right), 1, \ldots, 1  \right].
\end{equation}
Since $\iota_{Q_i}^{-1}(\ker \rho_{SO(3)})=\{\pm 1\}$, one gets an effective (right) $S^1 / \{\pm 1\}$-action on $Q_i$ thus obtaining a principal $S^1 / \{\pm 1\} $-bundle over $X(\alpha)$. The line bundle associated to $Q_i$ is then
$$
L_i=Q_i \times_{\rho_i} \C,
$$
where $\rho_i:K \to S^1$ is the representation given by
$$
\rho_i([A,e_1, \ldots,e_n])=e_i^2
$$
(see Section $6$ in \cite{K}).
Restricting the bundle $Q_i$ to the polygon space $M(\alpha)$ one obtains a principal circle bundle $Q_i\vert_{M(\alpha)} \to M(\alpha)$. Comparing it with the $S^1$-bundle $V_i \to M(\alpha)$
considered in \cite{AG} and given by
\begin{equation}\label{eq:vi}
V_i:= \left\{ v \in \prod_{j=1}^n S^2_{\alpha_j}\mid \sum_{j=1}^n v_j=0, \,\, \text{and}\,\, v_i=(0,0,\alpha_i)\right\},
\end{equation}
where the circle acts by standard rotation around the $z$-axis, one sees that 
$$
c_1(V_i)=-c_1\left(Q_i\vert_{M(\alpha)} \right)
$$
since the $S^1$-action  on $Q_i$ is a right action.

For this reason, we will work instead with the circle bundles
$$
\widetilde{V}_i \to X(\alpha)
$$
defined as the principal circle bundles over $X(\alpha)$ associated to the dual line bundles $L^*_i$. Note that, under the identification of ${\bf i}\, \frak{su}(2) \cong \frak{su}(2)^* \cong \mathbb{R}^3$, the circle acts on $\widetilde{V}_i$ by standard (left) rotation around the $z$-axis and so
$$
\widetilde{V}_i\vert_{M(\alpha)} = V_i.
$$
From now on we will denote  the first Chern classes of these bundles by
$$
c_j:=c_1(\widetilde{V}_j)\in H^2(X(\alpha),\mathbb{R}).
$$
Performing reduction in stages one can see hyperpolygon spaces
$$
X(\alpha):= \frac{\mu_{\mathbb{R}}^{-1}(0,\alpha) \cap \mu_{\mathbb{C}}^{-1}(0)}{K}
$$
as a quotient of a product of the cotangent bundles $T^*S^2_{\alpha_i}$ by $SO(3)$. Consider then the diagonal $S^1$-action on 
\begin{equation}
\label{eq:cot}
T^* S^2_{\alpha_1} \times \cdots \times T^* S^2_{\alpha_n}
\end{equation}
given by the following injective homomorphism of $S^1$ into $SU(2)/\pm I$
$$
\iota(e^{{\bf{i}}t})=\left[ \left( \begin{array}{cc} e^{{\bf{i}}t} & 0 \\ 0 & e^{{\bf{i}}t} \end{array}\right) \right].
$$
This action is Hamiltonian with moment map
$$
\begin{array}{rcl}
\mu_{S^1}: \displaystyle\prod_{i=1}^n   T^* S^2_{\alpha_i} & \to &  \mathbb{R} \\
 (p,q) & \mapsto &   \zeta\Big(\displaystyle\sum_{i=1}^n (q_iq_i^* - p_i^*p_i)_0\Big), 
\end{array}
$$
where $\zeta(x,y,z)=z$ is the height of the endpoint of $\sum_{i=1}^n (q_iq_i^* - p_i^*p_i)_0$ under the usual identification of $\frak{su}(2)^*$ with $\mathbb{R}^3$.

In analogy with the polygon space case one defines the abelian hyperpolygon space
$$
\mathcal{A} X(\alpha) =\left\{ (p,q) \in \prod_{i=1}^{n-1} T^* S^2_{\alpha_i} \mid \, \zeta\left( \sum_{i=1}^{n-1} (q_iq_i^* -p_i^*p_i)_0 \right)= \alpha_n \right\} 
$$
which is the set of those $(p,q)$ for which the vector $\sum_{i=1}^{n-1} (q_iq_i^* - p_i^*p_i)_0$ in $\mathbb{R}^3$ ends on the plane $z=\alpha_n$ modulo rotations around the $z$-axis. (Here we take $S^1 \simeq SO(2)$ as a subgroup of $SO(3)$ acting on the right.) It is the symplectic quotient of 
\begin{equation}
\label{eq:prodcot}
\prod_{i=1}^{n-1} T^* S^2_{\alpha_i}
\end{equation}
by the above circle action,
$$
\mathcal{A} X(\alpha) = \mu_{S^1}^{-1}(\alpha_n)/S^1,
$$
and so it is a symplectic manifold of dimension $4n-6$.

\begin{Remark}\label{rmk:6}
It is always possible to act on any element $[p,q]$ of $X(\alpha)$ by an element of $K$ in such a way that the vector $\sum_{i=1}^{n-1} (q_iq_i^* - p_i^*p_i)_0$ ends not only on the plane $z=\alpha_n$ but also so that $(q_nq_n^* - p_n^*p_n)_0$ points downwards.
\end{Remark}
Since $\alpha$ is generic, the circle acts freely on the level set $B:=\mu_{S^1}^{-1}(\alpha_n)$ and so $B \to \mathcal{A} X(\alpha)$ is a principal circle bundle. Moreover, one has the following commutative diagram
\begin{eqnarray*}
Q_n(\alpha) & \stackrel{\tilde{i}}{\to} &  B \\
\downarrow & & \downarrow \\
X(\alpha) & \stackrel{i}{\hookrightarrow} & \mathcal{A} X(\alpha)
\end{eqnarray*} 
where the inclusion $\tilde{i}:Q_n(\alpha) \to B$ is anti-equivariant since, in the identification of $X(\alpha)$ as  a submanifold of $\mathcal{A} X(\alpha)$, the vector $(q_n q_n^* - p_n^*p_n)_0$ must face downward (see Remark~\ref{rmk:6}).
Therefore,
$$
c_n := c_1(\widetilde{V}_n) = -c_1(Q_n) = i^*(c_1(B)).
$$
On the other hand, since $\mathcal{A} X(\alpha)$ is the reduced space
$$
\mu_{S^1}^{-1}(\alpha_n)/S^1 = B/S^1,
$$
one has by the Duistermaat Heckmann Theorem that 
$$
c_1(B)=\frac{\partial}{\partial \alpha_n}[\omega_\mathbb{R}]
$$
in $H^2(\mathcal{A} X(\alpha), \mathbb{R})$, and so
$$
c_n = \frac{\partial}{\partial \alpha_n}[\omega_\mathbb{R}]
$$
in $H^2(X(\alpha), \mathbb{R})$.
By symmetry, interchanging the order of the spheres in \eqref{eq:prodcot}, one obtains
\begin{equation}
\label{eq:cj}
c_j=\frac{\partial}{\partial \alpha_j}[\omega_\mathbb{R}].
\end{equation}
It is shown in \cite{konno} and \cite{hp} that these classes generate $H^*(X(\alpha),\mathbb{Q})$.
\subsection{Dual homology classes}

In this section we determine homology classes representing the first Chern classes $c_j\in H^2(X(\alpha), \mathbb{Q})$. 
For that consider $i$ and $j$, $1\leq i,j \leq n$, with $i\neq j$ and denote by $D_{i,j}(\alpha)$ the submanifold of $X(\alpha)$ formed by hyperpolygons $[p,q]$ for which $(q_iq_i^*-p_i^*p_i)_0$ and $(q_jq_j^*-p_j^*p_j)_0$ are parallel as vectors in $\mathbb{R}^3$. It is not restrictive to assume that both these vectors are parallel to the $z$-axis. Clearly $D_{i,j}(\alpha)$ has two connected components
\begin{eqnarray*}
D_{i,j}^+(\alpha) & =\{ [p,q] \in D_{i,j}(\alpha)\mid \, \langle(q_iq_i^*-p_i^*p_i)_0,(q_jq_j^*-p_j^*p_j)_0 \rangle >0 \} \\
D_{i,j}^-(\alpha) & =\{ [p,q] \in D_{i,j}(\alpha)\mid \, \langle(q_iq_i^*-p_i^*p_i)_0,(q_jq_j^*-p_j^*p_j)_0 \rangle <0 \}. 
\end{eqnarray*}
Moreover one has the following result.
\begin{proposition}
The circle bundle
$$
\widetilde{V}_j \vert_{X(\alpha)\setminus D_{i,j}(\alpha)} \stackrel{\pi_j}{\to} X(\alpha)\setminus D_{i,j}(\alpha) 
$$
has a section $s_{i,j}: X(\alpha)\setminus D_{i,j}(\alpha) \to \widetilde{V}_j \vert_{X(\alpha)\setminus D_{i,j}(\alpha)}$.
\end{proposition}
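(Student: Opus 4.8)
\smallskip
\noindent\emph{Proof proposal.} The plan is to exhibit the section explicitly, as a geometric ``gauge fixing'': I would use the vector attached to the index $i$ to rigidify the circle of rotations that is still free once the vector attached to the index $j$ has been normalized. Concretely, identify $\mathfrak{su}(2)^*$ with $\R^3$ and, to a point $(p,q)\in\mu_\R^{-1}(0,\alpha)\cap\mu_\C^{-1}(0)$, associate the vectors
$$v_k(p,q):=(q_kq_k^*-p_k^*p_k)_0\in\R^3,\qquad k=1,\dots,n,$$
so that $\|v_k(p,q)\|=\alpha_k+|p_k|^2\ge\alpha_k>0$; these vectors are fixed by $\ker\rho_{SO(3)}$ and are rotated through $\rho_{SO(3)}$ under the $K$-action. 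Since $\widetilde V_j$ and $Q_j=\widetilde Q_j/\ker\rho_{SO(3)}$ have the same total space (they differ only in the sign of the $S^1$-action, cf.\ the relation $c_1(V_j)=-c_1(Q_j\vert_{M(\alpha)})$ and the description of $V_j$ in \eqref{eq:vi}), a section of $\widetilde V_j$ over an open $U\subseteq X(\alpha)$ is nothing but a smooth splitting $U\to\widetilde Q_j/\ker\rho_{SO(3)}$ of the projection; so it suffices to choose, smoothly and consistently with the $K$-action, for each class in $U$ a representative whose vector $v_j$ points along the positive $z$-axis and for which the remaining rotations about that axis have been killed.

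\smallskip
\noindent First I would carry this out over $U:=X(\alpha)\setminus D_{i,j}(\alpha)$, where by definition $v_i$ and $v_j$ are nonzero and non-proportional. Given a representative $(p,q)$ of a class in $U$, there is a \emph{unique} rotation $g\in SO(3)$ carrying $v_j(p,q)$ onto the positive $z$-axis and $v_i(p,q)$ into the open half-plane $\{(x,0,z)\mid x>0\}$: one first rotates $v_j$ onto the positive $z$-axis (possible since $v_j\neq 0$ and $\mathrm{Ad}(SU(2))=SO(3)$ acts transitively on rays), and then uses the remaining circle of rotations about the $z$-axis to carry the orthogonal projection of $v_i$ onto the $xy$-plane — which is nonzero precisely because $v_i$ is not proportional to $v_j$ — onto the positive $x$-axis, which determines the rotation uniquely. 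I would then set $s_{i,j}([p,q])$ to be the class in $\widetilde Q_j/\ker\rho_{SO(3)}$ of the hyperpolygon obtained from $(p,q)$ by the $K$-action of a lift $\hat g\in SU(2)$ of $g$.

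\smallskip
\noindent Next I would check that $s_{i,j}$ is a well-defined smooth section. Well-definedness: replacing $(p,q)$ by $k\cdot(p,q)$, $k\in K$, replaces $g$ by $g$ composed with $\rho_{SO(3)}(k)^{-1}$, so the normalized representative changes only within a $\ker\rho_{SO(3)}$-orbit; and the two lifts $\pm\hat g$ of $g$ differ by $-\mathrm{Id}\in\ker\rho_{SO(3)}$. The identity $\pi_j\circ s_{i,j}=\mathrm{id}$ is immediate. Smoothness follows because $v_i$ and $v_j$ depend smoothly on $[p,q]$ and the normalizing rotation $g$ is given by an explicit formula (Gram--Schmidt applied to the ordered pair $(v_j,v_i)$) as long as the pair $(v_i,v_j)$ is linearly independent — which is exactly the defining condition of $U$. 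I would also point out that the recipe genuinely fails along $D_{i,j}(\alpha)$: there the projection of $v_i$ to the plane orthogonal to $v_j$ vanishes, there is no preferred direction in that plane, and the residual circle of rotations cannot be pinned down; so $U=X(\alpha)\setminus D_{i,j}(\alpha)$ is the natural maximal domain of such a section.

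\smallskip
\noindent The step I expect to be the main obstacle is the bookkeeping compressed into the first paragraph: one must verify carefully that ``a representative normalized as above, modulo $\ker\rho_{SO(3)}$'' really is a single point of the total space of $\widetilde V_j$, matching the $S^1$-action (and its orientation) that defines $\widetilde V_j$ from $L_j^*$. For this I would use the genericity of $\alpha$, which forces $K$ to act freely on $\mu_\R^{-1}(0,\alpha)\cap\mu_\C^{-1}(0)$: then the fibre of $\widetilde Q_j\to X(\alpha)$ is a single free $K$-orbit, and the normalization by $v_i$ and $v_j$ singles out exactly one $\ker\rho_{SO(3)}$-coset inside it, hence exactly one point of the fibre of $\widetilde V_j$.
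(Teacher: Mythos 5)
Your proposal is correct and follows essentially the same route as the paper: the paper's proof also defines the section by choosing, over each point of $X(\alpha)\setminus D_{i,j}(\alpha)$, the unique element of the fibre $\pi_j^{-1}([p,q])$ for which $(q_iq_i^*-p_i^*p_i)_0$ projects onto a fixed half-axis of the $xOy$-plane (the paper uses the positive $y$-axis, you use the positive $x$-axis — an immaterial convention), which is possible exactly because $v_i$ and $v_j$ are non-proportional off $D_{i,j}(\alpha)$. Your extra bookkeeping about $\ker\rho_{SO(3)}$, the sign of the $S^1$-action, and smoothness is sound, just more detailed than the paper's one-line argument.
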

\begin{proof}
Let $[p,q]\in X(\alpha)$ and take $i \neq j$. Then assign to $[p,q]$ the unique element in $\pi_j^{-1}([p,q])$ for which $(q_iq_i^*-p_i^*p_i)_0$ projects onto the $xOy$-plane along the positive $y$-axis. Such a representative always exists in $\pi_j^{-1}([p,q])$ as long as $[p,q]\notin  D_{i,j}(\alpha)$. 
\end{proof}
On the other hand, let us consider the function
$$
\tilde{t}_j:\mu_\mathbb{R}^{-1}(0,\alpha) \cap \mu_\mathbb{C}^{-1}(0) \to \mathbb{C}
$$
defined by
$$
\tilde{t}_j(p,q)= \left\{ \begin{array}{c}  \frac{b_j}{c_j},\,\,\, \text{if}\,\, c_j \neq 0 \\ \\ -\frac{a_j}{d_j},\,\,\, \text{if}\,\, d_j \neq 0, \end{array} \right.
$$
where, as usual, $p_j=(a_j,b_j)$ and $q_j=\left(\begin{array}{r} c_j \\ d_j \end{array} \right)$.
This map is well-defined since, if $c_j,d_j \neq 0$, one has by \eqref{complex1} that
$$
\frac{b_j}{c_j} =  -\frac{a_j}{d_j}.
$$
Moreover, it is $K$-equivariant with respect to $\overline{\rho}_j$ since
$$
\overline{t}_j((p,q)\cdot[A,e_1,\ldots,e_n])=e_j^{-2} \, \overline{t}_j(p,q),
$$
and so it induces a section $t_j$ of $L_j$ vanishing on 
$$
W_j:=\{[p,q]\in X(\alpha) \mid\, p_j=0\}.
$$ 
Hence we obtain the following proposition.
\begin{proposition}\label{prop:wj}
The line bundle
$L_j \vert_{X(\alpha)\setminus W_j} \stackrel{\pi_j}{\to} X(\alpha)\setminus W_j $
has a section.
\end{proposition}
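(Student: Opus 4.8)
The plan is to exhibit the required section explicitly as the one induced by the function $\tilde t_j$ introduced just above, and then to compute its zero locus. First I would record that $\tilde t_j$ is a well-defined smooth function on $\mu_{\R}^{-1}(0,\alpha)\cap\mu_{\C}^{-1}(0)$ (in fact holomorphic, since the same formula and equivariance hold over the GIT model $\mu_{\C}^{-1}(0)^{\alpha\text{-st}}/K^{\C}$): along this level set one has $|c_j|^2+|d_j|^2=2\alpha_j+|a_j|^2+|b_j|^2\geq 2\alpha_j>0$ by \eqref{real1}, so the open sets $\{c_j\neq0\}$ and $\{d_j\neq0\}$ cover it; the expression $b_j/c_j$ is holomorphic on the first, $-a_j/d_j$ on the second, and the two coincide on the overlap by the complex moment map relation $a_jc_j+b_jd_j=0$ of \eqref{complex1}.

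Next I would verify the $K$-equivariance $\tilde t_j\big((p,q)\cdot[A,e_1,\ldots,e_n]\big)=e_j^{-2}\,\tilde t_j(p,q)$ asserted above, which is precisely what makes $\tilde t_j$ descend to a global section $t_j$ of the line bundle $L_j$ over $X(\alpha)$. Writing $A=(A_{kl})\in SL(2,\C)$, the action yields $\tilde b_j=e_j^{-1}(a_jA_{12}+b_jA_{22})$ and $\tilde c_j=e_j(A_{22}c_j-A_{12}d_j)$, so that $\tilde b_j/\tilde c_j=e_j^{-2}(a_jA_{12}+b_jA_{22})/(A_{22}c_j-A_{12}d_j)$; equating this with $e_j^{-2}b_j/c_j$ and clearing denominators reduces the claim to the single identity $A_{12}(a_jc_j+b_jd_j)=0$, which is once more \eqref{complex1}, and the computation on the chart $\{d_j\neq0\}$ is entirely symmetric.

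Finally I would pin down the zero set of $t_j$. Since $q_j\neq0$ everywhere on the level set, at each point at least one of $c_j,d_j$ is nonzero; if $c_j\neq0$ then $t_j=0\iff b_j=0$, and then $a_jc_j+b_jd_j=0$ forces $a_j=0$, i.e. $p_j=0$; the case $d_j\neq0$ is symmetric, and conversely $p_j=0$ gives $t_j=0$ at once. Hence $t_j$ vanishes precisely on $W_j=\{[p,q]\mid p_j=0\}$, so its restriction to $X(\alpha)\setminus W_j$ is a nowhere-vanishing section of $L_j|_{X(\alpha)\setminus W_j}$; this proves the proposition and, trivializing the restricted bundle, sets up the Chern-class and Poincar\'e-duality computations to follow. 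I do not expect a genuine obstacle: the only real computation is the equivariance check, and it collapses onto the complex moment map equation \eqref{complex1}, so that essentially all of the content is already contained in the discussion preceding the statement.
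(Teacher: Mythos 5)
Your proposal is correct and follows exactly the route the paper takes: the section is the one induced by $\tilde t_j$, whose well-definedness and $K$-equivariance are exactly what the discussion preceding the proposition asserts (you merely make the equivariance computation explicit, reducing it to $A_{12}(a_jc_j+b_jd_j)=0$, i.e. \eqref{complex1}), and its zero locus is $W_j$, so the restriction to $X(\alpha)\setminus W_j$ is nowhere vanishing. Nothing is missing.
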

We conclude that $c_j$ is represented in Borel-Moore homology by both $D_{i,j}(\alpha)$ ($i\neq j$) and by $-W_j$.
\subsection{Restriction to a core component}
We will restrict the circle bundles defined in the previous sections to a core component $U_S$ and determine the Poincar\'{e} Dual of the Chern classes of these restrictions. For that, recall that $D_{i,j}(\alpha)$ has two connected components $D_{i,j}^\pm(\alpha)$. Then, if $i \neq j$ and $i,j \notin S$, the intersection $D_{i,j}^\pm(\alpha) \cap U_S(\alpha)$ is diffeomorphic to a core component $U_S(\alpha^{\pm})$ for a lower dimensional hyperpolygon space $X(\alpha^\pm)$.
\begin{proposition}\label{prop:map}
Assuming $S=\{1, \ldots, \vert S\vert \}$ and $\alpha_i > \alpha_j$ with $i,j \notin S$ there exist diffeomorphisms
$$
s_{\pm} :D_{i,j}^\pm(\alpha) \cap U_S(\alpha) \to U_S (\alpha^\pm), 
$$
with
\begin{equation}\label{eq:map}
s_{\pm} \left( [p,q] \right) = \left[p_1,\ldots,p_{\vert S\vert}, 0,\ldots, 0, q_1,\ldots,\hat{q}_i,\ldots, \hat{q}_j, \ldots, q_n, \sqrt{\frac{\alpha_i\pm\alpha_j}{\alpha_i}}\, q_i \right],
\end{equation}
where
$$
\alpha_{i,j}^\pm :=(\alpha_1, \ldots,\hat{\alpha}_i, \ldots, \hat{\alpha}_j, \ldots, \alpha_n, \alpha_i \pm \alpha_j).
$$
\end{proposition}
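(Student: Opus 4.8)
The plan is to write down $s_\pm$ together with an explicit inverse $t_\pm$, and to verify on chosen representatives of the relevant $K^{\mathbb C}$-orbits that both maps are well defined, land in the asserted spaces, and are mutually inverse; since every formula involved is polynomial in $(p,q)$ and the relevant groups act freely on the stable loci for generic weights, smoothness on the quotients will then be automatic and $s_\pm$ will be a diffeomorphism. First I would record the combinatorial identities relating admissible sets before and after the operation: writing $N$ for the new index and $B$ for a subset of $\{1,\dots,n\}\setminus\{i,j\}$, one has $\varepsilon_{B\cup\{N\}}(\alpha^{+}_{i,j})=\varepsilon_{B\cup\{i,j\}}(\alpha)$ and $\varepsilon_{B}(\alpha^{+}_{i,j})=\varepsilon_{B}(\alpha)$, while $\varepsilon_{B\cup\{N\}}(\alpha^{-}_{i,j})=\varepsilon_{B\cup\{i\}}(\alpha)$ and $\varepsilon_{B}(\alpha^{-}_{i,j})=\varepsilon_{B\cup\{j\}}(\alpha)$. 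In particular $\alpha^{\pm}_{i,j}$ is again generic, so $X(\alpha^{\pm}_{i,j})$ and $U_S(\alpha^{\pm}_{i,j})$ are defined. I would also note that, since $i,j\in S^c$, every $[p,q]\in U_S(\alpha)$ has $p_i=p_j=0$, so on $U_S(\alpha)$ the locus $D^{+}_{i,j}(\alpha)$ is exactly where $q_i$ and $q_j$ are proportional, and $D^{-}_{i,j}(\alpha)$ exactly where $q_i$ and $q_j$ span orthogonal lines in $\C^{2}$ (antipodal points of $\C\P^1$).

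Next I would check that $s_\pm([p,q])\in U_S(\alpha^{\pm}_{i,j})$. The complex moment map equations for $\alpha^{\pm}_{i,j}$ hold because the two deleted slots contributed $0$ to \eqref{complex2} (as $p_i=p_j=0$) and the new slot contributes nothing; the equation $\tfrac12|q_N|^{2}=\alpha_i\pm\alpha_j$ holds by the choice of $\sqrt{(\alpha_i\pm\alpha_j)/\alpha_i}$; and the closing equation reduces to $(q_Nq_N^{*})_0=(q_iq_i^{*})_0+(q_jq_j^{*})_0$, which is immediate from the description of $D^{\pm}_{i,j}(\alpha)\cap U_S(\alpha)$ together with $\|(q_kq_k^{*})_0\|=\alpha_k$ for $k\in S^c$ (here the hypothesis $\alpha_i>\alpha_j$ guarantees $\alpha_i-\alpha_j>0$). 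Straightness of $S$ and vanishing of the new $p$ off $S$ are clear, so by \eqref{eq:us} it remains to prove $\alpha^{\pm}_{i,j}$-stability of $s_\pm([p,q])$. For that I would go through the maximal straight sets $T$ of the new configuration with the new $p$ vanishing off $T$, distinguishing whether $N\in T$ (and, in the $D^{-}$ case, whether the maximal straight set of the old configuration containing $T$ meets $\{i,j\}$); in each case one produces a straight set $\widehat T$ of $(p,q)$ with $p$ vanishing off $\widehat T$, applies $\alpha$-stability of $[p,q]$ to get $\varepsilon_{\widehat T}(\alpha)<0$, and concludes $\varepsilon_T(\alpha^{\pm}_{i,j})<0$ using the identities above. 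The inverse is $t_\pm\colon U_S(\alpha^{\pm}_{i,j})\to X(\alpha)$, defined by splitting the $N$-th slot as $q_i:=\sqrt{\alpha_i/(\alpha_i\pm\alpha_j)}\,q_N$ and $q_j:=\sqrt{\alpha_j/(\alpha_i\pm\alpha_j)}\,q_N$ in the $+$ case, resp.\ $q_j:=\sqrt{\alpha_j/(\alpha_i-\alpha_j)}\,Jq_N$ in the $-$ case (with $J(c,d)^{t}=(-\bar d,\bar c)^{t}$), keeping the other slots and setting $p_i=p_j=0$; the same computations show $t_\pm$ lands in $D^{\pm}_{i,j}(\alpha)\cap U_S(\alpha)$ (and its stability check is purely combinatorial, since in the $-$ case any straight set meets $\{i,j\}$ in at most one element). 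Finally $s_\pm\circ t_\pm=\mathrm{id}$ on the nose, while $t_\pm\circ s_\pm$ recovers the given point after acting by the element of $K$ which is the identity except for a unit scalar in the slot labelled $j$ (unimodular because the norms match, hence trivial in $X(\alpha)$).

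The hard part will be the $\alpha^{-}_{i,j}$-stability of $s_-([p,q])$ in the one case where the relevant set $T$ is maximal straight in both configurations and its maximal straight extension in the old configuration avoids $i$ and $j$: there the combinatorics alone gives only $\varepsilon_T(\alpha)<0$, not the sharper $\varepsilon_T(\alpha^{-}_{i,j})<0$. To handle this I would feed in geometry. Projecting the closing equation $\sum_k w_k=0$ onto the common direction $\hat n$ of $\{q_l\}_{l\in T}$, and using that \eqref{complex1} forces $(p_l^{*}p_l)_0$ to point opposite to $(q_lq_l^{*})_0$ for each $l\in T$, one gets $\langle\sum_{l\in T}w_l,\hat n\rangle=\sum_{l\in T}(\alpha_l+|p_l|^{2})\ge\sum_{l\in T}\alpha_l>0$; on the other hand the antiparallelism of $v_i$ and $v_j$ bounds $|\langle\sum_{l\notin T}w_l,\hat n\rangle|$ by $\sum_{l\notin T}\alpha_l-2\alpha_j$. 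Combining these gives $\varepsilon_T(\alpha)\le-2\alpha_j$, and genericity of $\alpha^{-}_{i,j}$ upgrades this to a strict inequality, which is exactly $\varepsilon_T(\alpha^{-}_{i,j})<0$. A minor bookkeeping point, handled by the same projection argument, is to check that $D^{\pm}_{i,j}(\alpha)\cap U_S(\alpha)$ and $U_S(\alpha^{\pm}_{i,j})$ are nonempty (or empty) simultaneously, so that the asserted diffeomorphism is never between a nonempty space and an empty one.
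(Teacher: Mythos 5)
Your proposal is correct, but it takes a genuinely different route from the paper's proof. The paper argues in a few lines through the Harada--Proudfoot model of Theorem~\ref{hp}: identifying $U_S(\alpha)$ with the moduli space of pairs of polygons $\{u_l,v_k,w\}$, the locus $D^{\pm}_{i,j}(\alpha)\cap U_S(\alpha)$ is exactly where $v_j=\lambda v_i$ with $\lambda\in\R^{\pm}$, so the two edges merge into a single edge $v_i\pm v_j$ of length $\alpha_i\pm\alpha_j$; this visibly reproduces the polygon-pair description of $U_S(\alpha_{i,j}^{\pm})$, and composing the identifications yields the map \eqref{eq:map}, with the diffeomorphism property then asserted. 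You instead work directly on representatives $(p,q)$: you exhibit an explicit inverse $t_{\pm}$ (using $Jq_N$ in the minus case), verify the real and complex moment map equations, track the ambiguity by a unit scalar in the slot labelled $j$, and prove genericity of $\alpha_{i,j}^{\pm}$ via the $\varepsilon$-identities, a point the paper only remarks without proof. What your approach buys is an honest check that the explicit coordinate formula is smooth with smooth inverse -- something the paper leaves implicit behind the homeomorphism of \cite{hp} -- at the cost of a longer case analysis. One substantial simplification is available to you: since your representatives of $s_{\pm}([p,q])$ and of $t_{\pm}$ are checked to lie in $\mu_{HK}^{-1}\big((0,\alpha_{i,j}^{\pm}),(0,0)\big)$ (respectively in the level set for $\alpha$), their stability is automatic by Proposition~\ref{git}; hence the entire stability discussion -- in particular the ``hard case'' for $s_-$, where your projection onto the common direction of $\{q_l\}_{l\in T}$ is correct but unnecessary -- can simply be dropped, as can the separate nonemptiness bookkeeping, which already follows from having mutually inverse well-defined maps.
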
 
\begin{Remark}
By permutation it is not restrictive to assume $S=\{1, \ldots, \vert S \vert\}$. Moreover, note that both $\alpha_{i,j}^\pm$ are generic provided that $\alpha$ is. 
\end{Remark}
\begin{proof}
From \cite{hp} we know that $U_S(\alpha)$ is homeomorphic to the moduli space of $n+1$ vectors
$$
\big\{ u_l, v_k, w \in \mathbb{R}^3\mid \,\, l \in S,\,\, k \in S^c\big\}
$$
satisfying conditions $1)$ to $5)$ in Theorem~\ref{hp}, taken up to rotation. Moreover, in $D_{i,j}^\pm(\alpha)\cap U_S(\alpha)$ one has $v_j=\lambda v_i$ for some $\lambda \in \mathbb{R}^\pm$ and so one can trivially identify this intersection with the moduli space of $n$ vectors
$$
\big\{ u_l, v_{\vert S\vert +1}, \ldots, \hat{v}_i,\ldots, \hat{v}_j, \ldots, v_{n}, v_i\pm v_j, w \mid l \in S \big\}
$$ 
satisfying
\begin{align*}
1)& \quad w  + v_{\vert S\vert +1}+\cdots + v_{i-1}+v_{i+1}+\cdots + v_{j-1} + v_{j+1} +\cdots +v_{n} + (v_i \pm v_j) = 0 \\
2)& \quad \sum_{l \in S} u_l =0\\
3)& \quad u_l \cdot w = 0, \quad \text{for all} \,\, l\in S\\
4)& \quad \vert \vert v_k\vert \vert= \alpha_k,\,\, k \neq i,j\quad  k\in S^c,  \quad \vert \vert v_i \pm v_j\vert \vert =\alpha_i \pm \alpha_j\\
5)& \quad \vert \vert w \vert \vert = \sum_{l \in S} \sqrt{\alpha_l^2 + \vert \vert u_l\vert \vert^2}, 
\end{align*}
which, in turn,  is homeomorphic to $U_{S}(\alpha^\pm)$ (cf. Figure~\ref{figB}). The composition of these homeomorphisms defines the map
$$
s_{\pm}: D_{i,j}^\pm(\alpha) \cap U_S(\alpha) \to U_S(\alpha^\pm)
$$ 
of \eqref{eq:map}.  Note that the map $s_{\pm}$ is clearly a diffeomorphism between the two manifolds.
\end{proof}
\begin{figure}[htbp]
\begin{center}
\psfrag{w}{\small{$w$}}
\psfrag{v1}{\small{$v_{\lvert S \rvert +1}$}}
\psfrag{vi}{\small{$v_i$}}
\psfrag{vj}{\small{$v_j$}}
\psfrag{vn}{\small{$v_n$}}
\psfrag{u1}{\small{$u_1$}}
\psfrag{uS}{\small{$u_{\vert S \vert}$}}
\psfrag{I}{$(I)$}
\psfrag{II}{$(II)$}
\includegraphics[width=12cm]{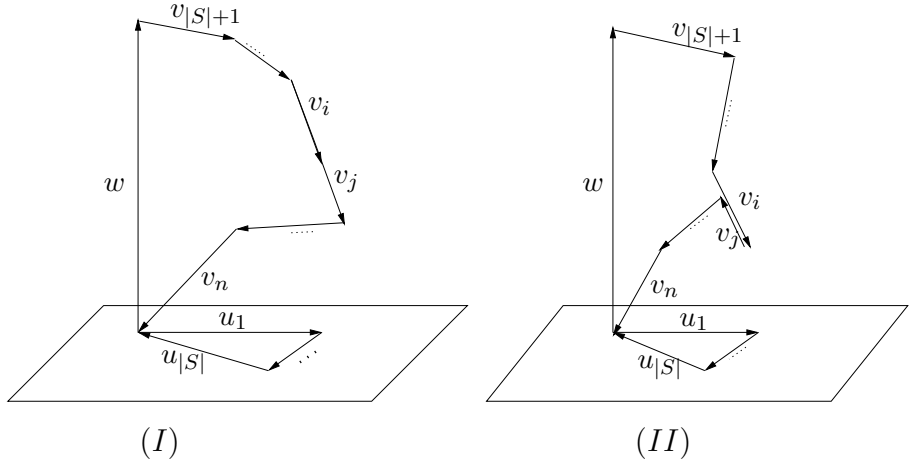}
\caption{$(I)$ A hyperpolygon in $D_{i,j}^+ (\alpha) \cap U_S(\alpha)$;
$(II)$ A hyperpolygon in $D_{i,j}^- (\alpha) \cap U_S(\alpha)$.}
\label{figB}
\end{center}
\end{figure}
We conclude that the manifolds $ D_{i,j}^\pm(\alpha) \cap U_S(\alpha)$ are connected and symplectic and so we can orient them using the symplectic form by requiring
$$
\int_{D_{i,j}^\pm (\alpha) \cap U_S(\alpha)} (i_S^\pm \circ s_\pm)^* (\omega_\mathbb{R}^\pm )^{n-4} > 0,
$$
where 
$$
i_S^\pm:U_S(\alpha^\pm) \to X(\alpha^\pm)
$$
is the natural inclusion map.
One obtains in this way two generators of 
$$H_{2(n-4)}(D_{i,j}^\pm(\alpha) \cap U_S(\alpha)),$$
namely $[D_{i,j}^+(\alpha) \cap U_S(\alpha)]$ and $[D_{i,j}^-(\alpha) \cap U_S(\alpha)]$. Hence, to determine the Poincar\'{e} dual of the class $i_S^* \, c_j$, where $i_S:U_S(\alpha) \to X(\alpha)$ is the inclusion map, one just has to determine constants $a_{i,j},b_{i,j}$ as follows.
\begin{proposition}\label{prop:dij}
Let $i:D_{i,j}(\alpha) \cap U_S(\alpha) \to X(\alpha)$ be the inclusion map. If $\alpha_i \neq \alpha_j$ and $i,j \notin S$ then the Poincar\'{e} dual of $i_S^*\, c_j$ is in $i_* H_{2(n-4)} (D_{i,j}(\alpha)\cap U_S(\alpha))$ and can be written as
$$
a_{i,j} [D_{i,j}^+ (\alpha) \cap U_S(\alpha)] + b_{i,j} [D_{i,j}^- (\alpha)  \cap U_S(\alpha)],
$$
where
$$
a_{i,j}=1 \quad \text{and}\quad b_{i,j}=\sgn (\alpha_i-\alpha_j).
$$
\end{proposition}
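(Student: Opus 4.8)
The plan is to reduce the statement to two local winding--number computations, one near each component $D^\pm_{i,j}(\alpha)\cap U_S(\alpha)$, and to carry these out in the polygon model of $U_S(\alpha)$ provided by Theorem~\ref{hp}. First I would observe that, by its very construction, the section $s_{i,j}$ trivializes $\widetilde V_j$ over $X(\alpha)\setminus D_{i,j}(\alpha)$, hence it trivializes $\widetilde V_j\big|_{U_S(\alpha)}$ over $U_S(\alpha)\setminus D_{i,j}(\alpha)$. Therefore $i_S^*c_j=c_1\big(\widetilde V_j|_{U_S(\alpha)}\big)$ is supported on the codimension--two subset $D_{i,j}(\alpha)\cap U_S(\alpha)=\bigl(D^+_{i,j}(\alpha)\cap U_S(\alpha)\bigr)\sqcup\bigl(D^-_{i,j}(\alpha)\cap U_S(\alpha)\bigr)$, which is a disjoint union of smooth oriented submanifolds by Proposition~\ref{prop:map} (and the orientations fixed there through $s_\pm$ and $(\omega_{\mathbb{R}}^\pm)^{n-4}$). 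A standard argument then yields $\mathrm{PD}(i_S^*c_j)=a_{i,j}[D^+_{i,j}(\alpha)\cap U_S(\alpha)]+b_{i,j}[D^-_{i,j}(\alpha)\cap U_S(\alpha)]$, where $a_{i,j}$ and $b_{i,j}$ are the local Euler numbers of $\widetilde V_j$ along the two components, i.e. the winding numbers of $s_{i,j}$ around small positively oriented meridian circles linking them.

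To evaluate these I would use Theorem~\ref{hp} to describe a neighbourhood in $U_S(\alpha)$ of a point of $D^\pm_{i,j}(\alpha)\cap U_S(\alpha)$: keeping $w$, the $u_l$ with $l\in S$, and the $v_k$ with $k\in S^c\setminus\{i,j\}$ fixed, the pair $(v_i,v_j)$ varies in the symplectic reduction of $S^2_{\alpha_i}\times S^2_{\alpha_j}$ by the diagonal $SO(3)$ at the value $v_i+v_j$, which near $D^\pm$ is a vector of length $\alpha_i\pm\alpha_j$. A transverse $2$--disk is obtained by tilting $v_i$ off the vertical, with $v_j$ determined by $v_i+v_j=\mathrm{const}$, and the meridian is the azimuth of $v_i$. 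Since for $j\in S^c$ the bundle $\widetilde V_j|_{U_S(\alpha)}$ is the ``edge $v_j$ vertical'' circle bundle with $S^1$ rotating about the $z$--axis (as on the polygon side, cf. $\widetilde V_j|_{M(\alpha)}=V_j$), and $s_{i,j}$ is the normalization ``$v_j$ up, $xy$--projection of $v_i$ along $+y$'', following the residual $z$--rotation around the meridian shows the winding number has absolute value one in both cases: each of the two normalizations is transverse of degree one.

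The sign is where the real work lies. For $D^+_{i,j}(\alpha)\cap U_S(\alpha)$ the reduced slice has radius $\alpha_i+\alpha_j>0$, the diffeomorphism $s_+$ of Proposition~\ref{prop:map} is a symplectomorphism compatible with the complex orientations, and one obtains $a_{i,j}=1$; this is consistent with the fact, recalled just before Proposition~\ref{prop:dij}, that $\mathrm{PD}(c_j)=[D_{i,j}(\alpha)]$ in $X(\alpha)$ with each of $D^\pm_{i,j}(\alpha)$ appearing with coefficient $+1$. For $D^-_{i,j}(\alpha)\cap U_S(\alpha)$ the slice is a symplectic reduction at a vector of \emph{signed} length $\alpha_i-\alpha_j$, while $s_-$ replaces the antiparallel pair $(v_i,v_j)$ by the single edge $v_i+v_j$ of (unsigned) length $\alpha_i-\alpha_j=\pm(\alpha_i-\alpha_j)$; comparing the symplectic orientation of the reduced slice with the orientation pulled back through $s_-$ from $(\omega^-_{i,j})^{n-4}$ shows that they agree exactly when $\alpha_i>\alpha_j$ and are opposite when $\alpha_i<\alpha_j$, so that $b_{i,j}=\sgn(\alpha_i-\alpha_j)$.

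The main obstacle is precisely this orientation bookkeeping: one has to reconcile (i) the coorientation of $D^\pm_{i,j}(\alpha)\cap U_S(\alpha)$ inside $U_S(\alpha)$ coming from the complex structure, (ii) the orientation of $D^\pm_{i,j}(\alpha)\cap U_S(\alpha)$ fixed via $s_\pm$ and $(\omega_{\mathbb{R}}^\pm)^{n-4}$, (iii) the sign of the winding of $s_{i,j}$ about the meridian, and (iv) the fact that $\widetilde V_j$ is built from the dual line bundle $L_j^*$. It is only for the component $D^-_{i,j}(\alpha)\cap U_S(\alpha)$, through the possibly negative reduced length $\alpha_i-\alpha_j$, that these combine into a nontrivial sign, and keeping track of all four contributions simultaneously in a neighbourhood of a point of that component is the delicate part of the argument.
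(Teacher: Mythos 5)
Your overall strategy is genuinely different from the paper's: you localize $c_1\big(\widetilde{V}_j\big)\big\vert_{U_S(\alpha)}$ on the zero locus of the explicit section $s_{i,j}$ and try to read off the two coefficients as local winding numbers, whereas the paper never computes a winding number at all --- it restricts to the toric "bending" sphere $N\subset U_S(\alpha)$ obtained by freezing all but three of the coordinates, uses $c_j=\frac{\partial}{\partial\alpha_j}[\omega_\R]$ (Duistermaat--Heckman) and reads the coefficients off the moving-facet formula for the variation of $[\omega_t]$ on a toric manifold; there the sign $\sgn(\alpha_i-\alpha_j)$ is simply the derivative of the lower endpoint $\lvert\alpha_{n-1}-\alpha_n\rvert$ of the bending interval. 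The localization part of your plan (that $\mathrm{PD}(i_S^*c_j)$ is supported on $D_{i,j}(\alpha)\cap U_S(\alpha)$, with coefficients of modulus one on the two components identified and oriented via Proposition~\ref{prop:map}) is fine in outline.

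The gap is in the determination of $b_{i,j}$, which is the entire content of the statement beyond that. You assert that the transverse winding is "degree one" in both cases and place the sign in a comparison between "the symplectic orientation of the reduced slice" and "the orientation pulled back through $s_-$ from $(\omega^-_{\R})^{n-4}$". Those two orientations live on manifolds of different dimensions (a transverse $2$-disk versus the $(2n-8)$-dimensional $D^-_{i,j}(\alpha)\cap U_S(\alpha)$), so the comparison as written does not make sense; and on any reading in which the sign is carried by the orientation of $D^-_{i,j}(\alpha)\cap U_S(\alpha)$ itself, the mechanism fails outright whenever that intersection is a point (e.g.\ $n=4$, $\alpha=(1,1,3,6)$, $S=\{1,2\}$, $i=3$, $j=4$): the convention $\int(\cdot)^{n-4}>0$ forces the point to be positively oriented, so your recipe outputs $b_{3,4}=+1$ and $\int_{U_S}i_S^*c_4=2$, whereas the correct value is $1+\sgn(3-6)=0$ (as one checks against Theorem~\ref{thm:recursion} or Theorem~\ref{thm:2}). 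In this local approach the sign in fact sits in the transverse winding itself: near $D^-_{i,j}$ the edge $v_i$ lies at the pole of $S^2_{\alpha_i}$ aligned with $v_i+v_j$ when $\alpha_i>\alpha_j$ and at the opposite pole when $\alpha_i<\alpha_j$, and coordinatizing the normal disk inside $U_S(\alpha)$ by $v_i$ near one pole or the other reverses orientation relative to the complex orientation of $U_S(\alpha)$, flipping the winding of $s_{i,j}$; nothing in your sketch carries out this computation (and "symplectic reduction at a vector of signed length $\alpha_i-\alpha_j$" is not a meaningful operation, so it cannot substitute for it). Likewise, "$s_+$ is a symplectomorphism compatible with the complex orientations" and the appeal to $\mathrm{PD}(c_j)=[D_{i,j}(\alpha)]$ in $X(\alpha)$ "with each component appearing with coefficient $+1$" are unproved (the paper only exhibits $D_{i,j}(\alpha)$ as a Borel--Moore representative, without orienting its components or addressing transversality with $U_S(\alpha)$). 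If you wish to complete your route, the cleanest fix is to do the winding count on the paper's curve $N$, where the bending circle action makes the normal weight and the behaviour of the normalizing section explicit --- at which point you have essentially reconstructed the paper's Duistermaat--Heckman argument.
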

\begin{proof}
For simplicity, consider $i=n-1$, $j=n$ and $S=\{1, \ldots, \vert S\vert\}$. Then take  a fixed element in $U_S(\alpha)$ with $p_i=0$ for all $i\geq 3$. Let $(p^0,q^0)$ be a fixed representative of this class.  Consider the subvariety $N$ of $U_S(\alpha)$ defined by the elements $[(p,q)]$ of $U_S(\alpha)$ with $p_i=p_i^0$ for all $i$ and $q_i=q_i^0$ for $i=1,\ldots, n-3$. This subvariety $N$ is thus obtained by fixing $p_i$ for all $i$, and $q_i$ for all $i \leq n-3$, allowing only to vary the last three values $q_{n-2}$, $q_{n-1}$ and $q_n$ (noting that the corresponding coordinates of $p$ are $p_{n-2}=p_{n-1}=p_n=0$). It is then symplectomorphic to the moduli space of polygons in $\mathbb{R}^3$
$$
M(l,\alpha_{n-2},\alpha_{n-1}, \alpha_n),
$$
with 
$$
l =\Big\lvert \Big\lvert \sum_{k=1}^{n-3} \big(q_k^0(q_k^0)^*\big)_0 - \big((p_1^0)^*p_1^0\big)_0 - \big((p_2^0)^*p_2^0\big)_0   \Big\rvert \Big\rvert, 
$$
which we know is a sphere.  Note that $N$ is homeomorphic to the moduli space of vectors
$u_1,u_2,v_k,w \in \R^3$, $k \in S^c$, such that
\begin{align*}
 u_1 & = -u_2=q_1^0p_1^0+(p_1^0)^*(q_1^0)^*,\\ 
v_k & = \big(q_k^0(q_k^0)^*\big)_0, \quad \forall k=\vert S \vert +1, \ldots, n-3,\\
 w & = \sum_{i \in S}   \big(q_k^0(q_k^0)^*\big)_0 - \big((p_1^0)^*p_1^0\big)_0 - \big((p_2^0)^*p_2^0\big)_0.
\end{align*}
On the other hand, $N$ equipped with the bending action along the first diagonal is a toric manifold with moment polytope given by  the interval
$$
\Delta=\left[\max \{\vert l - \alpha_{n-2}\vert, \vert\alpha_{n-1}-\alpha_{n} \vert \}, \min \{ l + \alpha_{n-2}, \alpha_{n-1}+\alpha_{n}  \}   \right],
$$ 
(cf. \cite{hk, km1} for details) and so we can use the following well-known fact about toric manifolds.

Consider a family of symplectic forms $\Omega_t$ on a toric manifold and the corresponding family of moment polytopes $\Delta_t$ with $m$ facets given, as usual, by
$$
F_{t,k}:=\big\{ x\in \frak{t}^*\mid \langle x, \nu_k\rangle = \lambda_k(t) \big\} \quad \text{for} \quad k=1, \ldots,m,
$$
with $\nu_k$ the inward unit normal vector to the facet $F_{t,k}$ and $\lambda_k(t) \in \R$. Suppose that the polytopes $\Delta_t$ stay combinatorially the same as $t$ changes but the value of $\lambda_i(t)$ for some $i \in \{1, \ldots,m\}$ depends linearly on $t$ and, as $t$ increases, the facet $F_{t,i}$ moves outwards while the others stay fixed. Then, $\frac{d \Omega_t}{dt}$ is the Poincaré dual of the homology class $[\mu^{-1}(F_{t,i})]$ where the orientation is given by requiring that 
$$\int_{\mu^{-1}(F_{t,i})} \Omega_t^{\frac{1}{2}(\dim \mu^{-1}(F_{t,i}))} >0$$
(cf. Section 2.2 of \cite{G} for details). 

Applying this result to the submanifold $N$ we see that, as $\alpha_n$ changes, the cohomology of the symplectic form on $N$ 
$$
[(i_S \circ i_N)^*\omega_{\R}]
$$
changes by the Poincaré dual of the homology class 
\begin{align*}
& [\mu^{-1}(\alpha_{n-1}+\alpha_n)\cap U_S(\alpha)] + \sgn (\alpha_{n-1}-\alpha_{n}) [\mu^{-1}(\lvert\alpha_{n-1}-\alpha_n\rvert)\cap U_S(\alpha)] \\ & = 
 [D_{n-1,n}^+(\alpha)\cap U_S(\alpha)\cap N] + \sgn (\alpha_{n-1}-\alpha_{n}) [D_{n-1,n}^-(\alpha) \cap U_S(\alpha) \cap N].
\end{align*}  
The result then follows from the fact that
$$
(i_S^* \, c_n)\vert_N = i_N^* \, i_S^* c_n = (i_S \circ i_N)^* \frac{\partial}{\partial \alpha_n} [\omega_\mathbb{R}] = \frac{\partial}{\partial \alpha_n} [(i_S \circ i_N)^* \omega_\mathbb{R}]. 
$$
\end{proof}
\subsection{Recursion formula}
To prove our recursion formula we have to first study the behavior of the classes $c_j$ when restricted to 
$$[D_{n-1,n}^\pm \cap U_S(\alpha)].$$
\begin{proposition}
\label{prop:pullbackcj} Suppose $\alpha_n \neq \alpha_{n-1}$ and
let $c_n^+$ and $c_n^-$ be the cohomology classes
$c_1\big(\widetilde{V}_n(\alpha^+)\big)$ and $c_1\big(\widetilde{V}_n(\alpha^-)\big)$, where 
$$
\alpha^+:=(\alpha_1,\ldots,\alpha_{n-2},\alpha_{n-1} +  \alpha_n) \quad \text{and} \quad \alpha^-:=(\alpha_1,\ldots,\alpha_{n-2},\lvert \alpha_{n-1} - \alpha_n\rvert ).
$$
Then, considering  the inclusion maps $i_\pm: D_{n-1,n}^\pm(\alpha) \cap U_S(\alpha) \hookrightarrow U_S(\alpha)$ and the diffeomorphisms  
$s_\pm:  D_{n-1,n}^\pm(\alpha)\cap U_S(\alpha) \longrightarrow  U_S(\alpha^\pm)$ from Proposition~\ref{prop:map}, we have
\begin{align*}
& (i_\pm \circ s_\pm^{-1})^* (i_S^*\, c_i)   =   (i^\pm_S)^* \, c_i^\pm \quad \text{for}\quad 1 \leq i\leq n-2; \\
&(i_+ \circ s_+^{-1})^* (i_S^*\, c_{n-1})    =  (i^+_S)^* \, c_{n-1}^+; \\
&(i_- \circ s_-^{-1})^* (i_S^*\, c_{n-1})    =  \sgn (\alpha_{n-1}-\alpha_n) \, (i^-_S)^* \, c_{n-1}^-; \\
&(i_+ \circ s_+^{-1})^* (i_S^* \, c_{n})     = (i^+_S)^* \, c_{n-1}^+ ;   \\
& (i_- \circ s_-^{-1})^* (i_S^*\, c_{n})     =  - \sgn (\alpha_{n-1}-\alpha_n)\,(i^-_S)^* \,  c_{n-1}^-.  
\end{align*}
\end{proposition}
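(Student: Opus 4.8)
The plan is to read off both sides of each identity from the explicit formula \eqref{eq:map} for $s_\pm$, using the description of the circle bundles $\widetilde V_j$ as frame bundles of directions in $\mathbb R^3$. Recall (cf. \cite{konno,hp}) that over $X(\alpha)$ the bundle $\widetilde V_j$ is $S^1$-equivariantly the pull-back of the principal $S^1$-bundle $SO(3)\to S^2$ along the direction map
$$
[p,q]\ \longmapsto\ \frac{(q_jq_j^*-p_j^*p_j)_0}{\lVert(q_jq_j^*-p_j^*p_j)_0\rVert}\ \in\ S^2
$$
(the vector never vanishes, its trace being $|q_j|^2-|p_j|^2=2\alpha_j>0$ while $q_jq_j^*$ has rank at most one), the circle acting by rotation about that axis; on the locus $\{p_j=0\}$ --- in particular on $U_S(\alpha)$ when $j\in S^c$ --- the map simplifies to $[p,q]\mapsto (q_jq_j^*)_0/\lVert(q_jq_j^*)_0\rVert$. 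I will use the two elementary consequences: if two nowhere-zero $\mathbb R^3$-valued maps on a space are everywhere positive multiples of one another, the associated frame bundles are isomorphic; if they are everywhere negative multiples, the bundles are conjugate and their first Chern classes are opposite.

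First I would dispose of the indices $1\le i\le n-2$. For $[p',q']\in U_S(\alpha^\pm)$ put $[p,q]=s_\pm^{-1}([p',q'])\in D_{n-1,n}^\pm(\alpha)\cap U_S(\alpha)$; by \eqref{eq:map} one has $p_i=p'_i$ and $q_i=q'_i$ for all $i\le n-2$, so the direction map of $\widetilde V_i(\alpha)$ at $[p,q]$ equals that of $\widetilde V_i(\alpha^\pm)$ at $[p',q']$. Hence $(i_\pm\circ s_\pm^{-1})^*\widetilde V_i(\alpha)\cong\widetilde V_i(\alpha^\pm)$ over $U_S(\alpha^\pm)$, giving the first identity on taking $c_1$. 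Next, the single new parabolic point of $s_\pm([p,q])$ carries $p_{\mathrm{new}}=0$ and $q_{\mathrm{new}}$ a positive real multiple of $q_{n-1}$ (or, for $s_-$ when $\alpha_n>\alpha_{n-1}$, of $q_n$); since $n-1,n\in S^c$ we have $p_{n-1}=p_n=0$ on $U_S(\alpha)$, so $\widetilde V_{n-1}(\alpha)|_{U_S}$ and $\widetilde V_n(\alpha)|_{U_S}$ are governed by $v_{n-1}=(q_{n-1}q_{n-1}^*)_0$ and $v_n=(q_nq_n^*)_0$, while the bundle at the new point is governed by $v_{\mathrm{new}}=(q_{\mathrm{new}}q_{\mathrm{new}}^*)_0$, a positive multiple of $v_{n-1}$ or of $v_n$.

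On $D_{n-1,n}^+(\alpha)$ one has $v_{n-1}\parallel v_n$ with $\langle v_{n-1},v_n\rangle>0$, so $v_{n-1}$ and $v_n$ have the same direction, and $v_{\mathrm{new}}$ is a positive multiple of both; the frame-bundle comparison then gives $(i_+\circ s_+^{-1})^*\widetilde V_{n-1}(\alpha)\cong\widetilde V_{n-1}(\alpha^+)\cong(i_+\circ s_+^{-1})^*\widetilde V_n(\alpha)$ over $U_S(\alpha^+)$, which is the second and fourth identities. On $D_{n-1,n}^-(\alpha)$ one has $v_{n-1}\parallel v_n$ with $\langle v_{n-1},v_n\rangle<0$, so $v_{n-1}$ and $v_n$ are antiparallel; then $v_{\mathrm{new}}$ is a positive multiple of $v_{n-1}$ and a negative multiple of $v_n$ when $\alpha_{n-1}>\alpha_n$, the two roles being interchanged when $\alpha_n>\alpha_{n-1}$. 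In either case the comparison yields, at the level of first Chern classes,
$$
(i_-\circ s_-^{-1})^*(i_S^*c_{n-1})=\sgn(\alpha_{n-1}-\alpha_n)\,(i_S^-)^*c_{n-1}^-,\qquad (i_-\circ s_-^{-1})^*(i_S^*c_{n})=-\sgn(\alpha_{n-1}-\alpha_n)\,(i_S^-)^*c_{n-1}^-,
$$
which are the remaining two identities.

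The step I expect to be the main obstacle is the frame-bundle description invoked above --- in particular the assertion that reversing a direction conjugates the bundle. Making this precise requires unwinding the construction of $\widetilde V_j$ as the circle bundle dual to $L_j=Q_j\times_{\rho_j}\mathbb C$, using that $\widetilde Q_j$ consists exactly of the representatives in which $(q_jq_j^*-p_j^*p_j)_0$ has been rotated to point up the positive $z$-axis and that $\rho_{SO(3)}$ is the adjoint representation; a sign convention enters here, but it is the same for $X(\alpha)$ and for $X(\alpha^\pm)$ and therefore cancels in all five formulas. Alternatively, one can avoid the frame-bundle picture entirely by writing $c_j=\partial_{\alpha_j}[\omega_{\mathbb R}]$ as in \eqref{eq:cj}, using that $s_\pm$ identifies the restricted symplectic classes (the content of the orientation convention fixed just before Proposition~\ref{prop:dij}), and then differentiating: since $\alpha^\pm_i=\alpha_i$ for $i\le n-2$ whereas $\alpha^+_{n-1}=\alpha_{n-1}+\alpha_n$ and $\alpha^-_{n-1}=\lvert\alpha_{n-1}-\alpha_n\rvert$, the chain rule reproduces all five identities, the signs $\sgn(\alpha_{n-1}-\alpha_n)$ arising from the partial derivatives of $\lvert\alpha_{n-1}-\alpha_n\rvert$ with respect to $\alpha_{n-1}$ and to $\alpha_n$.
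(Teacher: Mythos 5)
Your argument is correct and is essentially the paper's own proof: the paper also identifies the restricted bundles $\widetilde V_j$ as circle bundles rotating the pair of polygons about the axis given by the corresponding vector (working in the pairs-of-polygons model on $\mathcal D^\pm \cong D_{n-1,n}^\pm(\alpha)\cap U_S(\alpha)$), and then reads off the five identities from the relations $v_j(\alpha)=v_j(\alpha^\pm)$ for $j\le n-2$, $v_{n-1}(\alpha^+)=v_{n-1}(\alpha)+v_n(\alpha)$, and $v_{n-1}(\alpha^-)=\sgn(\alpha_{n-1}-\alpha_n)\big(v_{n-1}(\alpha)-v_n(\alpha)\big)$, with the same convention that reversing the axis reverses the circle action and hence the sign of $c_1$. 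Your sign bookkeeping in the $D^-$ case (including the interchange of roles when $\alpha_n>\alpha_{n-1}$) matches the paper's conclusion.
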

\begin{proof}
Recall the identification of $U_S(\alpha)$ with the moduli space $\mathcal{Z}$ of $(n+1)$-tuples of vectors 
$$
\big\{ u_l, v_k, w \in \mathbb{R}^3,\, l\in S,\, k \in S^c\big\}
$$ 
taken up to rotation, satisfying $1)-5)$ in Theorem~\ref{hp}. Recall also that $D_{n-1,n}^\pm (\alpha) \cap U_S(\alpha)$ can  be identified via this homeomorphism to the subspace $\mathcal{D}^\pm$ of $\mathcal{Z}$ where $v_{n-1}=\lambda v_n$ with $\lambda \in \mathbb{R}^\pm$, and that this space is, in turn, clearly homeomorphic to $U_S(\alpha^\pm)$. We then have homeomorphisms 
\begin{eqnarray*}
\phi_\alpha^\pm & :&  \mathcal{D}^\pm \to \, D_{n-1,n}^\pm(\alpha) \cap U_S(\alpha)\\
\phi_{\alpha^\pm}& :&\mathcal{D}^\pm \to \, U_S(\alpha^\pm)
\end{eqnarray*} 
and the corresponding pull-back bundles
\begin{eqnarray*}
(\phi_\alpha^\pm)^* \big(\widetilde{V}_j(\alpha)\big) & \to & \mathcal{D}^\pm \\
(\phi_{\alpha^\pm})^* \big(\widetilde{V}_j(\alpha^\pm)\big) & \to & \mathcal{D}^\pm
\end{eqnarray*} 
are topological circle bundles over $\mathcal{D}^\pm$ obtained by rotation of the pairs of polygons  formed by the vectors $u_l,v_k,w$ around the axis defined by the vector $v_j$. 

{\tiny{
\begin{equation}
\xymatrixrowsep{5pc} \xymatrixcolsep{.5pc} \xymatrix{
\widetilde{V}_j (\alpha^\pm)\vert_{U_S(\alpha^\pm)}  \ar[d] & \ar[l] (\phi_{\alpha^\pm})^*\big(\widetilde{V}_j(\alpha^\pm)\vert_{U_S(\alpha^\pm)}\big)  \ar[dr] &  &   (\phi_{\alpha}^\pm)^*\big(\widetilde{V}_j(\alpha)\vert_{D_{n-1,n}^\pm(\alpha) \cap U_S(\alpha)}\big) \ar[r] \ar[dl] & \widetilde{V}_j(\alpha)\vert_{D_{n-1,n}^\pm (\alpha)\cap U_S(\alpha)} \ar[d] \\  
U_S(\alpha^\pm) &  & \ar[ll]^{\phi_{\alpha^\pm}}   \mathcal{D}^\pm   \ar[rr]_{\phi_{\alpha}^\pm} &  &  \ar @/^3pc/[llll]_{s_\pm=\phi_{\alpha^\pm} \circ (\phi_\alpha^\pm)^{-1}} D_{n-1,n}^\pm(\alpha) \cap U_S(\alpha).
}
\end{equation}}}
We would like to compare the classes $(i_\pm \circ s_\pm^{-1})^* (i_S^*\, c_j)$ and $(i_S^\pm)^*\, c_j^\pm$. For that consider the pull back of both classes to $H^2(\mathcal{D}^\pm)$ via $\phi_{\alpha^\pm}$. In particular, one obtains
$$
\phi^*_{\alpha^\pm} \big((i_\pm \circ s_\pm^{-1})^* i_S^* c_j\big)= \phi_{\alpha^\pm}^* \, (s_\pm^{-1})^* \, i_\pm^* \, (i_S^* \,c_j)=(\phi_\alpha^\pm)^* \, i_\pm^* (i_S^* c_j),
$$
which is the first Chern class of the pull-back bundle $(\phi_\alpha^\pm)^* \widetilde{V}_j(\alpha)  \to \mathcal{D}^\pm$, and
$$
\phi_{\alpha^\pm}^* \big((i_S^\pm)^* c_j^\pm\big),
$$
which is the first Chern class of the pull-back bundle $\phi_{\alpha^\pm}^* \widetilde{V}_j(\alpha^\pm) \to \mathcal{D}^\pm$.
These two bundles rotate the pairs of polygons around the axis defined by the edge $v_j(\alpha)$ and $v_j(\alpha^\pm)$ respectively, where    $v_j(\alpha)$ is the vector $v_j$ in $(\phi_{\alpha}^\pm)^* \widetilde{V}_j(\alpha)$ and $v_j(\alpha^\pm)$ is the vector $v_j$ in $\phi_{\alpha^\pm}^* \widetilde{V}_j(\alpha^\pm)$. 

Since, if $j\neq n-1,n$, one has $v_j(\alpha)=v_j(\alpha^\pm)$, one obtains  
$$(i_\pm \circ s_\pm^{-1})^* (i_S^* \, c_i)    =  (i^\pm_S)^* \, c_i^\pm \quad \text{for}\quad 1 \leq i\leq n-2.$$
As $v_{n-1}(\alpha^+)=v_{n-1}(\alpha) + v_n(\alpha)$, the vectors $v_{n-1}(\alpha^+)$, $v_{n-1}(\alpha)$ and $v_n(\alpha)$ determine the same circle action and so
$$
(i_+ \circ s_+^{-1})^* i_S^* \, c_{n-1}    =  (i^+_S)^*\,  c_{n-1}^+ \quad \text{\and} \quad (i_+ \circ s_+^{-1})^* i_S^* \, c_{n}     = (i^+_S)^* \, c_{n-1}^+.
$$
Similarly,  since $v_{n-1}(\alpha^-)=\sgn (\alpha_{n-1}-\alpha_n) (v_{n-1}(\alpha) - v_n(\alpha))$, the vectors $v_{n-1}(\alpha^-)$,  $\sgn (\alpha_{n-1}-\alpha_n) v_{n-1}(\alpha)$ and $-\sgn (\alpha_{n-1}-\alpha_n) v_{n}(\alpha)$  determine the same circle action and so 
\begin{align*}
(i_- \circ s_-^{-1})^* i_S^*\, c_{n-1}    & =  \sgn (\alpha_{n-1}-\alpha_n) \, (i^-_S)^* \, c_{n-1}^- \\
 (i_- \circ s_-^{-1})^* i_S^*\, c_{n}    & =  - \sgn (\alpha_{n-1}-\alpha_n)\,(i^-_S)^*  c_{n-1}^-.
\end{align*}
\end{proof}
Using Propositions~\ref{prop:dij} and \ref{prop:pullbackcj} one obtains the following recursion formula.
\begin{Theorem}\label{thm:recursion}
Suppose $\alpha_{n-1} \neq \alpha_n$ and let 
$$
\alpha^+:=(\alpha_1,\ldots,\alpha_{n-2},\alpha_{n-1} +  \alpha_n) \quad \text{and} \quad \alpha^-:=(\alpha_1,\ldots,\alpha_{n-2},\lvert \alpha_{n-1} - \alpha_n\rvert ).
$$
Then, for $k_1,\ldots, k_n \in \Z_{\geq 0}$ such that $k_1+\cdots + k_n= n-3$ and $k_n\geq 1$,
\begin{equation}
\label{eq:0.1}
\begin{split}
&\dint\limits_{U_S(\alpha)}\!\! i_S^* \left( c_1^{k_1} \cdots c_n^{k_n} \right) = \!\!
\dint\limits_{U_S(\alpha^+)}\!\!
 (i_S^+)^* \left( (c^+_1)^{k_1} \cdots (c^+_{n-2})^{k_{n-2}} (c_{n-1}^+)^{k_{n-1}+k_n - 1}\right) \,+ \\
  \\
&(-1)^{k_n - 1}\! \left( \sgn( \alpha_{n-1}\! -\alpha_n ) \right)^{k_{n-1}+k_n}\!\!\!\!\!\!\!
 \dint\limits_{U_S(\alpha^-)}\!\!\!\!\! (i_S^-)^* \left( (c^-_1)^{k_1}\cdots
(c^-_{n-2})^{k_{n-2}} (c^-_{n-1})^{k_{n-1}+k_n - 1}\right). 
\end{split}
\end{equation}
\end{Theorem}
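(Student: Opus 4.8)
The plan is to express the top-degree integral over $U_S(\alpha)$ as a sum of two integrals over the pieces $D_{n-1,n}^{\pm}(\alpha)\cap U_S(\alpha)$, and then to transport each of these to $U_S(\alpha^{\pm})$ using the diffeomorphisms $s_{\pm}$ of Proposition~\ref{prop:map} together with the pull-back identities of Proposition~\ref{prop:pullbackcj}. Throughout we work under the hypothesis $n-1,n\notin S$, which is exactly the setting in which Propositions~\ref{prop:dij} and \ref{prop:pullbackcj} apply (and which can be arranged by permuting the indices whenever $|S^c|\geq 2$).

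First I would use the assumption $k_n\geq 1$ to peel off one factor, writing the integrand as $i_S^*(c_1^{k_1}\cdots c_{n-1}^{k_{n-1}}c_n^{k_n-1})\cup i_S^*c_n$. Since $U_S(\alpha)$ is a smooth compact oriented submanifold of $X(\alpha)$, Poincar\'e duality applies, and by Proposition~\ref{prop:dij} the class $i_S^*c_n$ is dual to
$$[D_{n-1,n}^+(\alpha)\cap U_S(\alpha)]+\sgn(\alpha_{n-1}-\alpha_n)\,[D_{n-1,n}^-(\alpha)\cap U_S(\alpha)].$$
Capping with this class gives
$$\int_{U_S(\alpha)} i_S^*\big(c_1^{k_1}\cdots c_n^{k_n}\big)= I_+ + \sgn(\alpha_{n-1}-\alpha_n)\, I_-,$$
where $I_{\pm}:=\int_{D_{n-1,n}^{\pm}(\alpha)\cap U_S(\alpha)} i_{\pm}^*\,i_S^*\big(c_1^{k_1}\cdots c_{n-1}^{k_{n-1}}c_n^{k_n-1}\big)$ and $i_{\pm}\colon D_{n-1,n}^{\pm}(\alpha)\cap U_S(\alpha)\hookrightarrow U_S(\alpha)$ are the inclusions.

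Next I would transport $I_{\pm}$ to $U_S(\alpha^{\pm})$. The orientation on $D_{n-1,n}^{\pm}(\alpha)\cap U_S(\alpha)$ was fixed, in the discussion preceding Proposition~\ref{prop:map}, precisely so that $(i_S^{\pm}\circ s_{\pm})^*(\omega_{\R}^{\pm})^{n-4}$ integrates positively; hence $s_{\pm}$ is an orientation-preserving diffeomorphism and $I_{\pm}=\int_{U_S(\alpha^{\pm})} (i_{\pm}\circ s_{\pm}^{-1})^*\big(i_S^*(c_1^{k_1}\cdots c_{n-1}^{k_{n-1}}c_n^{k_n-1})\big)$. Applying Proposition~\ref{prop:pullbackcj} factor by factor, for $I_+$ each $(i_+\circ s_+^{-1})^*(i_S^*c_i)$ with $i\leq n-2$ equals $(i_S^+)^*c_i^+$, while both $(i_+\circ s_+^{-1})^*(i_S^*c_{n-1})$ and $(i_+\circ s_+^{-1})^*(i_S^*c_n)$ equal $(i_S^+)^*c_{n-1}^+$; multiplying the appropriate powers yields
$$I_+=\int_{U_S(\alpha^+)} (i_S^+)^*\big((c_1^+)^{k_1}\cdots(c_{n-2}^+)^{k_{n-2}}(c_{n-1}^+)^{k_{n-1}+k_n-1}\big),$$
which is the first term of \eqref{eq:0.1}. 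For $I_-$ the factors with $i\leq n-2$ again give $(i_S^-)^*c_i^-$, but each of the $k_{n-1}$ factors of $c_{n-1}$ contributes $\sgn(\alpha_{n-1}-\alpha_n)\,(i_S^-)^*c_{n-1}^-$ and each of the $k_n-1$ factors of $c_n$ contributes $-\sgn(\alpha_{n-1}-\alpha_n)\,(i_S^-)^*c_{n-1}^-$; together with the prefactor $\sgn(\alpha_{n-1}-\alpha_n)$ from the Poincar\'e-duality step this produces the sign
$$\sgn(\alpha_{n-1}-\alpha_n)\cdot\big(\sgn(\alpha_{n-1}-\alpha_n)\big)^{k_{n-1}}\cdot\big(-\sgn(\alpha_{n-1}-\alpha_n)\big)^{k_n-1}=(-1)^{k_n-1}\big(\sgn(\alpha_{n-1}-\alpha_n)\big)^{k_{n-1}+k_n},$$
with remaining class $(i_S^-)^*\big((c_1^-)^{k_1}\cdots(c_{n-2}^-)^{k_{n-2}}(c_{n-1}^-)^{k_{n-1}+k_n-1}\big)$. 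This is exactly the second term of \eqref{eq:0.1}, and the dimension balance $k_1+\cdots+k_{n-2}+(k_{n-1}+k_n-1)=n-4=\dim_{\C}U_S(\alpha^{\pm})$ confirms consistency.

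The step I expect to be the main obstacle is not any single hard argument but the coordination of orientations and signs: one must verify that $s_{\pm}$ are orientation-preserving (which is built into the orientation convention for $D_{n-1,n}^{\pm}(\alpha)\cap U_S(\alpha)$), and then track the sign contributions from Proposition~\ref{prop:pullbackcj} separately for the $c_{n-1}$ factors and the $c_n$ factors. The genuinely delicate point is that on $U_S(\alpha^-)$ the two ``merged'' edges $n-1$ and $n$ both correspond to $c_{n-1}^-$ but with opposite signs, so the total sign acquires the factor $(-1)^{k_n-1}$; getting this parity right is where an error would most easily creep in. Everything else follows formally from Propositions~\ref{prop:dij} and \ref{prop:pullbackcj}.
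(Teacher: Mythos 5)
Your proposal is correct and follows essentially the same route as the paper: Proposition~\ref{prop:dij} identifies the Poincar\'e dual of $i_S^*c_n$, which converts the integral over $U_S(\alpha)$ into integrals over $D_{n-1,n}^{\pm}(\alpha)\cap U_S(\alpha)$ transported to $U_S(\alpha^{\pm})$ via $s_{\pm}$, and Proposition~\ref{prop:pullbackcj} then rewrites the integrand. Your explicit bookkeeping of the sign $(-1)^{k_n-1}\big(\sgn(\alpha_{n-1}-\alpha_n)\big)^{k_{n-1}+k_n}$ and of the orientation convention making $s_{\pm}$ orientation-preserving is exactly what the paper leaves implicit, and it is carried out correctly.
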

\begin{proof}
By Proposition~\ref{prop:dij} the Poincar\'{e} dual of $i_S^*\, c_n$ is 
$$
(i_S^+)_*\, [D_{n-1,n}^+ (\alpha)\cap U_S(\alpha)] + \sgn (\alpha_{n-1}- \alpha_n) (i_S^-)_*\,[D_{n-1,n}^-(\alpha) \cap U_S(\alpha)].
$$
This means that the formula
\begin{eqnarray*}
\lefteqn{\int_{U_S(\alpha)} i_S^* (a\, c_n)  = \int_{U_S(\alpha^+)} (i_+\circ s_+^{-1})^*(i_S^*\, a) \quad +} \\ & &  + \quad \sgn (\alpha_{n-1}- \alpha_n) \int_{U_S(\alpha^-)} (i_-\circ s_+^{-1})^*(i_S^*\,a)
\end{eqnarray*}
holds true for all $a\in H^{n-4}(U_S(\alpha))$. The result  then follows from  Proposition~\ref{prop:pullbackcj}.
\end{proof}
\subsection{Explicit formulas}
Using Theorem~\ref{thm:recursion} one can obtain explicit expressions for the computation of intersection numbers. For that we first note the following facts concerning the Chern classes $c_j$.
\begin{claim}\label{cl:1}
If $1\in S$ then $i_S^* c_1 = i_S^* c_j$ for every $j \in S$.
\end{claim}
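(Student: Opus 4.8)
The plan is to exploit the geometric description of the core component $U_S$ as a moduli space of pairs of polygons (Theorem~\ref{hp}), together with the identification of the circle bundle $\widetilde{V}_j$ as the bundle obtained by rotating the pair of polygons about the axis determined by the edge $v_j$ (for $j \in S^c$) or $u_j$ (for $j \in S$). The key observation is that for indices $i,j \in S$, the recipe $u_i = q_ip_i + p_i^*q_i^*$ forces all the $u_i$ with $i \in S$ to lie in the plane orthogonal to $w$, and since $S$ is straight the vectors $q_i$ with $i \in S$ are all proportional; consequently there is an intrinsic relation among the axes determined by these indices which will make the corresponding circle bundles isomorphic.

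First I would recall, from the constructions in the preceding subsections, that $i_S^* c_j$ is the first Chern class of the circle bundle obtained by restricting $\widetilde{V}_j$ to $U_S(\alpha)$, and that under the homeomorphism of Theorem~\ref{hp} this restricted bundle is the one that rotates the configuration $\{u_l, v_k, w\}$ about the line spanned by the relevant vector. For $j \in S$ that vector is $u_j = q_jp_j + p_j^*q_j^*$; more precisely one should return to the level set picture and note that $\widetilde{V}_j|_{U_S}$ is the bundle $Q_j$ of representatives normalized so that $(q_jq_j^* - p_j^*p_j)_0$ points in a fixed direction, up to the residual circle. The core of the argument is then to show that for $1 \in S$ and any $j \in S$ the two lines (and the two induced circle actions) coincide, so the bundles $\widetilde{V}_1|_{U_S}$ and $\widetilde{V}_j|_{U_S}$ are isomorphic as $S^1$-bundles, whence equality of Chern classes. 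Since $S$ is straight, on $U_S$ one has $q_i = \lambda_i q_1$ for scalars $\lambda_i \neq 0$ with $i \in S$; combining this with the description of the subbundle $L_S$ (the trivial line bundle with fiber $\langle q_i\rangle_{i\in S}$) and the fact that on $U_S$ the Higgs data is triangular with respect to $L_S$, one sees that the ``vertical'' directions $(q_iq_i^* - p_i^*p_i)_0$ for all $i \in S$ are forced to be collinear with the single vector $w = \sum_{i\in S}\big((q_iq_i^*)_0 - (p_i^*p_i)_0\big)$, up to sign and scale. Rotation about any of these collinear axes defines the same $S^1$-action, so the bundles agree.

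Concretely I would argue at the level of the circle bundles $Q_i$ defined in Section~\ref{sec:intnum}: on $\widetilde Q_i$ the vector $(q_iq_i^* - p_i^*p_i)_0$ is pinned to the positive $z$-axis, and the residual circle $\iota_{Q_i}(e^{\mathbf{i}t})$ acts by rotation about that axis. When restricted to the locus defining $U_S$ — where $p_j = 0$ for $j \in S^c$ and the $q_i$, $i\in S$, are mutually proportional — all the vectors $(q_iq_i^* - p_i^*p_i)_0$ with $i\in S$ are positive multiples of a common vector (this uses $\lambda_i\neq 0$ and positivity of the $z$-coordinate $\alpha_i + |p_i|^2 > 0$), so the normalization conditions defining $\widetilde Q_1|_{U_S}$ and $\widetilde Q_j|_{U_S}$ cut out the same sub-locus and the residual circle actions are identical. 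This yields a bundle isomorphism $\widetilde V_1|_{U_S} \cong \widetilde V_j|_{U_S}$ (the dualization is harmless and compatible), hence $i_S^* c_1 = i_S^* c_j$ in $H^2(U_S;\mathbb{R})$.

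The main obstacle I anticipate is making the identification of the axes fully rigorous: one must be careful that proportionality of the $q_i$ on $U_S$ (straightness of $S$) genuinely forces the combinations $(q_iq_i^* - p_i^*p_i)_0$ to be collinear, which requires using the moment map constraints $\mu_{\mathbb{R}} = (0,\alpha)$ and $\mu_{\mathbb{C}} = 0$ (in particular $p_iq_i = 0$, so $p_i$ lies in the line orthogonal to $q_i$, forcing $(p_i^*p_i)_0$ and $(q_iq_i^*)_0$ to be supported on the same axis once the $q_i$ are aligned) rather than just the equality $U_S = \{[p,q] \mid S \text{ straight},\ p_j = 0 \text{ for } j\in S^c\}$ from \eqref{eq:us}. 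A secondary technical point is checking that the orientation/sign conventions in passing from $L_i$ to $\widetilde V_i = $ (bundle associated to $L_i^*$) do not introduce a discrepancy between the two indices; but since the same dualization is applied to both $\widetilde V_1$ and $\widetilde V_j$, this cancels. Once the collinearity is established the rest is a routine comparison of principal $S^1/\{\pm1\}$-bundles.
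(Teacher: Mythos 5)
Your proof is correct, but it follows a different route from the paper's. The paper's argument applies Proposition~\ref{prop:dij} to write the Poincar\'e dual of $i_S^*(c_1-c_j)$ as $2\sgn(\alpha_j-\alpha_1)[D_{1,j}^-(\alpha)\cap U_S(\alpha)]$ and then observes that this cycle is empty, because on $U_S$ the straightness of $S$ together with the complex moment map condition \eqref{complex1} forces the vectors $(q_iq_i^*-p_i^*p_i)_0$, $i\in S$, to be positive multiples of one another. You use exactly the same geometric fact (and you correctly identify the two ingredients needed to prove it: $q_i=\lambda_i q_1$ with $\lambda_i\neq 0$ by straightness and stability, and $p_iq_i=0$, which gives $p_i=\nu_i(d_i,-c_i)$ and hence $(p_i^*p_i)_0=-|\nu_i|^2(q_iq_i^*)_0$), but you package it differently: instead of the cycle-theoretic wrapper, you show that the normalization loci $\widetilde Q_1$ and $\widetilde Q_j$ coincide over $U_S$ and carry the identical residual circle action \eqref{eq:actionqi}, so $\widetilde V_1\vert_{U_S}$ and $\widetilde V_j\vert_{U_S}$ are the same circle bundle and the classes agree. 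This is a legitimate and arguably cleaner route: it is exactly the ``same axis, hence same circle action, hence same Chern class'' reasoning the paper itself uses in Proposition~\ref{prop:pullbackcj} and in the proof of Theorem~\ref{thm:1}, it works uniformly even when $\alpha_1=\alpha_j$, and it avoids invoking Proposition~\ref{prop:dij} with $1,j\in S$, i.e.\ outside the hypotheses ($i,j\notin S$) under which that proposition is stated -- something the paper's own proof does implicitly. The only point to watch is the one you flag: the passage from $Q_i$ to $L_i$ and $\widetilde V_i$ involves index-dependent data ($\rho_i$), but since the paper's operative identification of $\widetilde V_i$ is the same geometric bundle with the left rotation action, applied uniformly in $i$, your observation that the dualization cancels is sufficient.
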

\begin{proof} By Proposition~\ref{prop:dij} the class $i_S^*(c_1-c_j)$ is represented by
$$
2 \sgn(\alpha_j-\alpha_1) [D_{1,j}^-(\alpha)\cap U_S(\alpha)].
$$
However, in $D_{1,j}^-(\alpha)$, the vectors $(q_1q_1^*-p_1^*p_1)_0$ and $(q_jq_j^*-p_j^*p_j)_0$ in $\mathbb{R}^3$ point in opposite directions and that is impossible in $U_S(\alpha)$ since, by hypothesis, both $j$ and $1$ are in $S$. Indeed, the vectors $q_i$ for $i \in S$ are all proportional, implying that the vectors $(q_iq_i^*)_0$ are positive scalar multiples of each other and, moreover, the moment map condition \eqref{complex1} implies that $(p_1^*p_1)_0$ is a non-positive scalar multiple of $(q_iq_i^*)_0$. Hence, for all $i \in S$, the vectors $(q_1q_1^*-p_1^*p_1)_0$ all point in the same direction and so $i_S^*(c_1-c_j)=0$. 
\end{proof}
\begin{claim}\label{cl:2}
If $1 \in S$ then $i_S^* \, c_j^2= i_S^* \, c_1^2$ for all $j \in S^c$.
\end{claim}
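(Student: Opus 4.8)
Since $U_S(\alpha)$ has complex dimension $n-3$, the identity is trivially true for $n\le 4$ (then $H^4(U_S(\alpha))=0$), so assume $n\ge 5$; we may also assume $\alpha_1\ne\alpha_j$, since a sufficiently small perturbation of $\alpha$ achieving this crosses no wall and so leaves $U_S$, the classes $c_i$, and the asserted identity unchanged. The plan is to represent \emph{both} $i_S^*c_1$ and $i_S^*c_j$ by the single codimension‑two cycle $D_{1,j}(\alpha)\cap U_S(\alpha)$ and to exploit that its two connected pieces are disjoint. On $X(\alpha)$ the class $c_m$ is represented, for any index $i\ne m$, by $D_{i,m}(\alpha)$; keeping track of orientations exactly as in Proposition~\ref{prop:dij} (taking $i=j$ for $c_1$ and $i=1$ for $c_j$) gives, in Borel--Moore homology,
$$
c_1\ \longleftrightarrow\ [D_{1,j}^+(\alpha)]+\sgn(\alpha_j-\alpha_1)[D_{1,j}^-(\alpha)],\qquad
c_j\ \longleftrightarrow\ [D_{1,j}^+(\alpha)]+\sgn(\alpha_1-\alpha_j)[D_{1,j}^-(\alpha)].
$$
On $U_S(\alpha)$ the relevant vectors are transparent: since $p_j=0$ there, $(q_jq_j^*-p_j^*p_j)_0=v_j$, while, exactly as in the proof of Claim~\ref{cl:1}, $(q_1q_1^*-p_1^*p_1)_0$ is a positive multiple of $w$. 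Hence $D_{1,j}(\alpha)\cap U_S(\alpha)$ is the locus $\{v_j\parallel w\}$, of real codimension two, and it is the disjoint union of the open pieces $D_{1,j}^+(\alpha)\cap U_S(\alpha)=\{v_j\text{ a positive multiple of }w\}$ and $D_{1,j}^-(\alpha)\cap U_S(\alpha)=\{v_j\text{ a negative multiple of }w\}$ — disjoint because on $D_{1,j}(\alpha)$ one has $\langle v_j,w\rangle=\pm\|v_j\|\,\|w\|\ne 0$, as $\|v_j\|=\alpha_j>0$ and $\|w\|\ge\sum_{i\in S}\alpha_i>0$.

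The first thing to verify — and the step I expect to be the main obstacle — is that $D_{1,j}^\pm(\alpha)$ meet $U_S(\alpha)$ cleanly, so that the two displays above restrict to give, in $H_{2(n-4)}(U_S(\alpha))$,
$$
i_S^*c_1\ \longleftrightarrow\ [D_{1,j}^+(\alpha)\cap U_S(\alpha)]+\sgn(\alpha_j-\alpha_1)[D_{1,j}^-(\alpha)\cap U_S(\alpha)]
$$
together with the analogous expression for $i_S^*c_j$ carrying $\sgn(\alpha_1-\alpha_j)$. This is precisely the analogue of Proposition~\ref{prop:dij} for an index lying in $S$, and it can be established by the same Duistermaat--Heckman/toric argument used there: one restricts to a suitable embedded $2$‑sphere inside $U_S(\alpha)$ on which the bending flow around $w$ is a Hamiltonian circle action whose moment interval has one endpoint moving linearly as $\alpha_1$ varies. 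Granting this, adding and subtracting the two expressions shows that $i_S^*(c_1+c_j)$ is Poincar\'e dual to $2[D_{1,j}^+(\alpha)\cap U_S(\alpha)]$ and that $i_S^*(c_1-c_j)$ is Poincar\'e dual to $2\,\sgn(\alpha_j-\alpha_1)\,[D_{1,j}^-(\alpha)\cap U_S(\alpha)]$.

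To conclude, take any $a\in H^{2(n-5)}(U_S(\alpha))$ and write, with $\iota\colon D_{1,j}^+(\alpha)\cap U_S(\alpha)\hookrightarrow U_S(\alpha)$ the inclusion,
$$
\int_{U_S(\alpha)} i_S^*(c_1^2-c_j^2)\,a=\int_{U_S(\alpha)} i_S^*(c_1-c_j)\,i_S^*(c_1+c_j)\,a=2\int_{D_{1,j}^+(\alpha)\cap U_S(\alpha)}\iota^*\!\big(i_S^*(c_1-c_j)\big)\,\iota^*a .
$$
Now $i_S^*(c_1-c_j)$ is Poincar\'e dual to a cycle supported on $D_{1,j}^-(\alpha)\cap U_S(\alpha)$, which is disjoint from $D_{1,j}^+(\alpha)\cap U_S(\alpha)$; representing that class by a closed form supported in a tubular neighbourhood of $D_{1,j}^-(\alpha)\cap U_S(\alpha)$ that avoids $D_{1,j}^+(\alpha)\cap U_S(\alpha)$ shows $\iota^*\big(i_S^*(c_1-c_j)\big)=0$, so the integral vanishes. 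As this holds for every $a$ and $U_S(\alpha)$ is a compact oriented manifold, nondegeneracy of the intersection pairing forces $i_S^*c_1^2=i_S^*c_j^2$. (When $\lvert S\rvert=n-1$, so $S^c=\{j\}$, one has $w=-v_j$ throughout $U_S(\alpha)$, whence $D_{1,j}^+(\alpha)\cap U_S(\alpha)=\varnothing$, $i_S^*(c_1+c_j)=0$, and the conclusion is immediate.)
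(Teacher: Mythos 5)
Your proof is correct and follows essentially the same route as the paper: factor $i_S^*(c_j^2-c_1^2)=i_S^*(c_j-c_1)\,i_S^*(c_j+c_1)$, represent the difference by a multiple of $[D_{1,j}^-(\alpha)\cap U_S(\alpha)]$ and the sum by a multiple of $[D_{1,j}^+(\alpha)\cap U_S(\alpha)]$, and conclude from the disjointness of these two cycles. The step you single out as the main obstacle --- the signed decomposition of $i_S^*c_1$ and $i_S^*c_j$ when $1\in S$ and $j\in S^c$ --- is invoked in the paper in exactly the same way, as the evident extension of Proposition~\ref{prop:dij} proved by the same Duistermaat--Heckman argument, so your added care there, and your separate treatment of $\alpha_1=\alpha_j$ and $\lvert S\rvert=n-1$, only supply extra detail.
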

\begin{proof}
Since $i_S^* (c_j^2-c_1^2) = i_S^* ((c_j-c_1)(c_j+c_1))$ and $i_S^* (c_j-c_1)$ is represented by
$$
2 \sgn(\alpha_1-\alpha_j)[D_{1,j}^- (\alpha)\cap U_S(\alpha)],
$$
while $i_S^*(c_j+c_1)$ is represented by
$$
2 [D_{1,j}^+(\alpha) \cap U_S(\alpha)],
$$
the result follows. Here note that in $D_{1,j}^+$ the vectors $(q_1q_1^*)_0$ and $(q_jq_j^*)_0$ (and consequently $(q_1q_1^*-p_1^*p_1)_0$ and $(q_jq_j^*-p_j^*p_j)_0$) point in the same direction while in $D_{1,j}^-$ they point in opposite directions.
\end{proof}
\begin{claim}\label{cl:3}
If $1\in S$ and $\vert S\vert =n-1$ then $i_S^* \, c_j = - i_S^* \, c_1$ for the unique $j \notin S$.
\end{claim}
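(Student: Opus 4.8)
The plan is to follow the pattern of the proofs of Claims~\ref{cl:1} and \ref{cl:2}, reducing the identity to the emptiness of an intersection locus inside $U_S(\alpha)$. Since $\vert S\vert=n-1$, the complement $S^c$ consists of a single element, the unique $j\notin S$. Exactly as in the proof of Claim~\ref{cl:2} (which already treats the configuration $1\in S$, $j\in S^c$), the restricted class $i_S^*(c_1+c_j)$ is Poincar\'e dual to $2\,[D_{1,j}^+(\alpha)\cap U_S(\alpha)]$. Hence it suffices to show that $D_{1,j}^+(\alpha)\cap U_S(\alpha)=\varnothing$: this forces $i_S^*(c_1+c_j)=0$, i.e.\ $i_S^*\,c_j=-\,i_S^*\,c_1$. (For context, recall that $U_S(\alpha)\cong\C\P^{n-3}$ by Proposition~\ref{Smaximal}.)

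To prove this emptiness I would argue directly with the vectors $(q_kq_k^*-p_k^*p_k)_0\in\mathbb{R}^3$ appearing in the definition of $D_{1,j}^{\pm}(\alpha)$. By \eqref{eq:us}, every $[p,q]\in U_S(\alpha)$ has $S$ straight and $p_k=0$ for all $k\in S^c$; in particular $p_j=0$. Since $S$ is straight, the $q_i$ with $i\in S$ are mutually proportional, so the traceless parts $(q_iq_i^*)_0$ are positive scalar multiples of one another; moreover the complex moment map condition \eqref{complex1} (i.e.\ $p_iq_i=0$) forces $(p_i^*p_i)_0$ to be a nonpositive scalar multiple of $(q_iq_i^*)_0$, exactly as in the proof of Claim~\ref{cl:1}. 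Therefore each $(q_iq_i^*-p_i^*p_i)_0$ with $i\in S$, and hence $w:=\sum_{i\in S}(q_iq_i^*-p_i^*p_i)_0$, is a nonzero vector pointing in the common direction of $(q_1q_1^*-p_1^*p_1)_0$. Finally, the real moment map condition $\sum_{k=1}^n(q_kq_k^*-p_k^*p_k)_0=0$ (cf.\ \eqref{real2}) gives $(q_jq_j^*-p_j^*p_j)_0=-w$, which is then a nonzero vector antiparallel to $(q_1q_1^*-p_1^*p_1)_0$. Thus $[p,q]\in D_{1,j}^-(\alpha)$, so $D_{1,j}^+(\alpha)\cap U_S(\alpha)=\varnothing$, as required.

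I do not expect a serious obstacle, since the whole argument is a short variant of the ones already in place for Claims~\ref{cl:1} and \ref{cl:2}. The single point I would make sure to check carefully is the sign in the relation between $(p_i^*p_i)_0$ and $(q_iq_i^*)_0$: writing $q_i=(c_i,d_i)^t$, the equation $p_iq_i=0$ forces $p_i=\nu_i\,(d_i,-c_i)$ for some $\nu_i\in\mathbb{C}$, whence a direct computation gives $(p_i^*p_i)_0=-\vert\nu_i\vert^2\,(q_iq_i^*)_0$, while $q_i\neq 0$ (Theorem~\ref{alpha-stability}(i)) guarantees $(q_iq_i^*)_0\neq 0$. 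One should also confirm that the description of the Poincar\'e dual of $i_S^*(c_1+c_j)$ established in the proof of Claim~\ref{cl:2} applies here verbatim, which it does, as that computation is carried out precisely in the configuration $1\in S$, $j\in S^c$.
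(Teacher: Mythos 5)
Your proposal is correct and takes essentially the same route as the paper: both represent $i_S^*(c_1+c_j)$ by $2\,[D_{1,j}^+(\alpha)\cap U_S(\alpha)]$ and conclude by showing this locus is empty because the closing condition forces $(q_jq_j^*)_0$ to be antiparallel to the common direction of the vectors indexed by $S$. Your moment-map computation merely spells out in coordinates what the paper phrases as ``the corresponding spatial polygons in $U_S(\alpha)$ would not close,'' so there is nothing to add.
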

\begin{proof}
Note that 
$$
i_S^*(c_j + c_1)= 2 PD\left([D_{1,j}^+(\alpha) \cap U_S(\alpha)]\right) = 0,
$$
since it is impossible for $(q_jq_j^*)_0$ to point in the same direction as $(q_1q_1^*)_0$ (the corresponding spatial polygons in $U_S(\alpha)$ would not close).
\end{proof}
Using the first two claims, and reordering $\alpha$ if necessary, one can reduce the computation of all intersection numbers to integrals of one of the two following types, where one assumes without loss of generality, that $S=\{1, \ldots, \vert S\vert\}$:
\begin{align*}
\text{(I)} & \dint_{U_S(\alpha)} i_S^*\,  c_1^{n-3},\\
\text{(II)} & \dint_{U_S(\alpha)} i_S^*\, (c_1^k \, c_{n-l}\cdots c_n), \quad \text{with}\quad n-l > \vert S \vert\quad   \text{and}\quad  k=n-l-4.
\end{align*}
To obtain explicit formulas for these integrals one needs first to consider families of triangular sets as defined in \cite{AG}.
\begin{definition}\label{def:triang}
Let $\alpha=( \alpha_1,\ldots,\alpha_m)$ be generic. A set $J \in I=\{3, \ldots, m\}$ is called \emph{triangular} if 
$$
\ell_J:= \sum_{i \in J} \alpha_i - \sum_{i \in I\setminus J} \alpha_i > 0
$$ 
and satisfies the following triangular inequalities
$$
\alpha_1 \leq \alpha_2 + \ell_J, \quad \alpha_2 \leq \alpha_1 + \ell_J \quad \text{and} \quad \ell_J \leq \alpha_1 +\alpha_2. 
$$
Moreover, define the family of triangular sets in $I$ as
$$
\mathcal{T}(\alpha)=\{J \in I\mid J \,\, \text{is triangular}\}.
$$
\end{definition}

For integrals of type (I) one has the following result.
\begin{Theorem}
\label{thm:1} 
Let $S$ be the short set $\{1, \ldots, \vert S\vert\}$.

If $\vert S \vert \leq n-3$ then
\begin{equation}\label{eq:I1}
\dint_{U_S(\alpha)} i_S^* \, c_1^{n-3} =   
\sum_{J\in \mathcal{T}(\widetilde{\alpha})} (-1)^{\big(n-\lvert S \rvert\big)\big\lvert J \cap \{n-\vert S\vert +1\}\big\rvert + \vert J \vert + \vert S\vert},
\end{equation}
where $\widetilde{\alpha}:=\left(\alpha_n,\alpha_{\vert S \vert + 1}, \ldots, \alpha_{n-1}, \sum_{i \in S} \alpha_i\right)$ and 
$\mathcal{T}(\widetilde{\alpha})$ is the corresponding family of triangular sets.

If $\vert S \vert = n-2$ then 
\begin{equation}\label{eq:I2}
\dint_{U_S(\alpha)} i_S^* \, c_1^{n-3} =  \left\{ \begin{array}{ll} (-1)^{n-1}, & \text{if $S$ is a maximal short set for $\alpha$} \\ \\ 0, &  \text{otherwise}.\end{array} \right.
\end{equation}

If $\vert S \vert =n-1$ then 
\begin{equation}\label{eq:I3}
\dint_{U_S(\alpha)} i_S^* \, c_1^{n-3} = (-1)^{n-1}.
\end{equation}
\end{Theorem}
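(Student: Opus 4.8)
The plan is to use the recursion formula of Theorem~\ref{thm:recursion} to peel off the weights $\alpha_n, \alpha_{n-1}, \ldots$ one at a time until the hyperpolygon space degenerates to a base case that we can evaluate directly, and then to match the resulting alternating sum with the combinatorial sum over triangular sets. Since $S = \{1,\ldots,|S|\}$ and we are integrating $i_S^*\,c_1^{n-3}$, by Claim~\ref{cl:1} we have $i_S^* c_1 = i_S^* c_j$ for all $j\in S$, so $c_1^{n-3}$ can be rewritten, after reordering if needed, as a monomial involving one of the classes $c_j$ with $j\notin S$; concretely I would use $c_1 = c_n$ on $U_S$ is \emph{not} available (since $n\notin S$), so instead I apply the recursion directly to $c_1^{n-3} = c_1^{n-4}\cdot c_1$ after first using $i_S^*c_1 = i_S^*c_j$ to move to a monomial that contains $c_n$ with positive exponent — the precise bookkeeping is to write $c_1^{n-3}$ and note $c_1 = c_{|S|}$ won't help either, so one uses the freedom in Claim~\ref{cl:1} to replace one factor $c_1$ by $c_n$ only when $n$ were in $S$; since it is not, the honest move is to observe that the recursion formula \eqref{eq:0.1} requires $k_n\geq 1$, and so we must first rewrite $c_1^{n-3}$ using the relations among the $c_j$ for $j\in S^c$. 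Here Claim~\ref{cl:2} ($i_S^* c_j^2 = i_S^* c_1^2$ for $j\in S^c$) is the key tool: if $n-3$ is even we write $c_1^{n-3} = c_n^{n-3}$ directly via Claim~\ref{cl:2}, and if $n-3$ is odd we write $c_1^{n-3} = c_1\cdot c_n^{n-4}$. Either way we land on a monomial with $k_n\geq 1$ and can invoke Theorem~\ref{thm:recursion}.

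\smallskip
\noindent The main body of the argument is an induction on $n$. For the inductive step, apply the recursion \eqref{eq:0.1} with $k_n\geq 1$ and all other exponents on indices in $S^c$; this expresses $\int_{U_S(\alpha)} i_S^* c_1^{n-3}$ as a sum of two integrals over $U_S(\alpha^+)$ and $U_S(\alpha^-)$, where $\alpha^\pm$ are $(n-1)$-tuples obtained by replacing $\alpha_{n-1},\alpha_n$ by $\alpha_{n-1}\pm\alpha_n$ (resp.\ $|\alpha_{n-1}-\alpha_n|$), and the $S$-cardinality is unchanged since $n-1,n\notin S$. The inductive hypothesis gives each of these as a signed count of triangular sets for the reduced weight vectors $\widetilde{\alpha^{\pm}}$. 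The combinatorial heart is then to verify that the signed sum of triangular-set counts for $\widetilde{\alpha^+}$ and $\widetilde{\alpha^-}$ equals the triangular-set count for $\widetilde{\alpha}$ with the sign $(-1)^{(n-|S|)|J\cap\{n-|S|+1\}| + |J| + |S|}$. This is precisely the type of identity proved in \cite{AG} for polygon spaces (Agapito--Godinho), and the bijective/inductive argument there should transfer almost verbatim: a triangular set $J$ for $\widetilde\alpha$ either ``contains'' the merged index or not, and this dichotomy corresponds exactly to the splitting into $\widetilde{\alpha^+}$ and $\widetilde{\alpha^-}$ contributions, with the signs $(\operatorname{sgn}(\alpha_{n-1}-\alpha_n))^{\cdots}$ and $(-1)^{k_n-1}$ from \eqref{eq:0.1} accounting for the orientation-dependent sign in \eqref{eq:I1}.

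\smallskip
\noindent For the base cases I would handle $|S| = n-1$ and $|S| = n-2$ separately, as the statement does. When $|S| = n-1$, Proposition~\ref{Smaximal} gives $U_S\cong\C\P^{n-3}$, and Claim~\ref{cl:3} shows $i_S^* c_j = -i_S^* c_1$ for the unique $j\notin S$; combined with the fact that $c_j$ (equivalently $-c_1$) restricts to the hyperplane class on $\C\P^{n-3}$ up to sign — which follows from \eqref{eq:cj} identifying $c_j$ with $\tfrac{\partial}{\partial\alpha_j}[\omega_\R]$ together with Delzant's theorem (Theorem~\ref{delzant}) forcing $[\omega_\R]$ to be a multiple of $\omega_{FS}$ — one gets $\int_{U_S}i_S^*c_1^{n-3} = (-1)^{n-3}\int_{\C\P^{n-3}}\omega_{FS}^{n-3}/(\text{normalization}) = (-1)^{n-1}$, giving \eqref{eq:I3}. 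When $|S| = n-2$: if $S$ is maximal short then again $U_S\cong\C\P^{n-3}$ by Proposition~\ref{Smaximal} and the same computation yields $(-1)^{n-1}$; if $S$ is not maximal short then there is a strictly larger short set $T\supsetneq S$, and one uses the geometry of $U_S\cap M(\alpha)\cong M_S(\alpha)$ together with the fact that in this degenerate case the relevant homology class $[D_{i,j}^-(\alpha)\cap U_S(\alpha)]$ is empty (as in the proofs of Claims~\ref{cl:2} and \ref{cl:3}), forcing the integral to vanish, giving \eqref{eq:I2}. I expect the main obstacle to be the sign bookkeeping in matching the output of the recursion — which carries factors $(-1)^{k_n-1}$ and powers of $\operatorname{sgn}(\alpha_{n-1}-\alpha_n)$ — against the clean closed-form sign $(-1)^{(n-|S|)|J\cap\{n-|S|+1\}| + |J| + |S|}$; getting this right will require carefully tracking how a triangular set $J$ for the reduced weights encodes the sequence of $\pm$ choices made along the recursion, and verifying that the parity of the number of ``$-$'' choices matches $|J\cap\{n-|S|+1\}|$ and $|J|$ as claimed.
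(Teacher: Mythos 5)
Your plan inverts the paper's logic: in the paper the recursion \eqref{eq:0.1} is \emph{not} used to prove Theorem~\ref{thm:1} at all (it is the engine of Theorem~\ref{thm:2}, with Theorem~\ref{thm:1} as input), and your attempt to prove Theorem~\ref{thm:1} by induction on $n$ via \eqref{eq:0.1} has a structural gap: the induction does not close within pure powers of $c_1$. To reach $k_n\geq 1$ you can only use Claim~\ref{cl:2}, which transports \emph{squares}, so in any rewriting of $i_S^*c_1^{n-3}$ the exponents $k_j$ with $j\in S^c$ are all even; after one application of \eqref{eq:0.1} the merged class $c_{n-1}^\pm$ appears with exponent $k_{n-1}+k_n-1$, which is therefore odd, and applying Claim~\ref{cl:2} again on $U_S(\alpha^\pm)$ leaves you with $(c_1^\pm)^{n-5}c_{n-1}^\pm$, a type~(II) monomial, never $(c_1^\pm)^{n-4}$. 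Hence the inductive hypothesis (formula \eqref{eq:I1} with one fewer edge) does not apply to the right-hand side; you would be forced into a simultaneous induction covering the mixed monomials of Theorem~\ref{thm:2}, which is essentially the paper's proof of Theorem~\ref{thm:2} reorganized and still needs an independently proved anchor. On top of this, the combinatorial identity you propose to import from \cite{AG} is asserted, not proved, and it is not literally the one there: in $\widetilde{\alpha}$ the two merged edges occupy positions $1$ and $n-\lvert S\rvert$, not the distinguished positions of the \cite{AG} recursion, and you must also handle the degenerations where $S$ fails to be short for $\alpha^-$ (so $U_S(\alpha^-)=\varnothing$ and that term is $0$) and where $\alpha_{n-1}=\alpha_n$.

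The base cases are also incomplete. For $\lvert S\rvert=n-1$ (and maximal $\lvert S\rvert=n-2$), Delzant's theorem together with \eqref{eq:cj} only shows $[\omega_\R]\vert_{U_S}$ is \emph{some} multiple of $\omega_{FS}$; it does not give the integral normalization needed to conclude $\dint_{U_S}i_S^*c_1^{n-3}=(-1)^{n-1}$. And for non-maximal $\lvert S\rvert=n-2$, ``$D^-_{i,j}\cap U_S=\varnothing$'' is not an argument for the vanishing. The paper handles all three cases uniformly and directly: by Claim~\ref{cl:1}, $i_S^*c_1^{\lvert S\rvert-1}=i_S^*(c_1\cdots c_{\lvert S\rvert-1})$, each factor is represented by $-W_j$ (Proposition~\ref{prop:wj}), the intersection $U_S(\alpha)\cap W_1\cap\cdots\cap W_{\lvert S\rvert-1}$ forces all planar edges $u_i$ to vanish and equals the polygon space $M_S(\alpha)$ (a point or empty in the two base cases), and the remaining power $\tilde{c}_1^{\,n-\lvert S\rvert-2}$ on $M_S(\alpha)$ is evaluated by quoting the polygon-space formula of \cite{AG} after reordering. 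If you want a recursion-based proof you must either prove the triangular-set recursion identity yourself and run the joint induction with type~(II) monomials, or adopt the paper's direct geometric reduction.
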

\begin{proof}

$\bullet$  If $\vert S\vert =n-1$ and assuming that $S=\{1, \ldots, n-1\}$ then, by Claim~\ref{cl:1} and Proposition~\ref{prop:wj},
$$
i_S^* \, c_1^{n-3}=i_S^* \, c_1 \cdots c_{n-3}=(-1)^{n-3} PD \big( [U_S(\alpha) \cap W_1 \cap \cdots \cap W_{n-3}]\big),
$$
where $W_i=\{[p,q]\in X(\alpha)\mid \, p_i=0\}$. Moreover, 
$$U_S(\alpha) \cap W_1 \cap \cdots \cap W_{n-3}$$
can be identified with the moduli space of vectors $u,v,w \in \mathbb{R}^3$ taken up to rotation,  satisfying 
\begin{align*}
\bullet & \quad  w=-v, \\ \bullet &  \quad u\cdot w= 0, \\ \bullet&  \quad \vert \vert w\vert \vert =\vert \vert v\vert \vert = \alpha_n, \\
\bullet &  \quad \sqrt{\alpha_{n-2}^2 + \vert \vert u \vert \vert^2} + \sqrt{\alpha_{n-1}^2 + \vert \vert u \vert \vert^2} = \alpha_n - \alpha_1-\cdots -\alpha_{n-3},
\end{align*}
(cf. Figure~\ref{fig:hip1}-(I)).
\begin{figure}[htbp]
\psfrag{w}{\footnotesize{$w$}}
\psfrag{v}{\footnotesize{$v$}}
\psfrag{u}{\footnotesize{$u_{n-2}=u$}}
\psfrag{-u}{\footnotesize{$-u=u_{n-1}$}}
\psfrag{v-1}{\footnotesize{$v_{n-1}$}}
\psfrag{vn}{\footnotesize{$v_n$}}
\includegraphics[scale=.7]{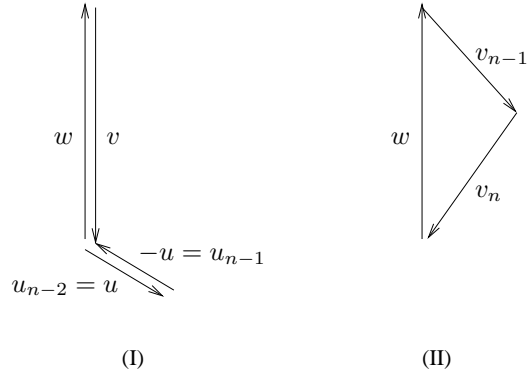}
\caption{(I) The element of $U_S(\alpha) \cap W_1 \cap \cdots \cap W_{n-3}$
represented as a pair of degenerate polygons when $\lvert S \rvert = n-1$.
(II) The element of $U_S(\alpha) \cap W_1 \cap \cdots \cap W_{n-3}$
represented by a spatial polygon, when $\lvert S \rvert = n-2$.}
\label{fig:hip1}
\end{figure}
Since, by hypothesis, $S$ is short we know that $\alpha_n > \sum_{i \in S} \alpha_i$ and so this moduli space is  a point and  
$$
\dint_{U_S(\alpha)} i_S^* \, c_1^{n-3} =  (-1)^{n-1}.
$$

$\bullet$ If $\vert S\vert = n-2$, assuming $S=\{1, \ldots, n-2\}$ and using Claim~\ref{cl:1} and Proposition~\ref{prop:wj}, one has
$$
i_S^* \, c_1^{n-3}=i_S^* \, c_1 \cdots c_{n-3}=(-1)^{n-3} PD \big( [U_S(\alpha) \cap W_1 \cap \cdots \cap W_{n-3}]\big).
$$
In this situation all the vectors $u_i$ as in Theorem~\ref{hp}  are equal to zero since  $\sum_{i \in S} u_i = 0$. Hence, 
$$U_S(\alpha) \cap W_1 \cap \cdots \cap W_{n-3}$$ is now the polygon space 
$$M_S(\alpha):= M\left(\sum_{i \in S} \alpha_i, \alpha_{n-1}, \alpha_n\right)$$
which is a point if simultaneously $\alpha_{n-1} < \sum_{i \neq n-1} \alpha_i$ and $\alpha_{n} < \sum_{i\neq n} \alpha_i$, and empty otherwise (cf. Figure~\ref{fig:hip1}-(II)). The result then follows. (Note that the fact that $S$ is short already implies that $\alpha_{n-1} + \alpha_n < \sum_{ i \in S} \alpha_i$.) 

$\bullet$ If $\vert S \vert \leq n-3$ then, assuming $S=\{1, \ldots, \vert S \vert \}$ and using Claim~\ref{cl:1} and  Proposition~\ref{prop:wj}, one has
$$
i_S^* \, c_1^{\vert S \vert -1} = (-1)^{\vert S \vert -1} PD\big( [ U_S(\alpha) \cap W_1 \cap \cdots \cap W_{\vert S\vert -1}]\big).
$$ 
Again,  in the identification of
$$
U_S(\alpha) \cap W_1 \cap \cdots \cap W_{\vert S\vert -1}
$$
as a moduli space of pairs of polygons in $\R^3$, all the vectors $u_i$  are zero, implying that
$$
U_S(\alpha) \cap W_1 \cap \cdots \cap W_{\vert S\vert -1}= M_S (\alpha)= M\left(\sum_{i \in S} \alpha_i, \alpha_{\vert S \vert +1}, \ldots, \alpha_n\right).
$$ 
Hence,
$$
\dint_{U_S(\alpha)} i_S^* \, c_1^{n-3} = (-1)^{\vert S \vert -1} \dint _{M_S(\alpha)} \tilde{c}_1^{\,n-\vert S\vert -2},
$$
where $\tilde{c}_1:=c_1\big(V_1(\alpha_S)\big)$ for $V_1$ defined in \eqref{eq:vi}, with 
$$\alpha_S=\big(\sum_{i \in S}\alpha_i, \alpha_{\vert S \vert +1}, \ldots, \alpha_n\big).$$ 
Indeed, the circle action on the principal bundle 
$$
\widetilde{V}_1\vert_{U_S(\alpha) \cap W_1 \cap \cdots \cap W_{\vert S\vert -1}} \to U_S(\alpha) \cap W_1 \cap \cdots \cap W_{\vert S\vert -1}
$$
agrees with the one on $V_1\vert_{M_S(\alpha)}$ and so these two bundles are isomorphic.
Reordering the elements in $\alpha_S$ one has
$$
\dint_{M_S(\alpha)} \tilde{c}_1^{\, n-\vert S\vert -2} = \dint_{M(\widetilde{\alpha})} c_{n-\vert S\vert +1}^{n-\vert S \vert - 2},
$$
where $\widetilde{\alpha}:=\big(\alpha_n, \alpha_{\vert S \vert +1},\ldots,\alpha_{n-1}, \sum_{i \in S} \alpha_i\big)$ and $\tilde{c}_{n-\vert S\vert +1}$ is the first Chern class of the circle bundle
$$
V_{n-\vert S\vert +1} \to M(\widetilde{\alpha}).
$$
This new integral can then be computed using Theorem 2 of \cite{AG} for polygon spaces, yielding
$$
\dint_{M_S(\alpha)} \tilde{c}_1^{\,n-\vert S \vert -2} = \sum_{J \in \mathcal{T}(\widetilde{\alpha})} (-1)^{n-\lvert S\rvert + 1 + \lvert J\rvert + \big\lvert (I\setminus J) \cap \{n-\vert S\vert +1\}\big\rvert \big(n- \lvert S \rvert\big)},
$$
where $I=\{ 3, \ldots, n-\vert S\vert +1 \}$ and the result follows.
\end{proof}
For integrals of type (II) we have:
\begin{Theorem} 
\label{thm:2}
Let $S$ be the short set $\{1, \ldots, \lvert S\rvert\}$.

If $\vert S\vert < n-l-2$ then
\begin{align} \label{eq:thm21}
& \dint_{U_S(\alpha)} i_S^* \, (c_1^k \, c_{n-l}\cdots c_n) =  \\ \nonumber  &   \sum_{{\scriptsize{J\in \mathcal{A}_{n,l}(\alpha)}}}\,\,\, \sum_{{\scriptsize{J^\prime \in \mathcal{T}_{n,l}(\alpha,J)}}} (-1)^{\big\lvert J \cap \{n-l-1\}\big\rvert +\big\lvert J^\prime \cap\{n-l-\vert S \vert \}\big\rvert \big(n-l-\vert S \vert + 1\big) + \vert J^\prime \vert + \vert S \vert +1 },
\end{align}
where $\mathcal{A}_{n,l}(\alpha)$ is the family of sets $J\subset I_{n,l}:=\{n-l-1,\ldots,n\}$  for which 
$$
\ell_J(\alpha): = \sum_{i \in J} \alpha_i \,\, - \sum_{i \in I_{n,l}\setminus J} \alpha_i > 0 
$$
and
$$
\sum_{i \in S} \alpha_i < \ell_J(\alpha) + \alpha_{\vert S\vert +1} + \cdots + \alpha_{n-l-2},
$$
and where $\mathcal{T}_{n,l}(\alpha,J):=\mathcal{T}(\widetilde{\alpha}_{n,l,J})$ is the family of triangular sets for 
$$\widetilde{\alpha}_{n,l,J}:=\left(\ell_J(\alpha), \alpha_{\vert S \vert +1}, \cdots, \alpha_{n-l-2}, \sum_{i \in S} \alpha_i\right).$$

If $\vert S\vert = n-l-2$ then
\begin{equation} \label{eq:thm22}
\dint_{U_S(\alpha)} i_S^* \, (c_1^k \, c_{n-l}\cdots c_n) =   \sum_{{\scriptsize{J\in \mathcal{A}_{n,l}(\alpha)}}}\,\,\,  (-1)^{\big\lvert J \cap \{n-l-1\}\big\rvert + \vert S \vert +1 }.
\end{equation}

If $\vert S \vert =n-l-1$ then
\begin{equation}
\label{eq:tildea}
\dint_{U_S(\alpha)} i_S^* (c_1^k\,  c_{n-l}\cdots c_n) = (-1)^{n-l}\big\vert \widetilde{\mathcal{A}}_{n,l}(\alpha) \big\vert,
\end{equation}
where
$$
\widetilde{\mathcal{A}}_{n,l}(\alpha) = \left\{J \subset \{n-l, \ldots, n\}\mid\, \ell_J(\alpha) > \sum_{i \in S} \alpha_i \right\}.
$$
\end{Theorem}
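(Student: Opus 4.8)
The plan is to prove the three cases together by iterating the recursion of Theorem~\ref{thm:recursion} so as to ``bend away'' the legs carrying the distinguished classes $c_{n-l},\ldots,c_n$, reducing to integrals of type (I) evaluated in Theorem~\ref{thm:1}, together with a use of Claim~\ref{cl:3} to dispose of the last distinguished factor.

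First I would reorder the legs so that $S=\{1,\ldots,|S|\}$; since $n-l>|S|$, the indices $n-l,\ldots,n$ all lie in $S^c$, and in the integrand $i_S^*\big(c_1^{\,n-l-4}c_{n-l}\cdots c_n\big)$ each of $c_{n-l},\ldots,c_n$ occurs to the first power. Because $\alpha$ is generic one may, after an arbitrarily small perturbation inside the chamber (which leaves the intersection numbers unchanged), assume $\alpha_{n-1}\ne\alpha_n$ and apply Theorem~\ref{thm:recursion} with $k_n=k_{n-1}=1$: this merges the last two legs and writes the integral as a sum over the two branches of integrals over $U_S(\alpha^{\pm})$ on $n-1$ legs, with one fewer distinguished factor, the merged leg having length $\alpha_{n-1}+\alpha_n$ on the $+$ branch and $|\alpha_{n-1}-\alpha_n|$ on the $-$ branch, and each term carrying the coefficient $(-1)^{k_n-1}\big(\sgn(\alpha_{n-1}-\alpha_n)\big)^{k_{n-1}+k_n}$. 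Iterating $l+1$ times when $|S|\le n-l-2$, and $l$ times when $|S|=n-l-1$, the legs $n-l-1,\ldots,n$ (resp.\ $n-l,\ldots,n$) are replaced by a single leg of length $\ell_J(\alpha)=\sum_{i\in J}\alpha_i-\sum_{i\in I_{n,l}\setminus J}\alpha_i$, where the subset $J\subseteq I_{n,l}=\{n-l-1,\ldots,n\}$ (resp.\ $J\subseteq\{n-l,\ldots,n\}$) records the sequence of $\pm$ choices. Branches for which the resulting weights $\beta_J:=(\alpha_1,\ldots,\alpha_{n-l-2},\ell_J(\alpha))$ are non-generic, or for which $\ell_J(\alpha)\le 0$, or for which $S$ is no longer short for $\beta_J$ --- equivalently $\sum_{i\in S}\alpha_i\ge \ell_J(\alpha)+\alpha_{|S|+1}+\cdots+\alpha_{n-l-2}$ --- give an empty or non-generic $U_S(\beta_J)$ and contribute $0$; what survives is exactly the family $\mathcal{A}_{n,l}(\alpha)$ (resp.\ $\widetilde{\mathcal{A}}_{n,l}(\alpha)$).

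Next, for each surviving $J$ the reduced integral is $\int_{U_S(\beta_J)} i_S^*\big(c_1^{\,n-l-4}c_{n-l}\big)$ on $m'$ legs, with $m'=n-l-1$ after $l+1$ merges and $m'=n-l$ after $l$ merges. If $|S|\le n-l-2$ one further merge (of the $S^c$-leg $n-l-1$ with the distinguished leg) eliminates the last distinguished factor, giving the type-(I) integral $\int_{U_S(\beta_J)} i_S^*\,c_1^{\,(n-l-1)-3}$; Theorem~\ref{thm:1} then yields formula~\eqref{eq:I1} when $|S|\le n-l-3$, with triangular-set parameter $\widetilde\alpha=\widetilde\alpha_{n,l,J}=(\ell_J(\alpha),\alpha_{|S|+1},\ldots,\alpha_{n-l-2},\sum_{i\in S}\alpha_i)$, producing the inner sum over $\mathcal{T}_{n,l}(\alpha,J)=\mathcal{T}(\widetilde\alpha_{n,l,J})$ of \eqref{eq:thm21} (when $|S|=n-l-3$ the tuple $\widetilde\alpha_{n,l,J}$ has three entries and $\mathcal{T}$ reduces to the single triangle, recovering \eqref{eq:I2}), while when $|S|=n-l-2$ formula~\eqref{eq:I3} gives $(-1)^{n-l-2}$, yielding \eqref{eq:thm22}. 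If instead $|S|=n-l-1$, then $n-l$ is the unique index of $S^c$ among the remaining legs, so Claim~\ref{cl:3} gives $i_S^*c_{n-l}=-i_S^*c_1$ and $\int_{U_S(\beta_J)} i_S^*\big(c_1^{\,n-l-4}c_{n-l}\big)=-\int_{U_S(\beta_J)} i_S^*\,c_1^{\,(n-l)-3}=(-1)^{n-l}$ by \eqref{eq:I3}; summing over the $|\widetilde{\mathcal{A}}_{n,l}(\alpha)|$ surviving branches gives \eqref{eq:tildea}.

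The remaining task, which I expect to be the main obstacle, is the sign bookkeeping: each of the $l+1$ (resp.\ $l$) merges contributes a factor $(-1)^{k_n-1}\big(\sgn(\alpha_{n-1}-\alpha_n)\big)^{k_{n-1}+k_n}$ in which all but the last merge have $k_{n-1}=k_n=1$ and hence contribute $+1$, while the final merge contributes a $\sgn$ factor depending on $J$; this must be combined with the orientation signs of the $D^{\pm}_{i,j}\cap U_S$ under the successive relabellings of legs (via Propositions~\ref{prop:dij} and~\ref{prop:pullbackcj}) and then with the sign in \eqref{eq:I1} --- or with the sign $(-1)^{n-l-2}$ in \eqref{eq:I3} together with the $-1$ from Claim~\ref{cl:3} --- in order to reproduce the exponents $|J\cap\{n-l-1\}|+|J'\cap\{n-l-|S|\}|(n-l-|S|+1)+|J'|+|S|+1$ and their analogues displayed in \eqref{eq:thm21}--\eqref{eq:tildea}. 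All of the geometric input is contained in Theorems~\ref{thm:recursion} and~\ref{thm:1}, Propositions~\ref{prop:dij} and~\ref{prop:pullbackcj}, and Claims~\ref{cl:1}--\ref{cl:3}.
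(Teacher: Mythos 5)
Your overall strategy is the right one and is in substance the same as the paper's: the paper proves the theorem by induction on $n$, each inductive step being one application of the recursion \eqref{eq:0.1}, which is exactly your iterated ``merging'' of the legs $n-l-1,\ldots,n$, with Theorem~\ref{thm:1} and Claim~\ref{cl:3} closing the base case. You also correctly observe that the intermediate merges (both exponents equal to $1$) carry trivial coefficient, that a branch survives precisely when $S$ stays short for the reduced weight (the second condition defining $\mathcal{A}_{n,l}(\alpha)$), and that the reduced integrals are the type-(I) integrals with $\widetilde{\alpha}_{n,l,J}$ as triangular data.

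However, what you defer as ``sign bookkeeping'' is not a routine verification but the actual content of the theorem, and as written there is a genuine gap. Two points in particular are missing. First, the bijection between $\pm$-branches of the iterated recursion and subsets $J$ with $\ell_J(\alpha)>0$ is not canonical: on a minus branch the new length is $\lvert \alpha_{n-1}-\alpha_n\rvert$, so which subset $J$ a branch corresponds to depends on the relative sizes of the weights at that stage (this is why the paper's induction splits into the two decompositions \eqref{eq:x} and \eqref{eq:y} according to $\sgn(\alpha_n-\alpha_{n+1})$, with a branch assigned to $\widetilde J$, $\widetilde J\cup\{n+1\}$ or $(\widetilde J\setminus\{n\})\cup\{n+1\}$ accordingly); incidentally, branches with $\ell_J(\alpha)\le 0$ do not ``contribute $0$'' --- they simply coincide with the complementary subset --- so your list of vanishing reasons conflates the parameterization with actual vanishing. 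Second, once the branches are matched with pairs $(J,J')$, one must check that the product of the last-merge coefficient $\sgn$, the relabelling needed to put the merged leg in the first slot of $\widetilde{\alpha}_{n,l,J}$, and the sign in \eqref{eq:I1} (respectively \eqref{eq:I2}, \eqref{eq:I3} together with Claim~\ref{cl:3}) reproduces the exponent $\lvert J\cap\{n-l-1\}\rvert+\lvert J'\cap\{n-l-\vert S\vert\}\rvert(n-l-\vert S\vert+1)+\vert J'\vert+\vert S\vert+1$; this is precisely the case-by-case computation (the base cases (i)--(iv) and the matching of $\mathcal{T}_{n,l-1}(\alpha^\pm,J)$ with $\mathcal{T}_{n+1,l}(\alpha,\cdot)$) that occupies most of the paper's proof, and without it the claimed formulas are not established. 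A minor further point: a single initial perturbation ensuring $\alpha_{n-1}\neq\alpha_n$ is not quite enough, since later merges require the two legs being merged at that stage to have distinct lengths (e.g.\ $\alpha_{n-2}\neq\alpha_{n-1}+\alpha_n$ is not implied by genericity); one should perturb so that all finitely many such coincidences along the iteration are avoided, as in the closing paragraph of the paper's proof.
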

\begin{proof}
We will prove this formula by induction on $n$ starting with $n=k+4$ (implying $l=0$). Here we have to consider two cases ($\vert S \vert = n-1$ and $\vert S \vert < n-1$).

First, if $\vert S \vert = n-1 = k+3$  we have by Claim~\ref{cl:3} and Theorem~\ref{thm:1} \eqref{eq:I3}, that
$$
\dint_{U_S(\alpha)} i_S^* \, c_1^{n-4} \, c_{n} = - \dint_{U_S(\alpha)} i_S^* \, c_1^{n-3}  = (-1)^n,
$$
which is equal to the right hand side of \eqref{eq:tildea} since, in this case, 
$$
\widetilde{\mathcal{A}}_{n,0}(\alpha) = \big\{  \{ n \} \big\} .
$$

If $\vert S \vert < n-1 =k+3$ then by the recursion formula \eqref{eq:0.1} we have
\begin{equation}
\label{eq:ssmaller}
\dint_{U_S(\alpha)} i_S^*\, c_1^{n-4} \,c_{n} = \dint_{U_S(\alpha^+)} (i_S^+)^* \, c_1^{n-4} + \sgn (\alpha_{n-1}-\alpha_n) \dint_{U_S(\alpha^-)} (i_S^-)^* \, c_1^{n-4}. 
\end{equation}
with $\alpha^\pm=\big(\alpha_1, \ldots, \alpha_{n-2},\vert \alpha_{n-1} \pm \alpha_n\vert\big)$.

$\bullet$ If, in particular, $\vert S\vert =n-2=k+2$ then by Theorem~\ref{thm:1}-\eqref{eq:I3},
\begin{equation}\label{eq:integral21}
\dint_{U_S(\alpha)}\!\! i_S^* c_1^{n-4} \, c_{n} = \left\{ \begin{array}{ll} \!\!(-1)^n\big(1+\sgn (\alpha_{n-1} - \alpha_n)\big), &\!\!\text{if}\, \displaystyle\sum_{i \in S} \alpha_i < \vert \alpha_{n-1} - \alpha_n\vert \\ \\\!\!(-1)^n, & \!\! \text{otherwise}.  \end{array}\right.
\end{equation}
Note that $S$ is always short for $\alpha^+$ since, by assumption $S$ is short for $\alpha$ and that $S$ is short for $\alpha^-$ if and only if $\sum_{i \in S} \alpha_i < \vert \alpha_{n-1} - \alpha_n\vert$. On the other hand, in this case we have 
$$
\mathcal{A}_{n,0}(\alpha)=\left\{ \begin{array}{ll}\!\!\big\{ \{n-1\}, \{n-1,n\} \big\} \,\text{or} \, \big\{ \{n\}, \{n-1,n\} \big\}, &\!\! \text{if $S$ is $\alpha^-$-short}  \\ \\
\!\! \big\{ \{n-1,n\} \big\}, & \!\! \text{otherwise}. \end{array} \right.
$$
Then the right-hand-side of \eqref{eq:thm22} agrees with the result obtained in \eqref{eq:integral21}.

$\bullet$ If $\vert S\vert =n-3=k+1$ then again by Theorem~\ref{thm:1}-\eqref{eq:I2},
$$
\dint_{U_S(\alpha)} i_S^* \, c_1^{n-4} \, c_{n} 
$$
is equal to 
\begin{enumerate}
\item[(i)] $(-1)^n((1+ \sgn(\alpha_{n-1}-\alpha_n) )$, if $S$ is $\alpha^\pm$-maximal short, in which case
$$
\mathcal{A}_{n,0}(\alpha)=\big\{ \{m-1\}, \{m-1,m\} \big\} \quad \text{or} \quad \big\{ \{m\}, \{m-1,m\} \big\}
$$
and 
$$
\mathcal{T}_{n,0}\big(\alpha,\{ n-1,n\}\big)=\mathcal{T}_{n,0}\big(\alpha,\{n-1\}\big)=\mathcal{T}_{n,0}\big(\alpha,\{n\}\big)=\big\{ \{3\} \big\}.
$$
\item[(ii)] $(-1)^n$, if $S$ is $\alpha^+$-maximal short and either not $\alpha^-$-maximal short or not $\alpha^-$-short at all, in which cases
$$
\mathcal{A}_{n,0}(\alpha)=\big\{\{n-1,n\} \big\}, \,\,  \big\{ \{n-1\}, \{n-1,n\} \big\} \,\, \text{or} \,\, \big\{ \{n\}, \{n-1,n\} \big\}
$$
and 
$$
\mathcal{T}_{n,0}\big(\alpha,\{n-1,n\}\big)=\big\{ \{3\} \big\},\,\, \mathcal{T}_{n,0}\big(\alpha,\{n-1\}\big)=\mathcal{T}_{n,0}\big(\alpha,\{n\}\big)= \varnothing.
$$ 
\item[(iii)] $(-1)^n \sgn(\alpha_{n-1}-\alpha_n)$, if $S$ is not $\alpha^+$-maximal short but $\alpha^-$-maximal short, in which case
$$
\mathcal{A}_{n,0}(\alpha)=\big\{ \{n-1\}, \{n-1,n\} \big\} \,\, \text{or} \,\, \big\{ \{n\}, \{n-1,n\} \big\},
$$
and 
$$
\mathcal{T}_{n,0}\big(\alpha,\{n-1,n\}\big)=\varnothing,\,\, \mathcal{T}_{n,0}\big(\alpha,\{n-1\}\big)=\mathcal{T}_{n,0}\big(\alpha,\{n\}\big)= \big\{ \{3\} \big\}.
$$  
\item[(iv)] $0$, if $S$ if not $\alpha^+$-maximal short and either not $\alpha^-$-maximal short or not $\alpha^-$-short at all, in which cases
$$
\mathcal{A}_{n,0}(\alpha)=\big\{ \{n-1,n\} \big\}, \,\, \big\{ \{n-1\}, \{n-1,n\} \big\} \,\, \text{or} \,\, \big\{ \{n\}, \{n-1,n\} \big\}
$$
and 
$$
\mathcal{T}_{n,0}\big(\alpha,\{n-1,n\}\big)=\mathcal{T}_{n,0}\big(\alpha,\{n-1\}\big)=\mathcal{T}_{n,0}\big(\alpha,\{n\}\big)= \varnothing.
$$ 
\end{enumerate}
It is now easy to verify that the above results (i)-(iv) agree in all cases with the right hand side of \eqref{eq:thm21}.

$\bullet$ Finally, if $\vert S \vert < n-3=k+1$ then by \eqref{eq:ssmaller} and Theorem~\ref{thm:1}-\eqref{eq:I1}, considering $\mathcal{T}(\widetilde{\alpha}^\pm)$ the family of triangular sets $J \subset \{3, \ldots, n-\vert S\vert\}$ for 
$$
\widetilde{\alpha}^\pm := \left(\vert \alpha_{n-1}\pm \alpha_n\vert , \alpha_{\vert S \vert +1}, \ldots, , \alpha_{n-2}, \sum_{i \in S} \alpha_i\right),
$$
we have
\begin{eqnarray}\label{eq:integral2}
  \lefteqn{\dint_{U_S(\alpha)} i_S^* \, c_1^{n-4} \,  c_{n}=  \sum_{J^\prime \in \mathcal{T}(\widetilde{\alpha}^+)} (-1)^{\big(n-1-\vert S \vert\big)\big\vert J^\prime \cap \{n-\vert S\vert \}\big\vert +\vert J^\prime \vert + \vert S \vert} \,\,\, +} \\ \nonumber & & + \sgn(\alpha_{n-1}-\alpha_n) \sum_{J^\prime \in \mathcal{T}(\widetilde{\alpha}^-)} (-1)^{\big(n-1-\vert S \vert\big)\big\vert J^\prime\cap \{n-\vert S \vert \}\big\vert +\vert J^\prime \vert + \vert S \vert}, 
\end{eqnarray}
if $S$ is short for $\widetilde{\alpha}^-$,  and 
\begin{equation}\label{eq:integral3}
\dint_{U_S(\alpha)} i_S^* c_1^{n-4}\,  c_{n}= \sum_{J^\prime \in \mathcal{T}(\widetilde{\alpha}^+)} (-1)^{\big(n-1-\vert S\vert\big)\big\vert J^\prime\cap \{n-\vert S\vert \}\big\vert +\vert J^\prime \vert + \vert S \vert},
\end{equation}
otherwise.
In the first situation, we have 
$$\mathcal{A}_{n,0}(\alpha)=\big\{\{n-1\}, \{n-1,n\}\big\} \quad \text{or}\quad \big\{\{n\}, \{n-1,n\} \big\} $$ 
and
$$
\mathcal{T}_{n,0}\big(\alpha,\{n-1,n\}\big) =\mathcal{T}(\widetilde{\alpha}^+), \quad \mathcal{T}_{n,0}\big(\alpha,\{n-1\}\big) =\mathcal{T}_{n,0}\big(\alpha,\{n\}\big) = \mathcal{T}(\widetilde{\alpha}^-),
$$
while, in the second one, we have
$$
\mathcal{A}_{n,0}(\alpha)=\big\{ \{n-1,n\} \big\} \quad \text{and} \quad  \mathcal{T}_{n,0}\big(\alpha,\{n-1,n\}\big) =\mathcal{T}(\widetilde{\alpha}^+),
$$
and so \eqref{eq:integral2} and \eqref{eq:integral3} agree with the right hand side of \eqref{eq:thm21}.

We will now assume that  \eqref{eq:thm21}, \eqref{eq:thm22} and \eqref{eq:tildea} hold for $n$ and show that they are still true for $n+1$. Using the recursion formula \eqref{eq:0.1} we get
\begin{align*}
  \dint \limits_{U_S(\alpha)}  i_S^* \, c_{1}^{k}\, c_{n+1-l} \cdots c_{n+1}  & =  \int \limits_{U_S(\alpha^+)}  (i_S^+)^* \, (c_{1}^+)^{k} \, c_{n+1-l}^+ \cdots c_{n}^+  \,\,\, + \\ & +  \int \limits_{U_S(\alpha^-)}   (i_S^-)^*\, (c_{1}^-)^{k}\, c_{n+1-l}^- \cdots c_{n}^-. 
\end{align*}

$\bullet$ If $\vert S \vert < n-l-2$ then, if $\alpha_{n}-\alpha_{n+1} > 0$,
{\small{\begin{align} \label{eq:x}
& \mathcal{A}_{n+1,l}(\alpha) =   \Big\{ J \subset I_{n+1,l}:= \{ n-l, \ldots, n+1\} \mid \, \ell_J(\alpha) >0 \,\, \text{and}  \\ \nonumber & \quad \quad \quad \quad  \quad \quad \quad \quad \sum_{i \in S} \alpha_i < \ell_J(\alpha) + \alpha_{\vert S\vert +1} + \cdots + \alpha_{n-l-1} \Big\}  \\\nonumber  = 
&   \left\{\widetilde{J} \in \mathcal{A}_{n,l-1}(\alpha^+) \mid n \notin \widetilde{J} \right\} \,  \bigcup \, \left\{\widetilde{J} \cup \{n+1\}\mid \widetilde{J} \in \mathcal{A}_{n,l-1}(\alpha^+) \, \text{and} \, n \in \widetilde{J} \right\} \, \bigcup \\ \nonumber & \bigcup \,  \left\{\widetilde{J} \in \mathcal{A}_{n,l-1}(\alpha^-) \mid n \in \widetilde{J} \right\} \, \bigcup \,   \left\{\widetilde{J}\cup \{n+1\} \mid \widetilde{J} \in \mathcal{A}_{n,l-1}(\alpha^-) \, \text{and} \, n \notin \widetilde{J} \right\},
\end{align}}}
while, if $\alpha_{n}-\alpha_{n+1} < 0$,
{\small{\begin{align}\label{eq:y}
\mathcal{A}_{n+1,l}(\alpha) =  & \left\{\widetilde{J} \in \mathcal{A}_{n,l-1}(\alpha^+) \mid n \notin \widetilde{J} \right\} \,  \bigcup \\ \nonumber &  \bigcup \, \left\{\widetilde{J} \cup \{n+1\}\mid \widetilde{J} \in \mathcal{A}_{n,l-1}(\alpha^+) \, \text{and} \, n \in \widetilde{J} \right\} \, \bigcup \\ \nonumber &  \bigcup  \left\{\left(\widetilde{J}\setminus\{n\}\right)\cup\{n+1\}\mid \widetilde{J} \in \mathcal{A}_{n,l-1}(\alpha^-) \, \text{and} \, n \in \widetilde{J} \right\} \, \bigcup \\\nonumber  &  \bigcup \,    \,   \left\{\widetilde{J}\cup \{n\} \mid \widetilde{J} \in \mathcal{A}_{n,l-1}(\alpha^-) \, \text{and} \, n \notin \widetilde{J} \right\}.
\end{align}}}
Moreover, since  $\mathcal{T}_{n,l-1}(\alpha^\pm,J):=\mathcal{T}(\widetilde{\alpha}_{n,l-1,J}^\pm)$ is the family of triangular sets 
$$J^\prime\subset\big\{3, \ldots, n-(l-1)-\vert S\vert\big\}$$ 
for  $\widetilde{\alpha}_{n,l-1,J}^\pm:=\big(\ell_J(\alpha^\pm), \alpha_{\vert S \vert +1}, \cdots, \alpha_{n-(l+1)}, \sum_{i \in S} \alpha_i\big)$,
and $\mathcal{T}_{n+1,l}(\alpha,J):=\mathcal{T}(\widetilde{\alpha}_{n+1,l,J})$ is the family of triangular sets 
$$J^\prime\subset \big\{3, \ldots, (n+1)-l-\vert S\vert\big\}$$ 
for 
$\widetilde{\alpha}_{n+1,l,J}:=\big(\ell_J(\alpha), \alpha_{\vert S \vert +1}, \cdots, \alpha_{(n-1)-l}, \sum_{i \in S} \alpha_i\big)$,
we have that
$$
\mathcal{T}_{n,l-1}(\alpha^+,J) =\left\{ \begin{array}{l}\mathcal{T}_{n+1,l}\big(\alpha, J \cup \{n+1\}\big),\,\, \text{if}\,\, n \in J \\ \\ \mathcal{T}_{n+1,l}(\alpha, J), \,\, \text{if}\,\, n \notin J, \end{array} \right.
$$
and, if $\alpha_{n}-\alpha_{n+1} > 0$,
$$
\mathcal{T}_{n,l-1}(\alpha^-,J) =\left\{ \begin{array}{l}\mathcal{T}_{n+1,l}\big(\alpha, J \cup \{n+1\}\big),\,\, \text{if}\,\, n \notin J \\ \\ \mathcal{T}_{n+1,l}(\alpha, J), \,\, \text{if}\,\, n \in J, \end{array} \right.
$$
while, if $\alpha_{n}-\alpha_{n+1} < 0$,
$$
\mathcal{T}_{n,l-1}(\alpha^-,J) =\left\{ \begin{array}{l}\mathcal{T}_{n+1,l}\big(\alpha, J \cup \{n\}\big),\,\, \text{if}\,\, n \notin J \\ \\ \mathcal{T}_{n+1,l}\big(\alpha, (J\setminus \{n\})\cup \{n+1\} \big), \,\, \text{if}\,\, n \in J. \end{array} \right.
$$
Assuming  that \eqref{eq:thm21} holds for $n$ we have
{\scriptsize{\begin{align*}
& \dint \limits_{U_S(\alpha^+)}    (i_S^+)^* (c_{1}^+)^{k} c_{n+1-l}^+ \cdots   c_{n}^+  \\ & = \hspace{-.3cm}
\sum_{\begin{array}{c} J \,\, \text{in} \\  \mathcal{A}_{n,l-1}(\alpha^+) \end{array}}\hspace{-1.5cm} \sum_{\hspace{1.3cm}\begin{array}{l} J^\prime \,\,\text{in}\\ \mathcal{T}_{n,l-1}\big(\alpha^+,J\big)\end{array}}\hspace{-1cm} (-1)^{\big\vert J \cap \{n-(l-1)-1\}\big\vert + \big\vert J^\prime \cap \{n-(l-1)-\vert S\vert \} \big\vert \big(n-(l-1)+1-\vert S\vert\big) + \vert J^\prime \vert +\vert S \vert +1}\\ & =\hspace{-.3cm}\sum_{\begin{array}{c} \widetilde{J} \in \mathcal{A}_{n,l-1}(\alpha^+)\\ \,\text{s.t.} n\notin \widetilde{J}\end{array}}\hspace{-1.5cm} \sum_{\hspace{1.3cm}\begin{array}{l} J^\prime \,\, \text{in}\\ \mathcal{T}_{n+1,l}\big(\alpha^+,\widetilde{J}\big)\end{array}}\hspace{-1.5cm} (-1)^{\big\vert \widetilde{J} \cap \{(n+1)-l-1\}\big\vert + \big\vert J^\prime \cap \{(n+1)-l -\vert S\vert \} \big\vert \big((n+1)-l+1-\vert S\vert\big) + \vert J^\prime \vert +\vert S \vert +1} \,\, + \\ & +\hspace{-.3cm} \sum_{\begin{array}{c}\widetilde{J} \in \mathcal{A}_{n,l-1}(\alpha^+)\\ \,\text{s.t.}\, n\in \widetilde{J}\end{array}}\hspace{-2.8cm} \sum_{\hspace{2.5cm}\begin{array}{l} J^\prime \,\,\text{in}\\ \mathcal{T}_{n+1,l}\big(\alpha^+,\widetilde{J}\cup\{n+1\}\big)\end{array}}\hspace{-2.8cm}  (-1)^{\big\vert (\widetilde{J} \cup \{n+1\}) \cap \{(n+1)-l-1\}\big\vert + \big\vert J^\prime \cap \{(n+1)-l -\vert S\vert \} \big\vert \big((n+1)-l+1-\vert S\vert\big) + \vert J^\prime \vert +\vert S \vert +1}.
\end{align*}}}
On the other hand, if $\alpha_n - \alpha_{n+1}>0$,
{\scriptsize{\begin{align*}
& \dint \limits_{U_S(\alpha^-)} (i_S^-)^* (c_{1}^-)^{k} c_{n+1-l}^- \cdots   c_{n}^-  \\ & =  \hspace{-.3cm}
\sum_{\begin{array}{c} J \,\,\text{in}\\ \mathcal{A}_{n,l-1}(\alpha^-)\end{array}} \hspace{-1.5cm} \sum_{\hspace{1.3cm}\begin{array}{l} J^\prime \,\,\text{in}\\ \mathcal{T}_{n,l-1}\big(\alpha^-,J\big)\end{array}}\hspace{-1cm} (-1)^{\big\vert J \cap \{n-(l-1)-1\}\big\vert + \big\vert J^\prime \cap \{n-(l-1)-\vert S\vert \} \big\vert \big(n-(l-1)+1-\vert S\vert\big) + \vert J^\prime \vert +\vert S \vert +1}\\ & =\hspace{-.3cm} \sum_{\begin{array}{c}\widetilde{J} \in \mathcal{A}_{n,l-1}(\alpha^-)\\ \,\text{s.t.} n\in \widetilde{J}\end{array}} \hspace{-1.5cm} \sum_{\hspace{1.3cm}\begin{array}{l} J^\prime\,\, \text{in} \\ \mathcal{T}_{n+1,l}\big(\alpha^+,\widetilde{J}\big)\end{array}}\hspace{-1.5cm} (-1)^{\big\vert \widetilde{J} \cap \{(n+1)-l-1\}\big\vert + \big\vert J^\prime \cap \{(n+1)-l -\vert S\vert \} \big\vert \big((n+1)-l+1-\vert S\vert\big) + \vert J^\prime \vert +\vert S \vert +1} \,\, + \\ & +\hspace{-.3cm} \sum_{\begin{array}{c}\widetilde{J} \in \mathcal{A}_{n,l-1}(\alpha^+)\\ \,\text{s.t.} n\notin \widetilde{J}\end{array}}\hspace{-2.8cm} \sum_{\hspace{2.5cm}\begin{array}{l} J^\prime \,\,\text{in}\\ \mathcal{T}_{n+1,l}\big(\alpha^+,\widetilde{J}\cup\{n+1\}\big)\end{array}}\hspace{-2.8cm}  (-1)^{\big\vert (\widetilde{J} \cup \{n+1\}) \cap \{(n+1)-l-1\}\big\vert + \big\vert J^\prime \cap \{(n+1)-l -\vert S\vert \} \big\vert \big((n+1)-l+1-\vert S\vert\big) + \vert J^\prime \vert +\vert S \vert +1}
\end{align*}}}
and similarly for $\alpha_n - \alpha_{n+1}<0$. The result now follows from \eqref{eq:x} and \eqref{eq:y}.

$\bullet$ If $\vert S \vert = n-l-2$ the family of sets $\mathcal{A}_{n+1,l}(\alpha)$ is the same as in \eqref{eq:x} and \eqref{eq:y}. Moreover, assuming  \eqref{eq:thm22} holds for $n$,we have
\begin{align*}
& \dint \limits_{U_S(\alpha^+)}    (i_S^+)^* (c_{1}^+)^{k} c_{n+1-l}^+ \cdots   c_{n}^+  = 
\sum_{J \in \mathcal{A}_{n,l-1}(\alpha^+)} (-1)^{\big\vert J \cap \{n-(l-1)-1\}\big\vert  +\vert S \vert +1}\\ & = \hspace{-1.8cm}\sum_{\hspace{1.5cm}\begin{array}{l} \widetilde{J} \in \mathcal{A}_{n,l-1}(\alpha^+)\\ \,\text{s.t.} n\notin \widetilde{J}\end{array}} \hspace{-2cm} (-1)^{\big\vert \widetilde{J} \cap \{(n+1)-l-1\}\big\vert +\vert S \vert +1} + \hspace{-1.8cm} \sum_{\hspace{1.5cm}\begin{array}{l}\widetilde{J} \in \mathcal{A}_{n,l-1}(\alpha^+)\\ \,\text{s.t.}\, n\in \widetilde{J}\end{array}} \hspace{-2cm} (-1)^{\big\vert (\widetilde{J} \cup \{n+1\}) \cap \{(n+1)-l-1\}\big\vert +\vert S \vert +1}.
\end{align*}
On the other hand, if $\alpha_n - \alpha_{n+1}>0$,
\begin{align*}
& \dint \limits_{U_S(\alpha^-)} (i_S^-)^* (c_{1}^-)^{k} c_{n+1-l}^- \cdots   c_{n}^-  = 
\sum_{J \in \mathcal{A}_{n,l-1}(\alpha^-)}  (-1)^{\big\vert J \cap \{n-(l-1)-1\}\big\vert +\vert S \vert +1}\\ & =\hspace{-1.8cm}\sum_{\hspace{1.5cm}\begin{array}{l}\widetilde{J} \in \mathcal{A}_{n,l-1}(\alpha^-)\\ \,\text{s.t.} n\in \widetilde{J}\end{array}}\hspace{-2cm} (-1)^{\big\vert \widetilde{J} \cap \{(n+1)-l-1\}\big\vert + \vert S \vert +1}  +\hspace{-1.8cm} \sum_{\hspace{1.5cm}\begin{array}{l}\widetilde{J} \in \mathcal{A}_{n,l-1}(\alpha^+)\\ \,\text{s.t.} n\notin \widetilde{J}\end{array}}  \hspace{-2cm} (-1)^{\big\vert (\widetilde{J} \cup \{n+1\}) \cap \{(n+1)-l-1\}\big\vert +\vert S \vert +1}
\end{align*}
and similarly for $\alpha_n - \alpha_{n+1}<0$. The result then follows from \eqref{eq:x} and \eqref{eq:y}.

$\bullet$ If $\vert S \vert = n-l-1$ then, writing
$$
\widetilde{\mathcal{A}}_{n,l-1}(\alpha^\pm) =  \left\{ J \subset I_{n,l-1}:= \{ n-(l-1), \ldots, n \}\mid \, \ell_J(\alpha^\pm) > \sum_{i \in S} \alpha_i  \right\},
$$
we have for $\alpha_{n}-\alpha_{n+1} > 0$ that
\begin{align*}
 \widetilde{\mathcal{A}}_{n+1,l}(\alpha)  = & \left\{\widetilde{J} \in  \widetilde{\mathcal{A}}_{n,l-1}(\alpha^+) \mid n \notin \widetilde{J} \right\} \,  \bigcup  \\ & \bigcup \, \left\{\widetilde{J} \cup \{n+1\}\mid \widetilde{J} \in  \widetilde{\mathcal{A}}_{n,l-1}(\alpha^+) \, \text{and} \, n \in \widetilde{J} \right\} \, \bigcup \\ & \bigcup \,  \left\{\widetilde{J} \in  \widetilde{\mathcal{A}}_{n,l-1}(\alpha^-) \mid n \in \widetilde{J} \right\} \, \bigcup \, \\ & \bigcup   \left\{\widetilde{J}\cup \{n+1\} \mid \widetilde{J} \in  \widetilde{\mathcal{A}}_{n,l-1}(\alpha^-) \, \text{and} \, n \notin \widetilde{J} \right\},
\end{align*}
while, for  $\alpha_{n}-\alpha_{n+1} < 0$ we have
\begin{align*}
\widetilde{\mathcal{A}}_{n+1,l}(\alpha)  = &  \left\{\widetilde{J} \in  \widetilde{\mathcal{A}}_{n,l-1}(\alpha^+) \mid n \notin \widetilde{J} \right\} \,  \bigcup \\ & \bigcup \, \left\{\widetilde{J} \cup \{n+1\}\mid \widetilde{J} \in  \widetilde{\mathcal{A}}_{n,l-1}(\alpha^+) \, \text{and} \, n \in \widetilde{J} \right\} \, \bigcup \\ & \bigcup \,  \left\{\widetilde{J}\cup \{n\} \mid \widetilde{J} \in  \widetilde{\mathcal{A}}_{n,l-1}(\alpha^-) \, \text{and}\, n \notin \widetilde{J} \right\} \, \bigcup \\ & \bigcup \,   \left\{(\widetilde{J}\setminus\{n\}) \cup \{n+1\} \mid \widetilde{J} \in  \widetilde{\mathcal{A}}_{n,l-1}(\alpha^-) \, \text{and} \, n \in \widetilde{J} \right\}.
\end{align*}
Then
\begin{align*}
\dint \limits_{U_S(\alpha^+)}   &  (i_S^+)^* (c_{1}^+)^{k} c_{n+1-l}^+ \cdots   c_{n}^+  + \dint \limits_{U_S(\alpha^-)} (i_S^-)^* (c_{1}^-)^{k} c_{n+1-l}^- \cdots   c_{n}^- \\ = & (-1)^{n-(l-1)}\Big\{\big\lvert \widetilde{\mathcal{A}}_{n,l-1}(\alpha^+)  \big\rvert + \big\lvert \widetilde{\mathcal{A}}_{n,l-1}(\alpha^-)  \big\rvert \Big\} = (-1)^{(n+1)-l}\big\lvert \widetilde{\mathcal{A}}_{n+1,l}(\alpha) \big\rvert
\end{align*}
and the result follows.

In the above proof one has to assume that each time that the recursion formula is used one has $\alpha_n\neq \alpha_{n+1}$. However, this result is still valid even if this is not the case, as long as $\alpha$ is generic.  In fact, for a generic  $\alpha$ with $\alpha_{n} = \alpha_{n+1}$ we may  take a small value of $\varepsilon > 0$ for which
$U_S(\alpha)$ is diffeomorphic to $U_S(\alpha_\varepsilon)$ with $\alpha_\varepsilon:=(\alpha_1,\ldots,\alpha_{n-1}, \alpha_{n} + \varepsilon)$. For $\varepsilon$ small enough,  $\mathcal{A}_{n,l-1}(\alpha^\pm)=\mathcal{A}_{n,l-1}(\alpha_\varepsilon^\pm)$ and $\mathcal{T}_{n,l-1}(\alpha^\pm,J)=\mathcal{T}_{n,l-1}(\alpha_\varepsilon^\pm,J)$ (since $\alpha$ generic implies that $\alpha_\varepsilon$, $\alpha^+_\varepsilon$ and $\alpha^-_\varepsilon$ are also generic) and so the induction step still holds. 
\end{proof}

\subsection{Examples}
\begin{example}
Let $\alpha=(1,1,3,3,3)$ and consider the space $X(\alpha)$ and the short set $S=\{1,2\}$. The fixed point set of the core component $U_S(\alpha)$ consists of the minimum component $M_S(\alpha) \cong \C \P^1$ and four isolated fixed points. From Claims~\ref{cl:1} and \ref{cl:2} one has
\begin{align*}
\dint\limits_{U_S(\alpha)}  i_S^* \,c_1^2  = \hspace{-.3cm} \dint\limits_{U_S(\alpha)} i_S^* \,c_2^2 =\hspace{-.3cm} \dint\limits_{U_S(\alpha)} i_S^* \,c_3^2=\hspace{-.3cm} \dint\limits_{U_S(\alpha)} i_S^*\, c_4^2=\hspace{-.3cm}\dint\limits_{U_S(\alpha)} i_S^*\, c_5^2=\hspace{-.3cm}\dint\limits_{U_S(\alpha)} i_S^*\, (c_1c_2).
\end{align*}
Using the fact that
$$
i_S^* c_1= -PD\big(U_S(\alpha) \cap W_1\big)=PD\big(M_S(\alpha)\big)=PD\big(M(2,3,3,3)\big)
$$
with $W_1=:\{[p,q]\in X(\alpha)\mid p_1=0\}$ (cf. Proposition~\ref{prop:wj}) and the
recursion formula for polygon spaces in \cite{AG}, one can compute
\begin{align*}
\dint\limits_{U_S(\alpha)}  i_S^* \,c_1^2 = & - \dint\limits_{M_S(\alpha)} \tilde{c}_1 = - \dint\limits_{M(2,3,3,3)} \tilde{c}_1 = - \dint\limits_{M(3,3,3,2)} \tilde{c}_4  \\ = &  - \dint\limits_{M(3,3,5)} 1 - \dint\limits_{M(3,3,1)} 1 =-2,
\end{align*}
where, as usual, given a polygon space $M(\lambda)$ one defines $\tilde{c}_j:=c_1(V_j(\lambda))$. Note that the polygon spaces $M(3,3,5)$ and $M(3,3,1)$ consist of only one point as in Figure~\ref{fig:hip1}-(II). 

If one uses Theorem~\ref{thm:1} to compute these integrals one obtains
$$
\dint\limits_{U_S(\alpha)}  i_S^* \,c_1^2 = \sum_{J \in \mathcal{T}(\widetilde{\alpha})} (-1)^{3\big \lvert J \cap \{4\} \big\rvert + \lvert J\rvert + 2}=-2,
$$
where $\widetilde{\alpha}=(3,3,2)$, since
$$
\mathcal{T}(\widetilde{\alpha})=\Big\{J \subset\{3,4\}\mid \sum_{j \in J} \widetilde{\alpha}_j - \sum_{j \in \{3,4\}\setminus J} \widetilde{\alpha}_j >0 \Big\} = \big\{ \{3\}, \{3,4\} \big\}.
$$

Similarly,
\begin{align*}
\dint\limits_{U_S(\alpha)}  i_S^* \,(c_1 c_5)  & = \dint\limits_{U_S(\alpha)} i_S^* \,(c_1 c_3) =\dint\limits_{U_S(\alpha)} i_S^* \,(c_1 c_4)=\dint\limits_{U_S(\alpha)} i_S^*\, (c_2 c_3) \\ & =\dint\limits_{U_S(\alpha)} i_S^*\, (c_2 c_4)=\dint\limits_{U_S(\alpha)} i_S^*\, (c_2 c_5).
\end{align*}
These integrals can be computed using the recursion formula \eqref{eq:0.1} as follows:
\begin{align*}
\dint\limits_{U_S(\alpha)}  i_S^* \,(c_1 c_5)  & = \dint\limits_{U_S(1,1,3,3,3+\varepsilon)}  i_S^* \,(c_1 c_5) = \dint\limits_{U_S(1,1,3,6+\varepsilon)} i_S^* \,c_1 - \dint\limits_{U_S(1,1,3,\varepsilon)} i_S^* \,c_1\\ &  = -  \dint\limits_{M(2,3,6+\varepsilon)} 1 +  \dint\limits_{M(2,3,\varepsilon)} 1 = 0
\end{align*}
since $i_S^* c_1=-PD\big(U_S(\alpha^\pm_\varepsilon) \cap W_1\big)$.
Note that 
$$M(2,3,6+\varepsilon)=M(2,3,\varepsilon)=\varnothing$$ 
as the polygons in these spaces would not close.

If one uses Theorem~\ref{thm:2}-\eqref{eq:thm21} to compute these integrals one obtains
$$
\dint\limits_{U_S(\alpha_\varepsilon)}  i_S^* \,c_1c_5 = \sum_{J\in \mathcal{A}_{5,0} (\alpha_{\varepsilon})} \sum_{J^\prime \in \mathcal{T}_{5,0}(\alpha_\varepsilon,J)} (-1)^{\big\lvert J \cap \{4\} \big\rvert + 4 \big\lvert J^\prime \cap\{3\} \big\rvert + \lvert J^\prime \rvert + 3}=0
$$
since
$$
\mathcal{A}_{5,0} (\alpha_{\varepsilon})=\big\{J \subset\{4,5\}\mid \ell_J(\alpha_\varepsilon)>0 \,\, \text{and}\,\, 2 < \ell_J(\alpha_\varepsilon)+3 \big\} = \big\{ \{5\}, \{4,5\}\big\},
$$
$$
\mathcal{T}_{5,0}\big(\alpha_\varepsilon,\{5\}\big)=\mathcal{T}_{5,0}(\varepsilon,3,2)=\varnothing
$$
and 
$$
\mathcal{T}_{5,0}\big(\alpha_\varepsilon,\{4,5\}\big)=\mathcal{T}_{5,0}(6+\varepsilon,3,2)=\varnothing.
$$

Finally, 
\begin{align*}
\dint\limits_{U_S(\alpha)}  i_S^* \,(c_4 c_5)  = \dint\limits_{U_S(\alpha)} i_S^* (c_3 c_5) =\dint\limits_{U_S(\alpha)} i_S^* (c_3 c_4) 
\end{align*}
and, by the recursion formula \eqref{eq:0.1}, one has
\begin{align*}
\dint\limits_{U_S(\alpha)}  i_S^* \,(c_4 c_5)  & = \dint\limits_{U_S(1,1,3,6+\varepsilon)} c_4 +  \dint\limits_{U_S(1,1,3,\varepsilon)} c_4 \\ & = \dint\limits_{U_S(1,1,9+\varepsilon)}\hspace{-.3cm} 1 - \dint\limits_{U_S(1,1,3+\varepsilon)}\hspace{-.3cm} 1 + \dint\limits_{U_S(1,1,3+\varepsilon)} \hspace{-.3cm} 1 + \dint\limits_{U_S(1,1,3-\varepsilon)}\hspace{-.3cm} 1 = 2.
\end{align*}
Note that the core components $U_S(1,1,9+\varepsilon)$, $U_S(1,1,3+\varepsilon)$ and $U_S(1,1,3-\varepsilon)$ consist of a single point as $S=\{1,2\}$ is short in all cases.

If one uses Theorem~\ref{thm:2}-\eqref{eq:thm22} one obtains
$$
\dint\limits_{U_S(\alpha_\varepsilon)}  i_S^* \,c_4c_5 = \sum_{J\in \mathcal{A}_{5,1} (\alpha_{\varepsilon})} (-1)^{\big\lvert J \cap \{3\} \big\rvert  + 1}= 2,
$$
since
\begin{align*}
\mathcal{A}_{5,1} (\alpha_{\varepsilon})=& \big\{J \subset\{3,4,5\}\mid \ell_J(\alpha_\varepsilon)>0 \,\, \text{and}\,\, 2 < \ell_J(\alpha_\varepsilon) \big\} \\ = &  \big\{ \{3,4\},\{3,5\}, \{4,5\}, \{3,4,5\} \big\}.
\end{align*}
These computations agree with the results in Example 4.7 of \cite{hp}. In fact, $U_S$ is homeomorphic to the blow-up of $\C \P^2$ at $3$ points and the intersection form on $H^2(U_S)$ with respect to the basis
$$
\left\{\frac{c_1+c_3+c_4+c_5}{2}, -\frac{c_1+c_3}{2},-\frac{c_1+c_4}{2},-\frac{c_1+c_5}{2} \right\}
$$
can be obtained from our results and is represented by the diagonal matrix $\text{Diag}(1,-1,-1,-1)$. Indeed, for example,
\begin{align*}
\bullet &  \dint\limits_{U_S(\alpha)}\hspace{-.2cm} i_S^* \left(\frac{c_1+c_3+c_4+c_5}{2}\right)^2 = \dint\limits_{U_S(\alpha)}\hspace{-.2cm}i_S^* c_1^2 + \frac{3}{2} \dint\limits_{U_S(\alpha)}\hspace{-.2cm}i_S^* c_4 \, c_5 = -2 +3 = 1,\\
\bullet & \dint\limits_{U_S(\alpha)} \hspace{-.2cm}i_S^* \left(\frac{c_1+c_3}{2}\right)^2 = \frac{1}{2}  \dint\limits_{U_S(\alpha)}\hspace{-.2cm}i_S^* \, c_1^2 + \frac{1}{2} \dint\limits_{U_S(\alpha)}\hspace{-.2cm}i_S^* \, c_1\,  c_3  = -1 + 0 = -1, \\
\bullet & \dint\limits_{U_S(\alpha)} \hspace{-.2cm}i_S^* \left(\frac{c_1+c_3+c_4+c_5}{2}\right) \left(-\frac{c_1+c_3}{2}\right) \\ & = - \dint\limits_{U_S(\alpha)}\hspace{-.2cm} i_S^* \left(\frac{c_1+ c_3}{2}\right)^2 -\frac{1}{2}\dint\limits_{U_S(\alpha)}\hspace{-.2cm} i_S^* \, c_1 \, c_5   -\frac{1}{2} \dint\limits_{U_S(\alpha)} \hspace{-.2cm}i_S^*\, c_3\, c_5=1-0-1=0 \\
\bullet &  \dint\limits_{U_S(\alpha)}\hspace{-.2cm}i_S^* \left(\frac{c_1+c_3}{2}\right)\left(\frac{c_1+c_4}{2}\right) \\ & = \frac{1}{4} \dint\limits_{U_S(\alpha)}\hspace{-.2cm} i_S^* \, c_1^2 +\frac{1}{2} \dint\limits_{U_S(\alpha)}\hspace{-.2cm}i_S^* \, c_1 \, c_4   +\frac{1}{4} \dint\limits_{U_S(\alpha)}\hspace{-.2cm}i_S^* \, c_3\, c_4= - \frac{1}{2}+ 0 + \frac{1}{2}= 0.
\end{align*}
\end{example}

\begin{example}
Let us consider the same hyperpolygon space $X(\alpha)$ as in the preceding example and compute the intersection numbers of the core component $U_S(\alpha)$ with $S=\{1,2,3\}$. By Claims~\ref{cl:1} and \ref{cl:2} it is enough to consider the following three integrals.
$$
\dint\limits_{U_S(\alpha)}i_S^* \, c_1^2, \quad  \dint\limits_{U_S(\alpha)}i_S^* \, c_1\, c_5 \quad \text{and} \quad  \dint\limits_{U_S(\alpha)}i_S^* \, c_4\, c_5.
$$

The value of the first one is 
$$
\dint\limits_{U_S(\alpha)}i_S^*\, c_1^2 = 1
$$
since 
$$i_S^* \, c_1^2 = i_S \, c_1\, c_2 = PD\big(U_S(\alpha) \cap W_1 \cap W_2\big) = PD\big(M(5,3,3)\big) = PD\big(\{\text{pt}\}\big).$$ 

This agrees with the value given by Theorem~\ref{thm:1}-\eqref{eq:I2} since $S$ is maximal short for $\alpha$.

For the second one we get
$$
\dint\limits_{U_S(\alpha_\varepsilon)}i_S^* \, c_1\, c_5 = \dint\limits_{U_S(1,1,3,6+\varepsilon)}i_S^* \, c_1 - \dint\limits_{U_S(1,1,3,\varepsilon)}i_S^* \, c_1 =-1 -0=-1
$$
since $S$ is not short for $(1,1,3,\varepsilon)$ and, in $U_S(1,1,3,6+\varepsilon)$, 
$$
i_S^* c_1= -PD\big(U_S(1,1,3,6+\varepsilon) \cap W_1\big) = -PD\big(\{\text{pt}\}\big).
$$
On the other hand, by Theorem~\ref{thm:2}-\eqref{eq:thm22} one has
$$
\dint\limits_{U_S(\alpha_\varepsilon)}i_S^* \, c_1\, c_5 = \sum_{J\in \mathcal{A}_{5,0}(\alpha_\varepsilon)} (-1)^{\big\lvert J \cap \{4\}\big\rvert +3+1}=-1,
$$
since
$$
\mathcal{A}_{5,0}(\alpha_\varepsilon)=\big\{ J \subset \{4,5\}\mid \ell_J(\alpha_\varepsilon) >0 \, \text{and} \, 5<\ell_J(\alpha_\varepsilon)\big\}=\big\{ \{4,5\}\big\}.
$$

Finally,
$$
\dint\limits_{U_S(\alpha_\varepsilon)}i_S^* \, c_4\, c_5 = \dint\limits_{U_S(1,1,3,6+\varepsilon)} \hspace{-.3cm} i_S^* \, c_4 + \dint\limits_{U_S(1,1,3,\varepsilon)}\hspace{-.3cm} i_S^* \, c_4 = - \dint\limits_{U_S(1,1,3,6+\varepsilon)}\hspace{-.3cm} i_S^* \, c_1 + 0 =1,
$$ 
where we used Claim~\ref{cl:3}, the fact that $S$ is not short for $(1,1,3,\varepsilon)$ and the fact that, in $U_S(1,1,3,6+\varepsilon)$, one has
$$
i_S^* c_1 = -PD\big(U_S(1,1,3,6+\varepsilon)\cap W_1\big) = -PD\big( \{\text{pt}\}\big). 
$$
By Theorem~\ref{thm:2}-\eqref{eq:tildea},
$$
\dint\limits_{U_S(\alpha_\varepsilon)}i_S^*\, c_4\, c_5=(-1)^4 \big\lvert \widetilde{\mathcal{A}}_{5,1}(\alpha_\varepsilon) \big\rvert = 1,
$$
since
$$
 \widetilde{\mathcal{A}}_{5,1}(\alpha_\varepsilon)=\big\{ J \subset\{4,5\}\mid \ell_J(\alpha_\varepsilon)>5 \big\} = \big\{\{4,5\}\big\}.
$$
These values agree with the fact that, since $S$ is a maximal short set for $\alpha$, the core component $U_S$ is $\C \P^2$ (cf. Proposition~\ref{Smaximal}). Indeed one can choose $c_1$ to be the generator of $H^2(U_S(\alpha))$.  
\end{example}

\section{Intersection numbers for PHBs}\label{sec:intersecPHB}

In this section we will use the isomorphism $\mathcal{F}: \mathcal{H}(\beta) \to X(\alpha)$ defined in \eqref{eq:mathcalf} to obtain explicit formulas for the intersection numbers of the nilpotent cone components of $\mathcal H(\beta)$. 
Consider the pull backs  $\mathcal{F}^* \widetilde{V}_i$ of $ \widetilde{V}_i$ as in the following diagram 
\begin{equation*}
\xymatrix{
\mathcal{F}^* \widetilde{V}_i \ar[r] \ar[d] & \widetilde{V}_i \ar[d]^{\pi}\\
\mathcal H(\beta) \ar[r]^{\mathcal F} & X(\alpha).
 }
\end{equation*}
In particular, 
$$
\mathcal F^* \widetilde{V}_i := \left\{ \big( [E, \Phi], (p,q)\big) \in \mathcal H(\beta) \times \widetilde{V}_i \mid \mathcal F \big( [E, \Phi] \big) = \pi \big( (p,q) \big)\right\}.
$$
Note that  the PHBs $ [E, \Phi] \in \mathcal H(\beta)$ for which there exists $(p,q)\in \widetilde{V}_i$ such that $\mathcal F \big( [E, \Phi] \big) = \pi \big( (p,q) \big)$ have parabolic structure at $x_i$ given by
$$
\mathbb{C}^2 = E_{x,1} \supset E_{x,2} = \langle (1,0)^t \rangle \supset 0
$$
and Higgs field with residue of the form 
\begin{equation}\label{eq:Resxi}
\text{Res}_x \Phi = \left(\begin{array}{cc} 0 & * \\ 0 & 0\end{array} \right).
\end{equation}
Indeed, since any $(p,q) \in \widetilde{V}_i$ satisfies 
$$(q_iq_i^* -p_i^* p_i)_0 = \left(\begin{array}{cc} t & 0 \\ 0 & -t\end{array} \right), \quad t>0,$$
writing, as usual, $p_i=(a_i, b_i)$ and $q_i= (c_i, d_i)^t$, one has
\begin{displaymath}
c_i \bar{d}_i - a_i \bar{b}_i =0 \quad \text{and} \quad \vert c_i\vert^2 - \vert d_i\vert^2 - \vert a_i\vert^2 + \vert b_i\vert^2 >0
\end{displaymath}
which, together with \eqref{complex1} gives $a_i = d_i =0$. Then, \eqref{eq:Resxi} follows from \eqref{eq:Ni}.

Consider the first Chern classes of these pull back bundles $c_1(\mathcal F^*  \widetilde{V}_i ) = \mathcal F^* c_i$ which we will also denote by $c_i$. Then it is clear that these classes generate $H^* (\mathcal H(\beta), \Q)$ as in the case of hyperpolygon spaces (cf. \cite{konno}, \cite{hauselp} and \cite{hp}). In particular, following Corollary 4.5 in  \cite{hauselp} we can explicitly describe the ring structure of the cohomology of $\mathcal{H}(\beta)$.
\begin{Theorem}\label{thm:H}
The cohomology ring $H\big(\mathcal H(\beta), \Q\big)$ is independent of $\beta$ and is isomorphic to 
$$
\Q[c_1, \ldots, c_n] / \big(\langle c_i^2-c_j^2\mid i,j \leq n \rangle + \langle \text{all monomials of degree $n-2$}\rangle \big).
$$
\end{Theorem}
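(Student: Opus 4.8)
The plan is to transport the known presentation of the cohomology ring of the hyperpolygon space $X(\alpha)$ through the $S^1$-equivariant isomorphism $\mathcal I$ of Theorem~\ref{isomorphism}. First I would invoke the isomorphism $\mathcal F = \mathcal I^{-1}: \mathcal H(\beta) \to X(\alpha)$, which induces a ring isomorphism $\mathcal F^*: H^*(X(\alpha),\Q) \to H^*(\mathcal H(\beta),\Q)$. Since, by the discussion just before the statement, the classes $c_i = \mathcal F^* c_i = c_1(\mathcal F^* \widetilde V_i) \in H^2(\mathcal H(\beta),\Q)$ are precisely the images of the generators $c_i \in H^2(X(\alpha),\Q)$, any presentation of $H^*(X(\alpha),\Q)$ in terms of the $c_i$'s transfers verbatim to a presentation of $H^*(\mathcal H(\beta),\Q)$ in the $\mathcal F^* c_i$'s. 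So the entire content reduces to citing the corresponding presentation for hyperpolygon spaces.

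Next I would recall that presentation. It is precisely the Hausel--Proudfoot description (Corollary 4.5 of \cite{hauselp}), which states that $H^*(X(\alpha),\Q)$ is generated by the degree-two classes $c_1,\dots,c_n$ with relations generated by $c_i^2 - c_j^2$ for all $i,j$ together with all monomials of degree $n-2$ in the $c_i$ (here degree is counted in the $c_i$'s, so a monomial of degree $n-2$ has cohomological degree $2(n-2)$, consistent with $\dim_\C X(\alpha) = 2(n-3)$). This presentation is manifestly independent of $\alpha$ — it depends only on $n$ — which is consistent with Nakajima's result that the $X(\alpha)$ are all diffeomorphic for generic $\alpha$, and correspondingly with Nakajima's result \cite{n3} that the moduli spaces $\mathcal N^{0,\Lambda}_{\beta,r,d}$ are diffeomorphic for all generic $\beta$. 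Applying $\mathcal F^*$ then gives
$$
H^*\big(\mathcal H(\beta),\Q\big) \cong \Q[c_1,\dots,c_n] \big/ \big(\langle c_i^2 - c_j^2 \mid i,j \leq n\rangle + \langle \text{all monomials of degree } n-2\rangle\big),
$$
with the right-hand side visibly independent of $\beta$, which is the claim.

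The only real subtlety — and the step I expect to require the most care — is verifying that the generators match up correctly: one must check that $\mathcal F^* \widetilde V_i$ is genuinely the $i$-th circle bundle appearing in the Hausel--Proudfoot presentation, i.e. that the identification of generators is the canonical one and not off by, say, a sign or a relabeling. This is exactly what the paragraph preceding the theorem arranges: the computation with the residue $\mathrm{Res}_{x_i}\Phi$ in \eqref{eq:Resxi}, derived from \eqref{eq:Ni}, \eqref{complex1} and the defining condition of $\widetilde V_i$, pins down the geometric meaning of $\mathcal F^* \widetilde V_i$ on the PHB side, so that these pullback bundles are identified with Konno's circle bundles under $\mathcal F$. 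Once that identification is in place, the fact that the $c_i$ generate $H^*(\mathcal H(\beta),\Q)$ follows immediately from the fact that the $c_i$ generate $H^*(X(\alpha),\Q)$ (\cite{konno,hp,hauselp}) together with surjectivity of $\mathcal F^*$, and the relation ideal transports isomorphically. There is no further computation needed; the argument is purely formal given Theorem~\ref{isomorphism} and the cited presentation for hyperpolygons.
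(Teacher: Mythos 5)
Your proposal is correct and follows essentially the same route as the paper: the paper's justification consists precisely of pulling back Konno's circle bundles via $\mathcal F$ (the paragraph with \eqref{eq:Resxi} pinning down the generators on the PHB side), noting these classes generate $H^*(\mathcal H(\beta),\Q)$, and invoking Corollary 4.5 of \cite{hauselp} for the presentation, whose independence of the parameter gives independence of $\beta$.
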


Moreover one can reduce the computation of the intersection numbers of any nilpotent cone component $\mathcal U_{(0,S)} = \mathcal I(U_S (\beta))$ of $\mathcal H(\beta)$ to one of the following two cases.
\begin{align*}
\text{(I)} & \dint_{\mathcal U_{(0,S)}}  \iota_S^*\,  c_1^{n-3} = \dint_{U_S(\alpha)} i_S^*\,  c_1^{n-3},\\
\text{(II)} & \dint_{\mathcal U_{(0,S)}} \iota_S^*\, (c_1^k \, c_{n-l}\cdots c_n) = \dint_{U_S(\alpha)} i_S^*\, (c_1^k \, c_{n-l}\cdots c_n), \\ &  \text{with} \quad n-l > \vert S \vert \,\, \text{and}\,\, k=n-l-4,
\end{align*}
where $\iota_S:\mathcal{U}_{(0,S)} \to \mathcal{H}(\beta)$ is the inclusion map, and we used the fact that $\mathcal F \circ \iota_S \circ \mathcal{I} = i_S$. These integrals can then be computed using the formulas in Theorem~\ref{thm:1} and Theorem~\ref{thm:2}.

The ring structure of $H^*(\mathcal U_{(0,S)}, \Q)$ can also be obtained from the ring structure of $H^*(U_{S}, \Q)$ (presented in \cite{hp}), through the isomorphism of Theorem~\ref{isomorphism}. 
Explicitly, one obtains the following result. 
\begin{Theorem}\label{thm:US}
Consider the classes $b_i = -  \iota_S^* \left( \frac{c_1 + c_i}{2}\right)$ for $1=1, \ldots, n$. Then $H^*(\mathcal U_{(0,S)}, \Q)$ is isomorphic to $\Q[b_1, \ldots, b_n] / I_S $ where
$I_S$ is generated by the following four families of relations:
\begin{align*}
1)& \, b_1 - b_i \text{ for all }i \in S,\\
2)& \, b_j(b_1 - b_j) \text{ for all }j \in S^c,\\
3)& \, \prod_{j \in R} b_j \text{ for all }R \subseteq S^c \text{ such that }R \cup S \text{ is long},\\
4)& \, b_1^{\vert S \vert -2} \prod_{j \in L} (b_j - b_1) \text{ for all long subsets }L \subseteq S^c.\\
\end{align*}
\end{Theorem}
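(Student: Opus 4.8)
The plan is to transport the known presentation of $H^*(U_S,\mathbb{Q})$ across the isomorphism $\mathcal{I}:X(\alpha)\to\mathcal{H}(\beta)$ of Theorem~\ref{isomorphism}. Since $\mathcal{I}$ is an isomorphism of complex (indeed hyperk\"ahler) manifolds which, by the subsequent proposition, is $S^1$-equivariant, it restricts to an isomorphism between the core component $U_S\subset X(\alpha)$ and the nilpotent cone component $\mathcal{U}_{(0,S)}=\mathcal{I}(U_S)\subset\mathcal{H}(\beta)$; thus it induces a ring isomorphism $\mathcal{I}^*:H^*(\mathcal{U}_{(0,S)},\mathbb{Q})\to H^*(U_S,\mathbb{Q})$. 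The definition of the pull-back classes $c_i$ on $\mathcal{H}(\beta)$ as $\mathcal{F}^*c_i$ (with $\mathcal{F}=\mathcal{I}^{-1}$) means exactly that $\mathcal{I}^*$ sends the class $c_i\in H^2(\mathcal{H}(\beta),\mathbb{Q})$ to $c_i\in H^2(X(\alpha),\mathbb{Q})$, and hence the restriction $\iota_S^*c_i$ to $\iota_S^*$ composed appropriately equals $i_S^*c_i$ (using $\mathcal{F}\circ\iota_S\circ\mathcal{I}=i_S$ as already noted in the text). Consequently $\mathcal{I}^*$ carries the class $b_i=-\iota_S^*\big(\tfrac{c_1+c_i}{2}\big)$ on $\mathcal{U}_{(0,S)}$ to the analogously defined class $-i_S^*\big(\tfrac{c_1+c_i}{2}\big)$ on $U_S$.

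The second ingredient is the presentation of $H^*(U_S,\mathbb{Q})$ due to Harada--Proudfoot, which is cited as Theorem~\ref{hp} and the surrounding discussion of \cite{hp}: in terms of the classes $-i_S^*\big(\tfrac{c_1+c_i}{2}\big)$, this cohomology ring is $\mathbb{Q}[b_1,\dots,b_n]/I_S$ with $I_S$ generated precisely by the four families of relations $1)$--$4)$ listed in the statement. (This is the geometric fact that $U_S$ is itself, via \cite{hp}, a hyperpolygon-type / toric-like variety whose cohomology admits this Stanley--Reisner-style description; the relations $1)$ and $2)$ reflect Claims~\ref{cl:1} and~\ref{cl:2}, while $3)$ and $4)$ are the "short/long" Stanley--Reisner relations coming from the combinatorics of which sub-collections of edges can be made parallel or degenerate inside $U_S$.) Given this, the proof is essentially a one-line transport: $H^*(\mathcal{U}_{(0,S)},\mathbb{Q})\xrightarrow{\ \mathcal{I}^*\ }H^*(U_S,\mathbb{Q})\cong\mathbb{Q}[b_1,\dots,b_n]/I_S$, and since $\mathcal{I}^*$ matches the generators $b_i$ on both sides, it identifies the two quotient rings.

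Concretely the steps I would carry out are: (i) recall that $\mathcal{I}$ is an $S^1$-equivariant isomorphism and therefore restricts to an isomorphism $U_S\cong\mathcal{U}_{(0,S)}$, so $\mathcal{I}^*$ is a graded ring isomorphism in cohomology; (ii) verify, from the definition $\mathcal{F}^*c_i=c_i$ and the identity $\mathcal{F}\circ\iota_S\circ\mathcal{I}=i_S$, that $\mathcal{I}^*(\iota_S^*c_i)=i_S^*c_i$, hence $\mathcal{I}^*(b_i)=-i_S^*\big(\tfrac{c_1+c_i}{2}\big)$; (iii) quote the presentation of $H^*(U_S,\mathbb{Q})$ from \cite{hp} in terms of these classes with ideal generated by families $1)$--$4)$; (iv) conclude that $H^*(\mathcal{U}_{(0,S)},\mathbb{Q})\cong\mathbb{Q}[b_1,\dots,b_n]/I_S$.

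The only real obstacle is ingredient (iii): one must make sure that the presentation of $H^*(U_S,\mathbb{Q})$ from \cite{hp} is indeed stated (or readily derived) in exactly the normalization used here, i.e.\ with generators $-i_S^*\big(\tfrac{c_1+c_i}{2}\big)$ and with the precise four relation families as written. If \cite{hp} gives the ring in a slightly different set of generators (say the $c_i$ themselves, or Chern classes of tautological bundles over the pair-of-polygons description), then a short change-of-variables computation is needed to rewrite the ideal; the relations $b_1-b_i$ ($i\in S$) and $b_j(b_1-b_j)$ ($j\in S^c$) should fall out of Claims~\ref{cl:1}, \ref{cl:2} together with $i_S^*(c_1-c_j)$ being represented by $2\,\mathrm{sgn}(\alpha_j-\alpha_1)[D_{1,j}^-\cap U_S]$, while the monomial relations $3)$ and $4)$ come from the emptiness of the corresponding intersections of the divisors $D^\pm_{i,j}$ and $W_j$ inside $U_S$ (degenerate polygons failing to close, exactly as in the proofs of Theorems~\ref{thm:1} and~\ref{thm:2}). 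Verifying that no extra relations are needed — i.e.\ that $I_S$ as listed is the \emph{full} ideal — is where one leans on \cite{hp} rather than re-proving it.
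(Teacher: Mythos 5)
Your proposal is correct and follows essentially the same route as the paper: the paper likewise obtains Theorem~\ref{thm:US} by transporting the Harada--Proudfoot presentation of $H^*(U_S,\Q)$ from \cite{hp} through the isomorphism $\mathcal I$ of Theorem~\ref{isomorphism}, using $\mathcal F\circ\iota_S\circ\mathcal I=i_S$ to identify the generators $b_i$, and noting (as you do) that relations $1)$ and $2)$ also follow directly from Claims~\ref{cl:1} and~\ref{cl:2}. Your caveat about matching the normalization of the generators in \cite{hp} is exactly the point on which the paper leans on that reference without further argument.
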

Note that relations $1)$ and $2)$ in Theorem~\ref{thm:US} are trivial consequences of Claims~\ref{cl:1} and \ref{cl:2} respectively.

\begin{example}
Let $S$ be a maximal $\alpha$-short set. Then 
$$\mathcal U_{(0,S)} \cong U_S(\alpha) \cong \C\P^{n-3}$$ 
(cf. Proposition~\ref{Smaximal}). 
This can be confirmed using Theorem~\ref{thm:US}. In fact, $R \cup S$ is long for any $R \subseteq S^c$, so $3)$ implies that $b_j=0$ for all $j \in S^c$, and then $2)$ is  trivially verified. Since by $1)$ we have $b_1 = b_i$ for all $i \in S$, we can chose $b_1$ to be the generator of $H^*(\mathcal U_{(0,S)}, \Q)$. Moreover, since $S$ is maximal, the only long subset of $S^c$ is $S^c$ itself, and so $I_S$ is generated by the unique condition $b_i^{n-2}=0$.
The cohomology ring of the nilpotent cone component $\mathcal U_{(0,S)} \cong \C\P^{n-3}$  is then 
$$ 
H^*(\mathcal U_{(0,S)}, \Q) \cong \Q[b_1] / \langle b_1^{n-2} \rangle \cong H^*(\C\P^{n-3}, \Q)
$$
as expected.
\end{example}

\end{document}